\begin{document}

\newtheorem{notation}{Notation}[section]
\newtheorem{definition}[notation]{Definition}
\newtheorem{satz}[notation]{}
\newtheorem{lemma}[notation]{Lemma}
\newtheorem{corollary}[notation]{Corollary}
\newtheorem{theorem}[notation]{Theorem}
\newtheorem{remark}[notation]{Remark}
\newtheorem{hyp}[notation]{Hypothesis}
\newtheorem{mdefinition}{Definition}
\newtheorem{mtheorem}{Theorem}

\def \<{\langle }
\def \>{\rangle }
\newcommand{\N}{\mathbb{N}}
\newcommand{\sy}{\textrm{Syl}}
\newcommand{\ov}{\overline}
\newcommand{\m}{\mathcal}
\newcommand{\F}{\mathcal{F}}
\newcommand{\ch}{\;char_{\mathcal{F}}\;}
\newcommand{\Ac}{A^\circ}
\newcommand{\Vc}{V^\circ}
\newcommand{\FN}{\F_\m{N}}
\newcommand{\FNT}{\F_\m{N}^*}
\newcommand{\Q}{\mathcal{Q}}

\title{Minimal Fusion Systems with a Unique Maximal Parabolic}
\author{Ellen Henke\footnote{The author was supported by EPSRC.}
}
\date{
Department of Mathematical Sciences\\
University of Copenhagen\\
Universitetsparken 5\\ 
2100 K{\o}benhavn {\O}\\
Denmark\\
Email: henke@math.ku.dk}

\maketitle

\thispagestyle{empty}

\thispagestyle{empty}

\textbf{\Large Abstract}

\bigskip

We define minimal fusion systems in a way that every non-solvable fusion system has a section which is minimal. Minimal fusion systems can also be seen as analogs of Thompson's N-groups. 
In this paper, we consider a minimal fusion system $\F$ on a finite $p$-group $S$ that has a unique maximal $p$-local subsystem containing $N_\F(S)$. For an arbitrary prime $p$, we determine the structure of a certain (explicitly described) $p$-local subsystem of $\F$. If $p=2$, this leads to a complete classification of the fusion system $\F$.

\section{Introduction}

A pattern for the classification of finite simple groups was set by Thompson in \cite{T}, where he gave a classification of all finite simple \textit{N-groups}. These are non-abelian finite simple groups with the property that every $p$-local subgroup is solvable, for every prime $p$.
Recall that a \textit{$p$-local subgroup} of a finite group $G$ is the normalizer of a non-trivial $p$-subgroup of $G$. Thompson's work was generalized by Gorenstein and Lyons, Janko and Smith to \textit{(N2)-groups}, that is to non-abelian finite simple groups all of whose $2$-local subgroups are solvable. Recall here that, by the Theorem of Feit-Thompson, every non-solvable group has even order.

\bigskip

N-groups play an important role, as every minimal non-solvable finite group is an N-group. Furthermore, every non-solvable group has a section which is an N-group. The respective properties hold also for (N2)-groups.

\bigskip

A new proof for the classification of (N2)-groups was given by Stellmacher in \cite{St2}. It uses the amalgam method, which is a completely local method. 
Currently, Aschbacher is working on another new proof for the classification of (N2)-groups. His approach uses \textit{saturated fusion systems} that were first introduced by Puig under the name of \textit{full Frobenius categories}. Aschbacher's plan is to classify all \textit{N-systems}, i.e. all saturated fusion systems $\F$ of characteristic $2$-type such that the group $Mor_\F(P,P)$ is solvable, for every subgroup $P$ of $\F$. Here the use of the group theoretical concept of solvability fits with the definition of solvable fusion systems as introduced by Puig. However, this concept seems not general enough to ensure that N-systems play the same role in saturated fusion systems as N-groups in groups. Therefore, in our notion of \textit{minimal fusion systems} introduced below, we find it necessary to use a concept of solvable fusion systems as defined by Aschbacher \cite[15.1]{A2}.

\bigskip

\textbf{For the remainder of the introduction let $\mathbf{p}$ be a prime and $\mathbf{\F}$ be a saturated fusion system on a finite $\mathbf{p}$-group $\mathbf{S}$.} We adapt the standard terminology regarding fusion systems as introduced by Broto, Levi and Oliver \cite{BLO}.
For further basic definitions and notation we refer the reader to Section~\ref{Fs}. Generic examples of saturated fusion systems are the fusion systems $\F_S(G)$, where $G$ is a finite group containing $S$ as a Sylow $p$-subgroup, the objects of $\F_S(G)$ are all subgroups of $S$, and the morphisms in $\F_S(G)$ between two objects are the injective group homomorphisms obtained by conjugation with elements of $G$.

\begin{definition}\label{MinDef}
The fusion system $\F$ is called minimal if $O_p(\F)=1$ and $N_\F(U)$ is solvable for every fully normalized subgroup $U\neq 1$ of $\F$.
\end{definition}

Here the fusion system $\F$ is \textit{solvable}, if and only if $O_p(\F/R)\neq 1$, for every strongly closed subgroup $R\neq S$ of $\F$. This implies that indeed every minimal non-solvable fusion system is minimal in the sense defined above. Furthermore, every non-solvable fusion system has a section which is minimal. Therefore, minimal fusion systems play a similar role in saturated fusion systems as N-groups in groups. However, a classification of minimal fusion systems seems a difficult generalization of the original N-group problem. One reason is that in fusion systems the prime $2$ does not play such a distinguished role as in groups. Therefore, we would like to treat minimal fusion systems also for odd primes as far as possible. Secondly, the notion of solvability in fusion systems is more general than the group theoretical notion. More precisely, although it turns out that every solvable fusion system is constrained and therefore the fusion system of a finite group, such a group can have certain composition factors that are non-abelian finite simple groups. Aschbacher showed in \cite{A2} that these are all finite simple groups in which fusion is controlled in the normalizer of a Sylow $p$-subgroup. Furthermore, Aschbacher gives a list of these groups. Generic examples are the finite simple groups of Lie type in characteristic $p$ of Lie rank $1$. For odd primes, Aschbacher's proof of these facts requires the complete classification of finite simple groups. For $p=2$ they follow already from Goldschmidt's Theorem on groups with a strongly closed abelian subgroup (see \cite{Gold}).

\bigskip

In this paper, we use a concept which is an analog to the (abstract) concept of parabolics in finite group theory, where a \textit{parabolic subgroup} is defined to be a $p$-local subgroup containing a Sylow $p$-subgroup. This generalizes the definition of parabolics in finite groups of Lie type in characteristic $p$. Suppose $S$ is a Sylow $p$-subgroup of a finite group $G$. It is a common strategy in the classification of finite simple groups and related problems to treat separately the case of a unique maximal (with respect to inclusion) parabolic containing $S$. In this case, one classifies as a first step a $p$-local subgroup of $G$ which has the pushing up property as defined in Section~\ref{PUGr}. In the remaining case, two distinct maximal parabolics containing $S$ form an amalgam of two groups that do not have a common normal $p$-subgroup. This usually allows an elegant treatment using the coset graph, and leads in the generic cases to a group of Lie type and Lie rank at least $2$. The main result of this paper handles the fusion system configuration which loosely corresponds to the pushing up case in the N-group investigation. We next introduce the concept of a parabolic in fusion systems.

\begin{definition}\label{ParDef}
\begin{itemize}
\item A subsystem of $\F$ of the form $N_\F(R)$ for some non-trivial normal subgroup $R$ of $S$ is called a \textbf{parabolic subsystem} of $\F$, or in short, a parabolic.
\item A \textbf{full parabolic} is a parabolic containing $N_\F(S)$. It is called a \textbf{full maximal parabolic}, if it is not properly contained in any other parabolic subsystem of $\F$.
\end{itemize}
\end{definition}

Thus, in this paper, we treat the case of a minimal fusion system having a unique full maximal parabolic. Note that this assumption is slightly more general than just supposing that a minimal fusion system has a unique maximal parabolic. Even more generally, we will in fact assume only that there is a proper saturated subsystem containing every full maximal parabolic. We will use the following notation.

\begin{notation}\label{ENDef}
Let $\m{N}$ be a subsystem of $\F$ on $S$. We write $\F_\m{N}$ for the set of centric subgroups $Q$ of $\F$ for which there exists an element of $Mor_\F(Q,Q)$ that is not a morphism in $\m{N}$.
\end{notation}

Note here that, if $\m{N}$ is a proper subsystem of $\F$, we get as a consequence of Alperin's Fusion Theorem that the set $\FN$ is non-empty. In our investigation we focus on members of $\F_\m{N}$ that are maximal in the sense defined next.

\begin{definition}\label{ThMax}
\begin{itemize}
\item For every subgroup $P$ of $S$ write $m(P)$ for the $p$-rank of $P$, i.e. for the largest integer $m$ such that $P$ contains an elementary abelian subgroup of order $p^m$.. 
\item Let $\m{E}$ be a set of subgroups of $S$. An element $Q$ of $\m{E}$ is called \textbf{Thompson-maximal} in $\m{E}$ if, for every $P\in\m{E}$, $m(Q)\geq m(P)$ and, if $m(Q)= m(P)$, then $|J(Q)|\geq |J(P)|$. 
\end{itemize}
\end{definition}

Here, for a finite group $G$, the \textit{Thompson subgroup} $J(G)$ (for the prime $p$) is the subgroup of $G$ generated by the elementary abelian $p$-subgroups of $G$ of maximal order. As a first step in our investigation we show the existence of \textit{Thompson-restricted subgroups}. These are subgroups of $S$ whose normalizer in $\F$ has a very restricted structure and involves $SL_2(q)$ acting on a natural module. More precisely, Thompson-restricted subgroups are defined as follows.

\begin{definition}\label{ThRestrictDef}
Let $Q\in \F$ be centric and fully normalized. Set $T:=N_S(Q)$ and let $G$ be a model for $N_\F(Q)$. We call such a subgroup $Q$ \textbf{Thompson-restricted} if, for every normal subgroup $V$ of $J(G)T$ with $\Omega(Z(T))\leq V\leq \Omega(Z(Q))$, the following hold:
\begin{itemize}
\item[(i)]  $N_S(J(Q))=T$ and $J(Q)$ is fully normalized..
\item[(ii)] $C_S(V)=Q$ and $C_{G}(V)/Q$ is a $p^\prime$-group.
\item[(iii)] $J(G)/C_{J(G)}(V)\cong SL_2(q)$ for some power $q$ of $p$, and $V/C_V(J(G))$ is a natural $SL_2(q)$-module for $J(G)/C_{J(G)}(V)$.
\item[(iv)] $C_{T}(J(G)/C_{J(G)}(V))\leq Q$.
\end{itemize} 
\end{definition}

Here a \textit{model} for $\F$ is a finite group $G$ containing $S$ as a Sylow $p$-subgroup such that $C_G(O_p(G))\leq O_p(G)$ and $\F=\F_S(G)$. By a Theorem of Broto, Castellana, Grodal, Levi and Oliver \cite{BCGLO}, there exists a (uniquely determined up to isomorphism) model for $\F$ provided $\F$ is constrained. Here $\F$ is called \textit{constrained} if $\F$ has a normal $p$-subgroup containing its centralizer in $S$. For every fully normalized, centric subgroup $Q$ of $\F$, the normalizer $N_\F(Q)$ is a constrained saturated subsystem of $\F$. This makes it possible in Definition~\ref{ThRestrictDef}  to choose a model for $N_\F(Q)$. For the definition of a \textit{natural $SL_2(q)$-module} see Definition~\ref{NatSL2qM}. 

\bigskip

Crucial in our proof is the following theorem that requires neither the minimality of $\F$, nor the existence of a proper saturated subsystem containing every full maximal parabolic.

\begin{mtheorem}\label{mainSL2qThm}
Let $\m{N}$ be a proper saturated subsystem of $\F$ containing $C_\F(\Omega(Z(S)))$ and $N_\F(J(S))$. Then there exists a Thompson-maximal subgroup $Q$ of $\F_\m{N}$ such that $Q$ is Thompson-restricted.
\end{mtheorem}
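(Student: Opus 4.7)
The plan is to choose $Q$ to be a Thompson-maximal element of $\F_\m{N}$ and to use the hypotheses on $\m{N}$ to force a failure-of-factorization on the model of $N_\F(Q)$. First, since $\m{N}\neq\F$ is saturated, Alperin's Fusion Theorem gives $\F_\m{N}\neq\emptyset$, so one can pick a Thompson-maximal $Q\in\F_\m{N}$, and because Thompson-maximality is invariant under $\F$-conjugacy one may replace $Q$ by an $\F$-conjugate and assume $Q$ is fully normalized. Set $T:=N_S(Q)$ and let $G$ be a model for the constrained subsystem $N_\F(Q)$, so that $Q=O_p(G)$, $T\in\sy_p(G)$, and every $\F$-automorphism of $Q$ is induced by conjugation by an element of $G$.

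The next step is to translate the two containment hypotheses into a failure of Thompson factorization for $G$. Because $Q$ is centric and $T\leq S$, one has $\Omega(Z(S))\leq\Omega(Z(T))\leq\Omega(Z(Q))$, which gives
\[
 C_\F(\Omega(Z(Q)))\subseteq C_\F(\Omega(Z(S)))\subseteq\m{N}.
\]
Combining $N_\F(J(S))\subseteq\m{N}$ with the Thompson-maximal choice of $Q$ (which forces $m(Q)=m(S)$ and hence controls how $J(Q)$ sits inside $J(S)$ up to $\F$-conjugacy), I would deduce that every element of $N_G(J(Q))$ and every element of $C_G(\Omega(Z(Q)))$ induces an automorphism of $Q$ already visible in $\m{N}$. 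Since $Q\in\F_\m{N}$ provides some $g\in G$ with $c_g|_Q\notin\mathrm{Aut}_\m{N}(Q)$, the Thompson factorization $G=N_G(J(Q))\,C_G(\Omega(Z(Q)))$ must fail; by Thompson's factorization theorem this is equivalent to $J(G)\not\leq Q$, so $G$ admits a genuine FF-offender.

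The bulk of the work is then condition (iii): the FF-module analysis of the action of $J(G)$ on any permissible $V$. Using the standard quadratic-offender machinery (Thompson replacement plus the Meierfrankenfeld--Stellmacher classification of FF-modules for $p$-minimal groups), $J(G)/C_{J(G)}(V)$ decomposes as a central product of rank-one Lie-type groups acting on their natural modules. The point of Thompson-maximality is to reduce this to a single $SL_2(q)$ on a natural module: any additional component, or a non-$SL_2$ factor, would yield an elementary abelian subgroup of $G$ of strictly larger rank or a Thompson subgroup with $|J|$ strictly larger than $|J(Q)|$ inside a suitable normalizer, producing a member of $\F_\m{N}$ that beats $Q$ in the Thompson order. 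I expect this reduction step to be the main obstacle, since it requires identifying precisely which of the rank-one configurations can be absorbed into $\m{N}$ via the $J(S)$- and $\Omega(Z(S))$-hypotheses.

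Finally, (i), (ii) and (iv) reduce to routine consequences of (iii). The equality $N_S(J(Q))=T$ holds because $J(Q)$ is characteristic in $Q$ and any strictly larger normalizer in $S$ would produce an $\F$-conjugate of $Q$ with larger normalizer, contradicting that $Q$ is fully normalized and Thompson-maximal; the full normalization of $J(Q)$ follows from the same conjugacy considerations. The identities $C_S(V)=Q$, $C_G(V)/Q$ a $p'$-group, and $C_T(J(G)/C_{J(G)}(V))\leq Q$ are standard for $SL_2(q)$ acting on a natural module inside a $p$-constrained group with $Q=O_p(G)$ self-centralizing.
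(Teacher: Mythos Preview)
Your choice of $Q$ is too weak to carry the argument, and this is a genuine gap rather than a routine omission. Taking $Q$ Thompson-maximal and fully normalized does not yield $N_S(J(Q))=N_S(Q)$: your justification for (i) is backwards, since $J(Q)$ characteristic in $Q$ only gives $N_S(Q)\leq N_S(J(Q))$, not equality. (Nor does Thompson-maximality force $m(Q)=m(S)$; it is maximality within $\FN$, not within $\F$.) More seriously, your claim that $C_S(V)=Q$ is ``standard for $SL_2(q)$ on a natural module'' is false: nothing in that module setup prevents $C_T(V)$ from properly containing $Q$, and without $C_S(V)=Q$ you are running the FF-module analysis on the wrong group.

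The paper makes a two-stage refinement of the choice. First (Lemma~\ref{ThomRestrHelp0}), starting from any Thompson-maximal $X\in\FN$ with $J(X)$ fully normalized, one passes to the explicit subgroup $Q=C_S(\Omega(Z(J(X))))\cap N_S(J(X))$; a Frattini argument together with $C_\F(\Omega(Z(S)))\leq\m{N}$ shows $A(Q)\not\leq\m{N}$, while Corollary~\ref{Cor3} forces $J(Q)=J(X)$, so $N_S(J(Q))=N_S(Q)$ holds by construction. Calling the resulting set $\FN^+$, one then takes $Q$ \emph{maximal by inclusion} in $\FN^+$. This inclusion-maximality is the engine of the whole proof (step (3) of Theorem~\ref{ThomRestr}): it shows that whenever $Q\leq Q_0\unlhd T$ and $N_G(Q_0)\not\leq H$, necessarily $Q_0=Q$. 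Applied with $Q_0=O_p(P)$ for a suitably chosen minimal parabolic $P\leq J(G)T$, and then with $Q_0=C_T(V)$, this is precisely what delivers condition (ii) and sets up the BHS classification on the correct group. Your proposal has no substitute for this step; the reduction to a single $SL_2(q)$ factor then comes not from a rank comparison as you suggest, but because in the multi-factor or $S_{2n+1}$ cases the minimal parabolic is generated by normalizers of offenders, all of which already lie in $H$.
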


The proof of Theorem~\ref{mainSL2qThm} can be found in Section~\ref{chapterAut}. It uses FF-module results of Bundy, Hebbinghaus, Stellmacher \cite{BHS}. Apart from that, the proof is self-contained. In particular, it is  possible to avoid the use of the classification of finite simple groups or any kind of $\m{K}$-group hypothesis in the proof of Theorem~\ref{mainSL2qThm} and, in fact, in the proof of all the theorems in this paper.

\bigskip

Note that $N_\F(\Omega(Z(S)))$ and $N_\F(J(S))$ are full parabolics of $\F$, as $\Omega(Z(S))$ and $J(S)$ are characteristic in $S$. In particular, if $\m{N}$ is a proper saturated subsystem of $\F$ containing every full parabolic, then $\m{N}$ fulfills the hypothesis of Theorem~\ref{mainSL2qThm}. Hence, there exists a Thompson-maximal subgroup $Q$ of $\FN$ such that $Q$ is Thompson-restricted. As we show in the next theorem, the fusion system $\F$ being minimal implies for each such $Q$ that $N_\F(Q)$ has a very simple structure.

\newcounter{repeat}
\setcounter{repeat}{\value{mtheorem}}
\begin{mtheorem}\label{CasesL}
Let $\F$ be minimal and let $\m{N}$ be a proper saturated subsystem of $\F$ containing every full parabolic. Let $Q\in\FN$ such that $Q$ is Thompson-restricted and Thompson-maximal in $\FN$. Let $G$ be a model for $N_\F(Q)$ and $M=J(G)$. Then $N_S(X)=N_S(Q)$, for every non-trivial normal $p$-subgroup $X$ of $MN_S(Q)$. Moreover, $Q\leq M$, $M/Q\cong SL_2(q)$, and one of the following holds:
\begin{itemize}
\item[(I)] $Q$ is elementary abelian, and $Q/C_Q(M)$ is a natural $SL_2(q)$-module for $M/Q$, or
\item[(II)] $p=3$, $S=N_S(Q)$ and $|Q|= q^5$. Moreover, $Q/Z(Q)$ and $Z(Q)/\Phi(Q)$ are natural $SL_2(q)$-modules for $M/Q$, and $\Phi(Q)=C_Q(M)$.
\end{itemize}
\end{mtheorem}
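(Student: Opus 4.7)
Set $V := \Omega(Z(Q))$; since $\Omega(Z(Q))$ is characteristic in $Q \trianglelefteq G$, it is normal in $G$ and in particular in $MT$, so conditions (ii)--(iv) of Definition~\ref{ThRestrictDef} apply. My first step is to extract $Q \le M$ and $M/Q \cong SL_2(q)$ from the Thompson-restrictedness. Setting $L := C_M(V)$, the inclusion $L/(L \cap Q) \hookrightarrow C_G(V)/Q$ makes $L/(L\cap Q)$ a $p^\prime$-group, while $M = J(G)$ is generated by $p$-elements. Using that $T$ is Sylow in the model $G$ and that $L \cap Q = L \cap T$ is a normal Sylow $p$-subgroup of $L$, one deduces $L \le Q$ and hence $M/(M \cap Q) \cong SL_2(q)$. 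Combining this with (iv) (any element of $T$ centralizing $M/L$ lies in $Q$) then forces $Q \le M$, completing the first step.

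The second step establishes the pushing-up assertion $N_S(X) = T$ for every non-trivial normal $p$-subgroup $X$ of $MT$. Suppose for contradiction that $P := N_S(X) > T$ for some such $X$, chosen of minimal order. Minimality of $X$ makes it elementary abelian, and $[X,M] \le X \cap V$ combined with the natural $SL_2(q)$-module structure of $V/C_V(M)$ forces $X \le V$. Using centricity of $Q$ together with the hypothesis that $\m{N}$ contains every full parabolic, one then exhibits a fully normalized centric $\widetilde Q \in \FN$ (obtained by saturating a suitable $P$-closure of a subgroup containing $V$) with $m(\widetilde Q) \ge m(Q)$ and $|J(\widetilde Q)| > |J(Q)|$, contradicting Thompson-maximality of $Q$ in $\FN$. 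Minimality of $\F$ enters via the solvability of $N_\F(P^*)$ for a fully normalized $\F$-conjugate $P^*$ of $P$, which makes the local analysis at $P$ tractable.

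In the third step I analyze $Q$ as an $\mathbb{F}_p[M/Q]$-module. By a chief-series argument inside $Q$ under $M$, any chief factor centralized by $M/Q$ that lies strictly above the bottom factor would contradict the pushing-up conclusion applied to the corresponding preimage. Combined with the FF-module results of Bundy, Hebbinghaus and Stellmacher \cite{BHS} applied to the offender $J(T)$ acting on $Q$, every non-trivial chief factor of $Q$ must be a natural $SL_2(q)$-module, and at most two such factors occur. If $Q$ is abelian then $Q = V$ and we land in case (I). If $Q$ is non-abelian, then $\Phi(Q) = C_Q(M)$ and both $Q/Z(Q)$ and $Z(Q)/\Phi(Q)$ are natural; a commutator-form computation using the $M$-equivariant alternating pairing $Q/Z(Q) \times Q/Z(Q) \to Z(Q)/\Phi(Q)$ induced by commutation, together with $p$-group arithmetic in class $2$, forces $|\Phi(Q)| = q$ and $p = 3$, giving $|Q| = q^5$ and case (II); the equality $S = T$ then follows from the pushing-up statement since $Q$ is characteristic in $MT$ in this situation. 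The main obstacle I foresee is this last commutator-form computation, where the rigidity of case (II) to $p = 3$ must be extracted without recourse to the classification of finite simple groups or any $\m{K}$-group hypothesis.
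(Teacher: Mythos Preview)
Your proposal has genuine gaps in Steps~2 and~3 that amount to bypassing precisely the two hard ingredients of the paper's proof.

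\textbf{Step~2.} The sentence ``one then exhibits a fully normalized centric $\widetilde Q\in\FN$ \ldots\ with $|J(\widetilde Q)|>|J(Q)|$'' is doing all the work and is unjustified. You do not say how $\widetilde Q$ is built, nor why $Aut_\F(\widetilde Q)\not\leq\m{N}$, nor why its Thompson-measure strictly exceeds that of $Q$. The paper does \emph{not} obtain the pushing-up statement by a one-shot Thompson-maximality contradiction. Instead it introduces the notion of $\F$-characteristic subgroups $U\ch Q$ and proves, through a chain of lemmas (\ref{PUFsl}, \ref{NT0R}, \ref{AbleqQ}, \ref{JT0T}, \ref{JNSUT}), that $B(N_S(U))=B(N_S(Q))$ for every such $U$ (Lemma~\ref{PUThm}). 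The arguments repeatedly exploit Thompson-maximality, but in the refined form of Corollaries~\ref{Cor2}--\ref{Cor3}: whenever a candidate subgroup $R$ with $J(Q)\leq R$ has $A(R)\not\leq\m{N}$, one forces $J(R)=J(Q)$, and this is used to push normalizers back into $T$. Your shortcut via ``$P$-closure'' does not capture this; in particular, the delicate offender analysis in $\m{A}_*(T,V(U))$ that yields Lemma~\ref{JT0T} has no analogue in your sketch.

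\textbf{Step~3.} The FF-module results of \cite{BHS} describe the action on $V=\Omega(Z(Q))$, not the structure of $Q$ itself, so ``a chief-series argument inside $Q$'' plus BHS will not by itself force the dichotomy (I)/(II). The paper's route is different: once $B(N_S(U))=B(T)$ is known, Lemma~\ref{BTH} produces $H\leq N_{M^*}(U)$ with $B(T)\in Syl_p(H)$ and $(H/O_p(H))/\Phi(H/O_p(H))\cong L_2(q)$; since characteristic subgroups of $B(T)$ are $A(T_0)$-invariant, $H$ satisfies the pushing-up property~(PU), and the Baumann--Niles theorem (Theorem~\ref{PUGrCor}) is applied to $H$. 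That theorem is what produces cases~(I$'$) and~(II$'$), including the restriction $p=3$ in~(II$'$). Your proposed ``commutator-form computation'' to extract $p=3$ from an $M$-equivariant alternating pairing is not a known argument and is unlikely to work without additional input: the $p=3$ restriction in Baumann--Niles arises from specific calculations in~\cite{St} that do not reduce to the pairing you describe. Likewise, the conclusion $S=T$ in case~(II) in the paper comes from property~(vi) of Theorem~\ref{PUGrCor} ($Q_1\phi^2=Q_1$), not from $Q$ being characteristic in $MT$.
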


The proof of Theorem~\ref{CasesL} can be found in Chapter~\ref{PUFs} and is self-contained. For  $p=2$, Theorem~\ref{mainSL2qThm} and Theorem~\ref{CasesL} lead to a complete classification of the fusion system $\F$. This is a direct consequence of a more general result (Theorem~\ref{ClassifyG}) on fusion systems of characteristic $2$-type that we prove in Chapter~\ref{ClassifyGSection}. This proof relies on a  group theoretical result (Theorem~\ref{MainAmalgamThm}) from Chapter~\ref{AmalgamSection}. It uses a special case of the classification of weak BN-pairs of rank $2$ from \cite{DGS} (see Theorem~\ref{ThmA}), and is apart from that self-contained. However, many of our arguments are similar to the ones in \cite{A3}. In fact, using the Odd Order Theorem of Feit--Thompson and the above mentioned theorem of Goldschmidt on groups with a strongly closed $2$-group, the following  classification for $p=2$ could also be obtained as a consequence of \cite{A3}. However, we prefer in this paper to give a proof that does not rely on these theorems. In particular, our proof needs only methods and results from local group theory, whereas Goldschmidt's theorem relies heavily on Glaubermann's $Z^*$-Theorem, whose proof uses modular representation theory.

\begin{mtheorem}\label{Classify}
Assume $p=2$, $\F$ is minimal, and there is a proper saturated subsystem of $\F$ containing every full parabolic of $\F$. Then there is a finite group $G$ containing $S$ as a Sylow $2$-subgroup such that $\F\cong\F_S(G)$ and one of the following holds:
\begin{itemize}
\item[(a)] $S$ is dihedral of order at least $16$, and $G\cong L_2(r)$ or $PGL_2(r)$ for some odd prime power $r$.
\item[(b)] $S$ is semidihedral, and $G$ is an extension of $L_2(r^2)$ by an automorphism of order $2$, for some odd prime power $r$.
\item[(c)] $S$ is semidihedral of order $16$, and $G\cong L_3(3)$.
\item[(d)] $|S|=32$, and $G\cong Aut(A_6)$ or $Aut(L_3(3))$.
\item[(e)] $|S|=2^7$ and $G\cong J_3$.
\item[(f)] $F^*(G)\cong L_3(q)$ or $Sp_4(q)$, $|O^2(G):F^*(G)|$ is odd and $|G:O^2(G)|=2$. Moreover, if $F^*(G)\cong Sp_4(q)$ then $q=2^e$ where $e$ is odd.
\end{itemize}
\end{mtheorem}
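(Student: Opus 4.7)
The plan is to use Theorems~\ref{mainSL2qThm} and~\ref{CasesL} to produce a ``second parabolic'' $N_\F(Q)$ whose structure is completely explicit, pair it with a full maximal parabolic sitting inside $\m{N}$, and feed the resulting pair of groups into the group-theoretic amalgam/weak BN-pair machinery (Theorem~\ref{MainAmalgamThm} and the DGS classification \cite{DGS} invoked as Theorem~\ref{ThmA}). The output is then translated back into fusion systems via Theorem~\ref{ClassifyG}.

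First I would observe that, by hypothesis, the proper saturated subsystem $\m{N}$ contains every full parabolic, hence in particular $C_\F(\Omega(Z(S)))$ and $N_\F(J(S))$. Theorem~\ref{mainSL2qThm} therefore yields a subgroup $Q$ that is Thompson-maximal in $\F_\m{N}$ and Thompson-restricted. Since $\F$ is minimal and $\m{N}$ contains every full parabolic, Theorem~\ref{CasesL} applies to this $Q$. Because $p=2$, case (II) is excluded (it requires $p=3$), so case (I) holds: $Q$ is elementary abelian, $M := J(G)$ (where $G$ is a model for $N_\F(Q)$) satisfies $M/Q\cong SL_2(q)$ for some power $q$ of $2$, $Q/C_Q(M)$ is a natural $SL_2(q)$-module for $M/Q$, and $N_S(X)=N_S(Q)$ for every nontrivial normal $2$-subgroup $X$ of $MN_S(Q)$.

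Next I would note that $Q$ cannot be normal in $S$: otherwise $N_\F(Q)$ would itself be a full parabolic and hence, by hypothesis, be contained in $\m{N}$, contradicting $Q\in\F_\m{N}$. By Alperin's fusion theorem and the Thompson-maximality of $Q$ in $\F_\m{N}$, the system $\F$ is generated by $\m{N}$ together with $N_\F(Q)$. I would then choose a full maximal parabolic $\m{P}\subseteq\m{N}$ (for instance one containing $N_\F(\Omega(Z(S)))$ or $N_\F(J(S))$), pass to a model $G_1$ of $\m{P}$ and to the model $G_2=G$ of $N_\F(Q)$, and study the amalgam formed by $(G_1,G_2)$ with Sylow $2$-subgroup $S$. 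The explicit module data from case~(I), together with the fact that no nontrivial normal $2$-subgroup of $MN_S(Q)$ has normalizer strictly larger than $N_S(Q)$, is exactly what is needed to verify the hypotheses of Theorem~\ref{MainAmalgamThm}; from there Theorem~\ref{ThmA} provides a finite list of possible amalgams.

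Finally, feeding this list into Theorem~\ref{ClassifyG} identifies $\F$ with $\F_S(G)$ for a finite group $G$ in the list~(a)-(f): dihedral Sylow-2 gives $L_2(r)$ or $PGL_2(r)$, semidihedral Sylow-2 gives extensions of $L_2(r^2)$ or $L_3(3)$, the ``small'' completions account for $\mathrm{Aut}(A_6)$, $\mathrm{Aut}(L_3(3))$, and $J_3$, and the genuine rank-$2$ case produces odd-index extensions of $L_3(q)$ or $Sp_4(q)$ (with $q=2^e$, $e$ odd, in the $Sp_4(q)$ subcase). The main obstacle is the case analysis coming out of the DGS list: one must discard amalgams incompatible with $O_2(\F)=1$ and with the minimality of $\F$, and for each surviving amalgam one must produce an actual finite group realising it and then verify, via Alperin's theorem applied to the generating set $\m{N}\cup N_\F(Q)$, that every $\F$-morphism is induced by conjugation in that group. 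The delicate subcases will be the exotic completion $J_3$ and the $Sp_4(q)$ subcase where the parity restriction on $e$ has to be traced back through the amalgam invariants.
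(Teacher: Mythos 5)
Your overall plan—use Theorems~\ref{mainSL2qThm} and~\ref{CasesL} to pin down $N_\F(Q)$, note that $p=2$ forces case~(I), and then invoke the amalgam machinery—points in the right direction, but the logical architecture is muddled in a way that hides a genuine gap. In the paper, Theorem~\ref{ClassifyG} is the self-contained classification theorem: its Hypothesis~\ref{ClassHyp} is verified from the output of Theorem~\ref{CasesL} together with Corollary~\ref{MinCharpType} (minimality $\Rightarrow$ characteristic $2$-type, which supplies both the blanket constraint condition and hypothesis (iv) on $C_\F(\langle t\rangle)$), and that is all the proof of Theorem~\ref{Classify} consists of. The amalgam argument via Theorem~\ref{MainAmalgamThm} and Theorem~\ref{ThmA} is \emph{internal} to the proof of Theorem~\ref{ClassifyG}, and only in the subcase $q\geq 4$ (note that Theorem~\ref{MainAmalgamThm} explicitly requires $q=2^e>2$); for $q=2$ the paper argues directly with dihedral/semidihedral Sylow $2$-subgroups. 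Your proposal instead runs Theorem~\ref{MainAmalgamThm} once and then ``feeds the list into Theorem~\ref{ClassifyG}'', as though the latter were a device for converting amalgam completions into fusion systems; that is not what it is, and this confusion leaves the $q=2$ branch unaddressed.

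The more serious gap is your claim that, by Alperin's theorem and Thompson-maximality, $\F$ is generated by $\m{N}$ together with $N_\F(Q)$. That claim is false in general (there can be essential classes in $\F_\m{N}$ beyond $Q^\F$), and it is precisely the point at which $J_3$ enters. In the paper's treatment of $q\geq 4$ one forms $\F_0=\langle N_\F(J(T)),N_\F(Q)\rangle$ from the two explicit local subsystems (note: $G_1$ is the model of $N_\F(J(T))$, not of an arbitrary full maximal parabolic as you propose), and the amalgam argument identifies $\F_0$ with the $2$-fusion system of a rank-$2$ group of Lie type. The case $\F\neq\F_0$ then has to be handled separately—Lemmas~\ref{q=Z=4} and~\ref{q>2FneqF0} show it forces $q=4$ and $\F\cong\F_S(J_3)$—by analysing the extra essential subgroup. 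Your proposal assumes the generation away and therefore would never see this case; your remark that ``the exotic completion $J_3$'' is a delicate DGS subcase misdiagnoses where $J_3$ actually comes from. Finally, your argument that $Q\not\unlhd S$ (``otherwise $N_\F(Q)$ would be a full parabolic'') needs more care, since a parabolic $N_\F(Q)$ is a \emph{full} parabolic only when it contains $N_\F(S)$, which is not automatic for a subgroup merely normal in $S$.
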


Throughout this paper, we write mappings on the right side. By $p$ we will always denote a prime. In our notation and terminology regarding fusion systems we mostly follow \cite{BLO}. The reader can find further basic definitions in Section \ref{Fs}. We adapt the group theoretic notions from \cite{KS}.
In particular, we define a finite group to be $p$-closed if it has a normal Sylow $p$-subgroup.
Moreover, for a normal subgroup $N$ of $G$, we will often make use of the so called ``bar''-notation. This means that, after setting $\ov{G}=G/N$, we write $\ov{U}$ (respectively $\ov{g}$) for the image of a subgroup $U$ of $G$ (respectively, an element $g\in G$) in $\ov{G}$.

\section{Saturated fusion systems}\label{Fs}

Let $G$ be a group. Write $Inn(G)$ for the group of inner automorphisms of $G$. For $g\in G$, denote by $c_g:G\rightarrow G$ the inner automorphism of $G$ determined by $g$. Let $P$ and $Q$ be subgroups of $G$. 
For any map $\phi:P\rightarrow Q$, $A\leq P$ and $A\phi\leq B\leq G$, write $\phi_{|A,B}$ for the map with domain $A$ and range $B$ mapping each element of $A$ to its image under $\phi$.  We will frequently use the following notation.

\begin{notation}\label{SP}
For subgroups $P$ and $R$ of $G$ set
$$R_P:=Aut_R(P):=\{{c_g}_{|P,P}:g\in N_R(P)\}.$$
\end{notation}

We adapt the basic definitions and notation related to fusion systems from \cite{BLO}. \textbf{From now on let $S$ be a finite $p$-group and $\F$ be a fusion system on $S$.} If $G$ contains $S$ as a subgroup, then we write $\F_S(G)$ for the fusion system on $S$ whose morphisms are the conjugation maps ${c_g}_{|P,Q}$ with $P,Q\leq S$ and $g\in G$ such that $P^g\leq Q$.
By an abuse of notation we denote by $\mathcal{F}$ also the set of all objects of $\mathcal{F}$. In particular, we write $Q\in \mathcal{F}$ instead of $Q\leq S$. By $P^\mathcal{F}$ we denote the $\F$-conjugacy class of $P$. We will refer to the following elementary property:

\begin{remark}\label{FullNormQ}
Let $Q\in \F$ and let $U$ be a characteristic subgroup of $Q$. Assume $U$ is fully normalized in $\F$ and $N_S(U)=N_S(Q)$. Then $Q$ is fully normalized.
\end{remark}

\begin{proof}
Let $P\in Q^\F$ and $\phi\in Mor_\F(Q,P)$. Then $U\phi\in U^\F$ and $U\phi$ is characteristic in $P=Q\phi$. So, as $U$ is fully normalized, we get
$$ |N_S(P)|\leq |N_S(U\phi)|\leq |N_S(U)|=|N_S(Q)|.$$
Hence, $Q$ is fully normalized.
\end{proof}

For a fusion system $\m{E}$ on a subgroup $T$ of $S$, we write $\m{E}\leq \F$ if $\m{E}$ is a subsystem of $\F$, i.e. if $Mor_{\m{E}}(P,Q)\subseteq Mor_\F(P,Q)$ for all $P,Q\leq T$. For $\m{E}\leq\F$, $P\in \m{E}$ and $L\leq Aut_\F(P)$, we write $L\leq \m{E}$ to indicate that $L\leq Aut_{\m{E}}(P)$, and $L\not\leq \m{E}$ if the converse holds. We will also use the following notation:

\begin{notation}\label{phi*}
\begin{itemize}
\item For every $P\in\F$ set
$$Aut_{\mathcal{F}}(P)=Mor_{\mathcal{F}}(P,P).$$
\item For $P,Q\in \F$ and an isomorphism $\phi\in Mor_\F(P,Q)$ we write $\phi^*$ for the map
$$\phi^*: Aut_\F(P)\rightarrow Aut_\F(Q)\mbox{ defined by }\alpha\mapsto \phi^{-1}\alpha\phi.$$
\item If $P\leq A\leq S$, $Q\leq B\leq S$ and $\phi\in Mor_\F(A,B)$ such that $\phi_{|P,Q}$ is an isomorphism, then we sometimes write $\phi^*$ instead of $(\phi_{|P,Q})^*$.
\end{itemize}
\end{notation}

\textbf{For the remainder of this section assume that $\mathbf{\F}$ is saturated.} To ease notation we set 
$$A(P):=Aut_\F(P),\mbox{ for every }P\in\F.$$

\begin{remark}\label{SatAut}
Let $\phi\in A(U)$ and $U\leq X\leq N_S(U)$.
\begin{itemize}
\item[(a)] If $\phi$ extends to a member of $A(X)$ then $X_U\phi^*=X_U$.
\item[(b)] Assume $C_S(U)\leq X$ and $U$ is fully centralized. Then $X_U\phi^*=X_U$ if and only if $\phi$ extends to a member of $A(X)$.
\end{itemize} 
\end{remark}

\begin{proof}
An elementary calculation shows (a), and (b) is a consequence of (a) and the saturation axioms.
\end{proof}

\begin{lemma}\label{fullNormChar}
Let $Q\in\F$. Then $Q$ is fully normalized if and only if, for each $P\in Q^{\mathcal{F}}$, there exists a morphism
$\phi\in Mor_{\mathcal{F}}(N_S(P),N_S(Q))$ such that $P\phi=Q$.
\end{lemma}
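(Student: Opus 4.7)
I would prove the two directions separately. The ``if'' direction is the easy one: assuming that for each $P\in Q^\F$ there is a morphism $\phi\in Mor_\F(N_S(P),N_S(Q))$ with $P\phi=Q$, injectivity of $\phi$ immediately gives $|N_S(P)|\leq |N_S(Q)|$ for every $P\in Q^\F$, so $Q$ realizes the maximum in its $\F$-conjugacy class and is fully normalized.

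For the ``only if'' direction, assume $Q$ is fully normalized and fix $P\in Q^\F$. The strategy is the standard two-step application of the saturation axioms. First, since $\F$ is saturated and $Q$ is fully normalized, $Aut_S(Q)=S_Q$ is a Sylow $p$-subgroup of $A(Q)$ and $Q$ is fully centralized. Pick any isomorphism $\psi\in Mor_\F(P,Q)$. Then $S_P\psi^*$ is a $p$-subgroup of $A(Q)$, so by Sylow's theorem in $A(Q)$ there exists $\alpha\in A(Q)$ with
\[
S_P\psi^*\alpha^* \;\leq\; S_Q.
\]
Replacing $\psi$ by $\psi\alpha$, I may assume from the outset that $S_P\psi^*\leq S_Q$, i.e.\ for each $g\in N_S(P)$ the automorphism $\psi^{-1}c_g\psi\in A(Q)$ lies in $Aut_S(Q)$.

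Now I would invoke the extension axiom of saturation, using that $Q$ is fully centralized: the isomorphism $\psi:P\to Q$ extends to a morphism $\widehat{\phi}:N_\psi\to S$, where
\[
N_\psi \;=\; \{\,g\in N_S(P)\;:\;\psi^{-1}c_g\psi\in Aut_S(Q)\,\}.
\]
By the previous step, $N_\psi=N_S(P)$, so $\widehat{\phi}$ is defined on all of $N_S(P)$. A direct check, using that $\widehat{\phi}$ restricts to $\psi$ on $P$ and is a homomorphism, shows that for $g\in N_S(P)$ and $x\in P$ one has $(x\psi)^{g\widehat{\phi}}=(x^g)\psi\in Q$, so $g\widehat{\phi}\in N_S(Q)$. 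Restricting the codomain yields the desired $\phi\in Mor_\F(N_S(P),N_S(Q))$ with $P\phi=Q$.

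The only real obstacle is the bookkeeping around the saturation axioms, specifically arranging $\psi$ so that $S_P\psi^*\leq S_Q$ in order to force $N_\psi=N_S(P)$; once this Sylow adjustment is in place, the extension axiom and a routine conjugation computation finish the argument. Essentially the same idea appears implicitly in Remark~\ref{SatAut}(b), but here it is applied to an isomorphism between distinct subgroups rather than to an automorphism of a single subgroup.
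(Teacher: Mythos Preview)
Your proof is correct and is exactly the standard argument for this well-known characterization. The paper does not give its own proof but simply cites \cite[2.6]{L}; your two-step use of the saturation axioms (Sylow adjustment in $A(Q)$ followed by the extension axiom applied with $N_\psi=N_S(P)$) is precisely the proof one finds in that reference.
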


\begin{proof}
See for example \cite[2.6]{L}.
\end{proof}

A subgroup $P\in \F$ is called \textbf{normal} in $\F$ if $\F=N_\F(P)$. Observe that the product of two normal subgroups of $\F$ is again normal in $\F$. Hence, there is a largest normal subgroup of $\F$ which we denote by $O_p(\F)$. The fusion system $\F$ is called \textbf{constrained} if $O_p(\F)$ is centric. 
A \textbf{model} for $\F$ is a finite group $G$ containing $S$ as a Sylow $p$-subgroup such that $G$ has characteristic $p$ (i.e. $C_G(O_p(G))\leq O_p(G)$), and $\F=\F_S(G)$.

\begin{theorem}[Broto, Castellana, Grodal, Levi and Oliver]\label{ModExUnique}
A fusion system $\F$ is constrained if and only if there is a model for $\F$. Furthermore, if $G$ and $H$ are models for $\F$ then there exists an isomorphism $\phi: G\rightarrow H$ such that $\phi$ is the identity on $S$.
\end{theorem}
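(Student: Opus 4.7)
The forward direction is straightforward. If $G$ is a model for $\F$, then $O_p(\F_S(G))=O_p(G)$: the inclusion $O_p(\F)\leq O_p(G)$ follows because $O_p(\F)$ is $S$-invariant and preserved by every $\F$-isomorphism, while $O_p(G)\leq O_p(\F)$ since $O_p(G)$ is $\F$-normal. As $G$ has characteristic $p$, one gets $C_S(O_p(\F))=C_G(O_p(G))\cap S\leq O_p(G)=O_p(\F)$, so $O_p(\F)$ is centric and $\F$ is constrained.

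For the converse, set $R=O_p(\F)$ and $L=Aut_\F(R)$. Since $R$ is $\F$-centric we have $C_S(R)=Z(R)$, and by the saturation axioms $Aut_S(R)\in\sy_p(L)$. The plan is to realize $L$ together with its tautological action on $R$ inside a single finite group. Concretely, I would take $G$ to be the quotient of the semidirect product $L\ltimes R$ by the normal subgroup $\{(c_r^{-1},r):r\in R\}$, where $c_r\in Inn(R)\leq L$ denotes conjugation by $r$. Then $R$ embeds in $G$ via $r\mapsto [(1,r)]$, is normal in $G$, satisfies $C_G(R)=Z(R)$, and the projection gives a canonical isomorphism $G/Z(R)\cong L$. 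A preimage of $Aut_S(R)$ together with $R$ provides a Sylow $p$-subgroup identifiable with $S$ itself.

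The bulk of the work is to check $\F_S(G)=\F$. I would invoke Alperin's fusion theorem for fusion systems: $\F$ is generated by the groups $Aut_\F(P)$ as $P$ ranges over $\F$-centric, $\F$-radical subgroups. The key observation, enabled by constrainedness, is that every such $P$ contains $R$; otherwise $R\leq N_S(P)$ with $R\not\leq P$, so $Aut_R(P)$ would be a nontrivial normal $p$-subgroup of $Aut_\F(P)/Inn(P)$, contradicting radicality. For $R\leq P\leq S$, every $\alpha\in Aut_\F(P)$ normalizes $R$ and restricts to an element of $L$, which by construction is realized by some $g\in G$; one then verifies the lift can be chosen to realize $\alpha$ on $P$ itself, giving $Aut_\F(P)\subseteq Aut_G(P)$, while the reverse inclusion is built into the construction. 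I expect this coherence step to be the main obstacle: one must confirm that the preimage Sylow is genuinely isomorphic to $S$ (not just to a quotient) and that conjugation in $G$ induces exactly the $\F$-fusion on every subgroup of $S$, not merely on centric-radicals.

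For uniqueness, let $G$ and $H$ be models. Both satisfy $O_p(G)=R=O_p(H)$ with $C_G(R)=Z(R)=C_H(R)$, and the induced injections $G/Z(R),H/Z(R)\hookrightarrow Aut(R)$ have common image $L$, giving a canonical isomorphism $\bar\varphi:G/Z(R)\to H/Z(R)$. A central-extension argument lifts $\bar\varphi$ to an isomorphism $\varphi:G\to H$ restricting to the identity on $R$; the ambiguity in the choice of lift is parametrized by $H^1(L;Z(R))$. The additional requirement that $\varphi$ restrict to the identity on $S$ rigidifies the choice: Sylow's theorem in $H$ allows one to adjust $\varphi$ by an inner automorphism of $H$ so that $\varphi(S)=S$, and a Frattini argument using $R\leq S$ together with the equality of fusion systems then forces $\varphi|_S=\mathrm{id}_S$.
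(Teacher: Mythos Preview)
The paper does not prove this theorem; it simply cites Proposition~C of \cite{BCGLO}. So your proposal is an attempt to supply an argument where the paper gives none, and must be judged on its own.

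The forward direction is fine. The genuine gap is in your construction of a model. Your group $G=(L\ltimes R)/N$ with $N=\{(c_r^{-1},r):r\in R\}$ has the wrong order. The map $r\mapsto(c_r^{-1},r)$ is injective, so $|N|=|R|$ and $|G|=|L\ltimes R|/|N|=|L|$. In fact the map $L\to G$, $\alpha\mapsto[(\alpha,1)]$, is an isomorphism: it is injective because $(\alpha,1)\in N$ forces $\alpha=1$, and surjective because $[(\alpha,r)]=[(\alpha c_r,1)]$ for every $(\alpha,r)$. Consequently the map $r\mapsto[(1,r)]$ you propose is \emph{not} an embedding of $R$; its kernel is $\{r:(1,r)\in N\}=\{r:c_r=1\}=Z(R)$, and its image is $Inn(R)$. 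Your $G$ is just $L=Aut_\F(R)$ again, with Sylow $p$-subgroup $Aut_S(R)\cong S/Z(R)$ rather than $S$. A genuine model must satisfy $G/Z(R)\cong L$, hence $|G|=|L|\cdot|Z(R)|$.

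What is actually required is the construction of an extension $1\to R\to G\to Out_\F(R)\to 1$ realizing the tautological map $Out_\F(R)\to Out(R)$; equivalently, a central extension $1\to Z(R)\to G\to L\to 1$ whose restriction along $Aut_S(R)\hookrightarrow L$ recovers $1\to Z(R)\to S\to S/Z(R)\to 1$. This is not a quotient construction but an obstruction-theory argument: the obstruction lies in $H^3(Out_\F(R);Z(R))$, and the choices (once the obstruction vanishes) in $H^2$. Since $Z(R)$ is a $p$-group these cohomology groups are $p$-groups, so restriction to a Sylow $p$-subgroup of $Out_\F(R)$ is injective; the extension $1\to R\to S\to S/R\to 1$ witnesses vanishing of the obstruction over the Sylow, hence globally, and pins down the correct $H^2$-class. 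Your intuition that identifying the Sylow with $S$ is ``the main obstacle'' is right, but it is not a verification step within your construction---the construction itself must be replaced. Your uniqueness sketch is closer to the mark, resting as it does on the same injectivity of restriction in $H^2(L;Z(R))$, though the final step (forcing $\varphi|_S=\mathrm{id}_S$, not merely $\varphi(S)=S$) needs a genuine $1$-cocycle argument rather than a Frattini argument.
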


\begin{proof}
This is Proposition~C in \cite{BCGLO}.
\end{proof}

Observe that, if $G$ is a model for $\F$, then $O_p(\F)=O_p(G)$. In the notation we introduce next we follow Aschbacher \cite{A3}.

\begin{notation}\label{ModNot}
Let $P\in \F$ such that $P$ is fully normalized and $N_\F(P)$ is constrained. Then by $G(P)$ we denote a model for $N_\F(P)$. 
\end{notation}

Note here that, by Theorem~\ref{ModExUnique}, $G(P)$ exists and is uniquely determined up isomorphism.

\bigskip

Define $P$ to be \textbf{strongly closed} in $\F$ if, for all $A,B\leq S$ and every morphism $\phi\in Mor_\F(A,B)$, $(A\cap P)\phi\leq P$.
Note that every normal subgroup of $\F$ is strongly closed in $\F$, and every strongly closed subgroup of $\F$ is normal in $S$.
Given a strongly closed subgroup $R$ of $\F$, Puig defined a \textbf{factor system} $\F/R$ which is a fusion system on $S/R$. Here for subgroups $A,B$ of $S$ containing $R$, the morphisms in $Mor_{\F/R}(A/R,B/R)$ are just the maps induced by the elements of $Mor_\F(A,B)$.

\begin{theorem}[Puig]
Let $R$ be strongly closed in $\F$. Then $\F/R$ is a saturated fusion system on $S/R$.
\end{theorem}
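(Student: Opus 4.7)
The plan is to verify the saturation axioms for $\F/R$ by reducing each to the corresponding axiom in $\F$, using strong closure of $R$ throughout. Write $\ov{X}=XR/R$ for the image in $S/R$; overgroups of $R$ in $S$ correspond bijectively to subgroups of $S/R$, and I will freely pass between $\ov{P}$ and its preimage $P$. The category $\F/R$ is well-defined: for $\phi\in Mor_\F(A,B)$ with $R\leq A,B$, strong closure gives $R\phi\leq R$ and injectivity forces $R\phi=R$, so $\phi$ descends to a group homomorphism $\ov A\to\ov B$. An easy calculation yields $N_{S/R}(\ov P)=N_S(P)/R$ (if $s\ov Ps^{-1}=\ov P$ then $sPs^{-1}\leq PR=P$, and comparing orders gives equality), and since strong closure places $R$ inside every $\F$-conjugate of $P$, we get that $\ov P$ is fully $\F/R$-normalized iff $P$ is fully $\F$-normalized. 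Similarly $C_{S/R}(\ov P)=C^*(P)/R$, where $C^*(P):=\{s\in S:[s,P]\leq R\}\leq N_S(P)$.

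For the Sylow axiom, suppose $\ov P$ is fully $\F/R$-normalized. Then $P$ is fully $\F$-normalized, so by saturation of $\F$, $Aut_S(P)\in\sy_p(Aut_\F(P))$. The quotient map $\pi_P:Aut_\F(P)\twoheadrightarrow Aut_{\F/R}(\ov P)$ sends $Aut_S(P)$ onto $Aut_{S/R}(\ov P)$, and the image of a Sylow $p$-subgroup under a surjection is Sylow in the image, yielding $Aut_{S/R}(\ov P)\in\sy_p(Aut_{\F/R}(\ov P))$. For the fully-centralized condition, for each $\F$-conjugate $P'$ of $P$, Lemma~\ref{fullNormChar} in $\F$ provides $\psi\in Mor_\F(N_S(P'),N_S(P))$ with $P'\psi=P$; strong closure gives $R\psi=R$, so for $s\in C^*(P')$ one computes $[s,P']\psi=[\psi(s),P]\leq R$, i.e.\ $\psi(s)\in C^*(P)$. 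Thus $\psi$ restricts to an injection $C^*(P')\hookrightarrow C^*(P)$, whence $|C_{S/R}(\ov{P'})|\leq|C_{S/R}(\ov P)|$ for all $\F/R$-conjugates $\ov{P'}$ of $\ov P$, so $\ov P$ is fully $\F/R$-centralized.

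Finally, for the extension axiom, let $\ov\phi\in Mor_{\F/R}(\ov Q,\ov P)$ be an isomorphism with $\ov P$ fully $\F/R$-centralized, pick a lift $\phi\in Mor_\F(Q,P)$, and choose a fully $\F$-normalized representative $P_0\in P^\F$ with an $\F$-isomorphism $\alpha:P\to P_0$. Set $\phi_0:=\phi\alpha$; by saturation of $\F$, $\phi_0$ extends to an $\F$-morphism $\widetilde{\phi_0}$ on $N_{\phi_0}$, and reducing mod $R$ and composing with $\ov\alpha^{-1}$ yields an extension of $\ov\phi$ defined on $\ov{N_{\phi_0}}$. The main obstacle is to show that this extension in fact covers the full $\F/R$-normalizer $N_{\ov\phi}$, not merely the image of $N_{\phi_0}$. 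The discrepancy is measured by the kernel $K$ of $Aut_\F(P_0)\twoheadrightarrow Aut_{\F/R}(\ov{P_0})$: a preimage $g\in N_S(Q)$ of $\bar g\in N_{\ov\phi}$ only satisfies $\phi_0 c_g\phi_0^{-1}\in Aut_S(P_0)\cdot K$, rather than $\phi_0 c_g\phi_0^{-1}\in Aut_S(P_0)$ as required for $g\in N_{\phi_0}$. I would close this gap by iterating the $\F$-extension axiom, at each step absorbing a $K$-factor into the domain, and using that $P_0$ remains fully $\F$-normalized (equivalently, $\ov{P_0}$ fully $\F/R$-centralized) so that the hypotheses of $\F$-saturation remain in force throughout. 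Matching the $\F$-level normalizer $N_{\phi_0}$ (computed against $Aut_S(P_0)$) with the $\F/R$-level normalizer $N_{\ov\phi}$ (computed against $Aut_{S/R}(\ov{P_0})$) is the principal technical difficulty of the proof.
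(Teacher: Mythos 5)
The paper's own ``proof'' is a one-line citation to Puig's Frobenius Categories paper, so you are taking a genuinely different route by attempting a direct verification of the saturation axioms. Your treatment of the first saturation axiom is essentially sound: the observations that $\F$-conjugacy classes and $\F/R$-conjugacy classes of subgroups containing $R$ correspond bijectively, that $N_{S/R}(\ov P)=N_S(P)/R$ and $C_{S/R}(\ov P)=C^*(P)/R$, that $Aut_{\F/R}(\ov P)$ is the image of $Aut_\F(P)$, and the injection $C^*(P')\hookrightarrow C^*(P)$ coming from Lemma~\ref{fullNormChar}, together give both the Sylow condition and the fully-centralized condition for a fully $\F/R$-normalized $\ov P$.

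The extension axiom, however, is where you correctly locate the difficulty but do not actually resolve it, and your sketched remedy points in the wrong direction. ``Iterating the $\F$-extension axiom, at each step absorbing a $K$-factor into the domain'' is not a workable strategy: the obstruction is not that $N_{\phi_0}$ is growing too slowly, it is that the lift $\phi_0$ was chosen badly. The standard way to close the gap is a Sylow conjugation in $Aut_S(P_0)K$, \emph{before} invoking the $\F$-extension axiom. Concretely: let $N\leq N_S(Q)$ be the full preimage of $N_{\ov\phi}$, so that $\phi_0 N_Q \phi_0^{-1}\leq Aut_S(P_0)K$. Because $\ov{P_0}$ is fully $\F/R$-normalized, $Aut_{S/R}(\ov{P_0})\in \sy_p(Aut_{\F/R}(\ov{P_0}))$, and since $Aut_S(P_0)K$ is its full preimage and $Aut_S(P_0)\in\sy_p(Aut_\F(P_0))$, one has $Aut_S(P_0)\in\sy_p(Aut_S(P_0)K)$. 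Sylow's theorem then produces $x\in Aut_S(P_0)K$ with $x^{-1}(\phi_0 N_Q\phi_0^{-1})x\leq Aut_S(P_0)$; writing $x=\sigma k$ with $\sigma\in Aut_S(P_0)$ and $k\in K$ and replacing $\phi_0$ by $\phi_0 x$ (which is still a lift of $\ov\phi\ov\sigma$, and $\ov\sigma$ is an inner automorphism of $S/R$ that can be undone at the end) yields a lift with $N\leq N_{\phi_0}$. Only then does the $\F$-extension axiom, applied once, give an $\F$-morphism on $N\supseteq$ the preimage of $N_{\ov\phi}$, which reduces to the desired $\F/R$-extension. You would also need to take a little more care in the reduction from an arbitrary fully $\F/R$-centralized $\ov P$ to a fully normalized $\ov{P_0}$: the map $\ov\alpha^{-1}$ must itself be extended (using $\ov P$ fully centralized) before it can be composed with $\ov{\tilde\phi_0}$ on the larger domain. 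As it stands, the proposal is an honest and largely correct outline, but the extension axiom remains a genuine gap rather than a detail.
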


\begin{proof}
This follows from \cite[6.3]{Pu}.
\end{proof}

In our definition of solvable fusion systems we follow Aschbacher \cite{A2}, who defined $\F$ to be solvable if every composition factor of $\F$ is the fusion system of the group of order $p$. However, as we have not defined normal subsystems and composition factors in this paper, we prefer to give the definition in the language we introduced. Aschbacher \cite[15.2,15.3]{A2} has shown his definition to be equivalent to the following.

\begin{definition}\label{SolvProp}
The fusion system $\F$ is \textbf{solvable} if and only if $O_p(\F/R)\neq 1$ for every strongly closed subgroup $R\neq S$ of $\F$.
\end{definition}

We will use the following properties of solvable fusion systems.

\begin{theorem}[Aschbacher]\label{SubSolv}
Let $\F$ be solvable. 
\begin{itemize}
\item[(a)] Every saturated subsystem of $\F$ is solvable.
\item[(b)] $\F$ is constrained.
\end{itemize}
\end{theorem}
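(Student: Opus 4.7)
Both statements are properties of solvable saturated fusion systems originally due to Aschbacher, and my plan is to prove them using his equivalent composition-factor characterization \cite[15.2, 15.3]{A2}, namely that $\F$ is solvable iff every composition factor in its normal-subsystem theory is the fusion system on a cyclic group of order $p$.

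For part (a), given a saturated subsystem $\m{E} \leq \F$ on $T \leq S$, the goal is to exhibit every composition factor of $\m{E}$ as a subquotient of a composition factor of $\F$, and so trivial. Beginning with a normal series $1 = R_0 \leq R_1 \leq \cdots \leq R_n = S$ of $\F$ with trivial quotients, I would intersect each $R_i$ with $T$ to form $T_i := T \cap R_i$, show that each $T_i$ is strongly closed in $\m{E}$ (using that $R_i$ is strongly closed in $\F$ while $T$ is invariant under $\m{E}$-morphisms), and that $T_i/T_{i-1}$ embeds into $R_i/R_{i-1}$. Saturation of the successive quotients $\m{E}/T_{i-1}$ is guaranteed by Puig's theorem already cited. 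The main technical task is verifying strong closure of the intersections and checking that each subquotient inherits triviality as a fusion system, via a Zassenhaus-style refinement.

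For part (b), I proceed by contradiction: assume $R := O_p(\F)$ satisfies $C_S(R) \not\leq R$, so in particular $R \neq S$. By solvability, $\bar{Q} := O_p(\F/R) \neq 1$; its preimage $Q$ in $S$ is strongly closed in $\F$ with $R < Q$ and $\bar{Q}$ normal in $\F/R$. The key step is to promote $Q$ from strongly closed to normal in $\F$, which contradicts the maximality of $O_p(\F) = R$ and forces $R = S$, whence $C_S(R) = Z(S) \leq S = R$ and $\F$ is constrained. To execute this lift, given any morphism $\phi: A \to B$ in $\F$, I first extend $\phi$ across $R$ using $R \lhd \F$, descend to $\F/R$, apply the normality of $\bar{Q}$ there to obtain an extension $\overline{AQ} \to \overline{BQ}$ stabilizing $\bar{Q}$, and lift back via the definition of the quotient morphism set to some $\psi \in Mor_\F(AQ, BQ)$ stabilizing $Q$. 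The main obstacle is the final bookkeeping: $\psi$ agrees with $\phi$ only modulo $R$, so one must adjust $\psi$ by a morphism trivial on $A/R$ in order to honestly extend $\phi$; this invokes the extension axiom of saturation applied to automorphisms of $Q$ and is where the care is required.
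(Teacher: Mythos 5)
The paper gives no proof of this theorem; it is stated as a citation to Aschbacher's memoir \cite{A2}, so your attempt is entirely from scratch. For part (b), the strategy has a fatal gap: the preimage $Q$ of $O_p(\F/R)$ with $R=O_p(\F)$ need \emph{not} be normal in $\F$, so the contradiction you aim for never materializes. Take $p=2$, $G=S_4$, $S\in Syl_2(G)\cong D_8$ and $\F=\F_S(G)$. Then $R=O_2(\F)$ is the normal Klein four-subgroup $V_4$ of $G$, the strongly closed subgroups of $\F$ other than $S$ are only $1$ and $V_4$, and $\F/V_4$ is the trivial fusion system on $S/V_4\cong C_2$ (because $Aut_\F(S)=Inn(S)$ acts trivially on $S/V_4$, as $S'\leq V_4$). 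Hence $O_2(\F/R)=S/R\neq 1$ and $Q=S$, yet $S$ is not normal in $\F$: an order-$3$ element $\phi$ of $Aut_\F(V_4)\cong S_3$ cannot extend to a member of the $2$-group $Aut_\F(S)$. Your lifting argument breaks exactly at the ``adjustment'' step: for $A=B=V_4$ the image $\bar\phi$ in $\F/R$ is trivial, the lift $\psi\in Aut_\F(S)$ is a $2$-element, and no adjustment trivial on $A/R$ can turn $\psi|_{V_4}$ into an element of order $3$. Note also that your lift never uses the hypothesis $C_S(R)\not\leq R$; if it worked it would show $O_p(\F/O_p(\F))=1$ for every saturated $\F$, which together with solvability would force $O_p(\F)=S$ in every solvable system --- false already for $\F_S(S_4)$.

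For part (a), the sketch defers the entire difficulty to ``a Zassenhaus-style refinement.'' That composition factors of a saturated subsystem $\m{E}\leq\F$ are subquotients of composition factors of $\F$ is not an elementary refinement argument in the fusion-system category. The easy observations you list --- that $T_i:=T\cap R_i$ is strongly closed in $\m{E}$, that $T_i/T_{i-1}$ embeds as a $p$-group into $R_i/R_{i-1}$, and that the successive quotients are saturated --- are genuine but are not where the content lies; the hard part is relating the morphism sets of the subquotient of $\m{E}$ to those of the subquotient of $\F$, which requires the normal-subsystem and Jordan--H\"older machinery Aschbacher develops over a large portion of \cite{A2}. In short, part (b) as sketched is wrong, and part (a) silently assumes the deep theory it is meant to reprove. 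The paper's treatment is correct in simply citing Aschbacher.
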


Following Aschbacher \cite{A3}, we define $\F$ to be of \textbf{characteristic $p$-type} if $N_\F(P)$ is constrained, for every fully normalized subgroup $P\in\F$.
Recall from the introduction that we call $\F$ \textbf{minimal} if $N_\F(P)$ is solvable, for every fully normalized subgroup $P\in\F$. 
As a consequence of Theorem~\ref{SubSolv}(b) we get:

\begin{corollary}\label{MinCharpType}
If $\F$ is minimal then $\F$ is of characteristic $p$-type.
\end{corollary}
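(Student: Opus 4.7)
The plan is essentially a one-line invocation of Theorem~\ref{SubSolv}(b). Let $P\in\F$ be fully normalized (and, following the standard group-theoretic convention implicit in the definition of ``$p$-local'', taken to be non-trivial; otherwise $\F$ itself would have to be constrained, which together with $O_p(\F)=1$ would force $S=1$). By the definition of minimality (Definition~\ref{MinDef}), $N_\F(P)$ is solvable.

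Now $N_\F(P)$ is itself a saturated fusion system on $N_S(P)$, so I would simply apply Theorem~\ref{SubSolv}(b) to the subsystem $N_\F(P)$ in place of $\F$: being solvable, it is constrained. As $P$ was an arbitrary fully normalized non-trivial subgroup of $\F$, the fusion system $\F$ is of characteristic $p$-type by definition.

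There is really no obstacle to speak of; the work is hidden in the results that have already been cited. The only mildly delicate point is the bookkeeping issue above about the trivial subgroup, which is handled by reading ``characteristic $p$-type'' with the same convention as $p$-local subgroups in finite group theory. No further ingredients are needed beyond the saturation of $N_\F(P)$ (standard for $P$ fully normalized) and Theorem~\ref{SubSolv}(b).
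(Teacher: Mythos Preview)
Your proposal is correct and takes essentially the same approach as the paper: the paper simply states the corollary as ``a consequence of Theorem~\ref{SubSolv}(b)'' with no further argument, which is exactly your one-line invocation. Your added remark about the trivial subgroup is reasonable bookkeeping that the paper leaves implicit.
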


Next we state Alperin--Goldschmidt Fusion Theorem and some of its consequences. The following definition is crucial.

\begin{definition}
A subgroup $Q\in\F$ is called \textbf{essential} if $Q$ is centric and $A(Q)/Inn(Q)$ has a strongly $p$-embedded subgroup.
\end{definition}

Recall that a proper subgroup $H$ of a finite group $G$ is called \textbf{strongly $p$-embedded} if $p$ divides the order of $H$, and the order of $H\cap H^g$ is not divisible by $p$ for every $g\in G\backslash H$. It is elementary to check that every $\F$-conjugate of an essential subgroup is again essential. This allows to refer to \textbf{essential classes} meaning the $\F$-conjugacy classes of essential subgroups.

\begin{theorem}[The Alperin--Goldschmidt Fusion Theorem, Puig]\label{AlpGold1}
Let $\m{C}$ be a set of subgroups of $S$ such that $S\in \m{C}$ and $\m{C}$ intersects non-trivially with every essential class. Then, for all $P,Q\leq S$ and every isomorphism $\phi\in Mor_{\mathcal{F}}(P,Q)$, there exist sequences of subgroups of $S$ 
$$P=P_0,P_1,\dots ,P_n=Q\;\mbox{in}\;\mathcal{F},\;\mbox{and}\;Q_1,\dots,Q_n\;\mbox{in}\;\mathcal{C}$$
and elements $\alpha_i\in A(Q_i)$ for $i=1,\dots,n$ such that $P_{i-1},P_i\leq Q_i$, $P_{i-1}\alpha_i=P_i$ and 
$$\phi=({\alpha_1}_{|P_0,P_1})({\alpha_2}_{|P_1,P_2})\dots ({\alpha_n}_{|P_{n-1},P_n}).$$
\end{theorem}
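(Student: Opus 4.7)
The plan is to induct on $[S:P]$. If $P=S$, then $\phi\in A(S)$ and the choice $n=1$, $Q_1=S\in\m{C}$, $\alpha_1=\phi$ works, so I assume $P<S$ and that the result is known for every $\F$-isomorphism whose source is strictly larger than $P$.

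The first step is to reduce to the situation where $Q$ is fully normalized. Pick a fully normalized $Q^*\in Q^\F$ and use Lemma~\ref{fullNormChar} to produce $\rho\in Mor_\F(N_S(Q),N_S(Q^*))$ with $Q\rho=Q^*$. Since $Q<S$ is a proper $p$-subgroup of $S$, one has $N_S(Q)>Q$, so $\rho$ (regarded as an isomorphism onto its image) has source of strictly larger order than $P$ and is decomposed by the inductive hypothesis; replacing $\phi$ by $\phi\cdot\rho|_{Q,Q^*}$ puts us in the case $Q$ fully normalized. A second application of Lemma~\ref{fullNormChar}, now to the conjugate $P\in Q^\F$, delivers $\tilde\phi\in Mor_\F(N_S(P),N_S(Q))$ with $P\tilde\phi=Q$, and writing $\phi=\tilde\phi|_{P,Q}\cdot\alpha$ for a suitable $\alpha\in A(Q)$ isolates the $A(Q)$-part. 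The morphism $\tilde\phi$ itself has source $N_S(P)>P$, so it is handled by induction after, if necessary, a preliminary reduction making $P$ fully normalized by the same trick. Everything therefore reduces to decomposing an arbitrary $\alpha\in A(Q)$ for a fully normalized $Q<S$.

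For this core step I distinguish three cases. If $Q$ is not centric, set $X:=Q\cdot C_S(Q)>Q$; then $X_Q=Inn(Q)$ is normal in $A(Q)$, so $X_Q\alpha^*=X_Q$, and Remark~\ref{SatAut}(b) (valid because fully normalized implies fully centralized) extends $\alpha$ to $A(X)$, which is handled by induction on $[S:X]<[S:P]$. If $Q$ is centric but not essential, then $A(Q)/Inn(Q)$ has no strongly $p$-embedded subgroup, so is generated by the normalizers of its non-trivial $p$-subgroups lying in the Sylow $p$-subgroup $S_Q/Inn(Q)$ (using that $S_Q\in\sy_p(A(Q))$ because $Q$ is fully normalized). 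Lifting expresses $\alpha$ as a product of automorphisms each normalizing some $U_i=Aut_{X_i}(Q)>Inn(Q)$ with $Q<X_i\leq N_S(Q)$; Remark~\ref{SatAut}(b) extends each factor to $A(X_i)$, and induction applies. Finally, if $Q$ is essential, the hypothesis provides some $R\in Q^\F\cap\m{C}$, and an $\F$-isomorphism $\iota\colon Q\to R$ transports $\alpha$ to $\iota^{-1}\alpha\iota\in A(R)$, giving a single-step decomposition through $R\in\m{C}$.

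The main obstacle is precisely this last case: the chosen $R\in\m{C}$ need not itself be fully normalized, so the bridging isomorphism $\iota\colon Q\to R$ connects subgroups of the same order as $P$ and cannot be fed back into the induction on $[S:P]$ directly. My intended fix is first to prove the theorem for the canonical choice $\m{C}_0=\{S\}\cup\{E:E\text{ essential and fully normalized}\}$, for which the essential case becomes a genuine one-step decomposition without any bridging, and then separately to show that any $\m{C}_0$-decomposition can be converted into a $\m{C}$-decomposition by inserting fixed pre- and post-conjugations supplied by Lemma~\ref{fullNormChar} whenever an essential factor occurs. The conversion is straightforward because all members of an essential class are $\F$-conjugate and $\m{C}$ is assumed to meet every such class.
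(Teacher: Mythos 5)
Your overall framework is the standard one: induction on $[S:P]$, reduction to a fully normalized target via Lemma~\ref{fullNormChar}, isolation of an automorphism $\alpha\in A(Q)$, and the three-case split (non-centric / centric but not essential / essential), with Remark~\ref{SatAut}(b) supplying the needed extensions in the first two cases. This is the same route as the references the paper cites in lieu of a proof, and those steps are sound. (The parenthetical ``preliminary reduction making $P$ fully normalized'' is unnecessary, since Lemma~\ref{fullNormChar} requires only the target, not the source, to be fully normalized -- but that is harmless.)

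The place where a genuine issue remains is the conversion from $\m{C}_0$-decompositions to $\m{C}$-decompositions. Every factor of a $\m{C}$-decomposition must be a \emph{restriction of an element of $A(Q_i)$ for some $Q_i\in\m{C}$}. The bridging isomorphism $\iota|_{R,Q}\colon R\to Q$ (with $R\in Q^\F\cap\m{C}$, $R\neq Q$) is an isomorphism between two distinct conjugates and is not of that form, so it cannot simply be ``inserted'' as a pre- or post-conjugation; it must itself be $\m{C}$-decomposed, which is exactly the obstacle you flagged for the naive essential case. Calling the conversion ``straightforward'' leaves this unaddressed. The clean fix is to fold the whole thing into your original induction: Lemma~\ref{fullNormChar} gives $\iota\in Mor_\F(N_S(R),N_S(Q))$ with $R\iota=Q$, and since $R<S$ we have $|N_S(R)|>|R|=|Q|=|P|$, so the inductive hypothesis already furnishes a $\m{C}$-decomposition of $\iota$; restricting to $R$ and inverting then $\m{C}$-decomposes $\iota|_{R,Q}$ and $(\iota|_{R,Q})^{-1}$. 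Once this is observed the $\m{C}_0$-detour becomes unnecessary: in the essential case write $\alpha=(\iota|_{R,Q})^{-1}\,\beta\,(\iota|_{R,Q})$ with $\beta:=\iota|_{R,Q}\,\alpha\,(\iota|_{R,Q})^{-1}\in A(R)$ and $R\in\m{C}$, so the middle factor is a single legitimate $\m{C}$-step and the two outer factors are handled directly by the inductive hypothesis.
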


\begin{proof}
This is a direct consequence of \cite[5.2]{L} and \cite[2.10]{DGMP}.
\end{proof}

The proof of the following Lemma uses Theorem \ref{AlpGold1}.

\begin{lemma}\label{NormalEssential}
Let $N\in\F$, and let $\m{D}$ be a set of representatives of the essential classes of $\F$. Set $\m{C}=\{S\}\cup\m{D}$. 
Then $N$ is normal in $\F$ if and only if, for every $P\in \m{C}$, $N\leq P$ and $N$ is $A(P)$-invariant.
\end{lemma}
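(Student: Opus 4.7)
The plan is to treat the two implications separately, relying on the Alperin--Goldschmidt Fusion Theorem (Theorem~\ref{AlpGold1}) for the non-trivial direction.

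For the forward direction, assume $N$ is normal in $\F$, so $\F = N_\F(N)$. Since $N_\F(N)$ is a fusion system on $N_S(N)$, comparing with $\F$ on $S$ forces $N_S(N) = S$, that is $N \trianglelefteq S$. Because $N$ is normal (hence strongly closed), for any $P \in \F$ with $N \leq P$ and any $\alpha \in A(P)$ we have $N\alpha \leq N$ and so $N\alpha = N$ by finiteness. It remains to show $N \leq P$ for each $P \in \m{C}$. For $P = S$ this is immediate, and for essential $P$ it follows from $N \leq O_p(\F)$ together with the standard fact that $O_p(\F)$ is contained in every $\F$-centric subgroup.

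For the backward direction, assume $N \leq P$ and $N$ is $A(P)$-invariant for every $P \in \m{C}$. Since $S \in \m{C}$ and $A(S) \supseteq Inn(S)$, the subgroup $N$ is normalized by $S$, so $N \trianglelefteq S$. To prove $\F = N_\F(N)$ it suffices to show that every isomorphism $\phi \in Mor_\F(A, B)$ admits an extension $\tilde\phi \in Mor_\F(AN, BN)$ with $N\tilde\phi = N$; arbitrary morphisms are handled by composing such an extension with the inclusion into $BN$ inside a larger codomain. Applying Theorem~\ref{AlpGold1} with the set $\m{C}$, one factors
\[
\phi = (\alpha_1|_{P_0, P_1}) \cdots (\alpha_n|_{P_{n-1}, P_n}),
\]
where $A = P_0, \dots, P_n = B$, $Q_i \in \m{C}$, $\alpha_i \in A(Q_i)$, and $P_{i-1}, P_i \leq Q_i$ with $P_{i-1}\alpha_i = P_i$. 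The hypothesis $N \leq Q_i$ gives $P_{i-1} N \leq Q_i$, and the hypothesis $N\alpha_i = N$ yields $(P_{i-1}N)\alpha_i = P_i N$; hence each $\alpha_i$ restricts to an isomorphism $P_{i-1}N \to P_iN$, and the composition of these restrictions is the desired $\tilde\phi$.

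The only real obstacle lies in the forward direction: one needs the standard fact that $O_p(\F)$ is contained in every $\F$-centric subgroup to conclude $N \leq P$ for essential $P \in \m{C}$. Once that ingredient is in hand the rest is routine, since the forward direction reduces to strong closure and the backward direction is a direct application of Alperin's Fusion Theorem together with the observation that enlarging the domains $P_{i-1}$ to $P_{i-1}N$ keeps us inside the $Q_i$, which already stabilize $N$.
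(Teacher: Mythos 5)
Your overall strategy is the natural one, and the paper itself only defers to its reference [H, 2.17] for this lemma, so there is no in-paper proof to compare against. The backward direction is correct as written: the Alperin--Goldschmidt factorization through automorphisms $\alpha_i$ of members $Q_i\in\m{C}$, each of which stabilizes $N$ by hypothesis and hence restricts to an isomorphism $P_{i-1}N \to P_iN$ preserving $N$, produces exactly the extension witnessing $\phi \in N_\F(N)$.

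The forward direction rests on a misstatement of the key ingredient. You invoke as standard that $O_p(\F)$ is contained in every $\F$-centric subgroup; that is false. In $\F_S(S_4)$ at $p=2$ with $S$ dihedral of order $8$, the fours group generated by two disjoint transpositions is $\F$-centric but does not contain $O_2(S_4)$, the fours group of double transpositions. The correct standard fact is that a normal subgroup of $\F$ is contained in every $\F$-radical $\F$-centric subgroup. This does give what you need, since essential subgroups are radical centric (a group with a strongly $p$-embedded subgroup has trivial $p$-core, so $O_p(A(P)/Inn(P))=1$). With the fact stated in its radical-centric form the forward direction goes through unchanged; but as written the appeal is to a claim that is not true, so you should either quote the correct version or reprove it, for instance by noting that for fully normalized radical centric $P$ the group $Aut_{N\cap N_S(P)}(P)$ is a normal $p$-subgroup of $A(P)$ and hence lies in $Inn(P)$, which forces $N\cap N_S(P)\leq P$ and then $N\leq P$ since $N\trianglelefteq S$.
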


\begin{proof}
See \cite[2.17]{H}.
\end{proof}

\begin{lemma}\label{fullNormChar3}
Let $U\in \F$ such that $U$ is not fully normalized. Then $N_S(U)$ is contained in an essential subgroup of $\F$.
\end{lemma}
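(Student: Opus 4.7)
The plan is to apply the Alperin--Goldschmidt Fusion Theorem (Theorem~\ref{AlpGold1}) to a morphism taking $U$ to a fully normalized conjugate, and trace where $N_S(U)$ sits in the factorization.

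First I would use the hypothesis that $U$ is not fully normalized to pick a fully normalized $V\in U^\F$ with $|N_S(V)|>|N_S(U)|$. By Lemma~\ref{fullNormChar}, there is a morphism $\phi\in Mor_\F(N_S(U),N_S(V))$ with $U\phi=V$. Since $\phi$ is injective, the image $W:=N_S(U)\phi$ satisfies $|W|=|N_S(U)|<|N_S(V)|$, so $W$ is a \emph{proper} subgroup of $N_S(V)$. This strict inequality will be the lever that forces some essential subgroup to appear.

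Next, I would apply Theorem~\ref{AlpGold1} with $\m{C}=\{S\}\cup\{\text{essential subgroups of }\F\}$ to the isomorphism $N_S(U)\to W$ induced by $\phi$. This yields sequences $N_S(U)=P_0,P_1,\dots,P_n=W$, subgroups $Q_1,\dots,Q_n\in\m{C}$, and $\alpha_i\in A(Q_i)$ with $P_{i-1},P_i\leq Q_i$ and $P_{i-1}\alpha_i=P_i$. I would first rule out the degenerate case that every $Q_i$ equals $S$: then $\hat\phi:=\alpha_1\cdots\alpha_n\in A(S)$ would extend $\phi$, and since $\F$-automorphisms of $S$ commute with forming normalizers in $S$, one gets $N_S(U)\phi=N_S(U)\hat\phi=N_S(U\hat\phi)=N_S(V)$, contradicting $W\subsetneq N_S(V)$. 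Hence some $Q_i$ is essential.

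Now let $k$ be the smallest index with $Q_k$ essential, and set $\beta:=\alpha_1\cdots\alpha_{k-1}\in A(S)$ (or $\beta=\mathrm{id}_S$ if $k=1$), noting $Q_i=S$ for $i<k$. Then
\[
P_{k-1}=N_S(U)\beta=N_S(U\beta)\leq Q_k,
\]
again because $\beta\in A(S)$ preserves normalizers in $S$. Applying $\beta^{-1}\in A(S)$ gives $N_S(U)\leq Q_k\beta^{-1}$. Since $\beta^{-1}$ restricts to an $\F$-isomorphism $Q_k\to Q_k\beta^{-1}$ and essentiality is preserved under $\F$-conjugation, $Q_k\beta^{-1}$ is essential, completing the proof.

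The only step requiring care is the contradiction in the all-$Q_i=S$ case; the key is to verify that the composite $\alpha_1\cdots\alpha_n$ really extends $\phi$ as a morphism on all of $N_S(U)$ (which is immediate from the factorization in Theorem~\ref{AlpGold1}) and that conjugation by an $\F$-automorphism of $S$ sends normalizers to normalizers---both routine. The remainder of the argument is essentially bookkeeping on the Alperin--Goldschmidt decomposition.
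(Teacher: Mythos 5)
Your proposal is correct and follows essentially the same route as the paper: obtain $\phi$ via Lemma~\ref{fullNormChar}, apply the Alperin--Goldschmidt factorization, observe that the factorization cannot pass only through $S$ (because $\phi$ does not extend to an element of $A(S)$, which you establish by the order count $|N_S(U)\phi|<|N_S(V)|$), and then conjugate the first essential group in the chain back by the initial segment of $A(S)$-automorphisms. You have simply filled in the bookkeeping that the paper leaves implicit.
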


\begin{proof}
By Lemma~\ref{fullNormChar}, there is $\phi\in Mor_\F(N_S(U),S)$ such that $U\phi$ is fully normalized. As $U$ is not fully normalized, $\phi$ does not extend to an element of $A(S)$. Now by Theorem~\ref{AlpGold1}, there is $\psi\in A(S)$ such that $N_S(U)\psi$ is contained in an essential subgroup. Since every $\F$-conjugate of an essential subgroup is again essential, this yields the assertion. 
\end{proof}

Given a saturated fusion system $\tilde{\F}$ on a finite $p$-group $\tilde{S}$, we call a group isomorphism $\alpha:~S\rightarrow~\tilde{S}$ an \textbf{isomorphism} (of fusion systems) from $\F$ to $\tilde{\F}$ if for all subgroups $A,B$ of $S$, $$\alpha^{-1}Mor_\F(A,B)\alpha=Mor(A\alpha,B\alpha).$$ 
An immediate consequence of Theorem~\ref{AlpGold1} is the following remark.

\begin{remark}\label{AlpIso}
Let $\tilde{\F}$ be a saturated fusion system on a finite $p$-group $\tilde{S}$. Let $\m{E}$ be a set of representatives of the essential classes of $\F$ and $\m{C}=\m{E}\cup \{S\}$. Then a group isomorphism $\alpha:S\rightarrow \tilde{S}$ is an isomorphism between $\F$ and $\tilde{\F}$ if and only if $\{P\alpha:P\in\m{E}\}$ is a set of representatives of the essential classes of $\tilde{\F}$ and $\alpha^{-1}A(P)\alpha=Aut_{\tilde{\F}}(P\alpha)$ for every $P\in\m{C}$. 
\end{remark}

We conclude this section with some results that are important in Section~\ref{chapterAut} where we show the existence of Thompson-restricted subgroups. Recall from Notation~\ref{ENDef} that, for a subsystem $\m{N}$ of $\F$ on $S$, we write $\FN$ for the set of centric subgroups $Q$ of $\F$ for which there exists an element in $A(Q)$ that is not a morphism in $\m{N}$. Furthermore, we introduce the following notation.

\begin{notation}\label{FN*Def}
Let $\m{N}$ be a subsystem of $\F$ on $S$. Then we write $\FNT$ for the set of Thompson-maximal members of $\FN$.\footnote{Recall Definition~\ref{ThMax}.}
\end{notation}

We get the following three corollaries to Theorem~\ref{AlpGold1}.

\begin{corollary}\label{Cor1}
Let $\m{N}$ be a subsystem of $\F$ on $S$. Then $\FN\neq\emptyset$ if and only if $\F\neq\m{N}$.
\end{corollary}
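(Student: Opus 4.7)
The plan is to prove the forward implication directly from the definitions and to handle the converse via the Alperin--Goldschmidt Fusion Theorem (Theorem~\ref{AlpGold1}).

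The forward direction is immediate: if $Q\in\F_\m{N}$ then by the definition of $\F_\m{N}$ there is some $\alpha\in A(Q)$ with $\alpha\notin Mor_\m{N}(Q,Q)$, so $\F$ and $\m{N}$ differ already at $Aut_\F(Q)$, and hence $\F\neq\m{N}$.

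For the converse, suppose $\F\neq\m{N}$, so choose $P,Q\leq S$ and a morphism $\phi\in Mor_\F(P,Q)\setminus Mor_\m{N}(P,Q)$. Take $\m{D}$ to be a set of representatives of the essential classes of $\F$ and set $\m{C}=\{S\}\cup\m{D}$. Applying Theorem~\ref{AlpGold1} to $\phi$ yields subgroups $P=P_0,P_1,\dots,P_n=Q$ and $Q_1,\dots,Q_n\in\m{C}$ together with automorphisms $\alpha_i\in A(Q_i)$ such that $P_{i-1},P_i\leq Q_i$, $P_{i-1}\alpha_i=P_i$ and $\phi=(\alpha_1)_{|P_0,P_1}\cdots(\alpha_n)_{|P_{n-1},P_n}$. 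Since $\m{N}$ is itself a fusion system on $S$, it is closed under composition and restriction of morphisms. Thus, if every $\alpha_i$ were to lie in $Aut_\m{N}(Q_i)$, then $\phi$ would be a morphism of $\m{N}$, contradicting our choice. Pick some $i$ with $\alpha_i\in A(Q_i)\setminus Aut_\m{N}(Q_i)$; then $Q_i\in\m{C}$ admits an $\F$-automorphism that is not a morphism in $\m{N}$. Every essential subgroup is centric by definition, and $S$ is trivially centric (its only $\F$-conjugate is itself, and $C_S(S)=Z(S)\leq S$), so $Q_i$ is centric. Therefore $Q_i\in\F_\m{N}$.

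The proof is essentially a bookkeeping exercise once Alperin--Goldschmidt is invoked; the only point that requires any care is the observation that $S$ itself qualifies as centric, which is needed so that the conclusion also holds when the offending factor $\alpha_i$ sits in $A(S)$ rather than on an essential class.
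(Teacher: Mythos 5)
Your proof is correct and follows exactly the route the paper intends: the corollary is stated as one of "three corollaries to Theorem~\ref{AlpGold1}" with no proof given, and the introduction even remarks that $\FN\neq\emptyset$ is "a consequence of Alperin's Fusion Theorem." Your careful observations—that $\m{N}$, being a fusion system on $S$, is closed under composition and restriction, and that $S$ itself is centric so the argument covers the case when the offending factor lies in $A(S)$—are exactly the points that make the bookkeeping go through.
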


\begin{corollary}\label{Cor2}
Let $\m{N}$ be a proper subsystem of $\F$ on $S$. Let $X\in \FNT$, $J(X)\leq R\leq S$ and $\phi\in Mor_\F(R,S)$. Then $J(R)=J(X)$ or $\phi\in \m{N}$.
\end{corollary}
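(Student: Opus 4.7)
The plan is to argue by contradiction: assume $\phi\notin\m{N}$ and $J(R)\neq J(X)$, and derive a contradiction from the Thompson-maximality of $X$ in $\FN$. Since inclusions on $S$ lie in every subsystem on $S$, I may regard $\phi$ as the $\F$-isomorphism $R\to R\phi$.

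First apply the Alperin--Goldschmidt Fusion Theorem~\ref{AlpGold1} with $\m{C}$ consisting of $S$ together with one representative from each essential class. This yields a factorisation $\phi=(\alpha_1|_{P_0,P_1})\cdots(\alpha_n|_{P_{n-1},P_n})$ with $P_0=R$, $\alpha_i\in A(Q_i)$, $P_{i-1},P_i\leq Q_i$, and each $Q_i\in\m{C}$ centric (being either $S$ or essential). Let $i_0$ be the least index with $\alpha_{i_0}\notin\m{N}$; it exists because $\phi\notin\m{N}$. Put $Q:=Q_{i_0}$ and let $\psi$ denote the composite of the first $i_0-1$ factors, so $\psi\in\m{N}$ is an $\F$-isomorphism $R\to P_{i_0-1}$ with $P_{i_0-1}\leq Q$. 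Moreover $Q\in\FN$, because $Q$ is centric and $\alpha_{i_0}\in A(Q)\setminus\m{N}$.

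The heart of the argument is to compare Thompson data of $Q$ and $X$. The image $J(X)\psi\leq R\psi\leq Q$ is an isomorphic copy of $J(X)$, hence is generated by elementary abelian subgroups of order $p^{m(X)}$; so $m(Q)\geq m(X)$, and Thompson-maximality forces $m(Q)=m(X)$. Those subgroups are therefore of maximal order in $Q$ and lie in $J(Q)$, giving $J(X)\psi\leq J(Q)$; a second appeal to Thompson-maximality gives $|J(Q)|\leq|J(X)|$, so $J(X)\psi=J(Q)$. Now $J(X)\leq R$ and $R\psi\leq Q$ yield $m(X)\leq m(R)=m(R\psi)\leq m(Q)=m(X)$, whence $m(R)=m(X)$. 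So any elementary abelian subgroup of $R\psi$ of maximal order $p^{m(X)}$ lies in $J(Q)=J(X)\psi$, giving $J(R)\psi\leq J(X)\psi$, i.e.\ $J(R)\leq J(X)$; conversely, elementary abelian subgroups of $X$ of order $p^{m(X)}$ lie in $R$ with maximal rank, so $J(X)\leq J(R)$. Hence $J(R)=J(X)$, contradicting the hypothesis. The one genuine subtlety is the choice of $i_0$: taking the \emph{first} factor outside $\m{N}$ is precisely what allows $\psi\in\m{N}$ while $Q\in\FN$, so that Thompson-maximality can be applied to $Q$.
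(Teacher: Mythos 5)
Your proof is correct and takes essentially the same approach as the paper's: Alperin--Goldschmidt produces a centric $Q\in\FN$ containing an $\F$-conjugate of $R$, and the rank and Thompson-subgroup comparisons against the maximality of $X$ in $\FN$ force $J(R)=J(X)$. One minor point: the property $\psi\in\m{N}$, which you flag as the key subtlety behind choosing $i_0$ minimal, is never actually used in the argument --- only $Q\in\FN$ and the fact that $\psi$ is an $\F$-isomorphism onto a subgroup of $Q$ matter, so any index with $\alpha_i\notin\m{N}$ would serve equally well.
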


\begin{proof}
Assume $\phi\not\in\m{N}$. Then by Theorem~\ref{AlpGold1}, $R$ is $\F$-conjugate to a subgroup of an element of $\FN$. Hence, as $J(X)\leq R$, the subgroup $X$ being Thompson-maximal in $\FN$ implies $J(R)=J(X)$.
\end{proof}

As a special case of Corollary~\ref{Cor2} we get

\begin{corollary}\label{Cor3}
Let $\m{N}$ be a proper subsystem of $\F$ on $S$. Let $X\in\FNT$ and $Q\in\F$ such that $J(X)\leq Q$ and $A(Q)\not\leq \m{N}$. Then $J(Q)=J(X)$.
\end{corollary}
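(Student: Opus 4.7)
The plan is to derive Corollary~\ref{Cor3} as an immediate specialization of Corollary~\ref{Cor2} with $R:=Q$. To invoke Corollary~\ref{Cor2}, I need to exhibit a morphism $\phi\in Mor_\F(Q,S)$ with $\phi\not\in\m{N}$; the two hypotheses $J(X)\leq Q$ and $A(Q)\not\leq\m{N}$ are precisely what is required to produce this data.

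First I would unpack $A(Q)\not\leq\m{N}$ using the convention fixed just before Remark~\ref{SatAut}: this says there exists $\phi\in A(Q)=Aut_\F(Q)$ with $\phi\not\in Aut_\m{N}(Q)$. I would then promote $\phi$ to an element of $Mor_\F(Q,S)$ by post-composing with the inclusion $Q\hookrightarrow S$ (which is a morphism in any fusion system on $S$). The only small point to verify — and I expect this to be essentially the whole (minor) obstacle — is that the promoted morphism still fails to lie in $\m{N}$. For this, observe that $\m{N}$ is itself a saturated fusion system on $S$, and is therefore closed under restriction of codomain: if the extended map were a morphism in $\m{N}$, then restricting the codomain back to $Q$ would place $\phi$ in $Aut_\m{N}(Q)$, contradicting the choice of $\phi$.

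Having produced $\phi\in Mor_\F(Q,S)\setminus\m{N}$ with $J(X)\leq Q\leq S$, the hypotheses of Corollary~\ref{Cor2} are met with $R=Q$, and that corollary yields $J(Q)=J(X)$, as required.
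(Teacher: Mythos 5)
Your proof is correct and takes essentially the same route as the paper: the paper itself gives no separate argument, stating only that Corollary~\ref{Cor3} is a special case of Corollary~\ref{Cor2}, and your instantiation with $R=Q$ is exactly what that means. The one small point you flag --- that the automorphism $\phi\in A(Q)\setminus Aut_{\m{N}}(Q)$, once post-composed with the inclusion $Q\hookrightarrow S$, still fails to lie in $\m{N}$ --- is handled correctly by codomain restriction. A minor inaccuracy: you assert that $\m{N}$ is a \emph{saturated} fusion system on $S$, but Corollary~\ref{Cor3} assumes only that $\m{N}$ is a proper subsystem; fortunately saturation is irrelevant here, since closure under restricting the codomain to the image holds for any fusion system (it is the factorization axiom), so the argument stands.
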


\begin{remark}\label{ThomQ}
Let $Q\in\F$ such that $J(S)\not\leq Q$. Then $J(N_S(J(Q)))\not\leq Q$.
\end{remark}

\begin{proof}
Otherwise $J(N_S(J(Q)))=J(Q)$ and so $$N_S(N_S(J(Q)))\leq N_S(J(N_S(J(Q))))=N_S(J(Q)).$$ 
Then $S=N_S(J(Q))$ and so $J(S)\leq Q$, a contradiction.
\end{proof}

\begin{lemma}\label{JQFullNorm}
Let $\m{N}$ be a proper subsystem of $\F$ on $S$. Then there exists $X\in\FNT$ such that $J(X)$ is fully normalized.
\end{lemma}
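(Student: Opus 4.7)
The plan is as follows. I pick an arbitrary $X_0\in\FNT$, which is non-empty by Corollary~\ref{Cor1} since $\m{N}$ is proper. If $J(X_0)$ happens to be fully normalized, we are done; otherwise I transport $X_0$ via a suitable $\F$-morphism to a new element of $\FNT$ whose Thompson subgroup is fully normalized. The two key inputs are Lemma~\ref{fullNormChar} to locate the transporting morphism and Corollary~\ref{Cor2} to extract the dichotomy that drives the argument.

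Assume $J(X_0)$ is not fully normalized, and let $Y$ be a fully normalized $\F$-conjugate of $J(X_0)$. Lemma~\ref{fullNormChar} yields $\psi\in Mor_\F(N_S(J(X_0)),N_S(Y))$ with $J(X_0)\psi=Y$. Setting $R:=N_S(J(X_0))$ and noting $J(X_0)\leq R$, Corollary~\ref{Cor2} applied to $\psi\in Mor_\F(R,S)$ forces either $J(R)=J(X_0)$ or $\psi\in\m{N}$. In the first case, since $J(R)$ is characteristic in $R$, one gets $N_S(R)\leq N_S(J(R))=N_S(J(X_0))=R$, hence $R=S$ because in a $p$-group every proper subgroup is strictly contained in its normalizer; but then $J(X_0)\trianglelefteq S$ is already fully normalized, contradicting the assumption. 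So the second alternative holds, i.e.\ $\psi\in\m{N}$.

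With $\psi\in\m{N}$ in hand, set $X:=X_0\psi$; the inclusion $X_0\leq R$ is automatic since $J(X_0)$ is characteristic in $X_0$. Centricity and the invariants $m(\cdot)$ and $|J(\cdot)|$ are preserved by $\F$-isomorphisms, so I only need to check $A(X)\not\leq\m{N}$ in order to conclude $X\in\FNT$. Given $\alpha\in A(X_0)\setminus\m{N}$ (available since $X_0\in\FN$), its conjugate $\alpha':=\psi^{-1}\alpha\psi\in A(X)$ cannot lie in $\m{N}$: otherwise, using $\psi\in\m{N}$, the identity $\alpha=\psi\alpha'\psi^{-1}$ would place $\alpha$ in $\m{N}$. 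Hence $X\in\FNT$ and $J(X)=J(X_0)\psi=Y$ is fully normalized, as required.

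The only step that warrants care is the transport of $\FN$-membership along $\psi$; but this is clean precisely because Corollary~\ref{Cor2} hands us a $\psi$ lying in $\m{N}$, so that conjugation by $\psi$ is an $\m{N}$-preserving bijection between $A(X_0)$ and $A(X)$ in both directions. Everything else is routine bookkeeping with Lemma~\ref{fullNormChar} and standard normalizer properties of $p$-groups.
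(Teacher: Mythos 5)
Your proof is correct and follows essentially the same route as the paper's: both proofs pick $X_0\in\FNT$, use Lemma~\ref{fullNormChar} to obtain a morphism carrying $J(X_0)$ to a fully normalized conjugate, apply Corollary~\ref{Cor2} to force that morphism into $\m{N}$, and then transfer membership in $\FNT$ along it. The only cosmetic difference is that you rule out the first alternative of Corollary~\ref{Cor2} by re-deriving the content of Remark~\ref{ThomQ} inline (via the normalizer-growth argument in a $p$-group), whereas the paper cites that remark directly after a preliminary case split on whether $J(S)\leq X_0$.
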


\begin{proof}
By Corollary \ref{Cor1}, we can choose $X_0\in\FNT$. Set $U_0=J(X_0)$ and let $U\in U_0^\F$ be fully normalized. Then by Lemma~\ref{fullNormChar}, there exists $\phi\in Mor_\F(N_S(U_0),N_S(U))$ such that $U_0\phi=U$. Set $X:=X_0\phi$. 
If $J(S)\leq X_0$ then observe that $U_0=J(S)=U$, so $U_0$ is fully normalized. Therefore, we may assume that $J(S)\not\leq X_0$. Hence, by Remark~\ref{ThomQ}, $J(N_S(U_0))\not\leq X_0$. It follows now from Corollary \ref{Cor2} that $\phi$ is a morphism in $\m{N}$. Since $A(X_0)=\phi A(X)\phi^{-1}$ and $A(X_0)\not\leq \m{N}$, we get $A(X)\not\leq \m{N}$ and so $X\in \FN$.
As $X_0\in \FNT$ and $X\in X_0^\F$, it follows $X\in \FNT$. Since $J(X)=U$ is fully normalized, this shows the assertion.
\end{proof}

\begin{lemma}\label{AUnotleqN}
Let $\m{N}$ be a proper saturated subsystem of $\F$ containing $C_\F(\Omega(Z(S)))$. Let $X\in\FN$ such that $J(X)$ is fully normalized. Then $A(J(X))\not\leq \m{N}$. 
\end{lemma}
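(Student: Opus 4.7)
I argue by contradiction. Suppose $A(J(X)) \leq \m{N}$ and set $U := J(X)$. Since $X \in \FN$, pick $\phi \in A(X) \setminus \m{N}$, and put $\bar\phi := \phi|_U \in A(U)$; by assumption $\bar\phi$ is a morphism of $\m{N}$. The plan is to produce an $\m{N}$-morphism $\psi : X \to S$ that also extends $\bar\phi$, to observe that the discrepancy $\zeta := \phi^{-1}\psi$ is trivial on $U$ and therefore lies in $C_\F(\Omega(Z(S))) \leq \m{N}$, and then to conclude $\phi = \psi\zeta^{-1} \in \m{N}$, contradicting the choice of $\phi$.

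As a preliminary I record the containment $\Omega(Z(S)) \leq U$. Since $X$ is centric, $Z(S) \leq C_S(X) \leq X$ and therefore $Z(S) \leq Z(X)$. For any maximal elementary abelian subgroup $A$ of $X$ and any $z \in \Omega(Z(X))$, the group $\<A,z\>$ is abelian of exponent $p$, so by maximality of $|A|$ one has $z \in A$; thus $\Omega(Z(X)) \leq J(X) = U$, and hence $\Omega(Z(S)) \leq \Omega(Z(X)) \leq U$.

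To construct $\psi$, I appeal to the extension axiom inside $\m{N}$. Because every $\m{N}$-conjugacy class is contained in the corresponding $\F$-conjugacy class, $U$ remains fully $\m{N}$-normalized, hence (by saturation of $\m{N}$) fully $\m{N}$-centralized. The extension axiom applied to $\bar\phi \in \text{Aut}_\m{N}(U)$ then yields an $\m{N}$-extension of $\bar\phi$ defined on $N_{\bar\phi} := \{g \in N_S(U) : \bar\phi^{-1}c_g\bar\phi \in \text{Aut}_S(U)\}$; the automatic inclusion $\text{Aut}_S(U) \leq \text{Aut}_\m{N}(U)$ makes this set the same whether computed in $\F$ or in $\m{N}$. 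For each $g \in X$, the identity $\bar\phi^{-1}c_g\bar\phi = c_{g\phi}|_U$ together with $g\phi \in X\phi = X \leq N_S(U)$ places the result in $\text{Aut}_S(U)$, so $g \in N_{\bar\phi}$. Hence $X \leq N_{\bar\phi}$, and restricting the extension to $X$ provides the required $\psi : X \to S$ in $\m{N}$ with $\psi|_U = \bar\phi$.

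Finally, set $Y := X\psi$ and $\zeta := \phi^{-1}\psi$, a morphism of $\F$ from $X$ to $Y$. Then $\zeta|_U = \bar\phi^{-1}\bar\phi$ is the identity on $U$, and in particular $\zeta$ acts trivially on $\Omega(Z(S)) \leq U$. Both $X$ and $Y$ (the latter contains $U\psi = U$) lie above $\Omega(Z(S))$, so the definition of the centralizer subsystem places $\zeta$ in $C_\F(\Omega(Z(S))) \leq \m{N}$. Consequently $\phi = \psi\zeta^{-1}$ is a composition of morphisms of $\m{N}$, giving $\phi \in \m{N}$, the desired contradiction. The subtlest step in the plan is the verification $X \leq N_{\bar\phi}$, which reduces to the two observations that $\bar\phi^{-1}c_g\bar\phi = c_{g\phi}|_U$ for every $g \in X$ and that $\text{Aut}_S(U)$ is automatically contained in $\text{Aut}_\m{N}(U)$; everything else is bookkeeping with restrictions.
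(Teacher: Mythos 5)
Your proof is correct and follows essentially the same route as the paper's: restrict $\phi$ to $U=J(X)$, use saturation of $\m{N}$ to extend the restriction to an $\m{N}$-morphism on $X$, observe that the composite $\phi^{-1}\psi$ fixes $U\supseteq\Omega(Z(S))$ pointwise, and conclude $\phi\in\m{N}$, a contradiction. You simply spell out two steps the paper treats tersely — the containment $\Omega(Z(S))\leq\Omega(Z(X))\leq J(X)$ and the verification $X\leq N_{\bar\phi}$ (which the paper gets from Remark~\ref{SatAut}(a)) — but the argument is the same.
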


\begin{proof}
Set $U:=J(X)$ and assume $A(U)\leq \m{N}$. Let $\phi\in A(X)$ such that $\phi\not\in\m{N}$. Then $\alpha:=\phi_{|U,U}\in \m{N}$ and, by Remark~\ref{SatAut}(a), $X_U\alpha^*=X_U$.\footnote{Recall Notation~\ref{SP} and Notation~\ref{phi*}.} In particular, $X\leq N_\alpha$. Observe that $U$ is fully normalized in $\m{N}$. Hence, as $\m{N}$ is saturated, $\alpha$ extends to an element $\psi\in Mor_{\m{N}}(X,S)$. Note that $\phi^{-1}\psi$ is the identity on $U$. By definition of $\FN$, $X$ is centric and therefore $\Omega(Z(S))\leq X$. This yields $\Omega(Z(S))\leq J(X)=U$. Thus, $\phi^{-1}\psi\in C_\F(\Omega(Z(S)))\leq \m{N}$ and so $\phi\in \m{N}$, a contradiction. Hence, $A(U)\not\leq \m{N}$.
\end{proof}

\begin{lemma}\label{ThomRestrHelp0}
Let $\m{N}$ be a proper saturated subsystem of $\F$ containing $C_\F(\Omega(Z(S)))$. Then there exists $Q\in \FNT$ such that $N_S(J(Q))=N_S(Q)$ and $J(Q)$ is fully normalized.
\end{lemma}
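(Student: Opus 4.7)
The plan is to construct a very specific subgroup $Q$, namely $Q := U\cdot C_T(U)$, where $U = J(X)$ for a carefully chosen $X$, and check that the extension axiom lets automorphisms of $U$ rise to automorphisms of $Q$. The key calculation is that $Q_U$ equals $\mathrm{Inn}(U)$, which is automatically normal in $A(U)$; this sidesteps any Sylow/Frattini issues in $A(U)$ that a less clever choice of $Q$ would force.

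Concretely, first apply Lemma~\ref{JQFullNorm} to fix $X\in\FNT$ with $U:=J(X)$ fully normalized, and set $T:=N_S(U)$. Then Lemma~\ref{AUnotleqN} produces some $\phi\in A(U)\setminus\m{N}$. Now define $Q := U\cdot C_T(U)$. Because $U$ and $C_T(U)$ commute elementwise, $Q$ is a subgroup of $T$; because $T=N_S(U)$ normalizes $U$ and also normalizes $C_T(U)=C_S(U)$, we have $Q\trianglelefteq T$; because $C_S(Q)\leq C_S(U)=C_T(U)\leq Q$, the subgroup $Q$ is centric in $\F$.

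The crux is showing $A(Q)\not\leq\m{N}$. Apply Remark~\ref{SatAut}(b): since $U$ is fully normalized (hence fully centralized), $C_S(U)\leq Q$, and $U\leq Q\leq N_S(U)$, the remark tells us that $\phi$ extends to an element of $A(Q)$ iff $Q_U\phi^*=Q_U$. The point is that $Q_U=\mathrm{Inn}(U)$: every $q\in Q$ factors as $q=uc$ with $u\in U$ and $c\in C_T(U)$, and then $c_q|_U=c_u|_U\in\mathrm{Inn}(U)$ because $c$ centralizes $U$. Since $\mathrm{Inn}(U)$ is normal in $A(U)$, the condition $\phi^{-1}\mathrm{Inn}(U)\phi=\mathrm{Inn}(U)$ is automatic. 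Hence $\phi$ extends to some $\tilde\phi\in A(Q)$, and $\tilde\phi\notin\m{N}$ because its restriction to $U$ is $\phi\notin\m{N}$. Thus $A(Q)\not\leq\m{N}$.

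Having established $A(Q)\not\leq\m{N}$, Corollary~\ref{Cor3} (with $X$ in the role of its $X$) gives $J(Q)=J(X)=U$. Since $J(Q)=U=J(X)$, the rank and Thompson-subgroup orders of $Q$ agree with those of $X$, so Thompson-maximality of $X$ in $\FN$ passes to $Q$ and $Q\in\FNT$. Finally, $N_S(J(Q))=N_S(U)=T$, and since $Q\trianglelefteq T$ on the one hand, while $N_S(Q)\leq N_S(J(Q))=T$ on the other, we obtain $N_S(Q)=T=N_S(J(Q))$. Together with $J(Q)=U$ being fully normalized by our initial choice, this finishes the proof.

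The main obstacle one anticipates is always the same in arguments of this flavour: to upgrade an automorphism of $U$ to a larger subgroup without losing the property of lying outside $\m{N}$. A naive choice of $Q$ forces one into a delicate Sylow/Frattini analysis in $A(U)$ (needing $A_\m{N}(U)$ normal in $A(U)$, which is not generally the case). The choice $Q=U\cdot C_T(U)$ is designed precisely so that $Q_U=\mathrm{Inn}(U)$, which is visibly normal in $A(U)$, and this makes the extension automatic for every $\phi\in A(U)$.
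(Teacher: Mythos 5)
Your construction $Q=U\cdot C_T(U)$ is genuinely different from the paper's, which takes $Q=C_S(\Omega(Z(U)))\cap N_S(U)$ and then runs a Frattini argument: writing $V=\Omega(Z(U))$, the paper factors $A(U)=C_{A(U)}(V)N_{A(U)}(Q_U)$, shows $C_{A(U)}(V)\leq C_\F(\Omega(Z(S)))\leq\m{N}$, deduces $N_{A(U)}(Q_U)\not\leq\m{N}$, and then extends. Your version sidesteps that entirely: by choosing $Q=UC_T(U)$ you force $Q_U=\mathrm{Inn}(U)$, which is normal in $\mathrm{Aut}(U)$ and hence automatically $\phi^*$-invariant for every $\phi\in A(U)$, so any $\phi\in A(U)\setminus\m{N}$ extends directly. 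This is cleaner: the hypothesis $C_\F(\Omega(Z(S)))\leq\m{N}$ enters only once, inside Lemma~\ref{AUnotleqN}, whereas the paper invokes it a second time in the Frattini step. The rest (Corollary~\ref{Cor3} giving $J(Q)=J(X)=U$, then $N_S(Q)=N_S(J(Q))=T$ from $Q\trianglelefteq T$ and $J(Q)\;char\;Q$) matches the paper's closing moves.

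One step as written is not justified: you assert ``$Q$ is centric in $\F$'' immediately from $C_S(Q)\leq Q$, but $\F$-centric requires $C_S(P)\leq P$ for \emph{every} $\F$-conjugate $P$ of $Q$, which does not follow from $C_S(Q)\leq Q$ alone. This assertion is needed later to conclude $Q\in\FN$ (hence $Q\in\FNT$), so the gap does affect the logic. The fix is exactly the paper's: after establishing $J(Q)=U$ and $N_S(Q)=N_S(J(Q))$, apply Remark~\ref{FullNormQ} (with $U=J(Q)$ characteristic in $Q$, fully normalized, and $N_S(U)=N_S(Q)$) to conclude $Q$ is fully normalized, hence fully centralized; only then does $C_S(Q)\leq Q$ imply $Q$ is centric. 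Defer the centric claim to that point and the proof is complete.
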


\begin{proof}
By Lemma~\ref{JQFullNorm}, we can choose $X\in\FNT$ such that $U:=J(X)$ is fully normalized. Then, by Lemma \ref{AUnotleqN}, we have $A(U)\not\leq \m{N}$. Set $V:=\Omega(Z(U))$ and $Q:=C_S(V)\cap N_S(U)$. Then $U\leq Q$ and $N_S(U)\leq N_S(Q)$. Since $U$ is fully normalized, $S_U\in Syl_p(A(U))$. Hence, $Q_U\in Syl_p(C_{A(U)}(V))$ and, by a Frattini-Argument,
$$ A(U)=C_{A(U)}(V)N_{A(U)}(Q_U).$$
As $X$ is centric, we have $\Omega(Z(S))\leq J(X)=U$ and thus, $\Omega(Z(S))\leq V$. Therefore, $C_{A(U)}(V)\leq C_\F(\Omega(Z(S)))\leq \m{N}$ and so $N_{A(U)}(Q_U)\not\leq \m{N}$.
Since $C_S(U)\leq C_S(V)\cap N_S(U)=Q$, it follows from Remark~\ref{SatAut}(b) that every element of $N_{A(U)}(Q_U)$ extends to an element of $A(Q)$. Hence, $A(Q)\not\leq \m{N}$. Now Corollary \ref{Cor3} implies that $J(Q)=U=J(X)$. Hence, $N_S(U)\leq N_S(Q)\leq N_S(J(Q))=N_S(U)$ and, by Remark~\ref{FullNormQ}, $Q$ is fully normalized. In particular, $Q$ is fully centralized and therefore, as $C_S(Q)\leq Q$, centric. Thus, $Q\in \FNT$. This proves the assertion.
\end{proof}

\section{The Frattini subgroup}

Throughout this section let $G$ be a finite group. Recall that the \textbf{Frattini subgroup} $\Phi(G)$ of $G$ is the intersection of all maximal subgroups of $G$.

\begin{remark}\label{Frat0}
\begin{itemize}
\item[(a)] Let $H$ be a subgroup of $G$. If $G=H\Phi(G)$ then $G=H$.
\item[(b)] $\Phi(G)$ is nilpotent.
\item[(c)] $\Phi(G/N)=\Phi(G)/N$, for every normal subgroup $N$ of $G$ contained in $\Phi(G)$. In particular, $\Phi(G/\Phi(G))=1$.
\item[(d)] $\Phi(N)\leq \Phi(G)$, for every normal subgroup $N$ of $G$.
\end{itemize}
\end{remark}

\begin{proof}
For (a) and (b) see 5.2.3 and 5.2.5(a) in \cite{KS}. Let $N$ be normal in $G$. If $N\leq \Phi(G)$, then a subgroup $M$ of $G$ is maximal in $G$ if and only if $N\leq M$ and $M/N$ is maximal in $G/N$. This shows (c). For the proof of (d) assume by contradiction that there is a maximal subgroup $M$ of $G$ such that $\Phi(N)\not\leq M$. Then $G=\Phi(N)M$ and $N=\Phi(N)(M\cap N)$. Hence, by (a), $\Phi(N)\leq N=M\cap N\leq M$, a contradiction.
\end{proof}

The main aim of this section is the proof of the following lemma that the author learned from Stellmacher and was probably first proved by Meierfrankenfeld in the case $p=2$. It will be useful in connection with the pushing up arguments in Section~\ref{ThRestrProp} and Section~\ref{PUFs}.

\begin{lemma}\label{PhiN}
Let $G$ be a finite group with $O_p(G)=1$, and let $N$ be a normal subgroup of $G$ such that $G/N$ is a $p$-group. Then $\Phi(G)=\Phi(N)$.   
\end{lemma}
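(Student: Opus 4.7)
The plan is to prove both inclusions; the direction $\Phi(N)\leq \Phi(G)$ is immediate from Remark~\ref{Frat0}(d), so the substantive content is $\Phi(G)\leq \Phi(N)$. First I would record two preparatory facts: (i) $\Phi(G)$ is a $p'$-group, because $\Phi(G)$ is nilpotent by Remark~\ref{Frat0}(b), so its Sylow $p$-subgroup is characteristic in $\Phi(G)$ and hence normal in $G$, and is therefore trivial since $O_p(G)=1$; and (ii) $\Phi(G)\leq N$, because $\Phi(G)N/N$ is simultaneously a quotient of the $p'$-group $\Phi(G)$ and a subgroup of the $p$-group $G/N$, and so is trivial.

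To obtain $\Phi(G)\leq \Phi(N)$, I would argue by contradiction and pick a maximal subgroup $M$ of $N$ with $\Phi(G)\not\leq M$. Since $\Phi(G)$ is normal in $G$ and $M$ is maximal in $N$, one has $M\Phi(G)=N$; as $[N:M]$ divides $|\Phi(G)|$, the prime $q:=[N:M]$ is different from $p$, and in particular $M$ contains a Sylow $p$-subgroup of $N$. I then choose a Sylow $p$-subgroup $P$ of $G$ with $P\cap N\leq M$. From $\langle M,\Phi(G),P\rangle\supseteq NP=G$ and the fact that elements of $\Phi(G)$ are non-generators of $G$ (Remark~\ref{Frat0}(a)), I would deduce $\langle M,P\rangle=G$, and Dedekind's modular law applied to $\langle M,P\rangle=\langle M^P\rangle P$ then yields $\langle M^P\rangle=N$. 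The objective now is to exhibit a maximal subgroup $T$ of $G$ with $T\cap N=M$: such a $T$ would have prime index $q$, must contain $\Phi(G)$ by maximality, and intersecting with $N$ would force $\Phi(G)\leq M$, the desired contradiction. When $P$ normalizes $M$, one may take $T=MP$ directly, since then $[G:MP]=[N:M]=q$.

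The main obstacle is the case where $P$ does not normalize $M$. Here I plan to reduce to the situation $M_G:=\bigcap_{g\in G}M^g=1$ by passing to $G/M_G$; a Frattini/Schur--Zassenhaus argument applied to a putative $O_p(G/M_G)$ shows that this reduction preserves the hypothesis $O_p=1$, and it forces $\Phi(N)=1$ in the reduced group. When $M$ is normal in $G$, a Schur--Zassenhaus argument inside $G/M$, whose normal subgroup $N/M\cong \mathbb{Z}/q$ has order coprime to the $p$-power $[G:N]$, produces a complement $T/M$ and hence a maximal subgroup $T$ of index $q$ in $G$ with $T\cap N=M$. When $M$ is only normal in $N$, the trivial-core reduction forces $N$ to embed in a product of copies of $\mathbb{Z}/q$, so $N$ is elementary abelian, and one exploits the induced semidirect structure $G=N\rtimes P$ together with the $P$-action on $N$ and its $P$-invariant hyperplanes to locate the required maximal subgroup $T$. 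This last subcase, where neither normality of $M$ in $G$ nor $P$-invariance is available, is where I expect the argument to demand the most care; the hypothesis $O_p(G)=1$ is essential throughout, as it rules out the pathological configurations in which Sylow $p$-subgroups of $G$ fail to supplement $M$ with the required index.
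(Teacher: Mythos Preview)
Your preliminary reductions are correct: $\Phi(N)\le\Phi(G)$, the fact that $\Phi(G)$ is a $p'$-group, and $\Phi(G)\le N$ all go through. A minor slip: $[N:M]$ is only a prime \emph{power}, not a prime (one Sylow subgroup $Q$ of the nilpotent group $\Phi(G)$ fails to lie in $M$, and then $N=MQ$); this does not affect your Case~1 argument, since any maximal subgroup of $G$ containing $MP$ still meets $N$ in exactly $M$.

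The real problem is Case~2. Your stated goal there---to produce a maximal subgroup $T$ of $G$ with $T\cap N=M$---is in fact \emph{unachievable} precisely in the subcase you single out. If $M\unlhd N$ and $P$ does not normalize $M$, then $N\le N_G(M)<G$; any $T$ with $T\cap N=M$ must normalize $M$ (as $T\cap N\unlhd T$), hence $T\le N_G(M)$, and a maximal subgroup of $G$ contained in the proper subgroup $N_G(M)$ must equal $N_G(M)$, whence $T\cap N=N\neq M$. (Concretely: in $G=A_4$, $N=V_4$, $p=3$, no maximal subgroup meets $N$ in a given $C_2$.) So the search for such a $T$ is hopeless; what actually makes the argument work in the reduced situation is Maschke---the $\mathbb{F}_q[\bar P]$-module $\bar N$ is semisimple, so the intersection of all $\bar P$-invariant maximal submodules is zero, and hence $\overline{\Phi(G)}\le\Phi(\bar G)\cap\bar N=0$---but you do not say this. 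Two further issues: your subcase split omits the possibility $M\not\unlhd N$ entirely, and the claim that $O_p(G/M_G)=1$ is neither justified nor true in general (if $\bar P$ fails to act faithfully on $\bar N$ then $C_{\bar P}(\bar N)$ is a nontrivial normal $p$-subgroup).

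The paper takes a quite different route: it argues by minimal counterexample, first reducing to $\Phi(N)=1$ and $\Phi(G)\le N$, then sets $V=Z(\Phi(G))\neq 1$, shows $V$ has a complement in $N$ (using $\Phi(N)=1$), and invokes Gasch\"utz's theorem to lift this to a complement $K$ of $V$ in $G$; then $G=KV\le K\Phi(G)$ forces $G=K$ and $V=1$, a contradiction. This avoids the delicate hunt for a specific maximal subgroup altogether.
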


\begin{proof}
Assume the assertion is wrong and let $G$ be a minimal counterexample. Then $O_p(G)=1$ and we may choose a normal subgroup $N$ of $G$ such that $G/N$ is a $p$-group and $\Phi(G)\neq \Phi(N)$. We choose this normal subgroup $N$ of maximal order. By Remark \ref{Frat0}(d), we have

\bigskip

(1)\;\; $\Phi(N)\leq \Phi(G)$.

\bigskip

The assumption $O_p(G)=1$ implies $O_p(\Phi(G))=1$. Hence, Remark \ref{Frat0}(b) gives

\bigskip

(2)\;\;$\Phi(G)$ has order prime to $p$.

\bigskip

Consider now $\ov{G}:=G/\Phi(N)$. Let $X$ be the full preimage of $O_p(\ov{G})$ in $G$ and $P\in Syl_p(X)$. Then $X=\Phi(N)P$ and the Frattini Argument gives $G=XN_G(P)=\Phi(N)N_G(P)$. Now (1) and Remark \ref{Frat0}(a) imply $G=N_G(P)$. Hence, as $O_p(G)=1$, we have $P=1$ and so $O_p(\ov{G})=1$. Assume now $\Phi(N)\neq 1$. Then $|\ov{G}|<|G|$ and, as $G$ is a minimal counterexample, $\phi(\ov{G})=\Phi(\ov{N})$. Now by Remark \ref{Frat0}(c), $\Phi(\ov{G})=1$. Thus, by Remark \ref{Frat0}, $\Phi(G)=\Phi(N)$, a contradiction. This shows

\bigskip

(3)\;\;$\Phi(N)=1$.

\bigskip

Set now $G_0:=N\Phi(G)$. Observe that, by Remark \ref{Frat0}(a), $G_0$ is a proper subgroup of $G$. As $O_p(G)=1$ and $G_0$ is normal in $G$, we have $O_p(G_0)=1$. Hence, the minimality of $G$ yields $\Phi(G_0)=\Phi(N)$. If $\Phi(G)\not\leq N$, then the maximality of $|N|$ implies $\Phi(G)=\Phi(G_0)=\Phi(N)$, a contradiction. Hence, 

\bigskip

(4)\;\; $\Phi(G)\leq N$.

\bigskip

Set $V:=Z(\Phi(G))$. Observe that, by (3) and Remark \ref{Frat0}(b),

\bigskip

(5)\;\;$V\neq 1$.

\bigskip

We show next

\bigskip

(6)\;\;$V$ has  a complement in $N$.

\bigskip

By (3) and (5), there is a maximal subgroup $M_0$ of $N$ such that $V\not\leq M_0$. Then $N=VM_0$. Hence, there is a non-empty set $\m{E}$ of maximal subgroups of $N$ such that $N=VU$, for $U:=\bigcap\m{E}$. We choose such a set $\m{E}$ of maximal order. If $U\cap V\neq 1$, then (3) implies the existence of a maximal subgroup $M$ of $N$ such that $U\cap V\not\leq M$. Then, in particular, $U\not\leq M$ and so $M\not\in\m{E}$. Moreover, $N=(U\cap V)M$, so $U=(U\cap V)(U\cap M)$ and $N=VU=V(U\cap M)$. This is a contradiction to the maximality of $|\m{E}|$. Hence, $U\cap V=1$ and (6) holds.

\bigskip

We now derive the final contradiction. By (2),(6) and a Theorem of Gasch\"utz (see e.g. \cite[3.3.2]{KS}), there is a complement $K$ of $V$ in $G$, i.e. $K\cap V=1$ and $G=KV=K\Phi(G)$. Now Remark \ref{Frat0}(a) implies $G=K$ and so $V=1$, a contradiction to (5). 
\end{proof}

\section{Minimal parabolics}

Let $G$ be a finite group and $T\in Syl_p(G)$.

\begin{definition}
$G$ is called \textbf{minimal parabolic} (with respect to $p$) if $T$ is not normal in $G$ and there is a unique maximal subgroup of $G$ containing $T$.
\end{definition}

This concept is originally due to McBride. One of the main properties of minimal parabolic groups is the following.

\begin{lemma}\label{minpar}
Let $G$ be minimal parabolic with respect to $p$ and let $N$ be normal in $G$. Then the following hold.
\begin{itemize}
\item[(a)] $N\cap T\unlhd G$ or $O^p(G)\leq N$.
\item[(b)] If $O_p(G)=1$ then $O^p(G)\leq N$ or $N\leq \Phi(G)$.
\end{itemize}
\end{lemma}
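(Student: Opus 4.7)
The plan is to let $M$ denote the unique maximal subgroup of $G$ containing $T$, given by the minimal-parabolic hypothesis. The single observation used throughout is that every \emph{proper} subgroup of $G$ containing $T$ must be contained in $M$: it lies in some maximal subgroup, which, by uniqueness, has to coincide with $M$. With this tool, both parts reduce to a case split on the subgroup $NT$.

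For (a), I would consider the subgroup $NT$ and distinguish $NT=G$ from $NT<G$. If $NT=G$, then $G/N\cong T/(N\cap T)$ is a $p$-group and hence $O^p(G)\le N$. If $NT<G$, then $NT$ is a proper overgroup of $T$, so $NT\le M$, and in particular $N\le M$. In this second case I would apply the Frattini argument to the Sylow $p$-subgroup $N\cap T$ of the normal subgroup $N$, obtaining $G=N\cdot N_G(N\cap T)$. Assuming $N\cap T$ is not normal in $G$, the subgroup $N_G(N\cap T)$ is a proper overgroup of $T$, hence lies in $M$; together with $N\le M$ this would force $G\le M$, a contradiction. So $N\cap T\unlhd G$.

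For (b), I would run the same dichotomy on $NT$. The case $NT=G$ yields $O^p(G)\le N$ as before. In the case $NT<G$ we have $N\le M$; applying (a) and using $O_p(G)=1$, we may assume $N\cap T\le O_p(G)=1$, so that $|N|$ is coprime to $p$. Assume for contradiction that $N\not\le\Phi(G)$, and pick a maximal subgroup $K$ of $G$ with $N\not\le K$; then $G=NK$. Since $[G:K]=[N:N\cap K]$ divides $|N|$, the index $[G:K]$ is prime to $p$, so a Sylow $p$-subgroup of $K$ has order $|T|$ and is therefore a Sylow $p$-subgroup of $G$. After replacing $K$ by a suitable conjugate, we may assume $T\le K^g$ for some $g\in G$; uniqueness of $M$ then forces $K^g=M$, so $K$ is a conjugate of $M$. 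Because $N$ is normal in $G$ and $N\le M$, it follows that $N$ lies in every conjugate of $M$, in particular $N\le K$, contradicting the choice of $K$.

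The only conceptual step is the exploitation of the ``unique maximal overgroup of $T$'' property: once one sees that any proper subgroup containing $T$ must sit in $M$, the Frattini step in (a) and the conjugation step in (b) fall out immediately. The one technical point I expect to need care is the use of (a) inside the proof of (b): this is exactly what delivers $|N|$ coprime to $p$, which is in turn what makes the $p'$-index Sylow argument on $K$ work and forces $K$ to be a conjugate of $M$.
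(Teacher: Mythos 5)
Your proof is correct. The overall strategy is the same one the paper relies on: exploit the fact that any proper subgroup of $G$ containing a Sylow $p$-subgroup lies in the unique maximal overgroup of that Sylow, and argue about a maximal subgroup not containing $N$ to show its index is a $p'$-number. The main structural difference is that the paper does not prove part (a) at all: it cites \cite[1.3(b)]{PPS}. Your Frattini-argument derivation of (a) is a clean, self-contained substitute. For part (b), the paper reaches its contradiction slightly more directly: having chosen a maximal $M_0$ with $N\not\le M_0$ and shown it contains a Sylow $p$-subgroup $S$, it observes that $NS\not\le M_0$ and that $S$ lies in only one maximal subgroup (namely $M_0$), so $NS$ cannot be proper; thus $NS=G$ and $O^p(G)\le N$, contradicting the hypothesis. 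You instead carry the fact $N\le M$ from the $NT<G$ case, conjugate the chosen maximal $K$ back onto $M$, and use normality of $N$ to conclude $N\le K$. Both routes are valid and of comparable length; yours makes heavier use of the $NT$ dichotomy and the conclusion $N\le M$, while the paper's is a little more economical in (b) by working entirely inside the chosen $M_0$.
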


\begin{proof}
For (a) see \cite[1.3(b)]{PPS}. For the proof of (b) assume by contradiction, $O_p(G)=1$, $N\not\leq \Phi(G)$ and $O^p(G)\not\leq N$. Then there is a maximal subgroup $M$ of $G$ such that $N\not\leq M$, and, by (a), $T\cap N=1$. Hence, $G=MN$ and $M$ contains a Sylow $p$-subgroup $S$ of $G$. As $NS\not\leq M$ and $S$ is contained in a unique maximal subgroup of $G$, this implies $G=NS$ and so $O^p(G)\leq N$.
\end{proof}

Given a group $G$ which is not $p$-closed, it is easy to obtain minimal parabolic subgroups of $G$ containing a Sylow $p$-subgroup of $G$. This is a consequence of the following remark which is elementary to check.

\begin{remark}\label{minparg}
Let $H$ be a subgroup of $G$ such that $N_G(T)\leq H< G$. Assume that $P$ is a subgroup of $G$ which is minimal with the properties $T\leq P$ and $P\not\leq H$. Then $P$ is minimal parabolic and $H\cap P$ is the unique maximal subgroup of $P$ containing $T$.
\end{remark}

\section{FF-modules}\label{FF}

Throughout this section let $G$ be a finite group, $p$ be a prime dividing $|G|$, $T\in Syl_p(G)$, and let $V$ be a finite dimensional $GF(p)G$-module.

\begin{definition}\label{OffDef}
\begin{itemize}
\item A subgroup $A$ of $G$ is said to be an \textbf{offender} on $V$, if 
\begin{itemize}
\item[(a)] $A/C_A(V)$ is a non-trivial elementary abelian $p$-group,
\item[(b)] $|V/C_V(A)|\leq |A/C_A(V)|$.
\end{itemize}
\item A subgroup $A$ of $G$ is called \textbf{best offender} if (a) holds and
\begin{itemize}
\item[(b')] $|A/C_A(V)||C_V(A)|\geq |A^*/C_{A^*}(V)||C_V(A^*)|$ for all subgroups $A^*$ of $A$. 
\end{itemize}
We write $\m{O}_G(V)$ for the set of all best offenders in $G$ on $V$.
\item The module $V$ is called an \textbf{FF-module} for $G$, if there is an offender in $G$ on $V$.
\item An offender $A$ on $V$ is called an \textbf{over-offender} on $V$ if $|V/C_V(A)|<|A/C_A(V)|$.
\item If $\m{O}_G(V)\neq\emptyset$, we set
$$m_G(V):=max\{|A/C_A(V)||C_V(A)|\,:\,A\in\m{O}_G(V)\},$$
and define $\mathcal{A}_G(V)$ to be the set of minimal (by inclusion) members of the set
$$\{A\in\m{O}_G(V)\,:\,|A/C_A(V)||C_V(A)|=m_G(V)\}.$$
\item For a set of subgroups $\m{D}$ of $G$ and $E\leq G$ set
$$\m{D}\cap E=\{A\in\m{D}:A\leq E\}.$$
\end{itemize}
\end{definition}

By \cite[2.5(a),(b)]{MS}, every best offender on $V$ is an offender on $V$, and $V$ is an FF-module if and only if there is a best offender on $V$. We will use this fact frequently and without reference.

\begin{definition}
Write $\mathcal{A}(G)$ for the set of all elementary abelian $p$-subgroups of $G$ of maximal order. Recall that the Thompson subgroup $J(G)$ is the subgroup of $G$ generated by $\m{A}(G)$. 
\end{definition}

\begin{lemma}\label{Baum}
Let $V$ be an elementary abelian normal $p$-subgroup of $G$. Let $A\in \mathcal{A}(G)$ and suppose that $A$ does not centralize $V$.
\begin{itemize}
\item[(a)] $A$ is a best offender on $V$.
\item[(b)] If $A$ is not an over-offender on $V$, then $VC_A(V)\in \mathcal{A}(G)$. In particular, we have then $\mathcal{A}(C_G(V))\subseteq \mathcal{A}(G)$ and $J(C_G(V))\leq J(G)$.
\end{itemize}
\end{lemma}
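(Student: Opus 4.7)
The whole argument rests on the observation that products of commuting elementary abelian subgroups remain elementary abelian, together with the maximality of $|A|$. My first preliminary step will be to show that
\[
C_V(A) = V\cap A.
\]
Indeed, $C_V(A)$ and $A$ are elementary abelian and commute (since $C_V(A)\le A \cdot C_G(A)$, but more concretely $A$ is abelian and centralizes $C_V(A)$), so $C_V(A)\cdot A$ is an elementary abelian subgroup of $G$. By maximality $|C_V(A)\cdot A|\le |A|$, which forces $C_V(A)\le A$; the reverse inclusion $V\cap A\le C_V(A)$ is immediate since $A$ is abelian.

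\textbf{Part (a).} Fix an arbitrary subgroup $A^*\le A$ and form
\[
B := A^*\,C_A(V)\,C_V(A^*).
\]
I plan to check that the three factors pairwise commute: $A^*$ and $C_A(V)$ lie in the abelian group $A$; $C_A(V)$ centralizes $V\supseteq C_V(A^*)$; and $A^*$ centralizes $C_V(A^*)$ by definition. Since each factor is elementary abelian, $B$ is an elementary abelian subgroup of $G$, hence $|B|\le |A|$. Now I compute $|B|$ by Dedekind: writing $D:=A^*C_A(V)$ (an elementary abelian subgroup of $A$), I get
\[
|B| \;=\; \frac{|D|\,|C_V(A^*)|}{|D\cap C_V(A^*)|}.
\]
The intersection $D\cap C_V(A^*)$ lies in $V\cap A$, and $V\cap A = C_V(A)$ by the preliminary step, so $|D\cap C_V(A^*)|\le |C_V(A)|$. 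Combining, $|D|\,|C_V(A^*)|\le |A|\,|C_V(A)|$. Since $C_{A^*}(V) = A^*\cap C_A(V)$, the second isomorphism theorem gives $|D| = |A^*C_A(V)| = |C_A(V)|\cdot|A^*/C_{A^*}(V)|$, which after cancellation yields
\[
|A^*/C_{A^*}(V)|\,|C_V(A^*)| \;\le\; |A/C_A(V)|\,|C_V(A)|,
\]
exactly condition (b$'$) of Definition \ref{OffDef}. Since $A/C_A(V)$ is a non-trivial elementary abelian $p$-group (as $A$ is elementary abelian and does not centralize $V$), condition (a) also holds, and (a) is proved.

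\textbf{Part (b).} The key identity is
\[
|V\,C_A(V)| \;=\; \frac{|V|\,|C_A(V)|}{|V\cap C_A(V)|} \;=\; |C_A(V)|\cdot\frac{|V|}{|V\cap A|} \;=\; |C_A(V)|\cdot|V/C_V(A)|,
\]
again using $V\cap A = C_V(A)$. Now $VC_A(V)$ is elementary abelian (since $C_A(V)$ centralizes $V$ and both factors are elementary abelian). If $A$ is not an over-offender, then by (a) (which ensures $A$ is an offender) equality $|V/C_V(A)| = |A/C_A(V)|$ holds, and the displayed identity becomes $|VC_A(V)| = |C_A(V)|\cdot|A/C_A(V)| = |A|$. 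Hence $VC_A(V)\in\mathcal{A}(G)$. Since $VC_A(V)\le C_G(V)$, this gives an elementary abelian subgroup of $C_G(V)$ of the maximum possible order; consequently every element of $\mathcal{A}(C_G(V))$ has order $|A|$ and is also an element of $\mathcal{A}(G)$. Taking the subgroup generated by $\mathcal{A}(C_G(V))$ proves $J(C_G(V))\le J(G)$.

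\textbf{Main obstacle.} The only real subtlety is keeping the three intersections straight — in particular the two crucial instances where I need $V\cap A = C_V(A)$ (first to bound the intersection $D\cap C_V(A^*)$ in part (a), and then to evaluate $|V\cap C_A(V)|$ in part (b)). Once that preliminary equality is isolated and proved, both parts become essentially index arithmetic combined with the maximality of $|A|$.
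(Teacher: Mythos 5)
Your proof is correct. The paper itself does not give an argument here but only cites external results, namely \cite[2.8(e)]{BHS} for part (a) and \cite[B.2.4]{AS} for part (b), so there is no internal proof to compare against; what you have written is a clean, self-contained version of the standard Thompson/Timmesfeld replacement argument that those references encapsulate. Your preliminary reduction $C_V(A)=V\cap A$, the auxiliary group $B=A^*C_A(V)C_V(A^*)$ compared against $|A|$ to obtain condition (b$'$), and the order computation $|VC_A(V)|=|C_A(V)|\,|V/C_V(A)|$ in part (b) are all exactly the ingredients one expects. One small remark: in part (a) the containment $D\cap C_V(A^*)\leq C_V(A)$ is in fact an equality, since $C_V(A)\leq C_A(V)\leq D$ and $C_V(A)\leq C_V(A^*)$; this does not affect your inequality but makes the bookkeeping transparent. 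A minor phrasing slip in the preliminary step ($C_V(A)\le A\cdot C_G(A)$ is not what you mean — you immediately correct it to the relevant fact that $A$ centralizes $C_V(A)$) does not affect the argument.
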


\begin{proof}
For the proof of (a) see \cite[2.8(e)]{BHS}. For the proof of (b) see \cite[B.2.4]{AS}.
\end{proof}

\begin{lemma}\label{1}
Let $V,W$ be normal elementary abelian $p$-subgroups of $G$ with $V\leq W$ and $[V,J(G)]\neq 1$. Let $A\in\m{A}(G)$ such that $[V,A]\neq 1$, and $AC_G(W)$ is a minimal with respect to inclusion element of the set
$$\{BC_G(W)\;:\;B\in \m{A}(G),\;[W,B]\neq 1\}.$$
Assume $A$ is not an over-offender on $V$. Then we have $|W/C_W(A)|=|A/C_A(W)|=|V/C_V(A)|$ and $W=VC_W(A)$.
\end{lemma}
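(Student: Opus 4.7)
The plan is to exploit the minimality of $AC_G(W)$ by testing it against the ``shifted'' offender $A_0 := VC_A(V)$, which by Lemma~\ref{Baum}(b) lies in $\m{A}(G)$ (since $A$ is not an over-offender on $V$). The two minimality alternatives for $A_0$ will either give us the desired equality of centralizers in $A$, or force $A$ to centralize $V$, contradicting the hypothesis $[V,A]\neq 1$.

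First I would record the basic setup: since $W$ is elementary abelian and $V\leq W$, we have $V\leq C_G(W)$. Lemma~\ref{Baum}(a) applied to both $V$ and $W$ shows that $A$ is a best offender on $V$ and on $W$ (using $[V,A]\neq 1$ and, as a consequence, $[W,A]\neq 1$). In particular, $|W/C_W(A)|\leq |A/C_A(W)|$, and the assumption that $A$ is not an over-offender on $V$ gives $|V/C_V(A)| = |A/C_A(V)|$. Finally, $C_A(W)\leq C_A(V)$ because $V\leq W$, and $V\cap C_W(A) = C_V(A)$, so
\[
|VC_W(A)/C_W(A)| = |V/C_V(A)| \leq |W/C_W(A)| \leq |A/C_A(W)|.
\]
Thus the lemma reduces to showing $|A/C_A(W)| = |A/C_A(V)|$, i.e.\ $C_A(V)=C_A(W)$.

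Next I would invoke minimality. Using $V\leq C_G(W)$ we get $A_0 C_G(W) = C_A(V) C_G(W) \leq A C_G(W)$, so $A_0 \in \m{A}(G)$ forces, by the minimality hypothesis, one of two things:
\begin{itemize}
\item[(1)] $[W,A_0] = 1$, or
\item[(2)] $A_0 C_G(W) = A C_G(W)$.
\end{itemize}
In case (1), since $[W,V]=1$ already, we obtain $[W,C_A(V)]=1$, i.e.\ $C_A(V)\leq C_A(W)$, whence $C_A(V)=C_A(W)$ as required; the chain of inequalities above then collapses to equalities, and $VC_W(A)=W$ follows from $|VC_W(A)/C_W(A)| = |V/C_V(A)| = |W/C_W(A)|$.

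The main obstacle is ruling out case (2), which requires a careful index computation. Using Dedekind's modular law (with $V\leq C_G(W)$ and $C_A(W)\leq C_A(V)$), I would show
\[
A_0\cap C_G(W) = VC_A(V)\cap C_G(W) = V\bigl(C_A(V)\cap C_A(W)\bigr) = V C_A(W),
\]
so $|A_0C_G(W)/C_G(W)| = |C_A(V)/(V\cap A)C_A(W)| \leq |C_A(V)/C_A(W)|$. On the other hand, $|AC_G(W)/C_G(W)| = |A/C_A(W)| = |A/C_A(V)|\cdot|C_A(V)/C_A(W)|$. Case (2) equates these two, forcing $|A/C_A(V)|=1$, i.e.\ $[V,A]=1$, contrary to hypothesis. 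Thus case (1) holds, and the proof concludes as above.
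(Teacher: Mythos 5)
Your proof is correct and follows essentially the same route as the paper's: both apply Lemma \ref{Baum} to get the shifted offender $VC_A(V)\in\m{A}(G)$, use the minimality of $AC_G(W)$ to deduce $[W,VC_A(V)]=1$, and conclude $C_A(V)=C_A(W)$ before closing the chain of inequalities. The only stylistic difference is that the paper observes $VC_A(V)C_G(W)$ is a \emph{proper} subgroup of $AC_G(W)$ in one line (since $VC_A(V)C_G(W)\leq C_G(V)$ while $A\not\leq C_G(V)$), whereas you arrive at the same conclusion through a dichotomy and an index computation that rules out equality; note in passing that $V\cap A\leq C_A(W)$ already, so your inequality $|C_A(V)/(V\cap A)C_A(W)|\leq|C_A(V)/C_A(W)|$ is in fact an equality.
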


\begin{proof}
It follows from Lemma~\ref{Baum} that
$|V/C_V(A)|=|A/C_A(V)|$, $B:=C_A(V)V\in\m{A}(G)$ and $|W/C_W(A)|\leq |A/C_A(W)|$.
Since $[V,A]\neq 1$ and $[V,B]=1$, $BC_G(W)$ is a proper subgroup of $AC_G(W)$. Hence, the minimality of $AC_G(W)$ yields $[W,B]=1$.
Thus, $C_A(V)=C_A(W)$. It follows that
\begin{eqnarray*}
|W/C_W(A)|&\leq& |A/C_A(W)|=|A/C_A(V)|\\
&=&|V/C_{V}(A)|=|VC_W(A)/C_W(A)|\leq |W/C_W(A)|.
\end{eqnarray*}
Now equality holds above, i.e $|A/C_A(W)|=|W/C_W(A)|=|V/C_V(A)|$ and $W=VC_W(A)$. 
\end{proof}

We continue by looking at natural $SL_2(q)$-modules and natural $S_n$-modules. These modules provide important examples of FF-modules. 

\begin{definition}\label{NatSL2qM}
Suppose $G\cong SL_2(q)$ for some power $q$ of $p$. Then $V$ is called a \textbf{natural $SL_2(q)$-module} for $G$ if $V$ is irreducible, $F:=End_G(V)\cong GF(q)$ and $V$ is a $2$-dimensional $FG$-module.
\end{definition}

The following lemma about natural $SL_2(q)$-modules is well known and elementary to check.

\begin{lemma}\label{natSL2q}
Assume that $G\cong SL_2(q)$ and $V$ is a natural $SL_2(q)$-module for $G$. Then the following hold:
\begin{itemize}
\item[(a)] $|C_V(T)|=q$ and $C_V(T)=[V,T]=C_V(a)$ for each $a\in T^\#$.
\item[(b)] We have $\m{O}_G(V)=\{A\leq G\;:\;A\;\mbox{is an offender on V}\;\}=Syl_p(G)$. Moreover, there are no over-offenders in $G$ on $V$.
\item[(c)] $C_G(C_V(T))=T$.
\item[(d)] Every element of $G$ of order coprime to $p$ acts fixed point freely on $V$.
\end{itemize}
\end{lemma}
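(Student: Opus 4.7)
The plan is to fix an identification of $V$ with $F^2$, where $F:=End_G(V)\cong GF(q)$, so that $G\cong SL_2(q)$ acts as the standard matrix group on $F^2$ and $T$ is a chosen Sylow $p$-subgroup (up to the appropriate left/right convention, the upper unitriangular matrices). Once this model is in place, the four statements become short linear-algebra calculations; there is no genuine obstacle beyond bookkeeping.

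For (a), note that $T$ has order $q$ and stabilizes a unique line $L\subseteq V$. For every $t\in T^\#$ the operator $t-1$ on $V$ is a nonzero nilpotent with $(t-1)^2=0$; since $\dim_F V = 2$, its image and kernel coincide, and both equal $L$. Hence $[V,t]=C_V(t)=L=C_V(T)$ for every $t\in T^\#$, and $|C_V(T)|=q$. Statement (c) then follows from the fact (standard for the natural $SL_2(q)$-module) that $G$ acts transitively on $V^\#$: the stabilizer in $G$ of a nonzero vector in $L$ has order $|G|/(q^2-1)=q$, and since $T$ is contained in this stabilizer and has the same order, the two coincide. Any $g\in C_G(C_V(T))$ fixes a nonzero vector of $L$, hence lies in $T$.

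For (b), faithfulness of $V$ gives $C_A(V)=1$ for every $A\leq G$, and any elementary abelian $p$-subgroup of $SL_2(q)$ lies in some Sylow $p$-subgroup $P$ of order $q$. By (a) (applied to $P$, using that Sylow subgroups are conjugate) one has $C_V(A)=C_V(P)$ for every nontrivial $A\leq P$, so $|V/C_V(A)|=q$; the offender inequality $|A|\geq |V/C_V(A)|$ then forces $|A|=q$, hence $A=P$ is Sylow, and no over-offenders exist. Conversely, $T$ itself is a best offender: the quantity $|A^*|\cdot |C_V(A^*)|$ equals $q^2$ when $A^*\in\{1,T\}$ and at most $q\cdot q=q^2$ for the intermediate subgroups. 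Together this gives $\m{O}_G(V)=\sy_p(G)$ and the equality with the set of all offenders.

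For (d), any $g\in G$ of order coprime to $p$ is semisimple, hence diagonalizable over $\overline{F}$ with eigenvalues $\lambda,\lambda^{-1}$. A nonzero fixed vector in $V$ would give $1$ as an eigenvalue of $g$; the condition $\det g=1$ then forces the other eigenvalue also to be $1$, and diagonalizability forces $g=1$. So every non-identity element of $G$ of order coprime to $p$ has no nonzero fixed vectors on $V$.
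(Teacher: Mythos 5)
Your proof is correct. The paper offers no proof of this lemma (it is flagged as well known and elementary to check), and your verification via the standard identification of $V$ with $F^2$, computing $\ker(t-1)=\operatorname{im}(t-1)$ for $t\in T^\#$, is exactly the standard argument one would write down; the deductions for (b), (c), (d) from (a), transitivity on $V^\#$, and semisimplicity of $p'$-elements are all sound.

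One small point of care you handled correctly but implicitly in (b): to conclude that every offender is a \emph{best} offender, one must observe that the set of offenders equals $\sy_p(G)$ and that a Sylow $p$-subgroup satisfies the best-offender inequality (b$'$) against all of its subgroups, which you verified via $|A^*|\cdot|C_V(A^*)|\leq q^2$. This closes the loop: offenders $\subseteq \sy_p(G) \subseteq \m{O}_G(V) \subseteq$ offenders, giving all three equalities in (b).
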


\begin{lemma}\label{AutSL2qModule}
Let $H\unlhd G$ such that $H\cong SL_2(q)$ and $C_T(H)\leq H$. Let $V$ be a natural $SL_2(q)$-module for $H$ and assume $C_G(V)=1$. Then $C_T(C_V(T\cap H))\leq H$.
\end{lemma}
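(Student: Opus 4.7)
The plan is to realize $g \in C_T(C_V(T\cap H))$ as an upper unitriangular matrix in the natural $SL_2(q)$-representation, thereby placing it in $T\cap H$. Set $T_H := T \cap H$ (a Sylow $p$-subgroup of $H$, since $H \unlhd G$) and $C := C_V(T_H)$.

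\textit{Step 1: $g$ normalizes $T_H$.} By Lemma~\ref{natSL2q}(a), $T_H$ fixes $C$ pointwise. Since $g$ centralizes $C$, the $p$-subgroup $T_H^g \leq H^g = H$ also centralizes $C^g = C$, so $T_H^g \leq C_H(C) = T_H$ by Lemma~\ref{natSL2q}(c). Equality follows by cardinality.

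\textit{Step 2: $g$ acts semilinearly on $V$.} Identify $G$ with its image in $GL(V)$, using that $C_G(V) = 1$. The natural-module hypothesis supplies the field $K := \mathrm{End}_H(V) \cong GF(q)$, and Schur's lemma applied to the irreducible $H$-module $V$ yields $C_{GL(V)}(H) = K^\times$. Since $g$ normalizes $H$, it normalizes $K^\times$; the induced automorphism $k \mapsto gkg^{-1}$ of $K^\times$ extends to a field automorphism $\sigma$ of $K$, and consequently $g(kv) = \sigma(k)g(v)$ for all $k \in K$, $v \in V$, i.e.\ $g$ lies in $\Gamma L_K(V)$.

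\textit{Step 3: matrix analysis.} Fix a $K$-basis $(e_1, e_2)$ of $V$ with $C = Ke_1$, so that $T_H$ consists of unipotents $u_a = \begin{pmatrix} 1 & a \\ 0 & 1\end{pmatrix}$. Writing $g(v) = A\sigma(v)$ with $A \in GL_2(K)$, the condition that $g$ fixes every $ke_1 \in C$ forces (taking $k=1$) both the first column of $A$ to equal $(1,0)^t$ and $\sigma = \mathrm{id}$. Combined with $gT_H g^{-1} = T_H$ from Step~1, $A$ must be upper triangular: $A = \begin{pmatrix} 1 & \beta \\ 0 & \gamma \end{pmatrix}$. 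Since $g$ is a $p$-element while $\gamma \in K^\times$ has order dividing $q-1$, we conclude $\gamma = 1$, so $g = u_\beta \in T_H \leq H$, as required.

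The main obstacle is Step~2: translating the group-theoretic hypothesis $H \unlhd G$ into the matrix-level statement that $g$ acts semilinearly. This hinges crucially on $V$ being a \emph{natural} $SL_2(q)$-module, which is precisely what pins down $\mathrm{End}_H(V) = K$ and hence $C_{GL(V)}(H) = K^\times$; without this identification, $g$ could be an arbitrary element of $GL(V)$ with no tractable form, and the final matrix calculation of Step~3 would have no hook on which to hang.
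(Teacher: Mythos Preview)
Your proof is correct, but it follows a genuinely different route from the paper's.

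The paper argues as follows: set $Z:=C_V(T\cap H)$ and $T_0:=C_T(Z)$. Using the structure of $Aut(SL_2(q))$ (specifically, that a Sylow $p$-subgroup of $Aut(SL_2(q))$ is the product of an inner unipotent subgroup with a cyclic group of field automorphisms, the latter centralizing a copy of $SL_2(p)$), one finds $x\in H\setminus N_H(T\cap H)$ with $T_0=(T\cap H)\,C_{T_0}(x)$. Then $C_{T_0}(x)$ centralizes both $Z$ and $Z^x$, hence centralizes $V=ZZ^x$; since $C_G(V)=1$ this forces $C_{T_0}(x)=1$, so $T_0\leq H$.

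Your argument instead realizes $g\in T_0$ concretely as a semilinear transformation of the $2$-dimensional $K$-space $V$, and then a direct matrix computation pins it down as upper unitriangular. This is more computational but entirely self-contained: it does not appeal to the structure of $Aut(SL_2(q))$, and it makes explicit where the natural-module hypothesis enters (namely, to identify $End_H(V)$ with $K\cong GF(q)$ and hence $C_{GL(V)}(H)$ with $K^\times$). The paper's proof, by contrast, is shorter and more group-theoretic, exploiting $C_G(V)=1$ via the decomposition $V=ZZ^x$ rather than via a coordinate calculation. One small point of phrasing: in Step~2 it is cleaner to say that conjugation by $g$ is a ring automorphism of $End_{GF(p)}(V)$ which restricts to $K$ (the centralizer of $H$), giving a field automorphism directly, rather than describing it as an extension from $K^\times$.
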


\begin{proof}
Set $Z:=C_V(T\cap H)$ and $T_0:=C_T(Z)$. Observe that, by the structure of $Aut(SL_2(q))$, there is an element $x\in H\backslash N_H(T\cap H)$ such that $T_0=(T\cap H)C_{T_0}(x)$. Then $[Z^x,C_{T_0}(x)]=1$ and so, $V=ZZ^x$ is centralized by $C_{T_0}(x)$. Hence, $C_{T_0}(x)=1$ and $T_0\leq H$. 
\end{proof}

\begin{lemma}\label{natSL2qOffFactor}
Let $G\cong SL_2(q)$ and $V/C_V(G)$ be a natural $SL_2(q)$-module for $G$. Let $A\leq G$ be an offender on $V$. Then
\begin{itemize}
\item[(a)] $|V/C_V(A)|=|A|=q$ and $C_V(A)=C_V(a)$ for every $a\in A^\#$,
\item[(b)] $[V,A,A]=1$.
\end{itemize}
\end{lemma}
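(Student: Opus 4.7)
The plan is to reduce both parts of the lemma to Lemma~\ref{natSL2q} applied to the natural module $\ov V := V/C_V(G)$. The key observation is that $G\cong SL_2(q)$ acts faithfully on $\ov V$ (since even when $q$ is odd, the central involution $-I$ of $SL_2(q)$ does not act trivially on a natural module). Consequently $C_G(\ov V)=1$, and in particular $C_A(V)\le C_A(\ov V)=1$, so $|A/C_A(V)|=|A|$ and $A$ is elementary abelian.

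First I would set $W:=\{v\in V:[v,A]\le C_V(G)\}$, so that $W/C_V(G)=C_{\ov V}(A)$ and hence $|V/W|=|\ov V/C_{\ov V}(A)|$. Since $C_V(A)\le W$, the offender hypothesis yields
\[
|\ov V/C_{\ov V}(A)|=|V/W|\le |V/C_V(A)|\le |A/C_A(V)|=|A|.
\]
Thus $A$ is an offender on $\ov V$, and Lemma~\ref{natSL2q}(b) forces $A\in\sy_p(G)$, so $|A|=q$. Lemma~\ref{natSL2q}(a) then gives $|\ov V/C_{\ov V}(A)|=q$, so all the displayed inequalities are equalities. This delivers $|V/C_V(A)|=q=|A|$ and, crucially, $C_V(A)=W$.

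For the remaining assertion of (a), fix $a\in A^\#$. Lemma~\ref{natSL2q}(a) gives $C_{\ov V}(a)=C_{\ov V}(A)$, which translates to $\{v\in V:[v,a]\le C_V(G)\}=W=C_V(A)$. Since $C_V(a)$ is contained in this set, we get $C_V(a)\le C_V(A)$, and the reverse inclusion is trivial. For (b) I will again use Lemma~\ref{natSL2q}(a), which this time says $[\ov V,a]=C_{\ov V}(a)=C_{\ov V}(A)$ for each $a\in A^\#$; reading this back upstairs means $[V,a]\le W=C_V(A)$, so $[V,A,A]=1$.

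There is no serious obstacle here: the proof is a bookkeeping argument that transfers known facts about the natural module from $\ov V$ back to $V$. The only point that requires a little care is the identification $C_V(A)=W$, which does not follow from either of the two separate inequalities individually but emerges only after squeezing them between $q$ on both sides. Once that equality is established, the statements about individual $a\in A^\#$ and the commutator action follow immediately from Lemma~\ref{natSL2q}(a).
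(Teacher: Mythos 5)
Your proof is correct, and it takes a genuinely different route from the paper's. For part~(a), the paper fixes $a\in A^\#$, chooses $g\in G$ with $G=\langle A,a^g\rangle$, and sandwiches $|V/C_V(G)|=q^2$ between $|V/C_V(A)|\cdot|V/C_V(a^g)|$ and $|A|^2$; equality throughout then yields $|A|=q$ and $C_V(a)=C_V(A)$. You instead push the offender condition down to $\ov V=V/C_V(G)$, invoke Lemma~\ref{natSL2q}(b) to identify $A$ as a full Sylow $p$-subgroup, and extract $C_V(A)=W$ from the equality case of your chain; part~(b) is then read off from Lemma~\ref{natSL2q}(a) just as in the paper. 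Your route has the advantage that it leans only on the already-catalogued offenders of the natural module, whereas the paper's argument rests on a generation fact about $SL_2(q)$ (that $A$ together with some conjugate of a single nontrivial $a\in A$ generates $G$) which requires some care when $A$ is a proper subgroup of a Sylow $p$-subgroup; your argument simply does not need it. One cosmetic remark: you write $[\ov V,a]=C_{\ov V}(a)$, but Lemma~\ref{natSL2q}(a) gives $[\ov V,A]=C_{\ov V}(a)$; what you actually use is the inclusion $[\ov V,a]\le[\ov V,A]=C_{\ov V}(A)$, which is what pulls back to $[V,a]\le W=C_V(A)$, so the conclusion stands.
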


\begin{proof}
As $G\cong SL_2(q)$, for every $a\in A^\#$ there exists $g\in G$ such that $G=\<A,a^g\>$. Hence, 
$$|V/C_V(G)|\leq |V/C_V(A)||V/C_V(a)|\leq |V/C_V(A)|^2\leq |A|^2=q^2=|V/C_V(G)|.$$
Thus, the inequalities are equalities and (a) holds. Together with Lemma~\ref{natSL2q}(a) this implies (b).
\end{proof}

\begin{lemma}\label{AT}
Let $V$ be an elementary abelian normal subgroup of $G$. Assume $G/V\cong SL_2(q)$ and $V/C_V(G)$ is a natural $SL_2(q)$-module. Then $V\in\m{A}(T)$. Moreover, the following hold:
\begin{itemize}
\item[(a)] For $R\in \m{A}(T)\backslash \{V\}$, we have $T=VR$, $R\cap V=Z(T)$ and $C_{V/C_V(J(G))}(T)=Z(T)/C_V(G)$.
\item[(b)] If $p=2$ and $J(T)\neq V$ then $|\m{A}(T)|=2$ and every elementary abelian subgroup of $T$ is contained in an element of $\m{A}(T)$.
\end{itemize}
\end{lemma}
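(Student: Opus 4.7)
The plan is as follows. Set $\bar G := G/V$ and $\bar T := T/V$; then $\bar T \in \mathrm{Syl}_p(\bar G)$ has order $q$, and $\bar G \cong SL_2(q)$ acts faithfully on $V/C_V(G)$. Consequently $C_T(V) = V$, so $Z(T) \leq V$, and combined with the automatic $C_V(T) \leq Z(T)$ this gives $Z(T) = C_V(T)$. By Lemma~\ref{natSL2q}(a), $|C_{V/C_V(G)}(\bar T)| = q$, so $C_V(T)$ maps into a group of order $q$ with kernel $C_V(G)$, yielding $|C_V(T)| \leq |C_V(G)|\, q$.

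For the first two assertions, let $R \leq T$ be elementary abelian and set $\bar R_0 := RV/V$ of order $p^k$. Because $R$ is abelian, $R \cap V \leq C_V(\bar R_0)$. By Lemma~\ref{natSL2q}(b), every offender in $\bar G$ on the natural module is a full Sylow $p$-subgroup, so if $\bar R_0 \lneq \bar T$ then $|C_{V/C_V(G)}(\bar R_0)| < q^2/p^k$, giving $|R| = p^k\,|R \cap V| \leq |V|/p$. If instead $\bar R_0 = \bar T$, equivalently $T = VR$, then $R \cap V \leq C_V(T)$ yields $|R| \leq |V|$ with equality iff $R \cap V = C_V(T)$. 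This proves $V \in \mathcal{A}(T)$, and for any $R \in \mathcal{A}(T) \setminus \{V\}$ the equality case must occur, so $T = VR$, $R \cap V = C_V(T) = Z(T)$, and $|Z(T)| = |C_V(G)|\, q$. For the final clause of (a), note that the image of $\langle R^G \rangle$ in $\bar G$ is a normal subgroup containing the Sylow $\bar T$, hence equals $\bar G \cong SL_2(q)$; together with $V \in \mathcal{A}(G)$ this gives $J(G)V = G$, so $C_V(J(G)) = C_V(G)$. Then $V/C_V(J(G))$ is the natural module, $C_{V/C_V(G)}(T) = C_{V/C_V(G)}(\bar T)$ has order $q$, and since it contains $Z(T)/C_V(G)$ which also has order $q$ (as $C_V(G) \leq Z(T)$), the two are equal.

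For (b), the hypotheses $p=2$ and $J(T)\neq V$ force $\mathcal{A}(T)\neq\{V\}$, so I may fix $R\in\mathcal{A}(T)\setminus\{V\}$ and apply the result of (a). For $r \in R \setminus V$, the image $\bar r \in \bar T$ is nontrivial, so Lemma~\ref{natSL2q}(a) gives $C_{V/C_V(G)}(\bar r) = C_{V/C_V(G)}(\bar T)$ of order $q$, whence $|C_V(r)| \leq |C_V(G)|\, q = |Z(T)|$; combined with the obvious $Z(T) \leq C_V(r)$ this yields $C_V(r) = Z(T)$. Writing $t \in T = VR$ as $t = vr$ with $v \in V$ and $r \in R$, the equations $v^2 = r^2 = 1$ give $t^2 = [v,r]$, so $t$ is an involution precisely when $v \in C_V(r)$; it follows that the set of involutions of $T$ equals $V \cup R$. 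Finally, if $S \leq T$ is elementary abelian with $S \not\leq V$, pick $s \in S \setminus V$, so $s \in R$; for any $s' \in S$, the element $ss'$ is an involution, hence lies in $V \cup R$, and a short case check using $s \in R \setminus V$ forces $s' \in R$. Thus $S \leq V$ or $S \leq R$, and applying this to elements of $\mathcal{A}(T)$ gives $\mathcal{A}(T) = \{V, R\}$. The main technical point throughout is that $C_V(T)$ need not surject onto $C_{V/C_V(G)}(\bar T)$ in general, and the existence of an $R \in \mathcal{A}(T) \setminus \{V\}$ is precisely what forces this lifting, which is the pivot for both parts (a) and (b).
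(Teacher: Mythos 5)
Your proof is correct and follows the paper's approach: both rely on Lemma~\ref{natSL2q}(a),(b) together with an order count for elementary abelian subgroups of $T$ (which the paper packages via Lemma~\ref{Baum} and you carry out directly), and both derive (b) from the identity $t^2=[v,r]$ for $t=vr$ with $v\in V$, $r\in R$. Your explicit verification that $C_V(J(G))=C_V(G)$, via $\langle R^G\rangle V=G$, is a welcome detail that the paper's one-line proof leaves unstated.
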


\begin{proof}
Property (a) and $V\in \m{A}(T)$ is a consequence of Lemma~\ref{Baum}(a) and Lemma~\ref{natSL2q}(a),(b). Now (b) is a consequence of (a), Lemma~\ref{natSL2q}(a) and the fact that the product of two involutions is an involution if and only if these two involutions commute.
\end{proof}

\begin{lemma}\label{StructureS}
Let $p=2$ and let $V$ be an elementary abelian normal $2$-subgroup of $G$. Suppose $S$ is a $2$-group containing $T$ as a subgroup. Assume the following conditions hold: 
\begin{itemize}
\item[(i)] $V\leq J(G)$, and $J(G)/V\cong SL_2(q)$ for some power $q$ of $2$.
\item[(ii)] $V/C_V(J(G))$ is a natural $SL_2(q)$-module for $J(G)/V$.
\item[(iii)] $S\neq T=N_S(V)=N_S(U)$, for every $1\neq U\leq C_V(J(G))$ with $U\unlhd T$.
\item[(iv)] $C_T(J(G)/V)\leq V$.
\end{itemize}
Then the following hold:
\begin{itemize}
\item[(a)] $|N_S(T):T|=|N_S(J(T)):T|=2$ and $N_S(T)=N_S(J(T))$.
\item[(b)] If $J(N_S(J(T)))\not\leq T$ then $q=2$ and $|V|=4$.
\item[(c)] If $J(N_S(J(T)))\leq T$ then $J(T)=J(S)$, and $|S:T|=2$.
\item[(d)] If $T=J(T)$ and $Z(T)=Z(S)$ then $C_T(u)=Z(S)$ for every involution $u\in N_S(T)\backslash T$.
\end{itemize}
\end{lemma}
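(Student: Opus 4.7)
The overall strategy is to pass to $M:=J(G)$ and its Sylow $2$-subgroup $T_0:=T\cap M$, apply Lemma~\ref{AT} to $M$, and then control the extensions $T_0\leq T\leq S$ using conditions (iii) and (iv). Conditions (i),(ii) together with $q\geq 2$ place us in the setting of Lemma~\ref{AT}(b) (with $M$ in the role of ``$G$''), yielding $\m{A}(T_0)=\{V,R\}$ with $T_0=VR$, $V\cap R=Z(T_0)$, and every elementary abelian subgroup of $T_0$ contained in $V$ or $R$. Condition (iv) embeds $T/V$ into $\mathrm{Aut}(M/V)=SL_2(q)\rtimes\langle\phi\rangle$, so $T/T_0$ is a cyclic $2$-group of field automorphisms. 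A short analysis of how involutions of $T\setminus T_0$ act on the natural module---using that a non-trivial field automorphism fixes only a proper subfield and cannot be an over-offender (Lemma~\ref{natSL2q}(b))---yields $\m{A}(T)=\m{A}(T_0)=\{V,R\}$ and $J(T)=T_0$.

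For~(a), $N_S(T)$ permutes $\m{A}(T)=\{V,R\}$; the stabilizer of $V$ is $N_S(T)\cap N_S(V)=T$ by (iii), and $T\lneq S$ forces $N_S(T)\gneq T$, so $|N_S(T):T|=2$. Since $J(T)=T_0$ is characteristic in $T$, $N_S(T)\leq N_S(J(T))$; conversely any $s\in N_S(J(T))$ permutes $\m{A}(J(T))=\{V,R\}$, whence $|N_S(J(T)):T|\leq 2$ and the two normalizers coincide. Put $N:=N_S(J(T))$. For~(c), if $J(N)\leq T$ then every element of $\m{A}(N)$ lies in $T$, so $\m{A}(N)=\m{A}(T)=\{V,R\}$ and $J(N)=J(T)$; any $s\in N_S(N)$ normalizes the characteristic subgroup $J(T)$, so $N_S(N)\leq N$, and since $S$ is a $p$-group we get $N=S$, whence $|S:T|=2$ and $J(S)=J(T)$. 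For~(b), suppose $J(N)\not\leq T$, pick $E\in\m{A}(N)$ with $E\not\leq T$, and take an involution $e\in E\setminus T$. Then $e$ swaps $V,R$ (else $e\in N_S(V)=T$) and centralizes $E\cap T$; the case $E\cap T\in\{V,R\}$ is ruled out, for then $e$ would centralize $V$ or $R$ and lie in $N_S(V)=T$. Hence $|E\cap T|=|V|/2$ and $|E|=|V|=q^2$. A case analysis on whether $E\cap T$ lies in $V$, in $R$, or straddles $T_0$, combined with $(E\cap T)^e=E\cap T$ and the swap $V\leftrightarrow R$, forces $E\cap T\leq V\cap R=Z(T_0)$; then $q^2/2=|E\cap T|\leq |Z(T_0)|=q$, so $q=2$ and $|V|=4$.

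For~(d), assume $T=J(T)=T_0$ and $Z(T)=Z(S)$, and let $u\in N_S(T)\setminus T$ be an involution; put $Z:=Z(T)=V\cap R$. The swap gives $C_V(u)\leq V\cap V^u=V\cap R=Z$ and symmetrically $C_R(u)\leq Z$; the hypothesis $Z(T)=Z(S)$ yields $Z\leq C_T(u)$ and that $u$ centralizes $Z$ pointwise. The natural $SL_2(q)$-module structure forces $[V,R]\leq Z$ (the commutator of $V$ with a shear in $R$ lies in the fixed axis), so $T/Z\cong V/Z\times R/Z$ is abelian with $u$ acting by swapping factors; the $u$-fixed subgroup of $T/Z$ is the diagonal of order $q$, meeting $V/Z$ trivially. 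The principal obstacle is to show that no non-zero element of this diagonal lifts to $C_T(u)$. Writing $t=vr$ and tracking $t^u t^{-1}\in Z$ as a cocycle $\phi\colon V/Z\to Z$, one shows injectivity of $\phi$ using that $u^2=1$ (giving $(u|_R)\circ(u|_V)=\mathrm{id}_V$) together with the non-degeneracy of the commutator pairing $V/Z\times R/Z\to Z$ (from the natural-module structure). Hence $C_T(u)\leq Z$, and combined with $Z\leq C_T(u)$ we conclude $C_T(u)=Z=Z(S)$.
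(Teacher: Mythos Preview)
Your overall structure is sound and parts (a) and (c) are correct. For the preliminary claim $\m{A}(T)=\m{A}(T_0)$, your field-automorphism analysis is more work than needed: since $T\in Syl_p(G)$, every $A\in\m{A}(T)$ lies in $\m{A}(G)$ and hence in $J(G)$, so $J(T)\leq T\cap J(G)=T_0$ immediately, giving $\m{A}(T)=\m{A}(T_0)$. For (d), your cocycle framework is fine, but the justification ``non-degeneracy of the commutator pairing'' is not quite enough: non-degeneracy only says that for each $v\notin Z$ \emph{some} $r\in R$ has $[v,r]\neq 1$, whereas you need $[v,v^u]\neq 1$ for the \emph{specific} element $v^u$. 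What makes this work is the stronger fact (Lemma~\ref{natSL2q}(a)) that $C_V(r)=Z(T)$ for \emph{every} $r\in R\setminus Z(T)$; this is exactly what the paper's direct computation uses.

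The real gap is in (b). First, you write $|V|=q^2$ and $|Z(T_0)|=q$, implicitly assuming $C_V(J(G))=1$; in general $|V|=q^2\,|C_V(J(G))|$ and $|Z(T_0)|=q\,|C_V(J(G))|$. With the corrected values your inequality still gives $q\leq 2$, but it does \emph{not} give $|V|=4$. To obtain $|V|=4$ one must invoke hypothesis~(iii): the paper shows $C_V(J(G))\cap C_V(J(G))^e=1$ (this intersection is normal in $T$ and $e$-invariant since $e^2\in T$, so (iii) applies), and since $E\cap J(T)\leq Z(J(T))$ is centralized by $e\in E$, one gets $(E\cap J(T))\cap C_V(J(G))=1$ and hence $|E\cap J(T)|\leq q$. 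Second, your ``straddles $T_0$'' case is not actually handled: if $E\cap T\not\leq T_0=J(T)$ then certainly $E\cap T\not\leq Z(T_0)$, so your stated conclusion fails there. The paper treats this case via Lemma~\ref{AutSL2qModule}, showing that any $t\in T\setminus J(T)$ satisfies $|C_{Z(J(T))/C_V(J(G))}(t)|<q$; combined with $|(E\cap T)/(E\cap J(T))|\leq 2$ (as $T/J(T)$ is cyclic), this again yields $|E\cap T|\leq q$, whence $q^2\leq |V|\leq |E|\leq 2q$ and both $q=2$ and $|V|=4$ follow.
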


\begin{proof}
Since $S\neq T$ and $N_S(V)=T$, there is a conjugate of $V$ in $T$ distinct from $V$. Now by Lemma~\ref{AT}, $|\m{A}(T)|=2$ and $V\in\m{A}(T)$. As $S\neq T=N_S(V)$, this implies (a). Assume now there is  $R\in\m{A}(N_S(J(T)))$ such that $R\not\leq T$. Observe that by Lemma~\ref{AT}, $R\cap~J(T)\leq V\cap~V^x=Z(J(T))$ for $x\in R\backslash T$. Moreover, by (iii), $C_V(J(G))\cap C_V(J(G))^x=1$ and so $R\cap~J(T)\cap~C_V(J(G))=1$. Hence, if $R\cap T\leq J(T)$ then $|R\cap T|\leq q$. By (iv), $T/V$ embeds into $Aut(J(G)/V)$, so we have that $T/J(T)$ is cyclic and thus $|R\cap T/R\cap J(T)|\leq 2$. Moreover, by Lemma~\ref{AutSL2qModule}, $|C_{Z(J(T))/C_V(J(G))}(t)|<q$ for $t\in T\backslash J(T)$. Hence, if $R\cap T\not\leq J(T)$ then $|R\cap J(T)|<q$ and, again, $|R\cap~T|\leq q$.
Now by (a), $q^2\leq |V|\leq |R|\leq 2\cdot q$, so $q=2$ and $|V|=q^2=4$. 
This shows (b). Since $S$ is nilpotent, (c) is a consequence of (a).

\bigskip

For the proof of (d) assume now $T=J(T)$ and $Z(T)=Z(S)$. Let $u\in N_S(T)\backslash T$ be an involution and $y\in C_T(u)$. By Lemma~\ref{AT}(a), there exist $a,\tilde{a}\in V$ such that  $y=a\tilde{a}^u$. Then $a\tilde{a}^u=(a\tilde{a}^u)^u=a^u\tilde{a}$. Now $[V,V^u]\leq V\cap V^u=Z(T)$ implies $aZ(T)=\tilde{a}Z(T)$. Let $z\in Z(T)$ such that $\tilde{a}=az$. Then, as $Z(T)=Z(S)$, $y=aa^uz$ and $aa^u=yz=(yz)^u=a^ua$. Hence, Lemma~\ref{natSL2q}(a) implies $a\in Z(T)$ and so $y\in Z(T)=Z(S)$.
\end{proof}

\begin{definition}
Let $G \cong S_m$ for some $m\geq 3$. 
\begin{itemize}
\item We call a $GF(2)G$-module a \textbf{permutation module} for $G$, if it has a basis $\{v_1,v_2,\dots,v_m\}$ of length $m$ on which $G$ acts faithfully.
\item A $GF(2)G$-module is called a \textbf{natural $S_m$-module} for $G$ if it is isomorphic to a non-central irreducible section of the permutation module. 
\end{itemize}
\end{definition}

Observe that natural $S_m$-modules are by this definition uniquely determined up to isomorphism.

\begin{lemma}\label{natSnOff}
Assume $p=2$, $G= S_{2n+1}$ and $V$ is a natural $G$-module. Then the following conditions hold:
\begin{itemize}
\item[(a)] The elements in $\m{O}_G(V)$ are precisely the subgroups generated by commuting transpositions. 
\item[(b)] $N_G(J)/J\cong S_n$ for $J:=\<\m{O}_G(V)\>$.
\item[(c)] There are no over-offenders in $G$ on $V$.
\end{itemize}
\end{lemma}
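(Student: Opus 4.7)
The plan is to analyze an elementary abelian $2$-subgroup $A\leq G=S_{2n+1}$ through its orbits on $\Omega=\{1,\dots,2n+1\}$, and match this data to the offender inequality. I realize $V$ as the sum-zero submodule $V_0=\{\sum a_iv_i:\sum a_i=0\}$ of the permutation module $P$ on $\Omega$; as $2n+1$ is odd, $V_0$ is complementary in $P$ to the fixed line $\<s\>$ where $s=\sum v_i$. If the orbits of $A$ on $\Omega$ are $\Omega_1,\dots,\Omega_r$ of sizes $n_j=2^{e_j}$, then $C_P(A)=\<s_1,\dots,s_r\>$ with $s_j=\sum_{i\in\Omega_j}v_i$; since $\sum n_j$ is odd the functional $\sum c_js_j\mapsto \sum c_jn_j\pmod 2$ is nonzero on $C_P(A)$, so $\dim C_V(A)=r-1$, giving
\[
|V/C_V(A)|\;=\;2^{2n+1-r}\;=\;2^{\sum_j(2^{e_j}-1)}.
\]

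For (a) and (c), I will combine this with a bound on $|A|$. Since $A$ acts faithfully on $\Omega$ and an elementary abelian transitive action is regular, $|A/A_j|=n_j$ for $A_j$ the kernel on $\Omega_j$, and the injection $A\hookrightarrow\prod_j A/A_j$ gives $|A|\leq 2^{\sum_j e_j}$. The offender inequality then forces
\[
\sum_j(2^{e_j}-1-e_j)\;\leq\;0.
\]
Because $2^e-1-e\geq 0$ with equality exactly for $e\in\{0,1\}$, every $A$-orbit has size $1$ or $2$, all intermediate inequalities are equalities, and in particular the offender inequality is tight — proving (c), no over-offenders. Equality in $|A|\leq\prod|A/A_j|$ then forces the injection $A\hookrightarrow\prod_j A/A_j$ to be an isomorphism onto a product of $k$ copies of $GF(2)$ (where $k$ is the number of $A$-orbits of size $2$), which places the transposition $t_j$ on $\Omega_j$ inside $A$; hence $A=\<t_1,\dots,t_k\>$ is generated by commuting (disjoint) transpositions. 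Conversely, if $A=\<t_1,\dots,t_k\>$ and $A^*\leq A$ has $k^*$ orbits of size $2$ on $\Omega$, the same computation gives $|A^*||C_V(A^*)|=2^{k^*}\cdot 2^{2n-k^*}=|V|=|A||C_V(A)|$, so the best-offender condition (b$'$) is satisfied; this finishes (a).

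For (b), the candidate subgroup is $J_0=\<(1\,2),(3\,4),\dots,(2n{-}1\,\,2n)\>$, a maximal subgroup generated by commuting transpositions of order $2^n$. Its normalizer in $G=S_{2n+1}$ is the setwise stabilizer of the partition $\{\{1,2\},\{3,4\},\dots,\{2n{-}1,2n\},\{2n{+}1\}\}$ (the point $2n{+}1$ being forced to be fixed because it is the unique $J_0$-fixed point in $\Omega$), which is $S_2\wr S_n$ acting on the $n$ pairs with $J_0$ as its base group; hence $N_G(J_0)/J_0\cong S_n$. The main obstacle here is that $\<\m{O}_G(V)\>$ read literally contains every single transposition (each $\<t\>$ is a best offender by the computation above) and so equals $G$; the statement (b) is therefore to be read as referring to the subgroup generated by a single maximal best offender (all of which are $G$-conjugate to $J_0$), and under that reading (b) becomes the partition-stabilizer calculation just described.
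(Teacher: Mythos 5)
Your proof is correct and takes a genuinely different route from the paper's, which simply cites \cite[2.15]{BHS} for part~(a) and asserts that (b) and (c) follow. You instead give a self-contained argument: realizing $V$ as the augmentation submodule $V_0$ of the permutation module (which splits off since $2n+1$ is odd), you compute $|V/C_V(A)| = 2^{\sum_j(2^{e_j}-1)}$ from the $A$-orbit sizes $n_j = 2^{e_j}$, bound $|A|\leq 2^{\sum_j e_j}$ via the regularity of transitive elementary abelian actions, and let the offender inequality force every orbit to have size at most $2$ with all intermediate inequalities tight. This simultaneously proves (c) and, via the resulting surjectivity of $A\to\prod_j A/A_j$, shows that $A$ contains the transposition on each $2$-orbit and hence equals the group they generate. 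The paper's citation is shorter but opaque; your argument is transparent and explains why the answer is what it is.

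Two remarks. First, in the converse verification you write $|A^*| = 2^{k^*}$ for $A^* \leq A = \langle t_1,\dots,t_k\rangle$ with $k^*$ orbits of size $2$, but this is only an upper bound: $A^* = \langle t_1t_2\rangle$ has $k^*=2$ while $|A^*|=2$. The chain $|A^*||C_V(A^*)| \leq 2^{k^*}\cdot 2^{2n-k^*} = |V| = |A||C_V(A)|$ still delivers condition (b$'$), so the conclusion survives, but the written equality should be an inequality. Second, your observation about part~(b) is legitimate: each $\langle t\rangle$ with $t$ a transposition is itself a best offender, so literally $\langle\m{O}_G(V)\rangle = G$ and $N_G(J)/J$ would be trivial. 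The intended $J$ is a single maximal best offender, equivalently the subgroup generated by the transpositions inside a fixed Sylow $2$-subgroup (any two transpositions in a $2$-group are disjoint, so there are exactly $n$ of them and they commute); this is exactly how the result is invoked in the proof of Theorem~\ref{ThomRestr}, and under that reading your partition-stabilizer computation correctly gives $N_G(J)/J\cong S_n$.
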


\begin{proof}
Part (a) follows from \cite[2.15]{BHS}. Claims (b) and (c) are consequences of (a).
\end{proof}

\section{Pushing up}\label{PUGr}

Throughout this section, let $G$ be a finite group, $p$ a prime dividing $|G|$ and $T\in Syl_p(G)$. Let $q$ be a power of $p$.

\subsection{A result by Baumann and Niles}

The group $G$ is said to have the \textbf{pushing up property} (with respect to $p$) if the following holds:

\bigskip

(PU)\;\; No non-trivial characteristic subgroup of $T$ is normal in $G$.

\bigskip

Note that this property does not depend on the choice of $T$ since all Sylow $p$-subgroups of $G$ are conjugate in $G$. The problem of determining the non-central chief factors of $G$ in $O_p(G)$ under the additional hypothesis

\bigskip

(*)\;\; $\ov{G}/\Phi(\ov{G})\cong L_2(q)$ for $\ov{G}=G/O_p(G)$

\bigskip

was first solved by Baumann \cite{BAUM} and Niles \cite{N} independently. Later Stellmacher \cite{St} gave a shorter proof. We state here a slight modification of the result.

\begin{hyp}\label{ModHyp}
Let $Q:=O_p(G)$ and let $W\leq \Omega(Z(Q))$ be normal in $G$. Suppose the following conditions hold:
\begin{itemize}
\item[(1)] $G/C_G(W)\cong SL_2(q)$,
\item[(2)] $W/C_W(G)$ is a natural $SL_2(q)$-module for $G/C_G(W)$,
\item[(3)] $G$ has the pushing up property (PU), and (*) holds.
\end{itemize}
\end{hyp}

\begin{theorem}\label{PUGrCor}
Suppose Hypothesis~\ref{ModHyp} holds. Then one of the following holds for $V:=[Q,O^p(G)]$.
\begin{itemize}
\item[(I)] $V\leq \Omega(Z(O_p(G)))$ and $V/C_V(G)$ is a natural $SL_2(q)$-module for $G/C_G(W)$.
\item[(II)] $Z(V)\leq Z(Q)$, $p=3$, and $\Phi(V)=C_V(G)$ has order $q$. Moreover, $V/Z(V)$ and $Z(V)/\Phi(V)$ are natural $SL_2(q)$-modules for $G/C_G(W)$.
\end{itemize}
Furthermore, the following hold for every $\phi\in Aut(T)$ with $V\phi\not\leq Q$.
\begin{itemize}
\item[(a)] $Q=VC_Q(L)$ for some subgroup $L$ of $G$ with $O^p(G)\leq L$ and $G=LQ$.
\item[(b)] If (II) holds then $Q\phi^2=Q$.
\item[(c)] $\Phi(C_{Q}(O^p(G)))\phi
=\Phi(C_{Q}(O^p(G)))$.
\item[(d)] If (II) holds then $T$ does not act quadratically on $V/\Phi(V)$.
\item[(e)] If (II) holds then $W\phi\leq Q$ and $V\leq W\<(W\phi)^G\>$.
\item[(f)] $V\not\leq Q\phi$.
\end{itemize}
\end{theorem}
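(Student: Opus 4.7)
I would split the argument into two parts: establishing the dichotomy (I)/(II), which is essentially the classical Baumann--Niles theorem (with Stellmacher's simplification), and then deriving the auxiliary properties (a)--(f) from the specific module structure found in each of the two cases.

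For the dichotomy, the strategy is the standard pushing-up argument via FF-module theory. Hypothesis~\ref{ModHyp}, in particular (PU) together with $(*)$ and the offender analysis of Section~\ref{FF}, shows that $V := [Q, O^p(G)]$ carries an FF-module structure for the action of $O^p(G)$, with $O^p(G)$ inducing $SL_2(q)$ on the top quotient. Running the Baumann--Niles/Stellmacher dichotomy, one concludes that either $V$ is elementary abelian and embeds in $\Omega(Z(Q))$ with $V/C_V(G)$ a natural $SL_2(q)$-module (case (I)), or $V$ has nilpotency class $2$ with $\Phi(V) = C_V(G)$ of order $q$ and the two natural $SL_2(q)$-sections $V/Z(V)$ and $Z(V)/\Phi(V)$, in which case the standard commutator computation pins down $p = 3$ (case (II)).

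For (a), the key observation is that $V = [Q, O^p(G)]$ forces $O^p(G)$ to act trivially on $Q/V$. I would choose $L$ minimal with $O^p(G) \leq L$ and $G = LQ$; generically $L = O^p(G)$ works since $L_2(q)$ is generated by its $p'$-elements, while the exceptional small $q$ require adjoining a carefully chosen $p$-element from $T \setminus Q$. The equality $Q = VC_Q(L)$ is then a $1$-cocycle statement (for each $q \in Q$, the map $l \mapsto [q,l]$ is a cocycle $L \to V$ that we need to realize as a coboundary), solved by the vanishing of $H^1(L/(L\cap Q), V/C_V(G))$ on natural $SL_2(q)$-modules, applied inductively along the central series of $V$ in case (II).

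For (b)--(f), fix $\phi \in \mathrm{Aut}(T)$ with $V\phi \not\leq Q$, so that $\phi$ moves $V$ off $Q$ while staying inside $T$. For (b), in case (II), $V$ and $V\phi$ form the two ``arms'' of a weak BN-like structure in $T$, so $\phi$ must interchange them and $\phi^2$ then stabilizes the $G$-class of $V$, whence $Q\phi^2 = Q$. For (c), one identifies $\Phi(C_Q(O^p(G)))$ with $C_V(G)$ in both cases: in (I) this is because $C_Q(O^p(G))$ coincides (modulo a subgroup of exponent $p$) with $C_V(G)$, and in (II) because $C_Q(O^p(G)) = Z(V)$ with $\Phi(Z(V)) = \Phi(V) = C_V(G)$; the subgroup $C_V(G)$ is the unique minimal non-trivial $G$-submodule realising the kernel of the natural action, and its $\phi$-invariance follows from this canonical description combined with the constraint $V\phi \not\leq Q$. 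Property (d) holds because quadratic action of $T$ on $V/\Phi(V)$ would split the non-split extension $1 \to Z(V)/\Phi(V) \to V/\Phi(V) \to V/Z(V) \to 1$ as a direct sum of natural $SL_2(q)$-modules, contradicting the defining structure of case (II). For (e), the rank bound $|W\phi| \leq |W| < |V|$ combined with $V\phi \not\leq Q$ forces $W\phi \leq Q$ with $W\phi \not\leq W$, so the $G$-closure of $W$ and $W\phi$ already covers $V$. Finally, (f) is a rank argument: $V$ has $p$-rank too large to embed into $Q\phi$ when $V\phi \not\leq Q$, since the ambient FF-module structure forces $V \cap Q\phi$ to be a proper subgroup of $V$.

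The main obstacle is the classification step yielding (I)/(II): the Baumann--Niles/Stellmacher analysis is technically intricate, requiring delicate FF-module control under the pushing-up constraint and a nontrivial commutator computation to isolate $p = 3$ in case (II). Among the supplementary claims, (a) is most subtle, since it rests on first-cohomology vanishing that must be verified (or circumvented) in the small-$q$ cases, and (d) relies essentially on the non-splitting of the central extension that defines case (II).
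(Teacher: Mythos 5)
Your overall framing is right that the dichotomy (I)/(II) is the Baumann--Niles/Stellmacher theorem, and indeed the paper handles (I), (II), and (a)--(d) by citing Theorem~1, Theorem~2 and results 2.4, 3.2, 3.3, 3.4(b),(c) of \cite{St} (together with \cite[3.2]{N}), rather than re-deriving them. Your proposal to re-prove (a) via $H^1$-vanishing and (b)--(d) from structural principles is a genuinely different and much more ambitious route, but as written it contains gaps, some of which you acknowledge and some of which you do not. In particular, for (a) the ``generic'' choice $L=O^p(G)$ does not yield $G=LQ$ when $T/(T\cap O^p(G))Q\neq 1$; in the paper (following Stellmacher) one always takes $L=(V\psi)O^p(G)$ for a suitable $\psi\in\mathrm{Aut}(T)$, so the adjoined $p$-element $V\psi$ is not merely a small-$q$ patch but the central device.

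Two of your arguments for the items the paper does prove directly contain concrete errors. For (e) you write that ``the rank bound $|W\phi|\leq|W|<|V|$ combined with $V\phi\not\leq Q$ forces $W\phi\leq Q$,'' but $\phi$ is an automorphism of $T$, so $|W\phi|=|W|$, and order alone says nothing about whether a subgroup of $T$ of order $|W|$ lies in $Q$; there are plenty of such subgroups outside $Q$. The paper's actual argument first establishes $[W,O^p(G)]\phi\leq Q$ from \cite[(2.4),(3.2)]{St} and then uses $W=[W,O^p(G)]C_W(G)\leq[W,O^p(G)]Z(T)$ with $Z(T)\phi=Z(T)$; there is no way to bypass this with a counting argument. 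For (f), you assert that ``the ambient FF-module structure forces $V\cap Q\phi$ to be a proper subgroup of $V$,'' but that assertion \emph{is} statement (f); as presented the argument is circular. The paper's proof in case (I) uses that $C_T(V)=Q$ and $C_T(V\phi)=Q\phi$ (so $V\leq Q\phi$ would force $V\phi\leq Q$), and in case (II) it derives from (a) that $V\leq Q\phi$ would give $[V,T]\leq Z(T)$ and contradict (d); neither step is a rank estimate. Finally, your identification in (c) of $\Phi(C_Q(O^p(G)))$ with $C_V(G)$ is not supported in case (I): there $V$ is elementary abelian, so $C_V(G)$ is elementary abelian, and there is no reason for $C_V(G)$ to lie inside the Frattini subgroup of $C_Q(O^p(G))$; the $\phi$-invariance in (c) has to come from \cite[3.4(c)]{St}, not from such an identification.
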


\begin{proof}
Theorem~1 in \cite{St} and \cite[3.2]{N} give us the existence of $\psi\in Aut(T)$ such that 
$$L/V_0O_{p^\prime}(L)\cong SL_2(q)\;\mbox{for}\;L=(V\psi)O^p(G)\;\mbox{and}\;V_0=V(L\cap Z(G)),$$
and one of the following hold:
\begin{itemize}
\item[(I')] $V\leq \Omega(Z(O_p(G)))$ and $V/C_V(G)$ is a natural $SL_2(q)$-module for $L/V_0O_{p^\prime}(L)$.
\item[(II')] $Z(V)\leq Z(O_p(G))$, $p=3$, and $\Phi(V)=C_V(G)$ has order $q$. Moreover, $V/Z(V)$ and $Z(V)/\Phi(V)$ are natural $SL_2(q)$-modules for $L/V_0O_{p^\prime}(L)$.
\end{itemize}
Observe that $LQ$ contains $O^p(G)$ and a Sylow $p$-subgroup of $G$, so $G=LQ$. Now (a) is a consequence of Theorem~2 in \cite{St}. Moreover, (b),(c) and (d) follow from 2.4, 3.2, 3.3 and 3.4(b),(c) in \cite{St}. Clearly (I') implies (I). Moreover, if (I') holds then $C_T(V)=Q$ and $C_T(V\phi)=Q\phi$, so (f) holds in this case.

\bigskip

We assume from now on that (II') holds and show next that $G/C_G(W)$ acts on $V/C_V(G)$. Note that $[V,V]\leq C_V(G)$ and so by (a), $[V,Q]\leq C_V(G)$. 
Therefore, as $[V,O_{p^\prime}(L)]=1=[W,O_{p^\prime}(L)]$ and $[W,O^p(G)]=[Z(V),O^p(G)]$, we have $C_L(W)=O_{p^\prime}(L)V_0$ and $C_G(W)=C_{QL}(W)=QC_L(W)\leq C_G(V/C_V(G))$.
So $G/C_G(W)$ acts on $V/C_V(G)$ and (II) holds.

\bigskip

Let now $\phi\in Aut(T)$ such that $V\phi\not\leq Q$. It follows from (2.4) and (3.2) in \cite{St} that $[W,O^p(G)]\phi\leq Q$. Hence, since $W=[W,O^p(G)]C_W(G)\leq [W,O^p(G)]Z(T)$, we have $W\phi\leq Q$. As  $\ov{Q}=Q/C_Q(O^p(G))\cong V/\Phi(V)$, it follows that $\ov{W}$ and $\ov{Q}/\ov{W}$ are natural $SL_2(q)$-modules. In particular, $[\ov{W},V\phi]\neq 1$ and so $\ov{W}\neq \ov{W\phi}$. Therefore, $\ov{Q}=\ov{W\<(W\phi)^G\>}$. This implies (e). For the proof of (f) assume $V\leq Q\phi$. Then by (a), $[V,V\phi]\leq [Q\phi,V\phi]\leq C_V(G)\phi\leq Z(T)\phi=Z(T)$. As $V\phi\not\leq Q$ we have $T=\<(V\phi)^{N_G(T)}\>Q$ and so $[V,T]\leq Z(T)$, a contradiction to (d). This proves (f).
\end{proof}

\subsection{The Baumann subgroup}

A useful subgroup while dealing with pushing up situations is the following:

\begin{definition}\label{BaumDef}
The subgroup
$$B(G)=\<C_P(\Omega(Z(J(P))))\;:\;P\in Syl_p(G)\>$$
is called the \textbf{Baumann subgroup} of $G$.
\end{definition}

Often it is not possible to show immediately that $G$ has the pushing up
property. In many of these situations it helps to look at a subgroup $X$ of $G$ such that $B(T)\in Syl_p(X)$ and to show that $X$ has the pushing up property. Here one uses that $B(T)$ is characteristic in $T$, so a characteristic subgroup of $B(T)$ is also a characteristic subgroup of $T$. Usually one can then determine the structure of $X$ and thus also of $B(T)$. This often leads to $T=B(T)\leq X$, in which case also the $p$-structure of $G$ is restricted. When using this method later, we will need the results stated below.

\begin{hyp}\label{BaumHyp}
Let $V\leq \Omega(Z(O_p(G)))$ be a normal subgroup of $G$ such that
\begin{itemize}
\item $G/C_G(V)\cong SL_2(q)$, 
\item $V/C_V(G)$ is a natural $SL_2(q)$-module for $G/C_G(V)$,
\item $C_G(V)/O_p(G)$ is a $p^\prime$-group and $[V,J(T)]\neq 1$.
\end{itemize}
\end{hyp}

\begin{lemma}\label{BaumR}
Assume Hypothesis \ref{BaumHyp} and suppose there is $d\in G$ such that $G=\<T,T^d\>$. Then $G=C_G(V)B(G)$, $\Omega(Z(J(T)))V$ is normal in $G$, and $B(T)\in Syl_p(B(G))$.
\end{lemma}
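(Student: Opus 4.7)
Write $Q := O_p(G)$, $Z := \Omega(Z(J(T)))$, and $W := ZV$. My plan is to first record a structural description and then deduce the three conclusions in turn. Since $V/C_V(G)$ is a natural $SL_2(q)$-module for $G/C_G(V) \cong SL_2(q)$, Lemma~\ref{natSL2q}(b) forbids over-offenders on $V/C_V(G)$ and forces every offender to have image a full Sylow $p$-subgroup of $SL_2(q)$; these properties lift to offenders on $V$ in $G$. Using $[V, J(T)] \neq 1$, I pick $A \in \mathcal{A}(T)$ with $[V, A] \neq 1$; Lemma~\ref{Baum} then shows $A$ is a best non-over offender and $V C_A(V) \in \mathcal{A}(T)$, so $V \leq J(T)$, and the Sylow image of $A$ forces $J(T) Q = T$. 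Since $Z \leq Z(J(T))$ centralizes $V \leq J(T)$, we obtain $Z \leq C_T(V) = T \cap C_G(V) = Q$.

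For $\Omega(Z(J(T)))V \trianglelefteq G$: note $W \leq Q$ and $W \trianglelefteq T$ (because $Z$ is characteristic in $T$ and $V \trianglelefteq G$), so by $G = \langle T, T^d\rangle$ it suffices to show $W^d = W$, i.e., $Z^d V = ZV$. The same structural analysis applied to $T^d$ yields $Z^d \leq Q$ and $J(T^d) Q = T^d$; the remaining task, which I regard as the main obstacle, is to identify $W/V$ as a canonical $G$-invariant subgroup of $Q/V$, determined by the natural $SL_2(q)$-module structure on $V/C_V(G)$ together with the $G$-module structure of $Q/V$.

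For $G = C_G(V) B(G)$: $B(G)$ is normal in $G$ by construction, since its defining expression is $G$-conjugation invariant. Because $Z \leq Z(J(T))$, $J(T) \leq C_T(Z) = B(T)$; combined with $J(T) \not\leq Q$ (as $J(T) Q = T$ and $T \neq Q$), this gives $B(T) Q = T$. Conjugating by $d$ yields $B(T^d) Q = T^d$, whence $B(G) Q \supseteq \langle T, T^d \rangle = G$, so $G = B(G) Q \leq B(G) C_G(V)$.

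Finally, for $B(T) \in Syl_p(B(G))$: normality of $B(G)$ in $G$ gives $X := B(G) \cap T \in Syl_p(B(G))$, and $B(T) \leq X$. The counts $|B(T)|/|B(T) \cap Q| = q = |X|/|X \cap Q|$ (both equal $|T/Q|$, using $B(T) Q = T = X Q$) reduce matters to showing $X \cap Q \leq C_Q(Z) = B(T) \cap Q$. From $W \trianglelefteq G$ we have $Z^g V = W$ for all $g \in G$, so each generator $B(T)^g$ of $B(G)$ centralizes $W/V$; hence $[B(G), W] \leq V$. For $x \in X \cap Q$ this yields $[x, Z] \leq Z \cap V = C_V(J(T))$, and combining with $[x, V] = 1$ and a further module-theoretic argument completes the proof that $[x, Z] = 1$, i.e., $x \in B(T)$.
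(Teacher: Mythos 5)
You correctly establish the opening structural facts ($V\leq J(T)$, $T=J(T)Q$, $Z\leq Q$, $B(T)Q=T$), and your argument for the first conclusion $G=C_G(V)B(G)$ is sound and essentially the paper's own ($J(T)\leq B(T)$ and $J(T^d)\leq B(T^d)$ force $B(G)Q\supseteq\langle T,T^d\rangle=G$). The difficulties lie in the other two conclusions, and you have been candid about one of them.

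On normality of $W:=\Omega(Z(J(T)))V$, you correctly reduce to showing $W$ is normalized by something outside $T$ but then stall. The step you are missing is the following chain of containments, which the paper uses. Since $A\in\m{A}(T)$ with $[V,A]\neq 1$ is a non-over-offender, Lemma~\ref{Baum}(b) gives $V(A\cap Q)\in\m{A}(T)$ and $J(Q)\leq J(T)$; in particular $V\leq J(Q)$ and (since $\Omega(Z(J(T)))$ lies in every member of $\m{A}(T)$, hence in $V(A\cap Q)\leq Q$) also $\Omega(Z(J(T)))\leq J(Q)$, so $W\leq\Omega(Z(J(Q)))$. Now apply the same offender analysis inside $T^d$: for every $B\in\m{A}(T^d)$, $V(B\cap Q)\in\m{A}(Q)$, and since $\Omega(Z(J(Q)))$ is contained in every element of $\m{A}(Q)$, one gets $W\leq\Omega(Z(J(Q)))\leq V(B\cap Q)$. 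The abelian group $B$ centralizes $B\cap Q$, hence acts trivially on $V(B\cap Q)/V$; because $V\leq W\leq V(B\cap Q)$, $B$ therefore normalizes $W$. Thus $W$ is normal in $J(T)^d$, in $J(T)$ and in $Q$, hence in $G=\langle J(T),J(T)^d\rangle Q$. Your framing ``show $W^d=W$'' is equivalent but does not suggest this route; the point is not to track the single element $d$, but to see $W$ as sitting inside a canonical abelian subgroup of $Q$ that every $\m{A}(T^d)$-element normalizes.

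On $B(T)\in Syl_p(B(G))$, your reduction via indices to $B(G)\cap Q\leq C_Q(Z)$ is correct, as is the deduction $[B(G),W]\leq V$, but the phrase ``a further module-theoretic argument'' conceals the genuinely hard step and is not a proof. What actually closes the gap is the auxiliary subgroup $Z_0:=C_{\Omega(Z(J(T)))}(X_0)$ with $X_0=\langle J(T),J(T)^d\rangle$: the inequality chain $q^2\geq|W/C_W(X_0)|\geq|VC_W(X_0)/C_W(X_0)|=|V/C_V(X_0)|=q^2$ forces $W=VC_W(X_0)=VZ_0$, whence by Dedekind $\Omega(Z(J(T)))=Z_0(\Omega(Z(J(T)))\cap V)$. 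Since $T=J(T)Q$ centralizes $\Omega(Z(J(T)))\cap V$ (as $J(T)$ centralizes $\Omega(Z(J(T)))$ and $Q$ centralizes $V$), one gets $B(T)=C_T(Z_0)$. Because $X_0$ (and hence $Z_0$) is normal in $G$, $C_G(Z_0)$ is normal, $B(T)=T\cap C_G(Z_0)\in Syl_p(C_G(Z_0))$, and $B(G)\leq C_G(Z_0)$. Without introducing $Z_0$ (or equivalently establishing $C_Q(Z)=C_Q(Z_0)$), the bound $[x,Z]\leq Z\cap V$ alone does not give $[x,Z]=1$: a unipotent action on $Z$ with image in $Z\cap V$ is not ruled out by what you have written. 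So this second gap is real and is not independent of the first, since the decomposition $W=VZ_0$ itself uses the normality of $W$.
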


\begin{proof}
Set $Q:=O_p(G)$. Let $A\in \m{A}(T)$ such that $[V,A]\neq 1$. Then by Lemma~\ref{natSL2q}(b) and Lemma~\ref{Baum},
$|V/C_V(A)|=|A/C_A(V)|=q$
and $V(A\cap Q)\leq J(Q)\leq J(T)$. In particular, $T=J(T)Q$ and, since $C_G(V)/Q$ is a $p^\prime$-group, $\Omega(Z(J(T)))\leq C_T(V)=Q$. Now $W:=\Omega(Z(J(T)))V\leq \Omega(Z(J(Q)))$ and
$$|W/C_W(J(T))|=|VC_W(J(T))/C_W(J(T))|=|V/C_V(J(T))|=|V/C_V(T)|=q.$$
By assumption, we may choose $d\in G$ such that $G=\<T,T^d\>$. Then for $X_0:=\<J(T),J(T)^d\>$,  we have $G=X_0Q$. 
Moreover, for $B\in \m{A}(T^d)$, we have $V(B\cap Q)\in\m{A}(Q)$. So $W\leq \Omega(Z(J(Q)))\leq V(B\cap Q)$ and $W=V(B\cap W)$ is normalized by $B$. Hence, $W$ is normal in $J(T)^d$ and thus also in $G=X_0Q$. We get now
\begin{eqnarray*}
|VC_W(X_0)/C_W(X_0)|&\leq& |W/C_W(X_0)|\\
&\leq& |W/C_W(J(T))|^2= q^2=|V/C_V(X_0)|=|VC_W(X_0)/C_W(X_0)|.
\end{eqnarray*}
Hence, we have equality above and therefore $W=VC_W(X_0)$.

\bigskip
 
Set $Z_0:=C_{\Omega(Z(J(T)))}(X_0)$. As $W\leq J(T)$, we have $C_W(X_0)\leq \Omega(Z(J(T)))$. Thus, $C_W(X_0)=Z_0$ and $W=VZ_0$. Now Dedekind's Law implies $\Omega(Z(J(T)))=Z_0(\Omega(Z(J(T)))\cap~V)$. So, using $T=J(T)Q$, we get $B(T)=C_T(Z_0)$. Note that $Z_0$ is normal in $G=QX_0$. Hence, $X:=C_G(Z_0)\unlhd G$. This yields $B(T)=T\cap X\in Syl_p(X)$ and $B(G)\leq X$, so $B(T)\in Syl_p(B(G))$. Since $G=QX_0=QB(G)$, this implies the assertion.
\end{proof}

\begin{lemma}\label{BaumCor}
Assume Hypothesis \ref{BaumHyp}. Then $G=C_G(V)B(G)$ and $B(T)\in Syl_p(B(G))$.
\end{lemma}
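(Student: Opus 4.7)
The plan is to reduce the statement to Lemma~\ref{BaumR} by passing to a subgroup of the form $\tilde G := \langle T, T^d\rangle$ for a suitable $d\in G$. Since $G/C_G(V)\cong SL_2(q)$ has more than one Sylow $p$-subgroup (as $q\geq p\geq 2$), I would choose $d\in G$ with $T^dC_G(V)\neq TC_G(V)$; the images of $T$ and $T^d$ in $SL_2(q)$ are then two distinct Sylow $p$-subgroups and hence generate $SL_2(q)$, so that $\tilde G\,C_G(V)=G$. The subgroup $\tilde G$ inherits Hypothesis~\ref{BaumHyp}: $T\in Syl_p(\tilde G)$ since $T\leq\tilde G\leq G$; the image of $O_p(\tilde G)$ in $G/C_G(V)\cong SL_2(q)$ is a normal $p$-subgroup, hence trivial, so $O_p(\tilde G)\leq\tilde G\cap C_G(V)$, and since $(\tilde G\cap C_G(V))/Q$ embeds into the $p'$-group $C_G(V)/Q$ we obtain $O_p(\tilde G)=Q$; the remaining conditions follow from $\tilde G/C_{\tilde G}(V)\cong G/C_G(V)$ and $C_V(\tilde G)=C_V(G)$. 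With $\tilde G=\langle T,T^d\rangle$ by construction, applying Lemma~\ref{BaumR} to $\tilde G$ yields $\tilde G=C_{\tilde G}(V)B(\tilde G)$ and $B(T)\in Syl_p(B(\tilde G))$.

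The first conclusion follows immediately from
\[
G=\tilde G\,C_G(V)=C_{\tilde G}(V)B(\tilde G)C_G(V)=C_G(V)B(\tilde G)\leq C_G(V)B(G)\leq G,
\]
where $B(\tilde G)\leq B(G)$ because every Sylow $p$-subgroup of $\tilde G$ is a Sylow $p$-subgroup of $G$. For the Sylow assertion I would compute $|B(T)|=q|B(T)\cap Q|$ (using $J(T)\leq B(T)$ and $J(T)Q=T$, the latter obtained from Lemma~\ref{natSL2q}(b) applied to a best offender $A\in\mathcal A(T)$ with $[V,A]\neq 1$) and $|B(G)|_p=q|B(G)\cap Q|$ (using $B(G)/(B(G)\cap C_G(V))\cong SL_2(q)$ and $Q\in Syl_p(C_G(V))$). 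Since $B(\tilde G)\cap Q=O_p(B(\tilde G))\leq B(T)$ yields $B(T)\cap Q=B(\tilde G)\cap Q$, the Sylow claim reduces to proving $B(G)\cap Q\leq B(T)=C_T(\Omega(Z(J(T))))$, i.e., that $B(G)\cap Q$ centralises $\Omega(Z(J(T)))$.

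The main obstacle is this last centraliser claim. I would address it by revisiting the construction in the proof of Lemma~\ref{BaumR}, which produces a subgroup $Z_0\leq\Omega(Z(J(T)))$ with $W:=\Omega(Z(J(T)))V=VZ_0$, $B(T)=C_T(Z_0)$, and $Z_0\triangleleft\tilde G$. Upgrading $Z_0\triangleleft\tilde G$ to $Z_0\triangleleft G$ would yield $X:=C_G(Z_0)\triangleleft G$, which contains $B(T)$ and hence all of $B(G)$, giving $B(G)\cap Q\leq C_Q(Z_0)=B(T)\cap Q$. To establish this $G$-normality, I would aim to show that $W$ itself is $G$-invariant — by identifying $W/V$ as an intrinsic, $G$-stable subgroup of the $G$-module $Q/V$ not depending on the choice of Sylow subgroup $T$ — and that $C_G(V)$ acts trivially on $W/V$; once both hold, the identity $Z_0=C_W(G)$ (using $\tilde G\,C_G(V)=G$ and that $\tilde G$ centralises $W/V$ by construction) provides the desired $G$-normality of $Z_0$.
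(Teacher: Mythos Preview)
Your overall strategy matches the paper's: reduce to Lemma~\ref{BaumR} via $\tilde G=\langle T,T^d\rangle$, deduce $G=C_G(V)B(G)$, and then reduce the Sylow statement to $B(G)\cap Q\leq B(T)$. Your order computations for $|B(T)|$ and $|B(G)|_p$ are correct, and the reduction to $B(G)\cap Q\leq C_T(\Omega(Z(J(T))))$ is sound.

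The gap is in your final paragraph. You propose to obtain $Z_0\unlhd G$ by first showing $W=\Omega(Z(J(T)))V$ is $G$-invariant ``by identifying $W/V$ as an intrinsic, $G$-stable subgroup of $Q/V$'', but you give no such description, and none is apparent: $W$ is defined via $J(T)$, which depends on the chosen Sylow subgroup. Even granting $W\unlhd G$, your claimed identity $Z_0=C_W(G)$ would require $[W,C_G(V)]=1$, whereas your hypothesis ``$C_G(V)$ acts trivially on $W/V$'' gives only $[W,C_G(V)]\leq V$; so the passage from $Z_0=C_W(\tilde G)$ to $Z_0=C_W(G)$ is not justified.

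The paper avoids both problems. First, it does not attempt $W\unlhd G$ but only $W\unlhd H:=O^{p'}(G)$, and it obtains this by a concrete symmetry argument: Lemma~\ref{BaumR} applied to $H_0=\langle T,T^d\rangle$ gives not only $W\unlhd H_0$ but also $W=V\,\Omega(Z(J(T^d)))$; now applying Lemma~\ref{BaumR} with $T^d$ in the role of $T$ (and an arbitrary second Sylow subgroup) shows $W$ is normalized by every Sylow $p$-subgroup of $G$, hence by $H$. Second, the paper never needs $Z_0\unlhd G$. Since $B(G)=B(H)\leq H$ normalizes $W$, one gets $[W,B(G)]\leq V$; then $[W,C_{B(G)}(V),C_{B(G)}(V)]=1$ together with the fact that $C_{B(G)}(V)/(C_{B(G)}(V)\cap Q)$ is a $p'$-group forces every $p'$-element of $C_{B(G)}(V)$ to centralize $W$, so $C_{B(G)}(V)\leq C_{B(G)}(W)Q$. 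From $B(G)=B(H_0)C_{B(G)}(V)$ one then concludes $T\cap B(G)\leq B(T)C_T(W)=B(T)$. The key ideas you are missing are thus the symmetric expression $W=V\,\Omega(Z(J(T^d)))$ to propagate normality to all Sylow subgroups, and the coprime-action step replacing any need for $Z_0\unlhd G$.
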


\begin{proof}
Set $W :=\Omega(Z(J(T))V$ and $H:=O^{p^\prime}(G)$.  
As $G/C_G(V)\cong SL_2(q)$, we can choose $d\in G$ such that $G=C_G(V)H_0$, for $H_0:=\<T,T^d\>$. By Lemma~\ref{BaumR}, $W \trianglelefteq H_0$ and $G=C_G(V)B(H_0)$. In particular, $W=V\Omega(Z(J(T^d)))$. So, again by Lemma~\ref{BaumR} (now applied with $T^d$ in place of $T$), $W$ is normal in $\<\hat{T},T^d\>$, for every $\hat{T}\in Syl_p(G)$ with $\hat{T}C_G(V)=TC_G(V)$. Hence, the arbitrary choice of $d$ gives that $W$ is normalized by every Sylow $p$-subgroup of $G$ and therefore,
$$W\unlhd H.$$
Note that, as $B(H_0)\leq B(G)$, we have $B(G)=B(H_0)C_{B(G)}(V)$ and $G=C_G(V)B(G)$.
Also observe that $B(G)=B(H)=\langle B(T)^H\rangle=\<B(T)^{B(G)}\>$. In particular, $[W,B(G)]\leq V$. Hence, $[W,C_{B(G)}(V),C_{B(G)}(V)]=1$, and coprime action shows that $C_{B(G)}(V)\leq C_{B(G)}(W)Q$. Therefore, we get $B(G)=B(H_0)C_{B(G)}(V)\leq H_0C_{B(G)}(W)$. So  $B(H_0)C_{B(G)}(W)$ is a normal subgroup of $B(G)$ containing $B(T)$ and thus, $B(G)=B(H_0)C_{B(G)}(W)$. By Lemma~\ref{BaumR}, $B(T)\in Syl_p(B(H_0))$. Hence, $(T\cap B(G))C_{B(G)}(W)=B(T)C_{B(G)}(W)$ and $T\cap B(G)\leq B(T)C_T(W)\leq B(T)$. This shows $B(T)\in Syl_p(B(G))$. 
\end{proof}

\section{Amalgams}\label{AmalgamSection}

An amalgam $\m{A}$ is a tuple $(G_1,G_2,B,\phi_1,\phi_2)$ where $G_1,G_2$ and $B$ are groups and $\phi_i:B\rightarrow G_i$ is a monomorphism for $i=1,2$. We write $G_1*_{B} G_2$ for the free product of $G_1$ and $G_2$ with $B$ amalgamated. Note that we suppress here mention of the monomorphisms $\phi_1$ and $\phi_2$. 
We will usually identify $G_1$, $G_2$ and $B$ with their images in $G_1*_{B} G_2$. Then $G_1\cap G_2=B$, and the monomorphisms $\phi_1,\phi_2$ become inclusion maps. We will need the following lemma.

\begin{lemma}\label{NormalForms}
Let $G$ be a group such that $G=\<G_1,G_2\>$ for finite subgroups $G_1,G_2$ of $G$. Set $B=G_1\cap G_2$. For $i=1,2$, let $K_i$ be a set of right coset representatives of $B$ in $G_i$ and $\iota_i:B\rightarrow G_i$ the  inclusion map. By $G_1*_B G_2$ we mean the free amalgamated product with respect to $(G_1,G_2,B,\iota_1,\iota_2)$. Let $g\in G$. Then $g$ can be expressed in the form
\begin{itemize}
\item[(*)] $g=bg_1\dots g_n$ where $b\in B$, $n\in\N$, $g_1,\dots,g_n\in (K_1\cup K_2)\backslash B$ and, for every $1\leq k<n$ and $i\in\{1,2\}$, $g_{k+1}\in K_i$ if and only if $g_k\in K_{3-i}$. 
\end{itemize}
This expression is unique if and only if $G\cong G_1*_B G_2$.
\end{lemma}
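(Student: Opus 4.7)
The plan is to compare $G$ with the amalgamated free product $\widehat G := G_1 *_B G_2$. The inclusions $G_1,G_2 \hookrightarrow G$ agree on $B = G_1 \cap G_2$, so by the universal property of the amalgamated product they induce a group homomorphism $\pi \colon \widehat G \to G$, which is surjective because $G = \langle G_1, G_2 \rangle$. Both assertions of the lemma will follow from a single statement inside $\widehat G$: every element of $\widehat G$ admits a \emph{unique} expression of the form (*). Indeed, existence in $G$ is then immediate via $\pi$; and uniqueness of normal forms in $\widehat G$ implies that uniqueness of (*) in $G$ is equivalent to injectivity of $\pi$, hence, using surjectivity, to $G \cong \widehat G$.

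For existence of the normal form (in $\widehat G$, and so in $G$) I would induct on the length $m$ of an arbitrary representation $g = h_1 \cdots h_m$ with $h_j \in G_1 \cup G_2$. Using the coset decomposition $G_i = \bigsqcup_{k \in K_i} B k$, each $h_j$ is written as $b_j k_j$ and the $B$-factors are pushed to the left (using $B \leq G_i$). Whenever two consecutive factors lie in the same $K_i$, combine them into a single element of $G_i$ and renormalize: either the result lies outside $B$, keeping the length unchanged, or it lies in $B$, in which case the effective length drops and the induction continues. This terminates in a word of the required shape.

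The real content is uniqueness inside $\widehat G$, for which I would run the classical Van der Waerden argument. Let $W$ denote the set of all tuples $(b, g_1, \ldots, g_n)$ satisfying the combinatorial conditions of (*). For each $i \in \{1,2\}$ define a left action of $G_i$ on $W$ by left-multiplying $b$ by the given element of $G_i$ and renormalizing via $G_i = \bigsqcup_{k \in K_i} B k$, inserting or absorbing a leading factor $g_1$ as dictated by the alternation rule. The two actions agree on $B$, so the universal property assembles them into a single action of $\widehat G$ on $W$. Distinct tuples move the trivial tuple to distinct places, so words in $\widehat G$ associated to distinct tuples act differently on $W$ and are therefore distinct in $\widehat G$, giving uniqueness of the normal form. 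The main obstacle is verifying well-definedness and the $B$-compatibility of the two $G_i$-actions on $W$; once this bookkeeping is done, everything else is formal manipulation around the universal property and the coset decompositions.
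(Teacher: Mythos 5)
Your proposal is correct and takes essentially the same route as the paper: both reduce to the unique-normal-form theorem for $G_1*_B G_2$ and transfer it to $G$ via the canonical surjection (equivalently, via the identification $G\cong(G_1*_B G_2)/N$ with $N\cap G_1=N\cap G_2=1$). The only difference is that the paper cites (7.9) of Part~I of \cite{DGS} for the normal form theorem in the amalgamated product, whereas you sketch its classical Van der Waerden ``action on normal forms'' proof, which makes the argument self-contained without changing the underlying strategy.
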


\begin{proof}
As $X:=G_1*_B G_2$ is the universal completion of $(G_1,G_2,B,\iota_1,\iota_2)$, the group $G$ is isomorphic to a factor group of $X$ modulo a normal subgroup $N$ of $X$ with $N\cap G_1=N\cap G_2=1$. By (7.9) of Part~I in \cite{DGS}, every element $g\in X$ can be uniquely expressed in the form (*). This implies the assertion.
\end{proof}

A triple $(\beta_1,\beta_2,\beta)$ of group isomorphisms $\beta:B\rightarrow
\tilde{B}$ and $\beta_i:G_i\rightarrow \tilde{G_i}$, for $i=1,2$, is said to
be an \textbf{isomorphism} from $\m{A}$ to an amalgam
$\m{B}=(\tilde{G_1},\tilde{G_2},\tilde{G}_{12},\psi_1,\psi_2)$, if  the
obvious diagram commutes, i.e. if $\phi_i\beta_i=\beta\psi_i$, for $i=1,2$. An
\textbf{automorphism} of $\m{A}$ is an isomorphism from $\m{A}$ to $\m{A}$. The group of automorphisms of $\m{A}$ will be denoted by $Aut(\m{A})$. If $B=G_1\cap G_2$ and $\phi_1,\phi_2$ are inclusion maps, then ${\alpha_i}_{|B}=\alpha$, for every automorphism $(\alpha_1,\alpha_2,\alpha)$ of $\m{A}$.

\bigskip

A finite $p$-subgroup $S$ of a group $G$ is called a \textbf{Sylow $p$-subgroup} of $G$, if every finite $p$-subgroup of $G$ is conjugate to a subgroup of $S$. We write $S\in Syl_p(G)$.
We will use the following result, which is stated in this form in \cite{CP}. A similar result was proved first in \cite{R}.

\begin{theorem}[Robinson]\label{Rob}
Let $(G_1,G_2,B,\phi_1,\phi_2)$ be an amalgam, and let $G=G_1*_{B} G_2$ be the corresponding free amalgamated product. Suppose there is $S\in Syl_p(G_1)$ and $T\in Syl_p(G_2)\cap Syl_p(B)$ with $T\leq S$. Then $S\in Syl_p(G)$ and 
$$\mathcal{F}_{S}(G)=\langle \mathcal{F}_{S}(G_1),\mathcal{F}_{T}(G_2)\rangle.$$
\end{theorem}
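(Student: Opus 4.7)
The plan is to exploit the action of $G = G_1 *_B G_2$ on its Bass--Serre tree $X$, whose vertices are the cosets $G/G_1 \sqcup G/G_2$ and whose edges are $G/B$ (with $hB$ joining $hG_1$ to $hG_2$); write $v_1 = G_1$ and $v_2 = G_2$. As a preliminary observation I would verify that $S \cap G_2 = T$: since $S \leq G_1$, one has $S \cap G_2 \leq G_1 \cap G_2 = B$, and then $T \leq S \cap B$ together with $T \in \mathrm{Syl}_p(B)$ and finiteness of $B$ forces $S \cap B = T$.

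To see that $S \in \mathrm{Syl}_p(G)$, I would invoke the standard fact that any finite group acting on a tree fixes a vertex. Hence any finite $p$-subgroup $P$ of $G$ is contained in a conjugate of $G_1$ or of $G_2$, and Sylow's theorem inside these finite groups then shows that $P$ is $G$-conjugate to a subgroup of $S$ (directly in the first case, via $T \leq S$ in the second).

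The inclusion $\mathcal{F}_S(G) \supseteq \langle \mathcal{F}_S(G_1), \mathcal{F}_T(G_2)\rangle$ is immediate from $S \leq G_1$ and $T \leq S \cap G_2$. For the other direction, I would take $P, Q \leq S$ and $g \in G$ with $P^g \leq Q$, and induct on the even integer $d := d_X(v_1, gv_1)$; the parity comes from the bipartite structure of $X$, and $P$ fixes both endpoints, hence the entire geodesic between them. The base case $d = 0$ gives $g \in G_1$, so $c_g|_{P,Q} \in \mathcal{F}_S(G_1)$ directly. For the inductive step with $d \geq 2$, let $u_1 = h v_2$ (with $h \in G_1$) be the second vertex of the geodesic. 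Then $P^h \leq G_1 \cap G_2 = B$, so some $b \in B$ satisfies $P_1 := P^{hb} \leq T$; since $hb \in G_1$ and $P, P_1 \leq S$, this gives $c_{hb}|_{P,P_1} \in \mathcal{F}_S(G_1)$. Setting $g' = (hb)^{-1}g$, the translated geodesic from $v_1$ to $g'v_1$ now starts $v_1, v_2, \ldots$, and running the mirror argument from $v_2$ produces $h' \in G_2$ and $b' \in B$ with $c_{h'b'}|_{P_1, P_2} \in \mathcal{F}_T(G_2)$ for $P_2 := P_1^{h'b'} \leq T$. Setting $g'' = (h'b')^{-1}g'$ and applying the isometry $h'b'$ one computes $d_X(v_1, g''v_1) = d - 2$, so induction factors $c_{g''}|_{P_2, Q}$, and $c_g = c_{hb}\, c_{h'b'}\, c_{g''}$ (right-action juxtaposition) completes the argument.

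The main obstacle I anticipate is the tree-geometry bookkeeping: verifying that after the two successive peel-offs the distance genuinely drops by $2$, and keeping straight which conjugating element lies in $G_1$ versus $G_2$ so that each factor lands in the correct subsystem. The Sylow transfer inside $B$ is clean because $T \in \mathrm{Syl}_p(B)$ is part of the hypothesis; once the geometric picture of each peel is in place the remaining verifications are routine.
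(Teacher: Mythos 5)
The paper does not prove this statement but cites Clelland--Parker (Theorem~3.1 of \cite{CP}), and that proof, like Robinson's original in \cite{R}, is an induction on the syllable length of $g$ in the amalgam normal form (the paper's Lemma~\ref{NormalForms}). Your Bass--Serre tree argument is correct and is the geometric rephrasing of exactly that induction: $d_X(v_1,gv_1)$ is twice the syllable length of $g$, and each double peel-off (the $G_1$-step $c_{hb}$ into $\F_S(G_1)$ followed by the $G_2$-step $c_{h'b'}$ into $\F_T(G_2)$) shortens the geodesic by $2$ precisely because it cancels one $G_1$- and one $G_2$-syllable off the front of $g$, so this is essentially the same approach in different language.
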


\begin{proof}
See 3.1 in \cite{CP}.
\end{proof}

When we prove our classification result for $p=2$ we will apply Theorem~\ref{Rob} and the following theorem in order to identify a subsystem of a given saturated fusion system $\F$.

\begin{theorem}\label{MainAmalgamThm}
Let $(G_1,G_2,B,\phi_1,\phi_2)$ be an amalgam of finite groups $G_1,G_2$ and $G=G_1*_B G_2$ the corresponding free amalgamated product. Suppose the following hold for $S\in Syl_2(G_1)$, $Q:=O_2(G_2)$ and $M:=J(G_2)$. 
\begin{itemize}
\item[(i)] $N_S(Q)\in Syl_2(G_2)$ and $C_{G_2}(Q)\leq Q\leq M$.
\item[(ii)] $B=N_{G_1}(Q)=N_{G_2}(J(N_S(Q)))$.
\item[(iii)] $|G_1:B|=2$.
\item[(iv)] $M/Q\cong SL_2(q)$ where $q=2^e>2$, $\Phi(Q)=1$, and $Q/C_Q(M)$ is a natural $SL_2(q)$-module for $M/Q$.
\item[(v)] No non-trivial normal $p$-subgroup of $MN_S(Q)$ is normal in $G_1$.
\end{itemize}
Then there exists a free normal subgroup $N$ of $G$ such that $N\cap G_i=1$ for $i=1,2$, $H:=G/N$ is finite, $SN/N\in Syl_2(H)$ and $F^*(H)\cong L_3(q)$ or $Sp_4(q)$.
\end{theorem}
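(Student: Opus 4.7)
The strategy is to recognise $(G_1,G_2,B,\phi_1,\phi_2)$ as a weak BN-pair of rank $2$ at the prime $2$, apply the classification from \cite{DGS} cited as Theorem~\ref{ThmA}, and then realise the resulting completion as a finite quotient of $G$ via Bass--Serre theory. First I would verify the input hypotheses. By (i) and (iii) one checks that $S\in\sy_2(G_1)$ while $N_S(Q)=S\cap B\in\sy_2(B)\cap\sy_2(G_2)$, so $B$ contains a common Sylow $2$-subgroup and $[S:N_S(Q)]=2$. In particular $G_1$ is minimal parabolic, with unique maximal overgroup $B$ of $N_S(Q)$. Hypothesis (iv) supplies the $SL_2(q)$-parabolic structure on $G_2$ with natural module $Q/C_Q(M)$, and (v) ensures that the amalgam has trivial core: no non-trivial $2$-subgroup is normal in both $G_1$ and $G_2$.

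Next I would apply Theorem~\ref{ThmA} to conclude that $(G_1,G_2,B,\phi_1,\phi_2)$ is isomorphic to an amalgam on the Delgado--Goldschmidt--Stellmacher list. The additional constraints---$\Phi(Q)=1$ (so $Q$ is elementary abelian), $|G_1:B|=2$, $q=2^e>2$, natural $SL_2(q)$-module structure, and trivial core---are used to eliminate the exotic entries of the list (such as those of type $G_2(q)$, $U_4(q)$, $U_5(q)$, $F_4(q)$, $M_{12}$, $J_2$, $\mathrm{Aut}(M_{12})$), leaving only the parabolic amalgams occurring in extensions of $L_3(q)$ and $Sp_4(q)$; the latter additionally forces $e$ odd. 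Let $H_0$ be the corresponding finite completion, and write $\iota_i\colon G_i\hookrightarrow H_0$ for the embeddings that meet along $\iota_1(B)=\iota_2(B)$.

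The universal property of the free amalgamated product then yields a surjection $\pi\colon G=G_1*_BG_2\twoheadrightarrow H_0$ whose restriction to each $G_i$ agrees with $\iota_i$. Set $N:=\ker\pi$. Since $\iota_i$ is injective we have $N\cap G_i=1$ for $i=1,2$. The group $G$ acts on the Bass--Serre tree $\m{T}$ of the amalgam with vertex stabilisers conjugate to $G_1$ or $G_2$ and edge stabilisers conjugate to $B$; as $N$ meets each such stabiliser trivially, it acts freely on $\m{T}$ and is therefore a free group. Setting $H:=G/N$ we obtain $H\cong H_0$, so $H$ is finite with $F^*(H)\cong L_3(q)$ or $Sp_4(q)$. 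For the Sylow statement, Theorem~\ref{Rob} gives $S\in\sy_2(G)$, and since $S\cap N\leq G_1\cap N=1$ the image $SN/N\cong S$ is a Sylow $2$-subgroup of $H$ of the correct order.

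The main obstacle is the second step: matching our amalgam against the DGS list and eliminating every case except $L_3(q)$- and $Sp_4(q)$-type. The input---$Q$ elementary abelian with $G_2$ inducing $SL_2(q)$ naturally on $Q/C_Q(M)$, combined with $|G_1:B|=2$ (which forces $G_1/O_2(G_1)$ to be of $S_3$- or $L_2(2)$-type) and the trivial core condition---should pin down precisely these two families. All subsequent arguments (free kernel, Sylow transfer, identification of $F^*(H)$) are essentially formal consequences of Bass--Serre theory once the amalgam has been recognised.
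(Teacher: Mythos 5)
Your proposal misapplies Theorem~\ref{ThmA}. That theorem requires $(G_1,G_2)$ to be a weak BN-pair of $G$ involving $SL_2(q)$, which by Definition~\ref{WeakBNDef} forces the existence of a normal subgroup $G_1^*\unlhd G_1$ with $G_1^*/O_2(G_1)\cong SL_2(q)$ and with $G_1^*\cap B$ the normalizer in $G_1^*$ of a Sylow $2$-subgroup. Since $q>2$, this would give $|G_1^*:G_1^*\cap B|=q+1\geq 5$, hence $|G_1:B|\geq q+1$, directly contradicting hypothesis~(iii) that $|G_1:B|=2$. So the amalgam $(G_1,G_2,B,\phi_1,\phi_2)$ is \emph{not} a weak BN-pair of rank~$2$, and Theorem~\ref{ThmA} cannot be applied to it as you propose; nor does it sit on the DGS list. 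The remark that $|G_1:B|=2$ "forces $G_1/O_2(G_1)$ to be of $S_3$- or $L_2(2)$-type'' does not rescue this, since $S_3\cong SL_2(2)$ has Sylow $2$-normalizer of index $3$, again incompatible with index $2$, and in any case the definition requires the same $q$ on both sides.

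The device the paper uses to bridge this gap is the one you are missing. Fix $T:=N_S(Q)$ and $t\in S\setminus T$, and consider $X:=\langle G_2,G_2^t\rangle$. Using Lemma~\ref{NormalForms} and the fact that $\{1,t\}$ is a transversal of $B$ in $G_1$, one verifies $X\cong G_2*_B G_2^t$. After showing that no nontrivial normal subgroup of $X$ lies in $B$ (this is where hypotheses~(iv), (v) and the module structure enter, and where the irreducibility of $Q/C_Q(M)$ together with $U_0:=C_Q(M)\cap C_Q(M)^t$ being normal in $G_1=B\langle t\rangle$ are used), the pair $(G_2,G_2^t)$ \emph{is} a weak BN-pair involving $SL_2(q)$ with both $O_2$'s elementary abelian, and Theorem~\ref{ThmA} applies to $X$. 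This yields a free normal $N\unlhd X$ with $\ov X:=X/N$ finite and $F^*(\ov X)\cong L_3(q)$ or $Sp_4(q)$. But one is not done: one must show $N$ is normal in $G=X\langle t\rangle$. The paper proves this via Lemma~\ref{KerEqual}, comparing the epimorphisms $x\mapsto \ov x$ and $x\mapsto \ov{x^t}$ from $X$ to $\ov X$ and concluding $N=N^{t^{-1}}$. Then $H:=G/N$ is finite with $|H:\ov X|=2$, and $F^*(H)=F^*(\ov X)$. Your Bass--Serre step (freeness of $N$, identification of Sylow subgroups) is fine once one actually has such an $N\unlhd G$, but the construction and the normality of $N$ are not "essentially formal": they depend on forming the conjugate amalgam and the uniqueness argument of Lemma~\ref{KerEqual}, neither of which appears in your proposal.
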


We will prove Theorem~\ref{MainAmalgamThm} at the end of this section. For that we need one more definition and some preliminary results.

\begin{definition}\label{WeakBNDef}
Let $G$ be a group with finite subgroups $G_1$ and $G_2$. Set $B:=G_1\cap G_2$.
\begin{itemize}
\item Let $q$ be a power of $p$. The pair $(G_1,G_2)$ is called a \textbf{weak BN-pair of $G$ involving $SL_2(q)$} if, for $i=1,2$, there are normal subgroups $G_i^*$ of $G_i$ such that the following properties hold:
\begin{itemize}
\item $G=\<G_1,G_2\>$,
\item no non-trivial normal subgroup of $G$ is contained in $B$,
\item $C_{G_i}(O_p(G_i))\leq O_p(G_i)\leq G_i^*$,
\item $G_i=G_i^*B$,
\item $G_i^*\cap B$ is the normalizer in $G_i^*$ of a Sylow $p$-subgroup of $G_i^*$ and $G_i^*/O_p(G_i)\cong SL_2(q)$.
\end{itemize}
\item If $(G_1,G_2)$ is a weak BN-pair of $G$ involving $SL_2(q)$, and $\iota_i:B\rightarrow G_i$ is the inclusion map for $i=1,2$, then we call $(G_1,G_2,B,\iota_1,\iota_2)$ the \textbf{amalgam corresponding to $(G_1,G_2)$}.
\end{itemize}
\end{definition}

The main tool is the following Theorem.

\begin{theorem}\label{ThmA}
Let $(G_1,G_2,B,\phi_1,\phi_2)$ be an amalgam and $G=G_1*_B G_2$ be the
corresponding free amalgamated product. Let $q$ be a power of $p$. Suppose $(G_1,G_2)$ is a weak BN-pair
of $G$ involving $SL_2(q)$, and $O_p(G_i)$ is elementary abelian for
$i=1,2$. Then there is a free normal subgroup $N$ of $G$ such that $G_i\cap N=1$ for $i=1,2$, $H:=G/N$ is
finite, and $F^*(H)\cong L_3(q)$ or $p=2$ and $F^*(H)\cong Sp_4(q)$. 
\end{theorem}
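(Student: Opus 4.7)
My plan is to follow the amalgam method of Goldschmidt--Delgado--Stellmacher: study the action of $G$ on the coset tree of the amalgam, use FF-module analysis to pin down the local structure, and then identify the amalgam with that of a Lie-type group of rank $2$. First I set up the coset graph. Let $\Gamma$ be the bipartite graph with vertex set $G/G_1\sqcup G/G_2$ in which $xG_1$ and $yG_2$ are adjacent iff $xG_1\cap yG_2\neq\emptyset$. Because $G=G_1*_B G_2$ is a free amalgamated product, $\Gamma$ is a tree; and because no non-trivial normal subgroup of $G$ lies in $B$, the action of $G$ on $\Gamma$ is faithful. Let $\alpha_i$ be the vertex stabilized by $G_i$ for $i=1,2$. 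For a vertex $\beta$ write $Q_\beta:=O_p(G_\beta)$ and $Z_\beta:=\Omega(Z(Q_\beta))$, and set
$$b:=\min\{\,d(\beta,\beta'):\beta,\beta'\in V(\Gamma),\ [Z_\beta,Q_{\beta'}]\ne 1\,\}.$$
Fix a critical pair $(\alpha,\alpha')$ realizing $b$ and a geodesic $\alpha=\lambda_0,\lambda_1,\dots,\lambda_b=\alpha'$.

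The heart of the argument is to bound $b$ and to pin down the module actions along the geodesic. Each $G_{\lambda_i}^*/Q_{\lambda_i}\cong SL_2(q)$ acts on $Z_{\lambda_i}/C_{Z_{\lambda_i}}(G_{\lambda_i}^*)$, which will turn out to be a natural $SL_2(q)$-module. By Lemma~\ref{natSL2q} and Lemma~\ref{Baum}(a), any offender on such a module lies in a Sylow $p$-subgroup of $SL_2(q)$ and no over-offenders exist. Using the hypothesis that each $Q_{\lambda_i}$ is \emph{elementary abelian} (so $Z_{\lambda_i}=Q_{\lambda_i}$), the image of $Z_{\lambda_{i-1}}Q_{\lambda_i}/Q_{\lambda_i}$ on $Z_{\lambda_i}$ is forced to be an FF-offender at each step of the geodesic, and an iterated commutator argument along the geodesic gives $b\leq 3$. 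The extremal case $b=1$ is ruled out because it produces a non-trivial normal subgroup of $G$ contained in $B$, contradicting the weak BN-pair hypothesis. For $b=2$ one recovers the local geometry of the two maximal parabolics of $L_3(q)$ (with $|Q_i|=q^2$ and each $Q_i$ a natural $SL_2(q)$-module for $G_{3-i}^*/Q_{3-i}$); the case $b=3$ is possible only when the induced action of $Z_\alpha$ at $\lambda_1$ is quadratic on a second commutator, which forces $p=2$ and reproduces the local structure of the two parabolics of $Sp_4(q)$.

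For the recognition step, let $H_0\in\{L_3(q),Sp_4(q)\}$ be the Lie-type group matching the local data, with its two maximal parabolics $P_1,P_2$ and intersection $P_1\cap P_2$. The local identification constructed above gives an isomorphism of amalgams $(G_1,G_2,B,\phi_1,\phi_2)\to (P_1,P_2,P_1\cap P_2)$, which by universality of the free amalgamated product extends to a surjection $\pi:G\twoheadrightarrow H_0$ that restricts to an isomorphism on each $G_i$. Set $N:=\ker\pi$. Then $N\cap G_i=1$ for $i=1,2$, so $N$ acts freely on the tree $\Gamma$ and is therefore a free group; $H:=G/N\cong H_0$ is finite, and Theorem~\ref{Rob} yields $SN/N\in Syl_p(H)$. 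The hardest part of this plan is the bound $b\leq 3$, and in particular the $b=3$ analysis: tracking an FF-offender from $Z_{\lambda_2}$ through $\lambda_1$ back to $\lambda_0$ requires using in tandem the elementary-abelian hypothesis on the $Q_i$ and the non-existence of over-offenders on natural $SL_2(q)$-modules, and it is exactly this combination that collapses the list of possibilities down to $L_3(q)$ and, at $p=2$, $Sp_4(q)$.
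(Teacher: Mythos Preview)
The paper does not prove this theorem directly: its proof is a one-line citation of Theorem~A in \cite{DGS}. What you have written is, in spirit, a sketch of the Delgado--Goldschmidt--Stellmacher argument itself. That is a reasonable thing to attempt, but your sketch has a concrete error that makes the case analysis collapse.

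With your own definitions, $Q_\beta$ is elementary abelian and $C_{G_\beta}(Q_\beta)\le Q_\beta$, so $Z_\beta=\Omega(Z(Q_\beta))=Q_\beta$ and the condition $[Z_\beta,Q_{\beta'}]\neq 1$ is equivalent to $Q_\beta\not\le Q_{\beta'}$. But for adjacent vertices (say the base vertices $\alpha_1,\alpha_2$) one already has $Q_1\not\le Q_2$: in $L_3(q)$ both unipotent radicals have order $q^2$ and meet in $Z(T)$ of order $q$, and in $Sp_4(q)$ with $q$ even both have order $q^3$ and again neither contains the other. Hence $b=1$ in both target configurations, contradicting your claim that $b=1$ is excluded and that $b=2,3$ distinguish $L_3(q)$ from $Sp_4(q)$. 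The ``$b=1$ forces a normal subgroup in $B$'' step is simply false with this choice of $Z_\beta$. In the genuine DGS argument the critical parameter is attached to a carefully chosen module $V_\alpha$ (typically $\langle \Omega_1(Z(T))^{G_\alpha}\rangle$ for one type of vertex only), not to $Q_\alpha$ itself, and the resulting case analysis is substantially more delicate than a single inequality $b\le 3$; the elementary-abelian hypothesis enters later, to prune the DGS list down to $L_3(q)$ and (for $p=2$) $Sp_4(q)$.

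Your recognition step, by contrast, is essentially correct: once one knows the amalgam is isomorphic to that of a rank~$2$ Lie-type group, universality of $G_1*_BG_2$ gives a surjection onto a finite completion whose kernel meets each $G_i$ trivially and hence acts freely on the tree. If you want to turn this into a self-contained proof rather than a citation, you must either redo the $b$-analysis with the correct module (and accept that this is the content of a large part of \cite{DGS}), or quote Theorem~A of \cite{DGS} as the paper does and then observe that among the amalgams on the DGS list only those of $L_3(q)$ and, for $p=2$, $Sp_4(q)$ have both $O_p(G_i)$ elementary abelian.
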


\begin{proof}
This is a consequence of Theorem~A in \cite{DGS}.
\end{proof}

In the situation of Theorem~\ref{ThmA}, it follows from the structure of $Aut(L_3(q))$ and $Aut(Sp_4(q))$ that $H$ embeds into $\Gamma L_3(q)$ respectively $\Gamma Sp_4(q)$. Moreover, for $i=1,2$ and $\ov{G}:=G/N$, $F^*(H)\cap \ov{G_i}$ is a parabolic subgroup of $F^*(H)$ in the Lie theoretic sense, and $\ov{B}\cap F^*(H)$ is the normalizer of a Sylow $p$-subgroup of $F^*(H)$.

\begin{lemma}\label{AutLocal}
Let $G$ be a finite group such that for $M=O^{p^\prime}(G)$, $T\in Syl_p(M)$ and $Q:=O_p(G)$ the following hold.
\begin{itemize}
\item[(i)] $M/Q\cong SL_2(q)$ and $G/Q$ is isomorphic to a subgroup of $GL_2(q)$, for some power $q$ of $p$.
\item[(ii)] $G/Q$ acts faithfully on $Q/Z(M)$, $Q$ is elementary abelian, $Q=[Q,M]$, $|Q/C_Q(T)|=q$, and $Q/Z(M)$ is a natural $SL_2(q)$-module for $M/Q$.
\item[(iii)] $Z(G)=1$.
\end{itemize}
Set $A:=Aut(G)$. Then $C_A(Q)=C_A(Q/Z(M))\leq C_A(G/Q)$ and $C_A(Q)$ is an elementary abelian $p$-group. Moreover, $C_A(T)\cong Z(T)$ for $T\in Syl_p(G)$.
\end{lemma}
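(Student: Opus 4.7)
First I would establish $C_A(Q/Z(M)) \leq C_A(G/Q)$. Given $\alpha \in C_A(Q/Z(M))$, for each $g \in G$ and $q \in Q$ the identity $(q^g)\alpha = (q\alpha)^{g\alpha}$, together with the congruences $q\alpha \equiv q$ and $(q^g)\alpha \equiv q^g$ modulo $Z(M)$, shows that $g\alpha \cdot g^{-1}$ acts trivially on $Q/Z(M)$; the faithfulness clause of (ii) then forces $g\alpha \in gQ$. To upgrade this to $\alpha \in C_A(Q)$ I would introduce the map $\phi \colon Q \to Z(M)$ defined by $\phi(q) = q^{-1}(q\alpha)$, well-defined because $\alpha$ acts trivially on $Q/Z(M)$ and a homomorphism because $Q$ is abelian. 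Using that $\alpha$ also acts trivially on $G/Q$ combined with $Q$ abelian, a short calculation shows $\phi$ is $M$-equivariant; since $Z(M)$ is a trivial $M$-module, $\phi$ must vanish on $[Q,M]$, and the hypothesis $Q=[Q,M]$ from (ii) gives $\phi \equiv 0$, whence $\alpha \in C_A(Q)$.

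For the elementary abelian $p$-group structure on $C_A(Q)$, I would attach to each $\alpha \in C_A(Q)$ the $1$-cocycle $\psi_\alpha \colon G/Q \to Q$ given by $\psi_\alpha(g) = g^{-1}(g\alpha)$. Because $\alpha$ fixes $Q$ pointwise and $Q$ is abelian, the assignment $\alpha \mapsto \psi_\alpha$ satisfies $\psi_{\alpha\beta} = \psi_\alpha + \psi_\beta$ and is injective, embedding $C_A(Q)$ into the additive group of such cocycles; since $Q$ has exponent $p$, the same follows for $C_A(Q)$.

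Finally, for $C_A(T) \cong Z(T)$ I would first compute $C_G(T)$. The faithful action of $G/Q$ on $Q/Z(M)$ forces $C_G(Q) = Q$; since $T/Q$ acts faithfully on $Q$ this in turn gives $Z(T) \leq Q$ and $C_G(T) = C_Q(T) = Z(T)$. Because $Z(G)=1$, the map $z \mapsto c_z$ then embeds $Z(T)$ into $C_A(T)$. For the reverse inclusion, an $\alpha \in C_A(T)$ gives a cocycle $\psi_\alpha$ vanishing on $T/Q$, and the task is to realise $\psi_\alpha(g) = [g,z]$ for some $z \in C_Q(T) = Z(T)$, exhibiting $\alpha = c_z$. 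This is the main obstacle. The plan is to use inflation-restriction along $M \unlhd G$---$G/M$ having order prime to $p$ kills its cohomology with coefficients in the $p$-group $Q$---to reduce to cocycles $M/Q \to Q$, and then to apply the long exact sequence of $0 \to Z(M) \to Q \to Q/Z(M) \to 0$ of $M/Q$-modules, together with the vanishing $H^1(SL_2(q), V) = 0$ for $V$ the natural module (the trivial submodule $Z(M)$ being handled by perfectness of $SL_2(q)$, modulo small cases). The condition $\psi_\alpha\vert_{T/Q} = 0$ then forces the coboundary witness $z$ into $C_Q(T) = Z(T)$.
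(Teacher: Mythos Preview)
Your argument is correct and takes a genuinely different route from the paper's. The paper handles the first two claims with the Three-Subgroups Lemma: from $[M,W,Q]=1$ and $[W,Q,M]=1$ (where $W=C_A(Q/Z(M))$) it deduces $[Q,M,W]=1$, and since $Q=[Q,M]$ this gives $W=C_A(Q)$; abelianness and exponent $p$ follow from further commutator identities. Your equivariant-homomorphism and cocycle reformulations accomplish the same thing in a more module-theoretic language. For the final claim the paper avoids cohomology entirely: it first proves $C_A(M)=Z(M)$ via Maschke's theorem, then observes that $W_0:=QC_W(T)$ is $G$-invariant with $|W_0/C_W(T)|=q$, so if $C_W(T)\not\leq Q$ one gets $|W_0/Z(M)|>q^2$ and hence, intersecting centralizers of two Sylow $p$-subgroups, $|C_W(M)|>|Z(M)|$, a contradiction.

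One simplification to your last step: you do not need the vanishing of $H^1(SL_2(q),V)$ or the long exact sequence. Since $T/Q\in Syl_p(G/Q)$ and $Q$ is a $p$-group, the restriction map $H^1(G/Q,Q)\to H^1(T/Q,Q)$ is already injective by the standard transfer argument. Thus any cocycle $\psi_\alpha$ with $\psi_\alpha|_{T/Q}=0$ is a coboundary $g\mapsto [g,z]$ for some $z\in Q$, and the vanishing on $T/Q$ forces $z\in C_Q(T)=Z(T)$. This removes the dependence on any case analysis for small $q$.
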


\begin{proof}
Set $\ov{G}=G/Z(M)$ and $W:=C_A(\ov{Q})$. Throughout this proof we will identify $G$ with the group of inner automorphism of $G$. Note that this is possible by (iii). Observe that by (ii), $[G,W]\leq C_G(\ov{Q})\leq Q$. Hence, $[M,W,Q]\leq [Q,Q]=1$. As $[W,Q,M]=[Z(M),M]=1$ it follows from the Three-Subgroups Lemma that
$[Q,W]=[Q,M,W]=1$. So we have shown that $W=C_A(Q)\leq C_A(G/Q)$.
As $[W,G]\leq Q\leq C(W)$ it follows from the Three-Subgroups Lemma that $[W,W,G]=1$, i.e. $[W,W]=1$ and $W$ is abelian.
Since $[G,W,W] = 1$ and $[G,W]\leq Q$ is elementary abelian, we have $[g,w^p] = [g,w]^p=1$ for every $g\in G$ and $w\in W$. Hence $W$ is a group of exponent $p$ and thus an elementary abelian $p$-group.

\bigskip

Set now $C:=C_A(M)$. Then $C\leq W$ and so $C$ is elementary abelian. Since
$G$ acts coprimely on $C$, by Maschke's Theorem there is a $G$-invariant complement $C_0$ of
$Z(M)$ in $C$. Then $[C_0,G]\leq (C\cap G)\cap C_0=Z(M)\cap C_0=1$. Hence, $C_0=1$
and $C=Z(M)$.

\bigskip

Set now $W_0:=QC_W(T)$.  Note that $W_0$ is
$G$-invariant as $[W,G]\leq Q$, and that, by (ii),  $|W_0/C_W(T)|=|Q/C_Q(T)|=q$. If $C_W(T)\not\leq
Q$ then $|W_0/Z(M)|>q^2$ and, for $T\neq S\in Syl_p(G)$, $|Z(M)|<|C_W(T)\cap
C_W(S)|=|C_W(M)|$. This contradicts $C=Z(M)$. Hence,
$C_A(T)=C_W(T)=C_Q(T)=Z(T)$.
\end{proof}

\begin{lemma}\label{AutAmExt}
Let $q>2$ be a power of $2$ and $G\cong L_3(q)$ or $Sp_4(q)$. Let 
$(G_1,G_2)$ be a weak BN-pair of $G$ involving $SL_2(q)$. Let $B:=G_1\cap G_2$ and $\m{A}$ be the amalgam corresponding to $(G_1,G_2)$. Then for every $\hat{\alpha}\in Aut(\m{A})$ there exists $\beta\in Aut(G)$ such that $\hat{\alpha}=(\beta_{|G_1},\beta_{|G_2},\beta_{|B})$.
\end{lemma}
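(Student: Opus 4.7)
My proof plan proceeds by lifting $\hat\alpha$ to the free amalgamated product and then descending to $G$. Set $F:=G_1*_B G_2$. By the universal property of the amalgamated free product, the triple $(\alpha_1,\alpha_2,\alpha_B)$ extends to a unique automorphism $\tilde\alpha$ of $F$. Theorem~\ref{ThmA} supplies a free normal subgroup $N\unlhd F$ with $N\cap G_i=1$ and $F/N\cong G$; write $\pi\colon F\to G$ for the quotient map. The lemma is equivalent to the assertion that $\tilde\alpha(N)=N$, for then $\tilde\alpha$ descends to an automorphism $\beta$ of $G$ with $\beta\circ\pi=\pi\circ\tilde\alpha$, and consequently $\beta|_{G_i}=\alpha_i$ for $i=1,2$.

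To establish $\tilde\alpha(N)=N$, I would construct $\beta$ directly. Since $G\cong L_3(q)$ or $Sp_4(q)$ with $q=2^e>2$, the automorphism group of $G$ is generated by inner, diagonal, field and (graph or graph-field) automorphisms. The graph piece interchanges the two maximal parabolic subgroups of $G$, whereas each $\alpha_i$ stabilises $G_i$; hence each $\alpha_i$ is the restriction to $G_i$ of an element in the subgroup of $Aut(G)$ generated by inner, diagonal, and field automorphisms. Writing $\alpha_i=\operatorname{inn}(b_i)\circ\phi_{f_i}$ for suitable $b_i\in G_i$ and field automorphisms $\phi_{f_i}$, the coincidence $\alpha_1|_B=\alpha_2|_B=\alpha_B$, together with the fact that $B$ contains a non-trivial torus acting faithfully (here $q>2$ is essential, via Lemma~\ref{AutLocal} applied to the two pieces of the weak BN-pair), forces $f_1=f_2=:f$ and $b_1b_2^{-1}\in C_G(B)$. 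The residual ambiguity already lies in $B$, so the two pieces patch into a single $\beta\in Aut(G)$ with $\beta|_{G_i}=\alpha_i$.

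Finally, the two homomorphisms $\pi\circ\tilde\alpha$ and $\beta\circ\pi$ from $F$ to $G$ agree on $G_1\cup G_2$ and therefore on $F=\langle G_1,G_2\rangle$. This yields $\tilde\alpha^{-1}(N)=\ker(\pi\circ\tilde\alpha)=\ker(\beta\circ\pi)=N$, so $\tilde\alpha(N)=N$, and the descent produces the desired $\beta$. The main obstacle is the patching step: showing that every amalgam-automorphism $\alpha_i$ of the parabolic $G_i$ is induced by an automorphism of the ambient group $G$. Using the decomposition $G_i=G_i^*B$ from Definition~\ref{WeakBNDef} and the known structure of $Aut(SL_2(q)\ltimes V)$ for the natural module $V$, this is a finite computation in which the hypothesis $q>2$ is used to kill diagonal ambiguities and to ensure faithful torus action, so that $\alpha_1|_B=\alpha_2|_B$ uniquely determines the common extension $\beta$.
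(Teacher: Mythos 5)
Your framing — lift $\hat\alpha$ to the universal completion $F=G_1*_BG_2$ and observe that the lemma is equivalent to $\tilde\alpha(N)=N$ — is correct but buys you nothing, and you acknowledge this by saying you then "construct $\beta$ directly." The problem is that your construction of $\beta$ has a real gap at its heart. The sentence ``The graph piece interchanges the two maximal parabolic subgroups of $G$, whereas each $\alpha_i$ stabilises $G_i$; hence each $\alpha_i$ is the restriction to $G_i$ of an element in the subgroup of $Aut(G)$ generated by inner, diagonal, and field automorphisms'' is a non sequitur. Each $\alpha_i$ is a priori just an automorphism of the finite group $G_i$; of course it ``stabilises $G_i$''. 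What you are asserting is that $\alpha_i$ lies in the image of the restriction map $N_{Aut(G)}(G_i)\to Aut(G_i)$, and nothing you have said shows that. Indeed the restriction map is not surjective in general: Lemma~\ref{AutLocal} already tells you that $C_{Aut(G_i)}(Q_i)$ is a nontrivial elementary abelian $2$-group, and one has to argue quite carefully that the particular $\alpha_i$ arising from an amalgam automorphism land in the image. You label this ``the main obstacle'' and then dismiss it as ``a finite computation,'' but that computation is the entire content of the lemma and you neither carry it out nor indicate a mechanism that would.

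The paper avoids the direct-extension problem altogether by a counting argument. It sets $A_0:=N_{Aut(G)}(T)\cap N_{Aut(G)}(Q_1)$, observes that the restriction map $\phi:A_0\to Aut(\m A)$, $\alpha\mapsto(\alpha_{|G_1},\alpha_{|G_2},\alpha_{|B})$, is well defined and injective (injectivity coming from $C_{Aut(G_i)}(B)=1$, a consequence of Lemma~\ref{AutLocal} and $Z(B)=1$), and that the projections $\psi_i:Aut(\m A)\to A_i$ are isomorphisms. Surjectivity of $\phi$ is then equivalent to the equality $|A_1|=|A_0|$, which the paper establishes by the structural bound $|C_1|\le|Q_1|$ on the kernel of the action on $Q_1$; that bound is obtained by transferring $C_1$ through $\psi_1^{-1}\psi_2$ into $Aut(G_2)$ and using the known structure on that side. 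This is where the hypothesis $q>2$ is genuinely used, via the $\Gamma L_2(q)$-structure of $A_0/Q_i$ and the module action. Your torus-faithfulness patching step is plausible but secondary: it addresses uniqueness of the extension, not its existence, and existence is what is missing from your argument.
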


\begin{proof}
Let $i\in \{1,2\}$. Set 
\begin{eqnarray*}
T&=&O_p(B),\\
Q_i&=&O_p(G_i),\\ 
M_i&=&O^{p^\prime}(G_i),\\
A_i&=&\{\alpha_i:(\alpha_1,\alpha_2,\alpha)\in Aut(\m{A})\}\leq Aut(G_i),\\
C_i&=&C_{A_i}(Q_i),\\
A_0&=& N_{Aut(G)}(T)\cap N_{Aut(G)}(Q_1).
\end{eqnarray*}
By the structure of $G$, $(G_1,G_2)$ is a pair of parabolics of $G$ in the Lie theoretic sense, and $B$ is the normalizer of a Sylow $2$-subgroup. Moreover the following properties hold:
\begin{itemize}
\item[(1)] $M_i/Q_i\cong SL_2(q)$ and $G_i/Q_i$ is isomorphic to a subgroup of $GL_2(q)$.
\item[(2)] $G_i$ acts faithfully on $Q_i/Z(M_i)$, $\Phi(Q_i)=1$, $[Q_i,M_i]=Q_i$, $|Q_i/C_{Q_i}(T)|=q$, and $Q_i/Z(M_i)$ is a natural $SL_2(q)$-module for $M_i/Q_i$.
\item[(3)] $Z(G_i)=1=Z(B)$. 
\item[(4)] $T\in Syl_p(G)$ and $B=N_G(T)$.
\end{itemize}
The automorphism group of $G$ is generated by a graph automorphism and the elements of $\Gamma L_3(q)$ respectively $\Gamma Sp_4(q)$. Therefore, we get the following property: 
\begin{itemize}
\item[(5)] $A_0\leq N(Q_2)$ and $A_0/Q_i \cong N_{\Gamma GL_2(q)}(\tilde{T})$ for $\tilde{T}\in Syl_2(GL_2(q))$, where we identify $Q_i$ with the inner automorphisms of $G$ induced by $Q_i$.
\end{itemize}
Properties (1) and (2) give in particular that $G_i$ fulfills the hypothesis of Lemma~\ref{AutLocal}. Hence, we have
\begin{itemize}
\item[(6)] $C_i=C_{A_i}(Q_i/Z(M_i))\leq C(G_i/Q_i)$ and $C_i$ is a $2$-group.
\item[(7)] $C_{A_i}(T)\cong Z(T)$.
\end{itemize}

Observe that the map
$$ \phi: A_0\rightarrow Aut(\m{A})\mbox{ defined by }\alpha\mapsto (\alpha_{|G_1},\alpha_{|G_2},\alpha_{|B})$$
is well-defined and a monomorphism of groups. Recall that ${\alpha_1}_{|B}=\alpha_{|B}={\alpha_2}_{|B}$ for $(\alpha_1,\alpha_2,\alpha)\in Aut(\m{A})$. Moreover, by (3) and (7), $C_{A_1}(B)=1$ and $C_{A_2}(B)=1$. Hence, for $i=1,2$, the maps
$$ \psi_i : Aut(\m{A})\rightarrow A_i \mbox{ defined by } (\alpha_1,\alpha_2,\alpha)\mapsto \alpha_i $$
are isomorphisms of groups. In particular, it is therefore sufficient to show $|A_1|=|A_0|$.
Observe that, by (5),
\begin{itemize}
\item[(8)] $A_i/C_{A_i}(M_i/Q_i)\cong N_{Aut(M_i/Q_i)}(T/Q_i)$ for $i=1,2$.
\end{itemize}
Furthermore, since every element in $C_{A_i}(M_i/Q_i)$ acts on $Q_i/Z(M)$ as a scalar from $End_{M_i}(Q_i/Z(M))\cong GF(q)$, it follows from (2),(5) and (6) that
\begin{itemize}
\item[(9)] $C_{A_i}(M_i/Q_i)/C_i\cong C_{q-1}$ for $i=1,2$. 
\end{itemize}
Hence, by (5), it is sufficient to show that $|C_1|\leq |Q_1|$. 
In order to prove that set $C:=C_1\psi_1^{-1}\psi_2$.
Note that ${\alpha_1}_{|B}=(\alpha_1\psi_1^{-1}\psi_2)_{|B}$ for every $\alpha_1\in A_1$. Thus, $[Q_1,C]=1$ and $[T,C]=[Q_1Q_2,C]\leq Q_2$. By (6), $C\cong C_1$ is  a $2$-group. Hence, by (8), $C\leq TC_{A_2}(M_2/Q_2)$, and by (9), $C_0:=C\cap C(M_2/Q_2)\leq C_2$. Thus, $|C/C_0|\leq q$ and, by (7), $C_0\leq C_{A_2}(Q_1Q_2)=C_{A_2}(T)\cong Z(T)$. So $|C_1|=|C|\leq q\cdot |Z(T)|=|Q_1|$. As argued above this proves the assertion.
\end{proof}

\begin{lemma}\label{KerEqual}
Let $G$ be a group, let $q>2$ be a power of $2$, and let $(G_1,G_2)$ be a weak BN-pair of $G$ involving $SL_2(q)$. Let $H$ be a finite group such that $F^*(H)\cong L_3(q)$ or $Sp_4(q)$ for a power $q>2$ of $2$. Let $\phi$ and $\psi$ be epimorphisms from $G$ to $H$ such that $G_i\cap ker\phi=G_i\cap ker\psi= 1$ for $i=1,2$. Then $ker\phi=ker\psi$. 
\end{lemma}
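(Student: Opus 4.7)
The plan is to construct $\delta \in Aut(H)$ with $\delta \circ \phi = \psi$, which immediately yields $\ker\phi = \ker\psi$. First, I reduce to the case $G = G_1 *_B G_2$: the canonical epimorphism $\pi \colon G_1 *_B G_2 \to G$ lifts $\phi$ and $\psi$ to $\tilde\phi := \phi\pi$ and $\tilde\psi := \psi\pi$, both still injective on each $G_i$ (since $G_i$ embeds faithfully into $G_1 *_B G_2$ by Lemma~\ref{NormalForms}). Since $\ker\tilde\phi = \pi^{-1}(\ker\phi)$ and $\pi$ is surjective, it suffices to show $\ker\tilde\phi = \ker\tilde\psi$.

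Next I verify that both $(P_1, P_2) := (G_1\phi, G_2\phi)$ and $(\tilde P_1, \tilde P_2) := (G_1\psi, G_2\psi)$ are weak BN-pairs of $H$ involving $SL_2(q)$, with $P_1 \cap P_2 = B\phi$ and $\tilde P_1 \cap \tilde P_2 = B\psi$. The internal axioms transfer through the restrictions $\phi|_{G_i}$ and $\psi|_{G_i}$, which are isomorphisms onto $P_i$ and $\tilde P_i$. That no non-trivial normal subgroup of $H$ is contained in the intersection follows from $F^*(H)$ being non-abelian simple (so $C_H(F^*(H)) = 1$): any such $N \unlhd H$ satisfies $N \cap F^*(H) \in \{1, F^*(H)\}$; the latter is ruled out by comparing orders (the $G_i\phi$ are too small to contain $F^*(H)$), and the former forces $N \leq C_H(F^*(H)) = 1$. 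For the intersection equality $P_1 \cap P_2 = B\phi$, I use $H \leq Aut(F^*(H))$ together with the Lie-theoretic fact that in $F^*(H) \cong L_3(q)$ or $Sp_4(q)$ the two maximal parabolics above a common Sylow $p$-subgroup intersect precisely in its normalizer; via the iso $\phi|_{G_i}$ this normalizer corresponds to $B\phi$.

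Since weak BN-pairs of $H$ involving $SL_2(q)$ are unique up to $Aut(H)$ (they all correspond to the standard pair of maximal parabolics of $F^*(H)$, which are transitive under $Aut(F^*(H))$ and hence under $Aut(H)$), there exists $\gamma \in Aut(H)$ with $P_i \gamma = \tilde P_i$ for $i = 1,2$. Replacing $\phi$ by $\gamma \circ \phi$ preserves $\ker\phi$, so I may assume $P_i = \tilde P_i$ and $B\phi = B\psi$. Then $\alpha_i := \psi|_{G_i} \circ (\phi|_{G_i})^{-1}$ is an automorphism of $P_i$, and $\alpha_1, \alpha_2$ agree on $B\phi$ (both restrict to $\psi|_B \circ (\phi|_B)^{-1}$), so $(\alpha_1, \alpha_2, \alpha_1|_{B\phi})$ is an automorphism of the amalgam corresponding to $(P_1, P_2)$.

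By Lemma~\ref{AutAmExt} applied to $F^*(H)$ (after intersecting with $F^*(H)$ to obtain an amalgam automorphism of the weak BN-pair amalgam of $F^*(H)$, and then lifting to $H$ via compatibility with $H/F^*(H) \leq Out(F^*(H))$), there exists $\beta \in Aut(H)$ with $\beta|_{P_i} = \alpha_i$ for $i = 1,2$. Then $\beta \circ \phi|_{G_i} = \psi|_{G_i}$, and since $G$ is generated by $G_1 \cup G_2$ as a free amalgamated product, $\beta \circ \phi = \psi$, giving $\ker\phi = \ker\psi$. The main obstacles are verifying the intersection equality $P_1 \cap P_2 = B\phi$ and establishing the uniqueness-up-to-$Aut(H)$ of the weak BN-pair, both of which require explicit Lie-theoretic analysis of $L_3(q)$ and $Sp_4(q)$; the lifting of the amalgam automorphism from $F^*(H)$ to $H$ is a routine compatibility check.
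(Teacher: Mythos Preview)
Your overall strategy matches the paper's: arrange $G_i\phi=G_i\psi$ via the transitivity of $Aut(H)$ on weak BN-pairs, form the amalgam automorphism $(\alpha_1,\alpha_2,\alpha)$, then invoke Lemma~\ref{AutAmExt} to extend it to some $\beta\in Aut(H)$ and conclude $\psi=\beta\phi$. The reduction to the free amalgamated product is harmless and the weak-BN-pair verification is fine.

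The one genuine gap is your handling of the passage between $H$ and $F^*(H)$. Lemma~\ref{AutAmExt} is stated only for $G\cong L_3(q)$ or $Sp_4(q)$, i.e.\ for $F^*(H)$ itself. You propose to restrict $\alpha_i$ to $P_i\cap F^*(H)$, apply the lemma there, and then ``lift to $H$ via compatibility with $H/F^*(H)\leq Out(F^*(H))$''. But this lifting is not routine: the lemma hands you $\beta_0\in Aut(F^*(H))$, and there is no a~priori reason $\beta_0$ normalizes the particular subgroup $H\leq Aut(F^*(H))$, nor that the induced map on $H$ agrees with $\alpha_i$ on the part of $P_i$ lying outside $F^*(H)$. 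You also need that $\alpha_i$ actually preserves $P_i\cap F^*(H)$, which requires knowing this intersection is characteristic in $P_i$ as an abstract group --- true here, but it needs an argument.

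The paper avoids this entirely by reducing to $H=F^*(H)$ \emph{before} invoking Lemma~\ref{AutAmExt}. Concretely, it pulls back $F^*(H)\cap G_i\phi$ to $G_i^\circ\leq G_i$, shows $G^\circ:=\langle G_1^\circ,G_2^\circ\rangle$ is normal in $G$, and then by an order computation ($|G/(G^\circ\cap\ker\phi)|=|H|$) deduces $\ker\phi\leq G^\circ$; the same holds for $\psi$. This lets one replace $(G,H)$ by $(G^\circ,F^*(H))$, after which Lemma~\ref{AutAmExt} applies directly with no lifting needed. Inserting this reduction would close the gap in your argument.
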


\begin{proof}
Since $G_i\cap ker\phi=1$ for $i=1,2$, it is easy to check from Definition~\ref{WeakBNDef} that $(G_1\phi,G_2\phi)$ is a weak BN-pair of $H$ involving $SL_2(q)$. Since $H$ embeds into $Aut(L_3(q))$ respectively $Aut(Sp_4(q))$, it follows from the structure of these groups that $H$ embeds into $\Gamma L_3(q)$ respectively $\Gamma Sp_4(q)$. Furthermore, 
$$(F^*(H)\cap G_1\phi,F^*(H)\cap G_2\phi)$$ is a pair of parabolic subgroups of $F^*(H)$ in the Lie theoretic sense and a weak BN-pair of $F^*(H)$
involving $SL_2(q)$. Let $G_i^\circ$ be the preimage of $F^*(H)\cap G_i\phi$
in $G_i$ for $i=1,2$. Then $(G_1^\circ,G_2^\circ)$ is a weak BN-pair of
$G^\circ=\<G_1^\circ,G_2^\circ\>$ involving $SL_2(q)$, and
$G^\circ\phi=F^*(H)$. Moreover, for $i=1,2$, $G_i^\circ$ is normal in $G_i$
and $G_i=G_i^\circ B$. Thus, $G^\circ$ is normal in $G=\<G^\circ, B\>$. Set
now $N:=ker \phi$ and $\ov{G}=G/(G^\circ\cap N)$.  
Then $\ov{G^\circ}\cong F^*(H)$, $\ov B \cong B\phi$, and with  Dedekind's Law $B(G^\circ\cap N)\cap G^\circ=(B\cap G^\circ)(G^\circ\cap N)$, so $\ov B \cap \ov {G^\circ} = \ov{B\cap G^\circ}$. Moreover, $B\cap G^\circ=B\cap G_i\cap G^\circ=B\cap G_i^\circ$ and so $\ov{B}\cap\ov{G^\circ}=\ov{B\cap G_i^\circ}\cong B\cap G_i^\circ\cong (B\cap G_i^\circ)\phi = B\phi \cap G_i^\circ \phi = B\phi \cap G_i\phi\cap F^*(H)=B\phi\cap F^*(H)$, for $i=1,2$. 
Hence, 
$$|\ov{G}|=|\ov{G^\circ B}|=|\ov{G^\circ}||\ov B/\ov{B} \cap \ov{G^\circ}| =
|F^*(H)||B\phi/B\phi \cap F^*(H)| = |F^*(H)(B\phi)|=|H|,$$
and so $N\leq G_0$. The same holds with $\psi$ instead of $\phi$. Thus, we may assume without loss of generality that $H=F^*(H)\cong L_3(q)$ or $Sp_4(q)$. 

\bigskip

Then the weak BN-pairs of $H$ are precisely the pairs of parabolic subgroups of $H$ (in the Lie theoretic sense) intersecting in the normalizer of a Sylow $2$-subgroup. Now using \cite[Section~12.3]{Car} one sees that $Aut(H)$ acts transitively on the weak BN-pairs of $H$. Therefore, we may assume that $G_i\phi=G_i\psi$ for $i=1,2$. Then $((\phi_{|G_1})^{-1}\psi,(\phi_{|G_2})^{-1}\psi,(\phi_{|B})^{-1}\psi)$ is an automorphism of the amalgam corresponding to $(G_1\phi,G_2\phi)$. Hence, by Lemma~\ref{AutAmExt}, there is an automorphism $\alpha$ of $H$ such that $(\phi_{|G_i})^{-1}\psi=\alpha_{|G_i}$ for $i=1,2$. This implies $\psi=\phi\alpha$ and $ker\psi=ker\phi\alpha=ker\phi$.
\end{proof}

\textit{The proof of Theorem~\ref{MainAmalgamThm}.}
Let $G_1,G_2,B,S,Q,q$ and $M$ be as in the hypothesis of Theorem~\ref{MainAmalgamThm}. Set $T:=N_S(Q)$. Let $t\in S\backslash T$ and $X:=\<G_2,G_2^t\>$. As $G_2^{t^2}=G_2$, $X$ is normal in $G=\<t,G_2\>$. 

\bigskip

Let $K_1$ be a set of right coset representatives of $B$ in $G_2$. Then $K_1^t$ is a set of right coset representatives of $B=B^t$ in $G_2^t$. So, as $Bt^{-1}=Bt$, the set
$$K_2:=\{tkt\;:\:k\in K_1\}$$
is also a set of right coset representatives of $B$ in $G_2^t$. Let $g\in G$. By Lemma~\ref{NormalForms}, there exists $b\in B$, $n\in\N$ and $g_1,g_2,\dots,g_n\in (K_1\cup K_2)\backslash B$ such that 
$$g=bg_1\dots g_n$$ 
and, for all $1\leq k<n$ and $i\in\{1,2\}$, $g_{k+1}\in K_i$ if and only if $g_k\in K_{3-i}$.
Since $G=G_1*_B G_2$ and $\{t,1\}$ is a set of right coset representatives of $B$ in $G_1$, it follows from Lemma~\ref{NormalForms} and the definition of $K_2$ that this expression is unique. Hence, again by Lemma~\ref{NormalForms}, $X=G_2*_B G_2^t$.

\bigskip

Assume there is $1\neq U\leq B$ such that $U$ is normal in $X$.
If $U$ is a $p$-group then, as $U$ is normal in $G_2$ and $G_2^t$, it follows from Lemma~\ref{AT}(a) that $U\leq Q\cap Q^t=Z(J(T))$. Hence, as $Q/C_Q(M)$ and $Q^t/C_{Q^t}(M^t)$ are irreducible modules for $M$ respectively $M^t$, we have $U\leq U_0:=C_Q(M)\cap C_Q(M)^t$. Since $U_0$ is normal in $MT$ and $G_1=B\<t\>$, it follows from our assumptions that $U_0=1$. Hence, $U=1$, a contradiction. So $U$ is not a $p$-group and, as $U$ was arbitrary, also $O_p(U)=1$. Since  $J(T)$ is normal in $B$, it follows $[U,J(T)]\leq U\cap J(T)\leq O_p(U)=1$. In particular, $U\leq C_{G_2}(Q)\leq Q$, contradicting $U$ not being a $p$-group. Hence, no non-trivial normal $p$-subgroup of $X$ is contained in $B$. Thus, it is now easy to check that $(G_2,G_2^t)$ is a weak BN-pair of $X$ involving $SL_2(q)$. Hence, by Theorem~\ref{ThmA}, there is a free normal subgroup $N$ of $X$ such that $N\cap G_2=1=N\cap G_2^t$, $\ov{X}:=X/N$ is finite and $F^*(\ov{X})\cong L_3(q)$ or $Sp_4(q)$. One can check now from Definition~\ref{WeakBNDef} that $(\ov{G_2},\ov{G_2^t})$ is also a weak BN-pair of $\ov{X}$. As $\ov{X}$ embeds into $Aut(L_3(q))$ respectively $Aut(Sp_4(q))$, the structure of these groups yields $\ov{T}\in Syl_2(\ov{X})$.

\bigskip

Define epimorphisms $\phi$ and $\psi$ from $X$ to $\ov{X}$ via $x\phi=\ov{x}$ and $x\psi=\ov{x^t}$ for all $x\in X$. Then it follows from Lemma~\ref{KerEqual} that $N=ker\phi=ker \psi= N^{t^-1}$. Hence, $N$ is normal in $G=X\<t\>$. Observe now that $H:=G/N$ is finite, and $\ov{X}$ has index $2$ in $H$. So $SN/N\in Syl_2(H)$ and $F^*(H)=F^*(\ov{X})\cong L_3(q)$ or $Sp_4(q)$.

\section{Classification for $p=2$}\label{ClassifyGSection}

Throughout this section let $\F$ be a fusion system on a finite $2$-group $S$.

\begin{hyp}\label{ClassHyp}
Assume every parabolic subsystem of $\F$ is constrained. Let $Q\in\F$ such that $Q$ is centric and fully normalized. Set $T:=N_S(Q)$ and $M:=J(G(Q))$, and assume the following hold:
\begin{itemize}
\item[(i)] $Q\leq M$, $M/Q\cong SL_2(q)$ for some power $q$ of $p$, and $C_T(M/Q)\leq Q$.
\item[(ii)] $Q$ is elementary abelian and $Q/C_Q(M)$ is a natural $SL_2(q)$-module for $M/Q$.
\item[(iii)] $T<S$ and $N_S(U)=T$ for every subgroup $1\neq U\leq Q$ with $U\unlhd MT$.
\item[(iv)] If $t\in T\backslash J(T)$ is an involution and $\<t\>$ is fully centralized, then $C_\F(\<t\>)$ is constrained.
\end{itemize}
\end{hyp}

Recall here from Notation~\ref{ModNot} that, for every fully normalized subgroup $P\in \F$, $G(P)$ denotes a model for $N_\F(P)$, provided $N_\F(P)$ is constrained. The aim of this section is to prove the following theorem.

\begin{theorem}\label{ClassifyG}
Assume Hypothesis~\ref{ClassHyp}. Then there is a finite group $G$ containing $S$ as a Sylow $2$-subgroup such that $\F\cong\F_S(G)$ and one of the following holds:
\begin{itemize}
\item[(a)] $S$ is dihedral of order at least $16$, $Q\cong C_2\times C_2$ and $G\cong L_2(r)$ or $PGL_2(r)$, for some odd prime power $r$.
\item[(b)] $S$ is semidihedral, $Q\cong C_2\times C_2$ and $G$ is an extension of $L_2(r^2)$ by an automorphism of order $2$, for some odd prime power $r$.
\item[(c)] $S$ is semidihedral of order $16$, $Q\cong C_2\times C_2$ and $G\cong L_3(3)$.
\item[(d)] $|S|=32$, $Q$ has order $8$, and $G\cong Aut(A_6)$ or $Aut(L_3(3))$.
\item[(e)] $|S|=2^7$ and $G\cong J_3$.
\item[(f)] $F^*(G)\cong L_3(q)$ or $Sp_4(q)$, $|O^2(G):F^*(G)|$ is odd and $|G:O^2(G)|=2$. Moreover, if $F^*(G)\cong Sp_4(q)$ then $q=2^e$ where $e$ is odd.
\end{itemize}
\end{theorem}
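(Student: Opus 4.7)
The plan is to split on the value of $q$, with $q>2$ handled by the amalgam machinery of Theorem~\ref{MainAmalgamThm} and $q=2$ handled by a direct small-group analysis. First I would apply Lemma~\ref{StructureS} to the model $G(Q)$ with $V:=Q$; the hypotheses are satisfied by Hypothesis~\ref{ClassHyp}(i)--(iii). This gives $|N_S(T):T|=|N_S(J(T)):T|=2$ and $N_S(T)=N_S(J(T))$, and when $q>2$ the contrapositive of Lemma~\ref{StructureS}(b) together with Lemma~\ref{StructureS}(c) forces $J(T)=J(S)$ and $|S:T|=2$.

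For $q>2$, I would set $G_2:=G(Q)$ with Sylow $T$, $G_1:=G(J(S))$ with Sylow $S$ (both models exist since every parabolic subsystem is constrained), and $B:=N_{G_1}(Q)$. The hypotheses of Theorem~\ref{MainAmalgamThm} are then straightforward to verify: $Q$ centric gives $C_{G_2}(Q)\leq Q\leq M$; the fact that $J(T)=J(S)$ is characteristic in both $S$ and $T$ identifies $B=N_{G_1}(Q)$ with $N_{G_2}(J(T))$; $|G_1:B|=2$ follows from $|S:T|=2$; condition (iv) is Hypothesis~\ref{ClassHyp}(i),(ii) together with $\Phi(Q)=1$; and (v) follows from Hypothesis~\ref{ClassHyp}(iii), because any non-trivial normal $2$-subgroup of $MT$ normal in $G_1$ would have normaliser in $S$ strictly larger than $T$. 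Theorem~\ref{MainAmalgamThm} then produces a finite quotient $H$ of $G_1*_BG_2$ with $F^*(H)\cong L_3(q)$ or $Sp_4(q)$ and $SN/N\in Syl_2(H)$. To conclude $\F\cong\F_S(H)$, I would use Theorem~\ref{Rob} to identify $\F_S(H)$ with $\langle\F_S(G_1),\F_T(G_2)\rangle$, and then invoke the Alperin--Goldschmidt Fusion Theorem~\ref{AlpGold1} together with a Thompson-maximality argument (as in the proof of Lemma~\ref{ThomRestrHelp0}) and Lemma~\ref{AT} to confine every essential class of $\F$, up to $\F$-conjugacy, inside $G_1$ or $G_2$. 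The constraints on $|O^2(G):F^*(G)|$, $|G:O^2(G)|$, and the parity of $e$ in case (f) are then read off from the $\Gamma L_3(q)$ or $\Gamma Sp_4(q)$ outer-automorphism structure subject to $|S:T|=2$: in particular, when $e$ is even the graph automorphism of $Sp_4(q)$ would swap $G_1$ and $G_2$, which is incompatible with $|S:T|=2$.

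For $q=2$, one has $M/Q\cong S_3$ and $|Q/C_Q(M)|=4$; here Lemma~\ref{StructureS} permits either subcase (b) or (c). The plan is to pin down $S$ up to isomorphism through a case split on $|C_Q(M)|$ (equivalently on $|Q|$) and on $|S|$, using Hypothesis~\ref{ClassHyp}(iv) to force constrained centralisers of well-chosen involutions $t\in N_S(T)\setminus J(T)$ as provided by Lemma~\ref{StructureS}(d). This forces $S$ to be dihedral, semidihedral, or one of the specific $2$-groups of order $32$ or $2^7$ that are Sylow subgroups of $\mathrm{Aut}(A_6)$, $\mathrm{Aut}(L_3(3))$, $L_3(3)$, or $J_3$. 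Recognition of fusion systems on dihedral and semidihedral groups then yields (a)--(c) (by identifying the involution fusion pattern with that of $L_2(r)$, $PGL_2(r)$, or an extension of $L_2(r^2)$), while the cases $|S|=32$ and $|S|=2^7$ are identified with $\F_S(\mathrm{Aut}(A_6))$, $\F_S(\mathrm{Aut}(L_3(3)))$, and $\F_S(J_3)$ by matching the constrained centraliser $C_\F(\langle t\rangle)$ against the known $2$-local data of these groups.

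The hardest step, as expected, is the $q=2$ analysis, and specifically the recognition of $J_3$ in case (e): from the purely local fusion-theoretic data forced on a Sylow $2$-subgroup of order $128$ by Hypothesis~\ref{ClassHyp}, one must rule out every other saturated fusion system, which amounts to a $2$-local recognition theorem for $J_3$. A secondary obstacle, in the $q>2$ case, is proving that $\F$ itself (and not merely a proper subsystem) is generated by $\F_S(G_1)$ and $\F_T(G_2)$, i.e.\ that no further essential class of $\F$ exists outside these two parabolics; the Thompson-maximality constraint coming from Hypothesis~\ref{ClassHyp}(iii), combined with the natural-module information in Lemma~\ref{AT} and the structural lemmas of Section~\ref{Fs}, will be the essential tools.
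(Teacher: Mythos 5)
The proposal has a genuine gap, and it is located precisely where you describe your ``secondary obstacle'' in the $q>2$ case: the claim that one can ``confine every essential class of $\F$, up to $\F$-conjugacy, inside $G_1$ or $G_2$'' is false in general. The paper does not prove $\F=\F_0:=\langle N_\F(J(S)),N_\F(Q)\rangle$; instead it proves (Lemma~\ref{F0} and Lemma~\ref{F01}) that $\F_0\cong\F_S(G)$ with $F^*(G)\cong L_3(q)$ or $Sp_4(q)$, and then confronts directly the possibility that $\F\neq\F_0$. Lemma~\ref{q=Z=4} shows that $\F\neq\F_0$ forces $q=|Z(T)|=4$, and Lemma~\ref{q>2FneqF0} then identifies $\F$ with $\F_S(J_3)$. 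In other words, the $J_3$ configuration of case~(e) is exactly the exceptional situation where your intended confinement argument cannot work, and it belongs to the $q=4$ branch of the case split. Treating it as ``an obstacle to be overcome'' rather than as a real branch is the core error.

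Relatedly, you have placed $J_3$ in the $q=2$ analysis, which is arithmetically impossible. When $q=2$, Lemma~\ref{Bound|Q|} gives $|Q|\leq q^3=8$, and since $T=J(T)\leq QQ^t$ (so $|T|\leq |Q|^2/|Z(T)|\leq 16$) and $|S:T|=2$, one has $|S|\leq 32$. A Sylow $2$-subgroup of order $2^7$ can therefore never occur with $q=2$; the order $2^7$ Sylow of $J_3$ arises only from $q=4$ with $|Q|=16$. Your $q=2$ analysis, once one drops $J_3$, is otherwise in the right spirit and corresponds to the paper's Lemmas~\ref{q=2Q=4} and~\ref{q=2Q>4} (cases (a)--(d)).

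A further omission in your $q>2$ treatment: you never establish that $T=J(T)$, which is the paper's Lemma~\ref{TJT} and is a substantial step (it uses Hypothesis~\ref{ClassHyp}(iv), the structure of $L_3(q_0)$ and $Sp_4(q_0)$, and the group-theoretic Lemma~\ref{NoEssL34} on $L_3(4)$ and $Sp_4(4)$). Without $T=J(T)$ you cannot rule out field automorphisms of even order, which is exactly what gives $|O^2(G):F^*(G)|$ odd and $|G:O^2(G)|=2$ in case~(f), so the claim that these constraints can be ``read off'' from the outer-automorphism structure is circular unless you first show $T=J(T)$.
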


Recall Notation~\ref{SP} and Notation~\ref{phi*} which we will use frequently in this section. Moreover, to ease notation we set
$$A(P):=Aut_\F(P),\mbox{ for every }P\in\F.$$

\subsection{Preliminary results}

We start with some group theoretical results. For Lemma~\ref{HS1Help}--Lemma~\ref{NoEssL34} let $G$ be a finite group.

\begin{lemma}\label{HS1Help}
Let $S\in Syl_2(G)$, $T:=J(S)$ and $t\in S\backslash T$. Assume the following hold.
\begin{itemize}
\item[(i)] $|S:T|=2$.
\item[(ii)] $A\leq Z(S)$ for every elementary abelian subgroup $A$ of $T$ with $C_S(A)\not\leq T$.
\item[(iii)] $Z(S)\leq T$, $Z(T)$ is elementary abelian, and $|Z(T)/Z(S)|>2$.
\item[(iv)] $Z(T)\<t\>\not\leq T^g$ for any $g\in G$.
\end{itemize}
Then $t\not\in T^g$ for any $g\in G$.
\end{lemma}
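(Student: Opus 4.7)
The plan is to argue by contradiction: assume $t \in T^g$ for some $g \in G$, and aim to produce $h \in G$ with $Z(T)\<t\> \leq T^h$, which directly contradicts (iv). The key enabling fact will be that $T$ is itself a Sylow $2$-subgroup of $C_G(Z(T))$. To see this, first note that (ii) applied to $A = Z(T)$---which is elementary abelian by (iii) and is not contained in $Z(S)$ since $|Z(T)/Z(S)| > 2$---yields $C_S(Z(T)) = T$. Now if a $2$-subgroup $P$ of $C_G(Z(T))$ properly contained $T$, I would embed $P$ in a Sylow $2$-subgroup $S^*$ of $G$; since $|T| = |S|/2 = |S^*|/2$, the index $|S^*:T| = 2$ forces $P = S^*$, so $S^*$ centralizes $Z(T)$ and $Z(T) \leq Z(S^*)$. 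But $|Z(S^*)| = |Z(S)|$ by $G$-conjugacy of Sylow $2$-subgroups, contradicting $|Z(T)| > 2|Z(S)|$ from (iii).

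Granted the Sylow property, the next step is to show that $T^g \leq C_G(Z(T))$. Once that is established, $T^g$ is a $2$-subgroup of $C_G(Z(T))$ of maximal order, hence Sylow in $C_G(Z(T))$, and Sylow's theorem yields $c \in C_G(Z(T))$ with $T^g = T^c$. Then $t^{c^{-1}} \in T$, and since $c$ centralizes $Z(T)$, we obtain $(Z(T)\<t\>)^{c^{-1}} = Z(T)\<t^{c^{-1}}\> \leq T$, whence $Z(T)\<t\> \leq T^c$---the desired contradiction with (iv).

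The heart of the proof, and the main obstacle I expect, is the verification that $T^g \leq C_G(Z(T))$. My plan is to derive this from the stronger claim $Z(T) = Z(T^g)$, by analyzing the elementary abelian subgroup $U := Z(T^g) \cap S$: it is centralized by $t \in T^g$ (since $Z(T^g)$ is central in $T^g$), so if $U \leq T$ then $t \in C_S(U) \setminus T$ and (ii) forces $U \leq Z(S) \leq Z(T)$. Combined with the symmetric statement obtained by applying hypothesis (ii) conjugated by $g$ (playing off the analogous role of $S^g, T^g, Z(T^g)$), and using the order equality $|Z(T)| = |Z(T^g)|$ to upgrade inclusion to equality, this would give $Z(T) = Z(T^g)$. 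The principal technical challenge lies in excluding the alternative configuration where $U$ has an element in $S \setminus T$: any such central involution $u$ of $T^g$ lying outside $T$ must be ruled out using (iv), exploiting that $u$ together with $Z(T^g)$ would produce a $G$-conjugate of $Z(T)\<t\>$ sitting inside a conjugate of $T$.
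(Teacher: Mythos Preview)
Your reduction has a logical redundancy that points to a deeper problem. If you could prove $Z(T)=Z(T^g)$, you would be done at once: since $Z(T^g)\leq T^g$ and $t\in T^g$, you would have $Z(T)\<t\>\leq T^g$, directly contradicting (iv). The Sylow argument in $C_G(Z(T))$ is never needed. That the claim $Z(T)=Z(T^g)$ yields the conclusion so trivially is a warning sign that it is too strong to obtain in one stroke, and indeed your sketched verification breaks down at several points.

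First, the ``symmetric statement'' does not go through. To apply (ii) conjugated by $g$ you would need an element of $S^g\setminus T^g$ centralizing the relevant elementary abelian subgroup of $T^g$. The only distinguished element you possess is $t$, and $t\in T^g$ by hypothesis, so it cannot play this role; there is no natural candidate. Second, even in the favourable case $U=Z(T^g)\cap S\leq T$, your argument gives only $Z(T^g)\cap S\leq Z(S)$. Nothing forces $Z(T^g)\leq S$, so this may concern only a tiny piece of $Z(T^g)$ and provides no path to $Z(T^g)\leq Z(T)$. Third, in the case $U\not\leq T$, the appeal to (iv) does not work as stated: if $u\in U\setminus T$ then $u\in Z(T^g)$, so $Z(T^g)\<u\>=Z(T^g)$ is a conjugate of $Z(T)$, not of $Z(T)\<t\>$; and $Z(T)\<u\>$ is not obviously $G$-conjugate to $Z(T)\<t\>$ either, since $u$ and $t$ share only the property of lying in $S\setminus T$.

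The paper does not attempt $Z(T)=Z(T^g)$. Instead it makes an extremal choice---picking $g$ with $|Z(S)\cap T^g|$ maximal among those with $t\in T^g$---sets $Z^*=(Z(S)\cap T^g)\<t\>$, and passes to $N=N_G(Z^*)$ to locate a further element $h$ with $t\in T^h$ and enough control over $Z(S)\cap T^h$ to violate maximality after some computation with $[Z,t]$. The extremal choice of $g$ is the organizing idea your plan is missing.
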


\begin{proof}
Set $Z:=Z(T)$. Assume there exists $g\in G$ such that $t\in T^g$ and choose this element $g$ such that $|Z(S)\cap T^g|$ is maximal. We show first

\bigskip

(1)\;\; $Z^x\leq T$ for all $x\in G$ with $Z^x\leq S$.

\bigskip

For the proof assume there is $x\in G$ such that $Z^x\leq S$ and $Z^x\not\leq T$. Then by (ii) and (iii), $Z^x\cap T\leq Z(S)$ and so $|Z/Z(S)|\leq |Z/(Z^x\cap T)|=|Z^x/(Z^x\cap T)|=2$, a contradiction to (iii). This shows (1).
Set
$$ Z^*:=(Z(S)\cap T^g)\<t\>\;\mbox{and } N:=N_G(Z^*).$$
Let $h\in G$ such that $Z\cap N\leq N\cap S^h\in Syl_2(N)$. Note that $t\in Z^*\leq O_p(N)\leq S^h$. We show next

\bigskip

(2)\;\; $t\in T^h$.

\bigskip

For the proof assume $t\not\in T^h$. As $Z^*\leq T^g$, we have $[Z^*,Z^g]=1$. Therefore, since $C_G(Z^*)\cap S^h\in Syl_2(C_G(Z^*))$, there exists $c\in C_G(Z^*)$ such that $Z^{gc}\leq S^h$. Now by (1), $Z^{gc}\leq T^h$. Note that $[Z^{gc},t]=1$, so by (ii), $Z^{gc}\leq Z(S)^h$, a contradiction to (iii). This shows (2).

\bigskip

In particular, by (iv), $Z\not\leq T^h$ and so, by (1), $Z\not\leq S^h$. Thus, the choice of $h$ gives $Z\not\leq N$. By (i), $S=T\<t\>$ and $t^2\in T$. So $[Z,T]=1$ implies $[Z,S]=[Z,t]\leq C_Z(t)\leq Z(S)$. Hence, if $Z(S)\leq T^g$ then $Z(S)\leq Z^*$ and so $[Z,Z^*]\leq [Z,S]\leq Z(S)\leq Z^*$, contradicting $Z\not\leq N$. This proves

\bigskip

(3)\;\; $Z(S)\not\leq T^g$.

\bigskip

Because of the maximality of $|Z(S)\cap T^g|$, properties (2) and (3) give now $Z(S)\not\leq T^h$. Note that $Z(S)\leq Z\cap N\leq S^h$ and so $S^h=T^hZ(S)$. Thus, $Z\cap N=Z(S)(Z\cap N\cap T^h)$. Moreover as $Z(S)\not\leq T^h$, (ii) and $t\in S^h$ imply $Z\cap N\cap T^h\leq Z(S)^h\leq C_G(t)$. Therefore, $Z\cap N\leq C_Z(t)=Z(S)$, and so $Z\cap N=Z(S)$. Hence, for $z\in Z\backslash Z(S)$, we have $z\not\in N$ and so $[z,t]\not\in Z(S)\cap T^g$. As $[Z,t]\leq Z(S)$, this gives $[Z,t]\cap T^g=1$. Hence, $|Z/Z(S)|=|Z/C_Z(t)|=|[Z,t]|=|[Z,t]T^g/T^g|\leq |Z(S)/Z(S)\cap T^g|$. Now the maximality of $|Z(S)\cap T^g|$ yields 
$|Z/Z(S)|\leq |Z(S)/Z(S)\cap T^h|= |S^h/T^h|=2$, a contradiction to (iii). This proves the assertion. 
\end{proof}

\begin{corollary}\label{HS1}
Let $S\in Syl_2(G)$. Let $T$ be a subgroup of $S$ such that 
\begin{itemize}
\item[(i)] $T$ is elementary abelian and $|S:T|=2$.
\item[(ii)] $|C_T(S)|^2=|T|$.
\end{itemize}
Then $T$ is strongly closed in $\F_S(G)$ or $|T|=4$.
\end{corollary}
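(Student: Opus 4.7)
The plan is to reduce the corollary to a direct application of Lemma~\ref{HS1Help}. Since the conclusion permits $|T|=4$ as an exception, I assume $|T|>4$, which by (ii) gives $|C_T(S)|>2$. My first step would be to establish that $T$ coincides with $J(S)$. Any elementary abelian subgroup $A$ of $S$ not contained in $T$ contains some $s\in S\setminus T$, and since $T$ is abelian with $S=T\langle s\rangle$ we have $C_T(s)=C_T(S)$; hence $A\cap T\leq C_T(S)$, which gives $|A|\leq 2|C_T(S)|=2|T|^{1/2}<|T|$ using $|T|>4$. So $T$ is the unique elementary abelian subgroup of $S$ of maximal order, and $J(S)=T$.

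Next I would verify the four hypotheses of Lemma~\ref{HS1Help}. Hypothesis (i) is immediate from (i) of the corollary. For (ii), if $A\leq T$ is elementary abelian with $C_S(A)\not\leq T$ and $s\in C_S(A)\setminus T$, then $A\leq C_T(s)=C_T(S)=Z(S)\cap T\leq Z(S)$. For (iii), $Z(T)=T$ is elementary abelian; the same identity $C_T(s)=C_T(S)$ shows that no element of $S\setminus T$ commutes with all of $T$ (since $|C_T(S)|<|T|$ under the assumption $|T|>4$), so $Z(S)\leq T$ and hence $Z(S)=C_T(S)$, giving $|Z(T)/Z(S)|=|T/C_T(S)|=|C_T(S)|>2$. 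Finally (iv) is trivial: for any $t\in S\setminus T$, $Z(T)\langle t\rangle=T\langle t\rangle=S$ has order strictly greater than $|T^g|=|T|$, so $S\not\leq T^g$ for any $g\in G$.

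Applying Lemma~\ref{HS1Help} to an arbitrary element $t\in S\setminus T$ then yields $t\notin T^g$ for every $g\in G$. As $t$ was arbitrary, no element of $T$ is $G$-conjugate to an element of $S\setminus T$, which is precisely the definition of $T$ being strongly closed in $\mathcal F_S(G)$. The only substantive content of the argument is the verification of the hypotheses of Lemma~\ref{HS1Help}; the bound $|T|>4$ is used exactly where $|C_T(S)|>2$ is needed, both in the identification $J(S)=T$ and in securing condition (iii), which is why the case $|T|=4$ has to appear as an exception in the statement.
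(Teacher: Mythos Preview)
Your proof is correct and follows exactly the approach the paper intends: the paper's proof is the single line ``This is a direct consequence of Lemma~\ref{HS1Help},'' and you have carefully verified the hypotheses of that lemma. Your identification $T=J(S)$ and the checks of conditions (i)--(iv) are all accurate, including the observation that condition (ii) of the corollary forces $|T|$ to be a perfect square, so $|T|>4$ gives $|C_T(S)|>2$ as needed.
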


\begin{proof}
This is a direct consequence of Lemma~\ref{HS1Help}.
\end{proof}

\begin{lemma}\label{HS2}
Let $S\in Syl_2(G)$, $T\leq S$ and $K\leq N_G(T)$ such that for $Z:=Z(T)$ the following hold.
\begin{itemize}
\item[(i)] $|S:T|=2$ and $|Z/C_Z(S)|=2$.
\item[(ii)] $|K|$ is odd and $K$ acts irreducibly on $Z/C_Z(K)$.
\item[(iii)] $C_Z(K)\cap C_Z(S)=1$.
\end{itemize}
Then $|Z|=4$.
\end{lemma}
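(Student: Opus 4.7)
The proof begins with two elementary structural observations. First, hypothesis~(iii) tells us that the restriction to $C_Z(K)$ of the natural projection $Z\to Z/C_Z(S)$ is injective, so with (i) it forces $|C_Z(K)|\leq 2$. Second, since $|K|$ is odd and acts on the $2$-group $Z$, coprime action gives the $K$-decomposition $Z=C_Z(K)\times[Z,K]$; by (ii) the factor $[Z,K]\cong Z/C_Z(K)$ is a nonzero irreducible $\mathbb{F}_2[K]$-module on which $K$ acts nontrivially (as $[Z,K]\cap C_Z(K)=0$), so $|[Z,K]|\geq 4$ and $[Z,K]$ is elementary abelian. Hence $Z$ itself is elementary abelian of order at least $4$, and the lemma reduces to the upper bound $|Z|\leq 4$; together with the preceding lower bounds this forces $|C_Z(K)|=1$ and $|[Z,K]|=4$.

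Fix any $\sigma\in S\setminus T$. Since $\sigma^2\in T$ centralises $Z=Z(T)$, $\sigma$ acts on $Z$ as an involution, and (i) forces $|[Z,\sigma]|=|Z/C_Z(\sigma)|=2$, so $\sigma$ is a transvection on the $\mathbb{F}_2$-vector space $Z$ with hyperplane $C_Z(S)$.

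To secure the bound $|Z|\leq 4$ I would pass to $\overline{G}:=G/C_G(Z)$ inside $\mathrm{GL}(Z)$. Because $T\leq C_G(Z)$ and $|S:T|=2$, the image $\overline{S}=\langle\overline{\sigma}\rangle$ is a Sylow $2$-subgroup of $\overline{G}$ of order $2$, so Burnside's normal $p$-complement theorem yields a splitting $\overline{G}=\overline{N}\rtimes\langle\overline{\sigma}\rangle$ with $\overline{N}$ of odd order containing the image $\overline{K}$ of $K$. For every $k\in K$, the conjugate $\sigma^k$ acts on $Z$ as a further transvection whose hyperplane $k(C_Z(S))$ still avoids $C_Z(K)$ by $K$-invariance of~(iii). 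The Sylow condition forces the product $\overline{\sigma}\cdot\overline{\sigma}^{\overline{k}}$ to have odd order, since the dihedral subgroup $\langle\overline{\sigma},\overline{\sigma}^{\overline{k}}\rangle$ has its Sylow $2$-subgroup embedded into $\overline{S}$ of order $2$. A direct analysis of products of two transvections of odd order on $Z$, combined with the $\overline{K}$-irreducibility of $[Z,K]$ and the constraint that every hyperplane in the $K$-orbit of $C_Z(S)$ misses $C_Z(K)$, should then preclude $\dim_{\mathbb{F}_2}Z\geq 3$.

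The main obstacle is this last geometric step. Because $K$ need not normalise $S$, neither $C_Z(S)$ nor $[Z,\sigma]$ is $\overline{K}$-invariant, and the whole $K$-orbit of transvections must be controlled simultaneously. Converting the Sylow constraint (via the Burnside splitting and the odd-order condition on the products $\overline{\sigma}\cdot\overline{\sigma}^{\overline{k}}$) into a sharp dimension bound on the $\overline{K}$-irreducible module $[Z,K]$ is the substantive content; without the hypothesis $S\in\mathrm{Syl}_2(G)$ this bound genuinely fails, so the Sylow condition must enter essentially here.
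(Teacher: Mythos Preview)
Your preliminary reductions are correct, and your setup---passing to $\overline{G}=G/C_G(Z)$, observing $|\overline{S}|=2$, and invoking the normal odd-order complement $\overline{N}$---matches the paper's. But you explicitly leave the decisive step unfinished (``the main obstacle is this last geometric step''), so the argument is incomplete as written.

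The paper does not attempt a direct analysis of pairs of transvections. Instead, take the \emph{full} normal closure $R_0=\langle\overline{S}^{\,\overline{G}}\rangle$, not just the $K$-orbit of $\overline{\sigma}$. One checks $O_2(R_0)=1$: otherwise $\overline{S}\trianglelefteq\overline{G}$, so $[\overline{K},\overline{S}]=1$, whence $C_Z(K)$ is $S$-invariant; then (iii) forces $C_Z(K)=1$, and Schur's lemma on the $K$-irreducible module $Z$ gives $[Z,S]=1$, contradicting (i). Glauberman's theorem \cite[9.3.7]{KS}, applied to the class of transvections generating $R_0$, now yields $R_0\cong S_3$ with $|[Z,R_0]|=4$. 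Here is the idea you were missing: since $D:=O_3(R_0)$ is characteristic in $R_0\trianglelefteq\overline{G}$ and $K$ has odd order, $\overline{K}$ centralises $D$, so the $4$-element subspace $[Z,D]$ is $K$-invariant. Because $\overline{S}\leq R_0$ acts nontrivially on $[Z,D]$ we have $C_{[Z,D]}(S)\neq 1$, so (iii) gives $[Z,D]\not\leq C_Z(K)$; $D$-irreducibility of $[Z,D]$ then forces $[Z,D]\cap C_Z(K)=1$, hence $C_Z(K)\leq C_Z(D)\leq C_Z(S)$, and (iii) again gives $C_Z(K)=1$. Finally $[Z,D]$ is a nonzero $K$-submodule of the $K$-irreducible module $Z$, so $Z=[Z,D]$ has order $4$. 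The conceptual pivot is that normality of $R_0$ converts the $D$-decomposition of $Z$ into a $K$-invariant one, which is what lets hypothesis (ii) bite.
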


\begin{proof}
Without loss of generality assume $G=N_G(Z)$. Set $\ov{G}=G/C_G(Z)$. Then $|\ov{S}|=2$ and by Cayley's Theorem there is a normal subgroup $U$ of $\ov{G}$ such that $|U|$ has odd order and $\ov{G}=\ov{S}U$. Set $R:=[\ov{S},U]$. If $R=1$ then $[\ov{K},\ov{S}]=1$ and $C_Z(K)$ is $S$-invariant. Hence, by (iii), $C_Z(K)=1$ and by (ii), $[Z,S]=1$, a contradiction to (i). Thus $1\neq R\leq R_0=\<\ov{S}^{U}\>$. If $O_2(R_0)\neq 1$ then $\ov{S}=O_2(R_0)$ is normal in $\ov{G}$ and $R=1$, a contradiction. Thus, $O_2(R_0)=1$. With a Theorem of Glauberman \cite[9.3.7]{KS} it follows from (i) that $R_0\cong S_3$ and $|Z/C_Z(R_0)|=4$. In particular, for $D:=O^p(R)$, $|D|=3$, $|[Z,D]|=|[Z,R]|=4$ and $C_Z(D)=C_Z(R)=C_Z(R_0)\leq C_Z(S)$. Since $R$ is normal in $G$, $\ov{K}$ acts on $R$. So, as $K$ has odd order, $[D,\ov{K}]=1$. Since $C_{[Z,D]}(S)\neq 1$, (iii) yields $[Z,D]\not\leq C_Z(K)$. As $D$ acts irreducibly on $[Z,D]$, we have then $[Z,D]\cap C_Z(K)=1$ and so $[C_Z(K),D]=1$. Hence, $C_Z(K)\leq C_Z(D)\leq C_Z(S)$ and by (iii), $C_Z(K)=1$. So, by (ii), $Z=[Z,D]$ has order $4$. 
\end{proof}

We will refer to the following lemma which is elementary to check.

\begin{lemma}\label{Aut2Group}
Assume one of the following holds:
\begin{itemize}
\item[(a)] $G\cong D_8$, $G\cong C_4\times C_2$, $G\cong D_8\times C_2$ or $G\cong C_4*D_8$.
\item[(b)] There are subgroups $V,K$ of $G$ such that $G=K\ltimes V$, $K\neq 1$ is cyclic of order at most $4$, $V$ is elementary abelian of order at most $2^3$ and $[V,K]\neq 1$.
\end{itemize}
Then $Aut(G)$ is a $2$-group.
\end{lemma}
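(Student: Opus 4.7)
The plan is to show, in each listed case, that $Aut(G)$ contains no non-trivial element of odd order. The standard reduction is as follows: for any 2-group $G$, the kernel of the restriction map $Aut(G) \to Aut(G/\Phi(G))$ consists of 2-elements, since an automorphism acting trivially on $G/\Phi(G)$ stabilizes the chain $1 \leq \Phi(G) \leq G$ and hence has 2-power order by Burnside's basis theorem. It therefore suffices to verify that the image of $Aut(G)$ in $Aut(G/\Phi(G))$ is a 2-group, and I would proceed case by case since each candidate $G$ has order at most $32$.

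For part (a), each of the four listed groups has order at most $16$ and its automorphism group is well-known to be a 2-group. Explicitly, $Aut(D_8) \cong D_8$ and $Aut(C_4 \times C_2) \cong D_8$ are classical. For $G \cong D_8 \times C_2$, the set of elements of order $4$ generates the characteristic subgroup $C_4 \times C_2$ of index $2$; combining this with the characteristic subgroups $Z(G)$ and $\Phi(G)$ forces $Aut(G)$ to be a 2-group (in fact of order $64$). For $G \cong C_4 * D_8$, the centre $Z(G) \cong C_4$ is characteristic, the quotient $G/Z(G)$ is elementary abelian of order $4$, and the squaring map $g \mapsto g^2$ is characteristic; a direct check then forces $Aut(G)$ to be a 2-group.

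For part (b), since $[V,K] \neq 1$ and $Aut(C_2)$ is trivial, we have $|V| \in \{4, 8\}$, so $|G| \leq 32$. When $|V| = 4$, the $K$-action on $V$ factors through a 2-subgroup of $Aut(V) \cong S_3$, hence $|K/C_K(V)| = 2$; the possibilities for $G$ then reduce to a short list of small 2-groups, each either appearing in (a) or directly verified. When $|V| = 8$, the image of $K$ in $Aut(V) \cong GL_3(2)$ is a cyclic 2-subgroup of order at most $4$, since a Sylow 2-subgroup of $GL_3(2)$ is dihedral of order $8$. In either sub-case, the subgroup $V$ lies inside characteristic subgroups of $G$ (such as $\Omega_1(G)$, $\Phi(G)$, or the centre), and $Aut(G)$ acts on these with image lying in the normalizer of the image of $K$ in $Aut(V)$, which is again a 2-group.

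The main obstacle in (b) is that $V$ itself may fail to be characteristic, so automorphisms need not respect the decomposition $G = K \ltimes V$. However, the uniform bound $|G| \leq 32$ makes a finite case check feasible, and in each instance a characteristic substitute for $V$ (for example $[G,G]$, $\Phi(G)$, or $\Omega_1(Z(G))$) is available to carry out the analysis cleanly.
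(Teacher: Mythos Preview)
The paper gives no argument beyond ``elementary to check'', and your reduction via the action of $Aut(G)$ on $G/\Phi(G)$, followed by a case analysis, is the natural way to carry this out. Your treatment of $D_8$, $C_4\times C_2$, $D_8\times C_2$, and the semidirect products in part~(b) is correct in outline and matches what the paper evidently intends.

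The gap is in the case $G\cong C_4*D_8$: the ``direct check'' you invoke would fail, because the statement is actually false for this group. Writing $D_8=\langle r,s\mid r^4=s^2=1,\ srs=r^{-1}\rangle$ and $C_4=\langle c\rangle$ with the amalgamation $c^2=r^2$, the assignment $r\mapsto src$, $s\mapsto rc$, $c\mapsto c$ defines an automorphism of order~$3$; equivalently, $C_4*D_8\cong Q_8*C_4$ (take $i=r$, $j=sc$), and the paper itself records in a later proof that $Aut(Q_8*C_4)/Inn\cong S_3\times C_2$, so $|Aut(C_4*D_8)|=48$. Your observation that $Z(G)\cong C_4$ is characteristic with $G/Z(G)\cong C_2\times C_2$ only bounds the image of $Aut(G)$ in $GL_2(2)\cong S_3$; it does not exclude the element of order~$3$, and the squaring map gives no further constraint since each non-trivial coset of $Z(G)$ contains both involutions and elements of order~$4$. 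Thus the lemma as stated appears to be in error for this particular group, and no proof attempt can succeed there.
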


\begin{lemma}\label{NoEssL34}
Suppose $G\cong L_3(4)$ or $Sp_4(4)$. Let $S\in Syl_2(Aut(G))$ and identify $G$ with its group of inner automorphisms. Let $t\in S\backslash G$ be a field automorphism of $G$ and $C_S(t)\leq P<S$. Then $Aut(P)$ is a $2$-group.
\end{lemma}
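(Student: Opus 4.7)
The plan is to analyze each of the two cases separately by first writing down the Sylow $2$-subgroup $S$ of $\mathrm{Aut}(G)$ explicitly together with the field automorphism $t$, then using the hypothesis $C_S(t)\le P<S$ to constrain the isomorphism type of $P$ sufficiently that Lemma~\ref{Aut2Group} applies. In both cases I would use the matrix-group description of $G$ so that $U=S\cap G$ is the unipotent radical of a Borel subgroup, $t$ acts by the Frobenius $x\mapsto x^2$ on matrix entries, and $C_U(t)$ is identified with the corresponding unipotent subgroup defined over $\mathbb{F}_2$.

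First I would treat $G=L_3(4)$. Here $|U|=q^3=64$ and $\mathrm{Out}(G)$ has a Sylow $2$-subgroup $\langle t,\sigma\rangle\cong C_2\times C_2$ with $t$ field and $\sigma$ graph, so $|S|=2^8$. The fixed subgroup $C_G(t)\cong L_3(2)$ has Sylow $2$-subgroup $C_U(t)\cong D_8$, so $|C_S(t)|=32$ and $|S:C_S(t)|=8$. I would list the intermediate subgroups $P$ by analysing the action of $\langle t,\sigma\rangle$ on $U/C_U(t)$ (a specific $\mathbb{F}_2$-module of dimension $3$) and show that in each case $P$ is either one of the distinguished $2$-groups in Lemma~\ref{Aut2Group}(a) or has the extension structure $P=K\ltimes V$ of Lemma~\ref{Aut2Group}(b), the latter being typical when $P$ contains enough of $U$ to force $V=[P,P]$ elementary abelian of order at most $8$ and $P/V$ cyclic of order at most $4$.

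For $G=Sp_4(4)$ a similar analysis applies but with an additional subtlety: $\mathrm{Out}(G)$ is cyclic of order $4$, generated by a twisted root automorphism $\tau$ with $\tau^2=t$, so $S=U\rtimes\langle\tau\rangle$ has order $2^{10}$. Here $C_G(t)\cong Sp_4(2)\cong S_6$, so $C_U(t)$ has order $16$ and $|C_S(t)|=64$. I would again enumerate the subgroups $P$ with $C_S(t)\le P<S$, now using the irreducibility properties of $U/C_U(t)$ and $Z(U)/C_{Z(U)}(t)$ as modules for $C_G(t)$, and verify that each such $P$ falls under Lemma~\ref{Aut2Group}. A convenient reduction is that $P/\langle\tau\rangle C_U(t)$ embeds into the small factor $S/C_S(t)$, which is of order $8$, severely limiting the possible orders and structures of $P$.

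The main obstacle is the bookkeeping: there are several intermediate subgroups $P$ in each case, and matching each to a line of Lemma~\ref{Aut2Group} requires identifying its centre, derived subgroup, and Frattini quotient explicitly via the matrix description. The argument is essentially routine once the Sylow structure and the action of $t$ on $U$ are written out, but care is needed with the graph/twisted automorphism $\sigma$ (respectively $\tau$) which can merge orbits on $U/Z(U)$ and thus produce subgroups $P$ of the special extraspecial-type $D_8\ast C_4$ listed in Lemma~\ref{Aut2Group}(a); these must not be overlooked.
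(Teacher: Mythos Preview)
Your plan has a genuine gap: you cannot apply Lemma~\ref{Aut2Group} to $P$ itself. Every group listed in Lemma~\ref{Aut2Group} has order at most $32$, while in your own count $|C_S(t)|=32$ for $L_3(4)$ and $|C_S(t)|=64$ for $Sp_4(4)$, so any intermediate $P$ already has $|P|\geq 32$ in the first case and $|P|\geq 64$ in the second. In particular your claim that ``$P$ contains enough of $U$ to force $V=[P,P]$ elementary abelian of order at most $8$ and $P/V$ cyclic of order at most $4$'' would give $|P|\leq 32$, which is impossible for $Sp_4(4)$ and only covers the single case $P=C_S(t)$ for $L_3(4)$. (There is also an arithmetic slip: for $Sp_4(4)$ one has $|S:C_S(t)|=16$, not $8$, so there are intermediate $P$ of order up to $2^9$.) Enumerating all such $P$ and checking $|\mathrm{Aut}(P)|$ by hand is feasible but is a substantial computation, not the short application of Lemma~\ref{Aut2Group} you describe.

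The paper avoids this enumeration entirely. It argues by contradiction: assume $\alpha\in\mathrm{Aut}(P)$ has odd order, and track how $\alpha$ acts on a chain of characteristic pieces of $P$. Lemma~\ref{Aut2Group} is applied not to $P$ but to small characteristic \emph{sections} of $P$ such as $P/\Omega(Z(P))$ and $P/(Z\cap P)$ or $C_P(Z\cap P)/(Z\cap P)$, which do have the required orders. From this one first disposes of $P=C_S(t)$, then forces $Z\cap P$ to be strictly larger than $C_Z(t)$, then shows $\alpha$ cannot fix $Z_0:=Z\cap P$, and from $Z_0\alpha\not\leq J(S)$ deduces enough about the embedding of $Z_0\alpha$ in $T$ to eliminate $L_3(4)$ and finally derive a contradiction for $Sp_4(4)$. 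The key idea you are missing is that the odd automorphism $\alpha$ must move $Z_0$ to an elementary abelian normal subgroup of $P$ not contained in $J(S)$, and the structure of $S$ near $J(S)$ is rigid enough to rule this out.
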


\begin{proof}
Let $Q\in\m{A}(S)$. Set $T:=N_S(Q)$ and $Z:=Z(J(S))$. It follows from the structure of $Aut(G)$ that $J(S)\in Syl_p(G)$, $T=J(S)\<t\>$, $|S/J(S)|=4$ and $S=J(S)C_S(t)$. Furthermore, if $G\cong L_3(4)$, we may choose an involution $s\in C_S(t)\backslash T$ such that $[Z,s]=1$. If $G\cong Sp_4(4)$, then it follows from \cite[Section~12.3]{Car}, that $S/J(S)$ is cyclic and we can pick $s\in C_S(t)\backslash T$ such that $s^2=t$. In both cases, we set
$$M:=O^{p^\prime}(N_G(Q)),\;W:=C_Q(t)\mbox{ and }Z_0:=Z\cap P.$$
By the structure of $G$, $M/Q\cong SL_2(q)$, $Q/C_Q(M)$ is a natural $SL_2(q)$-module, and in the case $G\cong Sp_4(q)$, $Q$ is the $3$-dimensional orthogonal module. Together with Lemma~\ref{AT}, this gives the following property:

\bigskip

(1)\;\;For every $x\in S\backslash T$, we have $\m{A}(S)=\{Q,Q^x\}$, $Z=Q\cap Q^x=[Q,Q^x]$, and every elementary abelian subgroup of $J(S)$ is contained in $Q$ or $Q^x$.

\bigskip

The Structure of $Aut(G)$ gives also $C_M(t)/W\cong S_3$ and $[W,C_M(t)]$ is a natural $S_3$-module for $C_M(t)/W$. Furthermore,

\bigskip

(2)\;\;$|Z(S)|=2$ and $Z(S)=[W,W^s]$.

\bigskip

In particular, if $Q=(Q\cap P)Z$ then $Q^s=(Q^s\cap P)Z$ and $Z=[Q,Q^s]\leq P$. Hence, $Q\leq P$ and so $S=(QQ^s)C_S(t)\leq P$, a contradiction. As $|Q:(WZ)|=2$, this shows

\bigskip

(3)\;\;$P\cap Q=WZ_0$ and $P\cap Q^s=W^sZ_0$.

\bigskip

Assume now the assertion is wrong. Pick a non-trivial element $\alpha\in Aut(P)$ of odd order. We show next

\bigskip

(4)\;\; $C_S(t)<P$.

\bigskip

Assume $P=C_S(t)$. Then $\Omega(Z(P))\leq C_S(W)\leq T$ and, by (1), $\Omega(Z(P))\cap J(S)\leq C_Z(P)=Z(S)$. Hence, $\Omega(Z(P))=Z(S)\<t\>$. In particular, $P/\Omega(Z(P))\cong D_8$ if $G\cong L_3(4)$, and $P/\Omega(Z(P))\cong D_8\times C_2$ if $G\cong Sp_4(4)$. Hence, Lemma~\ref{Aut2Group} gives $[P,\alpha]\leq \Omega(Z(P))$. Moreover, by (2), $|Z(S)|=2$ and $Z(S)=[W,W^s]\leq P^\prime\leq J(S)$, so $Z(S)=\Omega(Z(P))\cap P^\prime$. Coprime action shows now $[P,\alpha]=1$, a contradiction. Thus, (4) holds. We show next

\bigskip

(5)\;\;$C_Z(t)<Z_0$.

\bigskip

If $Z_0=C_Z(t)$ then (3), (4) and $S=J(S)C_S(t)$ imply $J(S)=(P\cap J(S))Q$. Recall that $Q/C_Q(M)$ is a natural $SL_2(4)$-module for $M/Q$, $J(S)\in Syl_p(M)$ and $Z/C_Q(M)=C_{Q/C_Q(M)}(J(S))$. Hence, $[W,P\cap J(S)]\leq P\cap J(S)^\prime=P\cap Z=Z_0$. Furthermore, as $W\not\leq Z$ and $J(S)=(P\cap J(S))Q$, also $[W,P\cap J(S)]C_Q(M)=[W,J(S)]C_Q(M)=Z$ and so $[W,P\cap J(S)]\not\leq C_Z(t)$. Thus, $Z_0\not\leq C_Z(t)$, contradicting our assumption. Therefore, (5) holds. Since $\Omega(Z(P))\leq C_S(W)\leq T$, (5) gives in particular that $\Omega(Z(P))\leq J(S)$. Hence, (1) implies $\Omega(Z(P))\leq C_Z(P)=Z(S)$. So, by (2),

\bigskip

(6)\;\;$\Omega(Z(P))=Z(S)$.

\bigskip

We show next

\bigskip

(7)\;\;$Z_0\alpha\neq Z_0$.

\bigskip

Assume $Z_0\alpha=Z_0$ and set $\ov{P}=P/Z_0$. Then $\ov{J(S)\cap P}$ is elementary abelian of order at most $2^3$. For $G\cong Sp_4(4)$ we get $[\ov{P},\alpha]=1$ as an immediate consequence of Lemma~\ref{Aut2Group}. For $G\cong L_3(4)$ note that, by (5), $C_P(Z_0)=(J(S)\cap P)\<s\>$ has index $2$ in $P$ and, by Lemma~\ref{Aut2Group}, $[\ov{C_P(Z_0)},\alpha]=1$. Hence, in both cases $[\ov{P},\alpha]=1$ and so coprime action gives $[Z_0,\alpha]\neq 1$. Now (6) yields $G\cong Sp_4(4)$. Therefore, $C_Z(t)=Z(\Omega(P))\cap Z_0$ is $\alpha$-invariant. Now, by (2) and (6), in the series
$$1\neq Z(S)=\Omega(Z(P))\leq C_Z(t)\leq C_Z(t)[Z_0,P]\leq Z_0$$
every factor has order at most $2$. Hence, $[Z_0,\alpha]=1$, a contradiction. This shows (7). We prove next

\bigskip

(8)\;\;$T=J(S)(Z_0\alpha)$ and $[Z_0,Z_0\alpha]\neq 1$.

\bigskip

Note that $Z_0\alpha$ is an elementary abelian normal subgroup of $P$. Hence, by (1), $(Z_0\alpha)\cap J(S)\leq Z_0$ and so, by (7), $Z_0\alpha\not\leq J(S)$. Moreover, $[P\cap Q,Z_0\alpha]\leq J(S)\cap (Z_0\alpha)\leq Z_0\leq P\cap Q$ and so, again by (1), $Z_0\alpha\leq T$ as $Q\cap P\not\leq Z$. This shows $T=J(S)(Z_0\alpha)$ and (8) follows from (5).

\bigskip

Set now $P^*:=P$ if $G\cong L_3(4)$, and $P^*:=\Omega(P)$ if $G\cong Sp_4(4)$. We show next

\bigskip

(9)\;\;$P^*\cap J(S)=C_{J(S)}(t)Z_0$ and $|Z_0:C_Z(t)|=2.$ 

\bigskip

Set $U:=C_{P^*}(Z_0)$ and observe that $|P^*:U|=2$. Hence, also $|P^*:(U\alpha)|=2$ and $|(P^*\cap J(S)):(J(S)\cap (U\alpha))|\leq 2$. By the structure of $Aut(G)$, we have $|C_{J(S)}(u)|\leq |C_{J(S)}(t)|$, for every involution $u\in T\backslash J(S)$. Hence, by (8), $|J(S)\cap (U\alpha)|\leq |C_{J(S)}(t)|$. Now (9) follows from (5). We show next

\bigskip

(10)\;\;$G\cong Sp_4(4)$.

\bigskip

Assume $G\cong L_3(4)$. Then, by (9), $P=C_S(t)Z$. By (2) and (6), $[Z(S),\alpha]=1$. Observe $$\ov{P}:=P/Z(S)=\<\ov{W},\ov{s}\>\times\<\ov{t}\>\times\ov{Z}\cong D_8\times C_2\times C_2,$$  
$D:=\<Z,t\>\cong D_8$, and $Z(\ov{P})=\ov{P^\prime D}\cong C_2\times C_2\times C_2$. So $P^\prime D$ is characteristic in $P$. Furthermore, by (2), $Z(S)=[W,W^s]\leq P^\prime$, and so we have $P^\prime \cong C_4$, $D\cap P^\prime=Z(S)$ and $P^\prime D\cong C_4*D_8$. Hence, by Lemma \ref{Aut2Group}, $[P^\prime D,\alpha]=1$. Moreover, $\ov{P^\prime D}=Z(\ov{P}) $ has index $2$ in 
$$\<x\in \ov{P}:o(x)=4\>\cong C_4\times C_2\times C_2$$ 
and hence, $[P,\alpha]=1$. This shows (10). We show next

\bigskip

(11)\;\;$C_Z(t)\alpha=C_Z(t)$.

\bigskip

Note that, by (1) and (8), $Z_0\cap (Z_0\alpha)=(Z_0\alpha)\cap J(S)\leq C_Z(Z_0\alpha)=C_Z(t)$ and $|Z_0/(Z_0\cap (Z_0\alpha))|=2$. Hence, by (5), $Z_0\cap (Z_0\alpha)=C_Z(t)$. The same holds with $\alpha^2$ in place of $\alpha$, so $Z_0\cap (Z_0\alpha^2)=C_Z(t)$. Hence, $C_Z(t)\leq (Z_0\alpha)\cap (Z_0\alpha^2)$ and, as $|C_Z(t)|=|(Z_0\alpha)\cap (Z_0\alpha^2)|$, we have $C_Z(t)=(Z_0\alpha)\cap (Z_0\alpha^2)=(Z_0\cap (Z_0\alpha))\alpha=C_Z(t)\alpha$. This shows (11).

\bigskip

We now derive the final contradiction. Set $\hat{P}:=P/C_Z(t)$. If $|\widehat{P\cap J(S)}|\leq 2^3$, then by Lemma~\ref{Aut2Group}, $Aut(\widehat{P})$ is a $2$-group and (11) implies $[P,\alpha]=1$, a contradiction. Therefore, $|\widehat{P\cap J(S)}|\geq 2^4$ and so, by (9), $(P\cap J(S))Q=J(S)$. Hence, $[W,P\cap J(S)]\not\leq C_Z(t)$ and, again by (9), $Z_0=(P^\prime\cap Z)C_Z(t)$. As $P^\prime\leq J(S)$ it follows from (1) that $\Omega(Z(P^\prime))=Z\cap P^\prime$. Now (11) yields a contradiction to (7).
\end{proof}

\begin{lemma}\label{Bound|Q|}
Assume Hypothesis~\ref{ClassHyp}. Then $|Q|\leq q^3$.
\end{lemma}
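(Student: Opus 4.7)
The plan is to reduce to showing $|V|\leq q$ where $V:=C_Q(M)$; then $|Q|=q^2|V|\leq q^3$ by Hypothesis~\ref{ClassHyp}(ii). I first identify $V$ with $Z(M)$: the faithful action of $M/Q\cong SL_2(q)$ on the natural module $Q/V$ gives $C_M(Q)=Q$, hence $Z(M)=C_Q(M)=V$. Consequently $V$ is characteristic in $M$ and normal in $MT$. Assuming $V\neq 1$ (otherwise the bound is trivial), Hypothesis~\ref{ClassHyp}(iii) gives $N_S(V)=T$.

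I next establish the vanishing $V\cap\Omega(Z(S))=1$. Indeed, the intersection $U:=V\cap\Omega(Z(S))$ is centralized by $M$ (as $U\leq V$) and by $S\supseteq T$ (as $U\leq Z(S)$), so $U\unlhd MT$. By Hypothesis~\ref{ClassHyp}(iii), if $U\neq 1$ we would have $N_S(U)=T$, contradicting the obvious $N_S(U)=S$ coming from $U\leq Z(S)$. By centricity of $Q$, one has $Z(S)\leq C_S(Q)\leq Q\leq T$, so $\Omega(Z(S))\leq Z(T)\leq C_Q(T_0)=Z(T_0)$, where $T_0:=T\cap M\in Syl_2(M)$. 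The natural module structure on $Q/V$ forces $|Z(T_0)/V|=q$, so combined with $\Omega(Z(S))\cap V=1$ one obtains $|\Omega(Z(S))|\leq q$.

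To bound $|V|$ itself by $q$, I would exploit the non-invariance of $V$ under $S$. Since $T<S$ and $S$ is a $2$-group, pick $s\in N_S(T)\setminus T$; then $V^s\neq V$ but $V^s\leq T$. Analysing $V\cdot V^s$ within $Z(T_0)\cdot Z(T_0)^s$, using that the torus of order $q-1$ in $N_M(T_0)$ acts fixed-point-freely on $Q/V$ and trivially on $V$, one aims to force $V\cap V^s$ to be a nontrivial $MT$-invariant subgroup of $V$ that is also $s$-stable; by Hypothesis~\ref{ClassHyp}(iii) applied to the $S$-orbit of $V$, the only way this is consistent is $|V|\leq q$.

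The main obstacle is the last step: making rigorous how the $s$-conjugate of $V$ constrains $|V|$ through the module-theoretic structure, and in particular tracking the $T/T_0$-action on $V$ (through $\mathrm{Out}(SL_2(q))$) in combination with the $S$-permutation of the $T$-invariant pieces of $Q$. A cleaner route, avoiding delicate intersection computations, is to realise $G(Q)$ together with an overgroup of $T$ containing $s$ as a pair of parabolics forming a weak BN-pair, and invoke Theorem~\ref{MainAmalgamThm} to identify the corresponding models with subgroups of $L_3(q)$ or $Sp_4(q)$; in both identifications the unipotent radical of the $SL_2(q)$-parabolic has order at most $q^3$, yielding the desired bound.
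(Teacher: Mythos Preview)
You have the right ingredients but miss the one observation that completes the argument. First note that your $T_0=T\cap M$ equals $J(T)$: one has $J(T)\leq J(G(Q))=M$, and since $T<S$ there is $Q^x\in\mathcal{A}(T)\setminus\{Q\}$ for $x\in N_S(T)\setminus T$, which by Lemma~\ref{natSL2q}(b) gives $|Q^xQ/Q|=q=|T_0/Q|$, whence $J(T)=T_0$. In particular $T_0$ is \emph{characteristic in $T$}, so any $s\in N_S(T)$ normalises $Z(T_0)$, and therefore $V^s\leq Z(T_0)^s=Z(T_0)$. This containment is the missing link, and it is precisely what makes your contradiction strategy work: if $|V|>q$ then $|V|\cdot|V^s|>q|V|\geq|Z(T_0)|$ forces $V\cap V^s\neq 1$. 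The paper argues even more directly: choose $s$ with $s^2\in T$, so that $U:=V\cap V^s$ satisfies $U^s=V^s\cap V^{s^2}=V^s\cap V=U$; since $U\leq V=C_Q(M)$ is centralised by $M$ and both $V,V^s$ are $T$-invariant, $U\unlhd MT$; now Hypothesis~\ref{ClassHyp}(iii) forces $U=1$ (otherwise $s\in N_S(U)=T$), and then
\[
|V|=|V^s|=|V^sV/V|\leq |Z(J(T))/V|\leq q.
\]

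The detour through $\Omega(Z(S))$ and the Cartan torus is unnecessary. The proposed alternative via Theorem~\ref{MainAmalgamThm} is not available: that theorem requires $q>2$, whereas the present lemma is invoked for $q=2$ in Lemma~\ref{Help|Q|=8}.
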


\begin{proof}
By Hypothesis~\ref{ClassHyp}(iii), we can choose $t\in N_S(T)\backslash T$ such that $t^2\in T$, and have  then $U:=C_Q(M)\cap C_Q(M)^t=1$. So, as $Q/C_Q(M)$ is a natural $SL_2(q)$-module for $M/Q$,
$$|C_Q(M)|=|C_Q(M)^t/U|=|C_Q(M)^tC_Q(M)/C_Q(M)|\leq |Z(J(T))/C_Q(M)|\leq q$$
and $|Q|\leq q^3$.
\end{proof}

\begin{lemma}\label{StructureSC}
Assume Hypothesis~\ref{ClassHyp}. If $q>2$ or $|Q|>4$ then $J(T)=J(S)$ and $|S:T|=2$.
\end{lemma}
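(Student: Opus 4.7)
The plan is to derive the conclusion as a direct application of Lemma~\ref{StructureS}. I would take $G := G(Q)$ and $V := Q$, keeping the same ambient $2$-group $S$. Since $Q$ is fully normalized in $\F$ and $Q = O_2(G)$, the model $G$ contains $T = N_S(Q)$ as a Sylow $2$-subgroup, $Q$ is a normal elementary abelian $2$-subgroup of $G$, and $J(G) = M$.

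Next I would verify the four hypotheses of Lemma~\ref{StructureS}. Items (i), (ii) and (iv) of that lemma follow immediately from the corresponding parts of Hypothesis~\ref{ClassHyp}: $Q \leq M$, $M/Q \cong SL_2(q)$, $Q/C_Q(M)$ is a natural $SL_2(q)$-module for $M/Q$, and $C_T(M/Q) \leq Q$. For item~(iii) of Lemma~\ref{StructureS}, I would argue as follows: suppose $1 \neq U \leq C_Q(M)$ with $U \unlhd T$. Since $M$ centralizes $C_Q(M)$, it also centralizes (hence normalizes) $U$; therefore $U \unlhd MT$, and Hypothesis~\ref{ClassHyp}(iii) yields $N_S(U) = T$. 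Together with $T < S$ and $N_S(Q) = T$, this establishes condition~(iii).

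Now Lemma~\ref{StructureS}(a) yields $|N_S(T):T| = |N_S(J(T)):T| = 2$ and $N_S(T) = N_S(J(T))$. Part~(b) asserts that if $J(N_S(J(T))) \not\leq T$ then $q = 2$ and $|Q| = 4$; but since $|Q| \geq |Q/C_Q(M)| = q^2$, this is exactly the case excluded by the standing assumption $q > 2$ or $|Q| > 4$. Hence $J(N_S(J(T))) \leq T$, and then Lemma~\ref{StructureS}(c) delivers both desired conclusions: $J(T) = J(S)$ and $|S:T| = 2$.

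The main (and essentially only) non-routine step is the observation in the second paragraph that Hypothesis~\ref{ClassHyp}(iii), which is phrased for subgroups of $Q$ normal in $MT$, is nonetheless strong enough to yield Lemma~\ref{StructureS}(iii), which is phrased for subgroups of $C_Q(M)$ normal merely in $T$. Once this is noted, the remainder is a direct invocation of Lemma~\ref{StructureS}.
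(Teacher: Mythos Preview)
Your proof is correct and follows exactly the same route as the paper, which simply says the result is a direct consequence of Lemma~\ref{StructureS}(b),(c). Your verification of hypothesis~(iii) of Lemma~\ref{StructureS} from Hypothesis~\ref{ClassHyp}(iii) is the only point that needs to be checked, and you handle it correctly.
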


\begin{proof}
This is a direct consequence of Lemma~\ref{StructureS}(b),(c).
\end{proof}

In the next proof and throughout this section we will use the well-known fact that a $2$-group $S$ is dihedral or semidihedral if it contains a subgroup $V$ such that $V\cong C_2\times C_2$ and $C_S(V)\leq V$.

\begin{lemma}\label{HelpP}
Assume Hypothesis~\ref{ClassHyp} and $T=J(T)$. Let $|Q|>4$ and $P\in\F\backslash (\{T\}\cup Q^\F)$ be essential in $\F$. Then the following hold.
\begin{itemize}
\item[(a)] $P\not\leq T$ and $P\cap T$ is not $A(P)$-invariant.
\item[(b)] $\Omega(Z(P))\cap T=Z(S)$.
\item[(c)] $P$ is not elementary abelian.
\item[(d)] If $Z(S)$ is $A(P)$-invariant then $Z(T)\leq P$.
\end{itemize}
\end{lemma}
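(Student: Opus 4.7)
The plan starts by extracting the structural facts from Hypothesis~\ref{ClassHyp} together with $T=J(T)$ and $|Q|>4$. By Lemma~\ref{StructureSC} we have $T=J(S)$ and $|S:T|=2$, so $T$ is characteristic in $S$. Lemma~\ref{AT} then gives $\m{A}(T)=\{Q,R\}$ for some $R=Q^y$ with $Q\cap R=Z(T)$, and every elementary abelian subgroup of $T$ is contained in $Q$ or in $R$. In particular $Z(T)$, and hence $Z(S)$, is elementary abelian. Since $Q$ is not normal in $S$ (by Hypothesis~\ref{ClassHyp}(iii) applied to $Q$), any $s\in S\setminus T$ conjugates $\{Q,R\}$ nontrivially, swapping them.

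For (a), to rule out $P\leq T$, I would use essentiality to obtain an odd-order $\alpha\in A(P)$ acting non-trivially; then the characteristic subgroup $J(P)\leq T$ is $\alpha$-invariant, and each element of $\m{A}(P)$ lies in $Q$ or in $R$. A three-case analysis follows. If every element of $\m{A}(P)$ lies in $Q$, then $J(P)\leq Q$ and $Q\leq C_T(J(P))$; combining with centricity $C_S(P)\leq P$ and the $SL_2(q)$-action of $M/Q$ on $Q$ forces $P\in Q^\F$, a contradiction. The case where $\m{A}(P)\subseteq R$ is symmetric via $y$-conjugation. In the mixed case, $J(P)$ contains elements of $Q\setminus Z(T)$ and of $R\setminus Z(T)$, and together with $Z(T)$ these generate $T$, forcing $P=T$, again a contradiction. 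For the second half of (a), if $P\cap T$ were $A(P)$-invariant, then the restriction of $\alpha$ to the index-$2$ subgroup $P\cap T\leq T$ would be an odd-order automorphism, and the same case analysis applied to $P\cap T$ in place of $P$ produces a contradiction.

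For (b), the inclusion $Z(S)\leq\Omega(Z(P))\cap T$ is immediate: $Z(S)\leq P$ by centricity, $Z(S)$ commutes with $S\supseteq P$, and $Z(S)$ is elementary abelian. Conversely, take $z\in\Omega(Z(P))\cap T$; by (a) we can pick $s\in P\setminus T$, and $z$ centralizes $s$. Since $s$ swaps $\{Q,R\}$, an $s$-fixed element of $T$ is forced close to $Z(T)$, and combined with $z\in Z(P)$ centralizing all of $P\cap T$, a direct calculation in the quotient $T/Z(T)$ yields $z\in Z(S)$. For (c), if $P$ is elementary abelian then $P=C_S(P)$, and by (a) there is an involution $s\in P\setminus T$. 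Then $P\cap T$ is elementary abelian in $T$, so lies in $Q$ or in $R$, and is centralized by $s$; since $s$ swaps $\{Q,R\}$, we have $C_Q(s), C_R(s)\leq Q\cap R=Z(T)$, so $P\cap T\leq Z(T)$ and $|P|\leq 2|Z(T)|$. Using Lemma~\ref{Bound|Q|} together with $|S|=2|Q|^2/|Z(T)|$, one checks $C_S(P)$ strictly contains $P$, contradicting centricity. For (d), under $A(P)$-invariance of $Z(S)$, every morphism in $A(P)$ lies in the constrained parabolic $N_\F(Z(S))$, and $P$ remains centric radical (hence essential) in this constrained subsystem; standard properties of constrained fusion systems imply that centric radical subgroups contain the unique maximal normal $p$-subgroup, and since $Z(T)$ is characteristic in $S$ and therefore contained in that subgroup, we obtain $Z(T)\leq P$.

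The main obstacle I expect is the mixed case of part (a): showing that generators of $J(P)$ coming from both $Q\setminus Z(T)$ and $R\setminus Z(T)$ actually generate $T$ requires careful tracking of the $SL_2(q)$-module structure of $Q/C_Q(M)$ and its $y$-conjugate inside $T$, and one must make essential use of the constraints on $|Z(T)|$ coming from the natural-module hypothesis and Lemma~\ref{Bound|Q|}.
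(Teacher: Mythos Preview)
Your sketch for (b) is essentially the paper's argument, but there are genuine gaps in (a), (c), and (d).

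For (a), your case analysis on $\m{A}(P)$ is incomplete. In the ``all in $Q$'' case you obtain $J(P)\leq Q$, but this does not force $P\leq Q$, which is what you would need for centricity to give $P=Q$; nothing prevents $P$ from containing elements of $T\setminus Q$. In the mixed case, having $A_1\leq Q\setminus Z(T)$ and $A_2\leq R\setminus Z(T)$ in $\m{A}(P)$ does not make $\langle A_1,A_2\rangle=T$; these can generate a proper subgroup. And for the second assertion, ``the same case analysis applied to $P\cap T$'' does not produce a contradiction: reaching $P\cap T=Q$, say, is not absurd on its own. The paper bypasses all of this with a single chain argument: assuming $P\cap T$ is $A(P)$-invariant (which covers $P\leq T$), it first shows $Z(T)\cap P$ is also $A(P)$-invariant, then observes that $X:=\langle (T_P)^{A(P)}\rangle$ stabilises the series $P\geq P\cap T\geq Z(T)\cap P\geq 1$, so $X$ is a normal $2$-subgroup of $A(P)$, whence $X\leq Inn(P)$ by essentiality and $T\leq P$, a contradiction.

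For (c), the claim ``one checks $C_S(P)$ strictly contains $P$'' from order arithmetic is not justified and is where your argument breaks. By (b), an elementary abelian $P$ satisfies $P\cap T=Z(S)$ and $|P|=2|Z(S)|$; nothing in the orders you quote forces $C_S(P)$ to be larger. The paper's proof is considerably more delicate: it first gets $|P|>4$ (hence $q>2$), uses the strongly embedded structure of $A(P)/Inn(P)$ to force $|N_S(P):P|=2$, deduces $|Z(T)/Z(S)|\leq 2$, and then invokes Lemma~\ref{HS2} inside $G(T)$ to conclude $|Z(T)|=4$, contradicting $|P|>4$. Your counting approach is missing this Glauberman-type ingredient entirely. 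For (d), your step ``$Z(T)$ is characteristic in $S$ and therefore contained in $O_p(N_\F(Z(S)))$'' is a non sequitur: being characteristic in $S$ gives normality in $S$, not normality in the fusion system $N_\F(Z(S))$. The paper instead notes that $|S:T|=2$ makes $[Z(T),P]\leq Z(S)$, so $Y:=\langle (Z(T)_P)^{A(P)}\rangle$ centralises the chain $P\geq Z(S)\geq 1$; hence $Y$ is a normal $2$-subgroup of $A(P)$, giving $Y\leq Inn(P)$ and $Z(T)\leq P$.
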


\begin{proof}
Recall that by Lemma~\ref{StructureSC}, $|S:T|=2$. Let $t\in S\backslash T$. Assume $P\cap T$ is $A(P)$-invariant. If $P\leq T$ then, as $P$ is centric, $Z(T)< P$. Since $P$ is centric, $Q$ is abelian and $P\not\in Q^\F$, we have $P\not\leq Q$ and $P\not\leq Q^t$. So by Lemma~\ref{AT}, $\Omega(Z(P))\leq C_Q(P)=C_{Q^t}(P)=Z(T)$. Thus, $Z(T)=\Omega(Z(P))$ is $A(P)$-invariant. If $P\not\leq T$ then we may take $t\in P$, so again by Lemma~\ref{AT}, $\Omega(Z(P\cap T))=Z(T)\cap P$. So in any case, $Z(T)\cap P$ is $A(P)$-invariant. Set $X:=\<(T_P)^{A(P)}\>$. As $T$ is normal in $S$, $[P,N_T(P)]\leq P\cap T$, so as $P\cap T$ is $A(P)$-invariant, $[P,X]\leq P\cap T$. Then $[P\cap T,N_T(P)]\leq P\cap [T,T]=Z(T)\cap P$, so as $Z(T)\cap P$ is $A(P)$-invariant, $[P\cap T,X]\leq Z(T)\cap P$. Similarly, $[Z(T)\cap P,X]=1$. Hence, $X$ is a normal $2$-subgroup of $A(P)$. Since $P$ is essential, this yields $T_P\leq X \leq Inn(P)$ and $T\leq P$, a contradiction to $P\neq T$. This shows (a). In particular, $S=TP$ and, by Lemma~\ref{AT}, $\Omega(Z(P))\cap T\leq Z(S)$. Since $P$ is centric and $Z(S)$ is elementary abelian, this shows (b).

\bigskip

For the proof of (d) assume that $Z(S)$ is $A(P)$-invariant.  As $|S:T|=2$, $S$ acts quadratically on $Z(T)$ and so $[Z(T),P]\leq Z(S)$. Hence, $[P,Y]\leq Z(S)$ and $[Z(S),Y]=1$ for $Y:=\<{(Z(T)_P)}^{A(P)}\>$. Therefore, $Y$ is a normal $2$-subgroup of $A(P)$ and, as $P$ is essential, we get $Y\leq Inn(P)$ and $Z(T)\leq P$. This shows (d).

\bigskip

Assume now $P$ is elementary abelian. Since $|Q|>4$, $S$ is not dihedral or semidihedral and hence,

\bigskip

(1)\;\; $|P|>4$.

\bigskip

By (b), $P\cap T=Z(S)$. Hence, 

\bigskip

(2)\;\; $|P/C_P(S_P)|=|P/Z(S)|=2$. 

\bigskip

Moreover, by Hypothesis~\ref{ClassHyp}(iii), $P\cap C_Q(M)=1$. Thus, $|P\cap T|\leq q$ and so $|P|\leq 2\cdot q$. In particular, by (1),

\bigskip

(3)\;\; $q>2$.

\bigskip

Since $P$ is essential, $A(P)$ has a strongly $2$-embedded subgroup. So there exists $\phi\in A(P)$ such that $S_P\cap {S_P}\phi^*=1$. Set $L=\<S_P,S_P\phi^*\>$. Then, by (2), $\ov{P}:=P/C_P(L)$ has order $4$ and $L/C_L(\ov{P})\cong S_3$. Observe that $C_L(\ov{P})$ is a normal $2$-subgroup of $L$ and thus contained in $S_P\cap S_P\phi^*=1$. Hence, $L\cong S_3$ and $|N_S(P):P|=|S_P|=2$. As $[Z(T),P]\leq Z(S)$, we have $Z(T)\leq N_S(P)$. Therefore, $|Z(T)/Z(S)|= |Z(T)/Z(T)\cap P|\leq 2$. As $|S:T|=2$ and $T=QQ^t$, $q=|C_{T/Z(T)}(S)|$. Thus, if $Z(T)=Z(S)$ then $q=|C_{T/Z(S)}(S)|\leq |N_T(P)/Z(S)|\leq 2$, a contradiction to (3). Hence, $|Z(T)/Z(S)|=2$. So by (3), $G=G(T)$ fulfills the Hypothesis of Lemma~\ref{HS2}, for a subgroup $K$ of $N_{G(T)}(Q)$ such that $|K|=q-1$ and $Aut_K(Q)$ is a Cartan subgroup of $Aut_M(Q)$. Hence, Lemma~\ref{HS2} yields $q\leq |Z(T)|=4$. Thus, $|Z(S)|=2$ and (2) yields a contradiction to (1). This shows (c). 
\end{proof}

\subsection{The case $q=2$}

Throughout this section assume Hypothesis~\ref{ClassHyp} and $q=2$. Note that $T/Q$ embeds into $Aut(M/Q)\cong Aut(SL_2(q))$ and, by Lemma~\ref{AT}, $J(T)\in Syl_2(M)$. Therefore, $T=J(T)\in Syl_2(M)$.

\begin{lemma}\label{Psemidi}
Assume $|Q|=4$ and let $P$ be essential in $\F$. If $P$ is not a fours group, then $P$ is quaternion of order $8$, $A(P)=Aut(P)$, and $S$ is semidihedral of order $16$.
\end{lemma}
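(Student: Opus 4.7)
My plan is to first determine the structure of $T$, then show that $P$ must be quaternion of order $8$, and finally identify $S$.

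Under the hypotheses with $q=2$ and $|Q|=4$: since $|Q/C_Q(M)|=q^2=|Q|$ we have $C_Q(M)=1$, and $M/Q\cong SL_2(2)\cong S_3$ acts faithfully on $Q\cong V_4$ as its full automorphism group. Combined with $T=J(T)\in\mathrm{Syl}_2(M)$ (noted at the start of this subsection), $T$ is a non-split extension of $V_4$ by $C_2$ acting as a transposition, so $T\cong D_8$, with $Z(T)$ of order $2$, exactly two fours subgroups $Q$ and $Q'=Q^t$ (for $t\in M\setminus T$), and one characteristic cyclic subgroup $C\cong C_4$. Also $Z(S)\leq C_S(Q)\leq N_S(Q)=T$ combined with $Z(S)\leq Z(T)$ gives $Z(S)=Z(T)$ of order $2$. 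Moreover $Q'$ is $\F$-conjugate to $Q$ and $T\leq N_S(Q')$, so by full normalization of $Q$ we get $N_S(Q')=T$ as well.

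Next I would rule out $P\leq T$. If $P\leq T=D_8$ and $P$ is not a fours group, then $P\in\{1,C_2,C_4,D_8\}$, and $\mathrm{Aut}(P)/\mathrm{Inn}(P)$ is a $2$-group in each case (using $\mathrm{Out}(D_8)\cong C_2$). But a $2$-group has no strongly $2$-embedded subgroup, contradicting essentiality. So $P\not\leq T$, and there is $x\in P\setminus T$. I would then locate $P\cap T$. Since $N_S(Q)=N_S(Q')=T$, no element of $P\setminus T$ can normalize $Q$ or $Q'$, so neither $Q$ nor $Q'$ can be contained in the normal core of $P\cap T$ in $P$. Combined with $Z(S)\leq P\cap T$ (from centricity) and the existence of an odd-order element $\phi\in A(P)$ coming from the strongly $2$-embedded subgroup of $A(P)/\mathrm{Inn}(P)$, which must act non-trivially on $P/Z(P)$, I would argue that $P\cap T$ equals the characteristic cyclic subgroup $C\cong C_4$ of $T$.

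With $P\cap T=C$ fixed, $P$ has $C$ as an index-at-least-$2$ subgroup. A careful analysis using $C_S(P)\leq P$, the explicit $D_8$-structure of $T$ in $S$, and the fact that $O_2(A(P))=\mathrm{Inn}(P)$ (from $P$ essential), shows $|P|=8$. Among the $2$-groups of order $8$ containing a cyclic subgroup of index $2$, namely $C_8$, $C_4\times C_2$, $D_8$, $Q_8$, only $Q_8$ has $\mathrm{Out}(Q_8)\cong S_3$ non-trivial modulo $2$; the other three have $\mathrm{Out}$ a $2$-group and so are excluded. Hence $P\cong Q_8$, and since $S_3$ is the only subgroup of $S_3$ admitting a strongly $2$-embedded proper subgroup, we get $A(P)=\mathrm{Aut}(P)$. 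Finally, $TP$ is a $2$-group of order $|T||P|/|T\cap P|=16$ containing both $T\cong D_8$ and $P\cong Q_8$ as index-$2$ subgroups intersecting in $C\cong C_4$. Inspection of the $2$-groups of order $16$ identifies $SD_{16}$ as the unique such group, so $TP\cong SD_{16}$. Any proper extension $S>TP$ would yield an $MT$-invariant $p$-subgroup of $S$ strictly larger than $T$, violating Hypothesis~\ref{ClassHyp}(iii). Hence $S=TP\cong SD_{16}$.

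The main obstacle will be the step bounding $|P|=8$ and pinning down $P\cap T=C$: both require producing a non-trivial odd-order automorphism of $P$ from the strongly $2$-embedded subgroup of $A(P)/\mathrm{Inn}(P)$, and then exploiting the $D_8$-structure of $T$ together with normal core/closure arguments in $P$ to constrain $P$. Once $P\cong Q_8$ is established, the remaining steps are short combinatorial checks about $2$-groups of order $16$.
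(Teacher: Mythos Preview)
Your approach has genuine gaps, and it misses the key structural observation that makes the paper's proof short.

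The paper begins by observing that since $Q\cong C_2\times C_2$ and $Q$ is centric (so $C_S(Q)\leq Q$), the $2$-group $S$ is dihedral or semidihedral; this well-known fact is stated in the paragraph immediately preceding the lemma. Once $S$ is known to be dihedral or semidihedral with cyclic maximal subgroup $X$, the subgroup structure of $S$ is completely determined: every subgroup $P\not\leq X$ is cyclic of order $2$, a fours group, dihedral, semidihedral, or generalized quaternion, and among these only $V_4$ and $Q_8$ have automorphism groups that are not $2$-groups. So if $P$ is essential and not a fours group, then $P\cong Q_8$, which forces $S$ to be semidihedral, and $A(P)/\mathrm{Inn}(P)$ must equal all of $S_3$ to admit a strongly $2$-embedded subgroup. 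Finally, since $Z(P)=Z(S)$, the system $N_\F(Z(S))$ is constrained with $O_2(N_\F(Z(S)))$ strictly containing $Z(S)$; by Lemma~\ref{NormalEssential} this normal subgroup lies in $P$ and is $A(P)$-invariant, forcing $O_2(N_\F(Z(S)))=P$. Hence $P\unlhd S$, so $S/P$ embeds in $\mathrm{Out}(Q_8)\cong S_3$, giving $|S/P|=2$ and $|S|=16$.

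Your argument never invokes the dihedral/semidihedral classification of $S$, and without it the steps you flag as ``the main obstacle'' (pinning down $P\cap T=C$ and bounding $|P|=8$) are not justified; the normal-core sketch you give does not exclude, for instance, $P\cap T$ being a fours group or $P$ being larger than $8$. Your final step is also incorrect: Hypothesis~\ref{ClassHyp}(iii) concerns normalizers of nontrivial $MT$-invariant subgroups of $Q$, not the existence of $MT$-invariant subgroups of $S$ larger than $T$, so it does not give $S=TP$ as you claim. The correct endgame, as above, goes through $P=O_2(N_\F(Z(S)))\unlhd S$ and the centricity of $P$.
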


\begin{proof}
It follows from $|Q|=4$ that $S$ is dihedral or semidihedral. Let $X\leq S$ be cyclic of index $2$. As $Aut(P)$ is not a $2$-group, $P\not\leq X$ and $P\cap X$ is not characteristic in $P$. Assume now $P$ is not a fours group. Then $|P\cap X|=4$, $S$ is semidihedral, $P$ is quaternion of order $8$, $Z(P)=Z(S)$ and $A(P)=Aut(P)\cong S_4$. In particular, $N_\F(P)$ is a subsystem of $\m{N}:=N_\F(Z(S))$ and $P$ is essential in $\m{N}$. By Hypothesis~\ref{ClassHyp}, $\m{N}$ is constrained and so $Z(S) < O_p(\m{N})$. Now it follows from Lemma~\ref{NormalEssential} that $P=O_p(\m{N})$. In particular, $P$ is normal in $S$ and thus $S$ has order $16$.
\end{proof}

\begin{lemma}\label{q=2Q=4}
Assume $|Q|=4$. Then there exists a finite group $G$ such that $S\in Syl_p(G)$, $\F\cong \F_S(G)$ and one of the following holds.
\begin{itemize}
\item[(a)] $S$ is dihedral and $G\cong PGL_2(r)$ or $L_2(r)$ for an odd prime power $r$.
\item[(b)] $S$ is semidihedral and, for some odd prime power $r$, $G$ is an extension of $L_2(r^2)$ by an automorphism of order $2$.
\item[(c)] $S$ is semidihedral or order $16$ and $G\cong L_3(3)$.
\end{itemize}
\end{lemma}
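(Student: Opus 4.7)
The plan is to pin down the isomorphism type of $S$, use Theorem~\ref{AlpGold1} together with Lemma~\ref{Psemidi} to enumerate the essentials of $\F$, and then match $\F$ to the fusion system of one of the groups in the conclusion.

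Since $Q$ is centric with $Q\cong C_2\times C_2$, we have $C_S(Q)=Q$, a self-centralizing fours group in $S$; combined with the fact that $S$ is a $2$-group strictly larger than $Q$ (as $T<S$ by Hypothesis~\ref{ClassHyp}(iii)), this forces $S$ to be dihedral or semidihedral of order at least $8$. Furthermore $A(Q)\cong M/Q\cong SL_2(2)\cong S_3$ is the full automorphism group of $Q$, acting transitively on the three involutions of $Q$. By the Alperin--Goldschmidt Fusion Theorem~\ref{AlpGold1}, $\F$ is generated by $A(S)$ together with the $A(P)$ as $P$ ranges over representatives of the essential $\F$-classes, and by Lemma~\ref{Psemidi} each essential $P$ is either (i) a fours group with $A(P)\cong S_3$ (the strongly $2$-embedded condition forcing $A(P)$ to be the full $\textrm{Aut}(P)\cong S_3$), or (ii) a quaternion group of order $8$ with $A(P)=\textrm{Aut}(P)\cong S_4$, the latter case forcing $S\cong SD_{16}$.

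I then split into three cases according to which essentials occur. \emph{Case (c):} If a quaternion essential exists, then $S\cong SD_{16}$ and $\F$ carries exactly the essential data --- a fours-group class $Q^\F$ with automizer $S_3$ together with a quaternion class with automizer $S_4$ --- that $L_3(3)$ produces on its semidihedral Sylow $2$-subgroup; Remark~\ref{AlpIso} then yields $\F\cong\F_S(L_3(3))$. \emph{Case (a):} If $S$ is dihedral and every essential is a fours group, there are at most two $S$-classes of fours groups in $S=D_{2^n}$ to consider, and whether these classes fuse under $\F$ distinguishes $\F_S(PGL_2(r))$ from $\F_S(L_2(r))$, for any odd prime power $r$ chosen with $|L_2(r)|_2=|S|$. \emph{Case (b):} If $S$ is semidihedral and every essential is a fours group, the dihedral index-$2$ subgroup of $S$ carries a subsystem identifiable with $\F_{D_{2^{n-1}}}(L_2(r^2))$ for an appropriate odd prime power $r$, and the semidihedral twist accounts for an outer involution extending $L_2(r^2)$; hence $\F\cong\F_S(L_2(r^2).2)$.

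The main obstacle will be the matching step in each case: for every candidate $G$ on the list, one must verify, via explicit knowledge of $G$ and Remark~\ref{AlpIso}, that the essential classes, the automizers on each essential, and the automizer $A(S)$ all coincide with those of $\F$. A subsidiary delicate point is the existence of an odd prime power $r$ with the prescribed $2$-part of $r\pm 1$, which follows from the classical fact that for each $n\geq 3$ there exist odd prime powers $r$ with $\nu_2(r-1)=n-1$ and $\nu_2(r+1)=1$, and conversely. Once these verifications are in place, the isomorphism $\F\cong\F_S(G)$ follows directly from Remark~\ref{AlpIso} in each of (a)--(c).
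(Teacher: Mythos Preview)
Your overall plan mirrors the paper's proof closely: identify $S$ as dihedral or semidihedral, invoke Lemma~\ref{Psemidi} to list the possible essentials, and then match via Remark~\ref{AlpIso}. Two points need correction, however.

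First, in the dihedral case your dichotomy is stated incorrectly. You write that ``whether these classes fuse under $\F$'' distinguishes $PGL_2(r)$ from $L_2(r)$. In fact the two $S$-classes of fours groups in $D_{2^n}$ are \emph{never} $\F$-conjugate here: since $S$ is dihedral (or semidihedral), $A(S)=Inn(S)$ --- a fact you omit but which the paper uses explicitly --- and the only essentials are fours groups, whose $\F$-automorphisms fix them setwise; so Alperin--Goldschmidt forces each $S$-class of fours groups to be its own $\F$-class. The correct dichotomy (and the one the paper uses) is the \emph{number of essential $\F$-classes}: one essential class gives $\F_S(PGL_2(r))$, two give $\F_S(L_2(r))$. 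That the two $S$-classes are interchanged by $Aut(S)$ is what guarantees, via Remark~\ref{AlpIso}, that in the one-class case it does not matter which class is essential.

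Second, your handling of case~(b) is more elaborate than necessary. Passing to the dihedral index-$2$ subgroup and identifying a subsystem with $\F(L_2(r^2))$ requires additional arguments about restriction of saturated fusion systems. The paper instead observes directly that a semidihedral $S$ has a unique $S$-class of fours groups, so (when no quaternion essential occurs) there is exactly one essential class with automizer $S_3$, and Remark~\ref{AlpIso} matches this immediately to the known extension $L_2(r^2).2$ with semidihedral Sylow of the right order.
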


\begin{proof}
Recall that $S$ is dihedral or semidihedral. Note that $A(S)=Inn(S)$ since $S$ has no automorphisms of odd order. By Lemma~\ref{Psemidi}, $A(P)=Aut(P)$ for every essential subgroup $P$ of $\F$, and either every essential subgroup of $\F$ is a fours group, or $S$ is semidihedral of order $16$ and the only essential subgroup of $S$ that is not a fours group is the quaternion subgroup of $S$ of order $8$. If $S$ is dihedral then there are two conjugacy classes of subgroups of $S$ that are fours groups, and they are conjugate under $Aut(S)$. By Remark~\ref{AlpIso}, if $\F$ has only one conjugacy class of essential subgroups then $\F$ is isomorphic to the $2$-fusion system of $PGL_2(r)$, and if $\F$ has two conjugacy classes of essential subgroups then $\F$ is isomorphic to the $2$-fusion system of $L_2(r)$, in both cases for some odd prime power $r$. Let now $S$ be semidihedral. Then $S$ has only one conjugacy class of fours groups. Recall that there is always an odd prime power $r$ and an extension $H$ of $L_2(r^2)$ by an automorphism of order $2$ that has semidihedral Sylow $2$-subgroups of order $|S|$. If the fours groups are the only essential subgroups in $\F$ then it follows from Remark~\ref{AlpIso} that $\F$ is isomorphic to the $2$-fusion system of $H$. Otherwise, it follows from the above and Remark~\ref{AlpIso} that $\F$ is isomorphic to the $2$-fusion system of $L_3(3)$.
\end{proof}

\begin{lemma}\label{Help|Q|=8}
Assume $|Q|>4$. 
\begin{itemize}
\item[(a)] $|Q|=8$, $M=G(Q)\cong S_4\times C_2$ and $Z(S)=\Phi(T)\leq [Q,M]$.
\item[(b)] Let $u\in [Q,M]\backslash Z(S)$ and $1\neq c\in C_Q(M)$. Then there exists an element $y\in S\backslash T$ of order $8$ such that $y^u=y^{-1}$, $y^c=y^5$ and $S=\<c,u\>\ltimes \<y\>$. In particular, $S$ is uniquely determined up to isomorphism.
\end{itemize} 
\end{lemma}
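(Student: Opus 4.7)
My plan is, in part (a), to pin down $|Q|$, the isomorphism type of $M$, and the position of $Z(S)$; then in (b), to exhibit an order-$8$ element $y\in S\setminus T$ realising the stated presentation.

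For part (a): Lemma~\ref{Bound|Q|} together with $|Q|>4$ forces $|Q|=q^3=8$, so $|C_Q(M)|=2$, $|M|=48$, $|T|=16$, and by Lemma~\ref{StructureSC} $|S|=32$. I obtain $M=G(Q)$ from the observation that $G(Q)/Q$ embeds in $\mathrm{Aut}(Q)\cong GL_3(\mathbb{F}_2)$, normalizes $M/Q\cong S_3$, and therefore stabilises the $M/Q$-decomposition $Q=C_Q(M)\oplus [Q,M]$; its stabiliser is $GL_1\times GL_2\cong S_3=M/Q$. For $M\cong S_4\times C_2$: since $[Q,M]$ is the natural $S_3$-module $V_4$ with $H^2(S_3,V_4)=0$ (restriction to a Sylow $2$-subgroup makes $V_4$ free), the preimage in $M$ of any $S_3\leq M/C_Q(M)\cong S_4$ is a complement to $C_Q(M)$ once the central extension $C_Q(M)\to M/[Q,M]\to S_3$ splits. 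To see that it does, I argue by contradiction: if $M/[Q,M]\cong\mathrm{Dic}_3$, any lift $t\in T$ of the order-$4$ generator of $\mathrm{Syl}_2(\mathrm{Dic}_3)$ has $1\neq t^2\in C_Q(M)$, so $t^2\notin[Q,t]\leq[Q,M]$, and $(qt)^2=qq^t t^2\neq 1$ for every $q\in Q$; thus $T\setminus Q$ contains no involutions, forcing $J(T)=Q<T$ and contradicting $T=J(T)$. Finally $\Phi(T)=[T,T]=[Q,t]\leq[Q,M]$ has order $2$, is characteristic in $T$ hence central in $S$; and $Z(S)\leq Z(T)=C_Q(M)\oplus[Q,t]$, while Hypothesis~\ref{ClassHyp}(iii) gives $N_S(C_Q(M))=T$, ruling out $Z(S)\cap C_Q(M)\neq 1$, so $Z(S)=\Phi(T)\leq[Q,M]$.

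For part (b): fix an involution $t\in T\setminus Q$ from the splitting $M=Q\rtimes S_3$, and pick $s\in S\setminus T$. Since $N_S(Q)=T$, $s$ interchanges the two elements $Q$ and $A:=C_Q(t)\<t\>$ of $\m{A}(T)$, hence normalizes $Q\cap A=C_Q(t)=\<c,z\>$ (with $z$ generating $Z(S)$) and acts on it as the involution fixing $z$ and swapping $c\leftrightarrow cz$. Passing to $S/\Phi(T)$, which has order $16$ with $T/\Phi(T)$ elementary abelian of index $2$ and $\bar s$ swapping $\bar Q\leftrightarrow\bar A$, a direct enumeration forces an element of order $4$ outside $\bar T$; lifting and adjusting by an element of $C_Q(t)$ yields $y\in S\setminus T$ of order $8$ with $y^4=z$ and $y^2\in C_Q(t)\setminus\<z\>$. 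Since $\<y\>\cap Q=\<z\>$ and $z\notin\<c,u\>$, the subgroup $\<c,u\>$ injects into $S/\<y\>\hookrightarrow\mathrm{Aut}(\<y\>)\cong C_2\times C_2$, and the three involutions of $\<c,u\>$ match bijectively the three involutions $\{3,5,7\}$ of $(\mathbb{Z}/8)^\times$. To identify which: $u\in[Q,M]$ inverts $y^2$ (the element $u$ inverts the unique cyclic subgroup of order $4$ of $T\cong D_8\times C_2$), so the $u$-action on $y$ is $x\mapsto x^m$ with $2m\equiv-2\pmod 8$, i.e.\ $m\in\{3,7\}$, and the computation $(uy)^2=y^{m+1}$ selects $m=7$ after an appropriate choice of $y$ inside the coset $sT$. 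Analogously $c\in C_Q(M)=Z(M)$ commutes with $y^2$, so the $c$-action is $x\mapsto x$ or $x\mapsto x^5$, and only $x\mapsto x^5$ is consistent with the injectivity of $\<c,u\>\to\mathrm{Aut}(\<y\>)$.

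Then $\<y\>\cap\<c,u\>=1$ gives $|\<c,u\>\<y\>|=32=|S|$, $\<y\>\triangleleft S=\<c,u,y\>$, and $S=\<c,u\>\ltimes\<y\>$. Any finite $2$-group with presentation $\<c,u,y\mid c^2=u^2=y^8=[c,u]=1,\ y^u=y^{-1},\ y^c=y^5\>$ has order at most $32$, so $S$ is uniquely determined up to isomorphism. The main obstacle is (a)'s exclusion of the non-split central extension via $J(T)=T$, and in (b) the careful choice of $y$ within its $T$-coset to realise both relations with $c$ and $u$ simultaneously; the remaining steps are direct $2$-group computations.
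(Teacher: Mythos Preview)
Your treatment of part~(a) is correct and in fact fills in details the paper omits (the paper simply asserts $M=G(Q)\cong S_4\times C_2$). Your exclusion of the $\mathrm{Dic}_3$ case via $J(T)=T$ is a valid argument.

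Part~(b), however, has genuine problems. First, the claim ``$y^2\in C_Q(t)\setminus\langle z\rangle$'' is false: since $o(y)=8$, the element $y^2$ has order~$4$, but $C_Q(t)=Z(T)$ is elementary abelian. (What is true is $y^2\in T\setminus Q$, and then $\langle y\rangle\cap Q=\langle z\rangle$ as you later use.) Second, and more seriously, you assume $\langle y\rangle\unlhd S$ when you write ``$\langle c,u\rangle$ injects into $S/\langle y\rangle\hookrightarrow\mathrm{Aut}(\langle y\rangle)$'' and when you speak of the $u$-action on $y$ as $y\mapsto y^m$; but normality of $\langle y\rangle$ has not been established at that point, and it is not automatic. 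The paper handles this in two steps: first, since $y$ acts quadratically on $Z(T)$ one gets $[c,y]\in Z(S)=\langle y^4\rangle$, so $c$ normalizes $\langle y\rangle$ and $y^c=y^5$; then $N:=\langle y,c\rangle$ has index~$2$ in $S$, hence is normal, and one checks that $\langle y\rangle$ and $\langle yc\rangle$ are the only cyclic subgroups of order~$8$ in $N$. Thus $u$ either normalizes $\langle y\rangle$ or swaps it with $\langle yc\rangle$; a short computation rules out the swap. Only then can one write $y^u=y^m$ and proceed with your determination $m\in\{3,7\}$, replacing $y$ by $yc$ if $m=3$. Your sketch ``after an appropriate choice of $y$ inside the coset $sT$'' does not substitute for this argument.

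A minor point: your route to $o(y)=8$ via $S/\Phi(T)$ can be made to work, but not by ``adjusting by an element of $C_Q(t)$'' (one checks $(yw)^4=y^4$ for $w\in Z(T)$); rather, if a lift $y$ had order~$4$ then $y^2\in Q\cap Q^y=Z(T)$, and since you already showed any $s\in S\setminus T$ swaps $c\leftrightarrow cz$, the possibilities $y^2\in\{c,cz\}$ contradict $(y^2)^y=y^2$, while $y^2=z$ contradicts $\bar y$ having order~$4$. The paper's use of $S/Z(T)$ makes this step more direct.
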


\begin{proof}
It follows from Lemma~\ref{Bound|Q|} that $|Q|=8$. Hence, $M=G(Q)\cong S_4\times C_2$. In particular, $T\cong D_8\times C_2$ and so $|\Phi(T)|=2$. Now Hypothesis~\ref{ClassHyp}(iii) implies $Z(S)=\Phi(T)$. In particular, $Z(S)\leq [Q,M]$. This shows (a).

\bigskip

Recall that by Lemma~\ref{StructureSC}, $|S:T|=2$.
Set $\ov{S}=S/Z(T)$ and $C=C_Q(M)$. Note that $\ov{T}$ is elementary abelian, and $\ov{S}$ is non-abelian, since $Q$ is not normal in $S$. In particular, there exists an element $y\in S\backslash T$ such that $\ov{y}$ has order $4$. Then $y^4\in Z(T)$, and $S=T\<y\>$ implies $y^4\in Z(S)$.
If $y^4=1$ then $y^2\in T$ is an involution and, by Lemma~\ref{AT},  $y^2=(y^2)^y\in Q\cap Q^y=Z(T)$, so $\ov{y}$ has order $2$, a contradiction. Therefore, $y^4$ is an involution and $y$ has order $8$.

\bigskip

Since $Z(T)$ is normal in $S$ and $[Z(T),y^2]=1$, $y$ acts quadratically on $Z(T)$. Hence, $[C,y]\leq [Z(T),y]\leq C_{Z(T)}(y)=Z(S)=\<y^4\>$, so $\<y\>$ is normalized by $C$. By Hypothesis~\ref{ClassHyp}(iii), $[C,y]\neq 1$. Now $[y^2,C]=1$ implies $y^c=y^5$. Set $N:=\<y\>C$. Observe that $\<y\>$ and $\<yc\>$ are the only cyclic subgroups of $N$ of order $8$. Moreover, $|S:N|=2$ and so $N$ is normal in $S$. Hence, $u$ acts on $N$ and either normalizes $\<y\>$ or swaps $\<y\>$ and $\<yc\>$.

\bigskip

Assume first $y^u\in \<yc\>=\<y^2\>\cup \<y^2\>yc$. Since $y^u\not\in T$, $y^u=y^i c$ for some $i\in\{1,3,5,7\}$. Then
$y^2\neq (y^2)^u=(y^u)^2=(y^i c)^2=y^i(y^i)^c=y^i(y^c)^i=y^iy^{5i}=y^{6i}$ implies $i\in\{1,5\}$. Hence, $[y,u]=y^{-1}y^u\in \<y^4\>c=Z(S)c\leq Q$ and $Q$ is normalized by $y$, a contradiction. Thus, $\<y\>$ is normal in $S$ and $S$ is the semidirect product of $\<c,u\>$ and $\<y\>$. Since $[y^2,u]\neq 1$, $y^u\in\{y^{-1},y^3\}$. If $y^u=y^3$ then $(yc)^u=y^3c=(yc)^{-1}$ and $(yc)^c=y^5c=(yy^5)^2yc=(yy^c)^2yc=(yc)^5$, so we may in this case replace $y$ by $yc$ and assume $y^u=y^{-1}$. This shows (b).
\end{proof}

\begin{lemma}\label{q=2Q>4}
Assume $|Q|>4$. Then there exists a finite group $G$ such that $S\in Syl_p(G)$, $\F\cong \F_S(G)$ and $G\cong Aut(A_6)$ or $Aut(L_3(3))$.
\end{lemma}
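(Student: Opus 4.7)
The plan is to pin down all $\F$-essential subgroups of $\F$ together with their $\F$-automizers, and then invoke the Alperin--Goldschmidt Fusion Theorem (Theorem~\ref{AlpGold1}) via Remark~\ref{AlpIso} to identify $\F$ with the $2$-fusion system of one of the two named groups. By Lemma~\ref{Help|Q|=8}, $|Q|=8$, $|S|=32$, and $S$ is uniquely determined up to isomorphism by the explicit presentation $S=\langle c,u\rangle\ltimes\langle y\rangle$. The subgroup $Q$ is itself essential, with $A(Q)$ induced by the model $M\cong S_4\times C_2$ (so $A(Q)/Inn(Q)\cong S_3$ has a strongly $2$-embedded subgroup); and since $S$ has very restricted outer automorphisms one easily shows $A(S)=Inn(S)$. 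It therefore remains to determine all essential classes distinct from $Q^\F$.

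For this, let $P$ be essential in $\F$ with $P\notin Q^\F$. Since $T=J(T)$ is in force (so in particular $P\neq T$), Lemma~\ref{HelpP} applies and yields $P\not\leq T$, $P\cap T$ not $A(P)$-invariant, $\Omega(Z(P))\cap T=Z(S)$, and $P$ not elementary abelian. I would then go through the (very short) list of centric subgroups of the $2$-group $S$ of order $32$ described in Lemma~\ref{Help|Q|=8}(b) and rule out all those which fail these constraints. For each of the few survivors (which will necessarily have order $16$, since $P$ is centric, non-elementary-abelian and satisfies $N_S(P)>P$), I would compute $Aut(P)$ and determine which subgroups of $Aut(P)$ containing $Inn(P)$ have a strongly $2$-embedded subgroup modulo $Inn(P)$. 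At the prime $2$ this is a very stringent condition, and combined with the requirement that $A(P)$ be compatible with $A(N_S(P))$ via the saturation axioms, the set of possible pairs $(P,A(P))$ should fall into precisely two patterns.

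Finally, I would verify that each of $G=Aut(A_6)$ and $G=Aut(L_3(3))$ has a Sylow $2$-subgroup isomorphic to $S$, that the hypothesis of Lemma~\ref{q=2Q>4} is realized by $\F_S(G)$ with the same $Q$ and $M$, and that the two groups realize precisely the two essential patterns found above. An application of Remark~\ref{AlpIso} then identifies $\F$ with $\F_S(Aut(A_6))$ or $\F_S(Aut(L_3(3)))$, completing the proof. The main obstacle in this programme is the enumeration/rejection step: one has to go carefully through the centric subgroups of $S$, use the constraints from Lemma~\ref{HelpP} together with the strongly $2$-embedded condition to cut the candidate list to exactly two, and then argue that each residual configuration is in fact realized by exactly one of the two known finite groups without leaving room for a further ``exotic'' fusion system on $S$.
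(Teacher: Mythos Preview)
Your overall strategy is the same as the paper's: pin down $S$ and $A(Q)$ via Lemma~\ref{Help|Q|=8}, classify the remaining essential subgroups using Lemma~\ref{HelpP}, and then invoke Remark~\ref{AlpIso}. Two points deserve comment. First, your parenthetical ``(so in particular $P\neq T$)'' is not justified by $T=J(T)$ alone; you still have to exclude $T$ as an essential subgroup. The paper does this by observing that $|\m{A}(T)|=2$ with $Q\in\m{A}(T)$, so every odd-order automorphism of $T$ normalizes $Q$, whence by Lemma~\ref{Help|Q|=8}(a) the group $A(T)$ is a $2$-group and $T$ is not essential.

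Second, where you propose to enumerate centric subgroups of $S$ and filter them against the constraints from Lemma~\ref{HelpP}, the paper proceeds more directly using the explicit presentation $S=\langle c,u\rangle\ltimes\langle y\rangle$. It first shows $Z(T)\leq P$ (via Lemma~\ref{HelpP}(d) after checking $Z(S)=\Omega(Z(P))$), then argues $P\cap T=\langle c,y^2\rangle$ and that $P$ must contain an element of order $4$ outside $T$; an explicit computation shows all such elements lie in a single subgroup, so $P=\langle a\in S:o(a)=4\rangle\cong Q_8*C_4$ is \emph{uniquely determined}, with $A(P)=O^2(Aut(P))S_P$ and $A(P)/Inn(P)\cong S_3$. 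This replaces your case-by-case enumeration with a single algebraic identification and simultaneously pins down $A(P)$, leaving exactly the dichotomy $\{Q^\F\}$ versus $\{Q^\F,\{P\}\}$ for the essential classes, matched against $Aut(A_6)$ and $Aut(L_3(3))$ respectively.
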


\begin{proof}
Let $G$ be a finite group isomorphic to $Aut(A_6)$ or $L_3(3)$, $\hat{S}\in Syl_2(G)$, $\hat{Q}\in \m{A}(\hat{S})$ and $\hat{\F}=\F_{\hat{S}}(G)$. Then by the structure of $G$, $\hat{Q}$ is essential in $\hat{\F}$ and $\hat{M}:=N_G(\hat{Q})\cong C_2\times S_4$. By Lemma~\ref{Help|Q|=8}, there is a group isomorphism $\alpha: S\rightarrow \hat{S}$ such that $Q\alpha=\hat{Q}$ and $[Q,M]\alpha =[\hat{Q},\hat{M}]$. This implies $\alpha^{-1}A(Q)\alpha=Aut_{\hat{\F}}(Q\alpha)$.

\bigskip

Assume first $G\cong Aut(A_6)$ and observe that $\hat{Q}^{\hat{\F}}$ is the only essential class in $\hat{\F}$. Therefore, if $Q^\F$ is the only essential class in $\F$, then it follows from Remark~\ref{AlpIso} that $\F\cong \hat{\F}$.

\bigskip

Therefore, we may assume from now on that there is an essential subgroup $P\in \F\backslash Q^\F$. By Lemma~\ref{AT}, $|\m{A}(T)|=2$ and $Q\in \m{A}(T)$. Hence, every automorphism of $T$ of odd order normalizes $Q$. Thus, Lemma~\ref{Help|Q|=8}(a) implies that $Aut_\F(T)$ is a $2$-group and thus $P\neq T$. 
Therefore, it follows from Lemma~\ref{HelpP} that $P\not\leq T$ and $P$ is not elementary abelian. In particular, as $|S:T|=2$ by Lemma~\ref{StructureSC}, we have $S=TP$. We first show

\bigskip

(1)\;\; $Z(T)\leq P$.

\bigskip

By Lemma~\ref{HelpP}(d), it is sufficient to show that $Z(S)=\Omega(Z(P))$. By Lemma~\ref{HelpP}(b), we may assume that $\Omega(Z(P))\not\leq T$. As $P$ is not elementary abelian and $|S:T|=2$, $P\cap T\not\leq Z(S)$. By Lemma~\ref{AT}, we have $Q\cap P \leq C_Q(\Omega(Z(P)))=Z(S)$. So $|P\cap T|=4$ and $P$ is dihedral of order $8$. Then $A(P)$ is $2$-group contradicting $P$ being essential. This shows (1). 

\bigskip

By Lemma~\ref{Help|Q|=8}, we can choose $y\in S\backslash T$, $1\neq c\in C_Q(M)$ and $u\in [Q,M]\backslash Z(S)$ such that $o(y)=8$, $y^c=y^5$ and $y^u=y^{-1}$. Since $A(P)$ is not a $2$-group, $P$ is not dihedral of order $8$ and so  $P\cap T\neq Z(T)$. By (1), $Z(T)\leq P$. If $Q\leq P$ then, as $P\not\leq T$, $T=J(T)\leq P$ and $S=P$, a contradiction. Hence, $|P\cap T|=8$, $\Omega(P\cap T)=Z(T)$, and there is an element of order $4$ in $P\cap T$. As $T=\<y^2,c,u\>\cong D_8\times C_2$, this gives $$P\cap T=Z(T)\<y^2\>=\<c,y^2\>=\<y^2,y^2c\>=\<a\in T\;:\;o(a)=4\>.$$ 
By Lemma~\ref{HelpP}(a), $P\cap T$ is not $A(P)$-invariant. So, as the elements in $P\cap T$ have order at most $4$, there is an element $x\in P\backslash T$ of order at most $4$. As $|P/(P\cap T)|=2$ we have $P=(P\cap T)\<x\>$. Moreover, by Lemma~\ref{AT}, $x^2\in Z(T)$. So $Z(T)\<x\>$ is dihedral of order $8$ and we may assume $o(x)=4$. Set $t:=uc$ and note that $y^t=y^3$. An easy calculation shows that the elements in $S\backslash T$ of order $4$ are precisely the elements of the form $y^it$ for some odd integer $i$. Hence, $P=\<y^2,c,yt\>$ and we may assume $x=yt$. Moreover, this shows

\bigskip

(2)\;\; $P=\<a\in S\;:\;o(a)=4\>$

\bigskip

In particular, the arbitrary choice of $P$ yields

\bigskip

(3)\;\;$P$ is the only essential subgroup of $\F$ in $\F\backslash Q^\F$.

Note that $(y^2c)^x=(y^2c^y)^t=(yy^cc)^t=(y^6c)^t=y^2c$ and so $[x,y^2c]=1$. Moreover, as $[y^2,c]=1$ and $[y^2,t]\neq 1$, we have $[y^2c,y^2]=1$ and $[y^2,x]\neq 1$. Also observe $o(y^4)=2$, $(y^2c)^2=y^4=(y^2)^2$ and $x^2=yy^t=y^4$. Hence, $\<y^2,x\>\cong Q_8$ and $P=\<y^2,x\>\<y^2c\>\cong Q_8* C_4$. In particular, $Aut(P)/Inn(P)\cong S_3\times C_2$ and, as $A(P)/Inn(P)$ has a strongly $2$-embedded subgroup, the following property holds:

\bigskip

(4)\;\; $P\cong Q_8*C_4$, $A(P)=O^2(Aut(P))S_P$ and $A(P)/Inn(P)\cong  S_3$.

\bigskip

Let now $G$, $\hat{S}$, $\hat{Q}$, $\hat{\F}$ and $\alpha$ be as above and assume $G\cong Aut(L_3(3))$. Then Hypothesis~\ref{ClassHyp} is fulfilled with $\hat{\F}$ in place of $\F$. Moreover, $$\hat{P}=\<a\in \hat{S}: o(a)=4\>\cong Q_8*C_4$$ 
is essential in $\hat{\F}$. So (3) and (4) applied to $\hat{\F}$ instead of $\F$ give that $Aut_{\hat{\F}}(\hat{P})=O^2(Aut(\hat{P}))\hat{S}_{\hat{P}}$, and $\hat{P}$ is the only essential subgroup of $\hat{\F}$ in $\hat{\F}\backslash \hat{Q}^{\hat{\F}}$. It follows from (2) and (3) that $P$ is the only essential subgroup of $\F$ in $\F\backslash Q^\F$, and that $P\alpha =\hat{P}$. By (4), $\alpha^{-1}A(P)\alpha=Aut_{\hat{\F}}(P\alpha)$ and so by Remark~\ref{AlpIso}, $\alpha$ is an isomorphism from $\F$ to $\hat{\F}$. This shows the assertion.
\end{proof}

\subsection{The case $q\geq 4$}

Throughout this section assume Hypothesis~\ref{ClassHyp} and $q\geq 4$. 

\bigskip

Set $G_1=G(J(T))$, $G_2=G(Q)$, $M=J(G_2)$ and $\F_0:=\<N_\F(J(T)), N_\F(Q)\>$. 

\bigskip

We will use from now on without reference that, by Lemma~\ref{AT} and Lemma~\ref{StructureSC}, $J(S)=J(T)\in Syl_2(M)$ and $|S/T|=2$. In particular, $N_\F(J(T))$  is parabolic, so by Hypothesis~\ref{ClassHyp}, $N_\F(J(T))$ is constrained and $G_1$ is well-defined. Moreover, $\F_0$ is a fusion system on $S$, and $S\in Syl_2(G_1)$.

\begin{lemma}\label{BuildAmalgam}
There is an isomorphism $\phi$ from $N_{G_1}(Q)$ to $N_{G_2}(J(T))$ such that $\phi$ is the identity on $T$. If $X=G_1*_{N_{G_1}(Q)}G_2$ is the free amalgamated product with respect to $\m{A}=(G_1,G_2,N_{G_1}(Q),id,\phi)$, then $\F_0=\<\F_S(G_1),\F_S(G_2)\>=\F_S(X)$.
\end{lemma}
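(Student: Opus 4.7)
The plan is to first identify $N_{G_1}(Q)$ and $N_{G_2}(J(T))$ as two models for a common constrained fusion system on $T$, then apply the uniqueness clause of Theorem~\ref{ModExUnique} to produce $\phi$ fixing $T$ pointwise, and finally invoke Theorem~\ref{Rob} to identify $\F_S(X)$ with $\F_0$.

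To set up the first step, I would observe that since $q\geq 4$, Lemma~\ref{StructureSC} gives $J(T)=J(S)$, so $J(T)$ is characteristic in both $S$ and $T$. Hence $S=N_S(J(T))\in\sy_2(G_1)$ and $T=N_T(J(T))\in\sy_2(N_{G_2}(J(T)))$. Also $Q\in\m{A}(T)$ by Lemma~\ref{AT}, so $Q\leq J(T)$. Since $Q$ is fully $\F$-normalized, $T=N_S(Q)\in\sy_2(G_2)$; and since every $N_\F(J(T))$-conjugate of $Q$ is in particular an $\F$-conjugate, $Q$ remains fully normalized in the subsystem $\F_S(G_1)=N_\F(J(T))$, so $T\in\sy_2(N_{G_1}(Q))$.

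For the model identification, using the standard fact that the normalizer $N_G(R)$ inside a model $G$ realizes the normalizer subsystem $N_{\F_S(G)}(R)$ whenever $R$ is fully normalized, both $N_{G_1}(Q)$ and $N_{G_2}(J(T))$ are models for a common constrained saturated fusion system $\m{E}$ on $T$, namely the one whose morphisms $f:A\to B$ (for $A,B\leq T$) are the $\F$-morphisms admitting an extension $f^*:AJ(T)\to BJ(T)$ with $f^*(Q)=Q$ and $f^*(J(T))=J(T)$ (one uses $Q\leq J(T)$ to collapse $AQJ(T)=AJ(T)$). Indeed, unwinding the definitions of the two nested normalizers gives $\F_T(N_{G_1}(Q))=N_{N_\F(J(T))}(Q)=\m{E}=N_{N_\F(Q)}(J(T))=\F_T(N_{G_2}(J(T)))$. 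Theorem~\ref{ModExUnique} now yields an isomorphism $\phi:N_{G_1}(Q)\to N_{G_2}(J(T))$ that is the identity on $T$.

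Finally, form $X=G_1*_{N_{G_1}(Q)}G_2$ with respect to the amalgam $\m{A}=(G_1,G_2,N_{G_1}(Q),\mathrm{id},\phi)$. Since $S\in\sy_2(G_1)$, $T\in\sy_2(G_2)\cap\sy_2(N_{G_1}(Q))$, and $T\leq S$, Theorem~\ref{Rob} gives $S\in\sy_2(X)$ and $\F_S(X)=\<\F_S(G_1),\F_T(G_2)\>=\<N_\F(J(T)),N_\F(Q)\>=\F_0$. The main obstacle is the second step: verifying that the two normalizer-of-normalizer subsystems really coincide requires care with the saturation axioms and with the extension of morphisms, but simplifies once one uses $Q\leq J(T)$ to identify the relevant extension lattices.
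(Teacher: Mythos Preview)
Your overall strategy coincides with the paper's: show that $N_{G_1}(Q)$ and $N_{G_2}(J(T))$ are both models for the same constrained saturated fusion system on $T$, apply Theorem~\ref{ModExUnique} to get $\phi$, and then apply Theorem~\ref{Rob}. However, there is a genuine gap.

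To invoke Theorem~\ref{ModExUnique} you must know that both $N_{G_1}(Q)$ and $N_{G_2}(J(T))$ are \emph{models}, which by definition requires each to have characteristic~$2$. The ``standard fact'' you quote only says that $N_G(R)$ realizes the normalizer subsystem $N_{\F_S(G)}(R)$; it does \emph{not} say that $N_G(R)$ inherits characteristic~$p$ from $G$, and in general a subgroup of a group of characteristic~$p$ need not be of characteristic~$p$. For $N_{G_2}(J(T))$ the verification is easy, since $O_2(G_2)=Q\leq J(T)\leq N_{G_2}(J(T))$ and any subgroup of a characteristic-$p$ group containing $O_p$ is again of characteristic~$p$. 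But for $N_{G_1}(Q)$ there is no such cheap argument: one does not know that $O_2(G_1)\leq T=N_S(Q)$, so one cannot simply say $O_2(G_1)\leq N_{G_1}(Q)$.

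The paper fills this gap by showing directly that $C_{G_1}(J(T))\leq O_2(G_1)$: for $x\in C_{G_1}(J(T))$ of odd order, the facts $|S:T|=2$ and $T/J(T)$ cyclic force (via coprime action) $[O_2(G_1),x]=[O_2(G_1),x,x]\leq [J(T)\cap O_2(G_1),x]=1$, whence $x\in C_{G_1}(O_2(G_1))\leq O_2(G_1)$ and $x=1$. Since $J(T)\unlhd G_1$, this yields $J(T)\leq O_2(N_{G_1}(Q))$ and hence $N_{G_1}(Q)$ has characteristic~$2$. You need to supply this step (or an equivalent one) before Theorem~\ref{ModExUnique} can be applied.
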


\begin{proof}
Observe that $\m{N}:=\F_T(N_{G_1}(Q))=N_{N_\F(J(T))}(Q)=N_{N_\F(Q)}(J(T))
=\F_T(N_{G_2}(J(T)))$. Also note that, as $Q$ is fully normalized in $\F$, $Q$ is fully normalized in $N_\F(J(T))$, so $\m{N}$ is saturated. Moreover, $N_{G_2}(J(T))$ has characteristic $2$, since $G_2$ has characteristic $2$. Let $x\in C_{G_1}(J(T))$ be of odd order. As $|S:T|=2$ and $T/J(T)$ is cyclic, we have $[O_2(G_1), x]=[O_2(G_1),x,x]\leq [J(T)\cap O_2(G_1),x]=1$. Hence, $x\leq C_{G_1}(O_2(G_1))\leq O_2(G_1)$ and $x=1$. This proves $C_{G_1}(J(T))\leq O_2(G_1)$, and $N_{G_1}(Q)$ has characteristic $2$. 
Therefore, $N_{G_1}(Q)$ and $N_{G_2}(J(T))$ are models for $\m{N}$. Hence, by Theorem~\ref{ModExUnique}, there exists an isomorphism $\phi$ between these two groups that is the identity on $T$. Now the assertion follows from Theorem~\ref{Rob} and the definitions of $G_1$, $G_2$ and $\F_0$.
\end{proof}

\begin{lemma}\label{F0}
There exists a finite group $G$ with $S\in Syl_2(G)$ such that $\F_0=\F_S(G)$ and $F^*(G)\cong L_3(q)$ or $Sp_4(q)$.
\end{lemma}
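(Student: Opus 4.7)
My plan is to apply Theorem~\ref{MainAmalgamThm} to the amalgam $\m{A}=(G_1,G_2,B,\mathrm{id},\phi)$ supplied by Lemma~\ref{BuildAmalgam}, where $B=N_{G_1}(Q)$ is identified with $N_{G_2}(J(T))$ via $\phi$, so that $X=G_1*_B G_2$ is the free amalgamated product and $\F_0=\F_S(X)$. Once the five hypotheses of Theorem~\ref{MainAmalgamThm} are verified, the theorem produces a free normal subgroup $N\unlhd X$ with $N\cap G_i=1$, a finite quotient $H:=X/N$ with $SN/N\in\sy_2(H)$, and $F^*(H)\cong L_3(q)$ or $Sp_4(q)$. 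I then take $G:=H$, identify $S$ with $SN/N$ (valid since $S\le G_1$ and $N\cap G_1=1$), and argue $\F_S(X)=\F_S(H)$ under this identification.

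Hypotheses (i), (ii), (iv) follow almost directly from the data. For (i): since $G_2=G(Q)$ is a model for $N_\F(Q)$, we have $T=N_S(Q)\in\sy_2(G_2)$ and $Q=O_2(G_2)$, so $C_{G_2}(Q)\le Q$ by the characteristic-$2$ property of $G_2$, and $Q\le M$ is part of Hypothesis~\ref{ClassHyp}(i). Claim (ii) holds by construction of $\phi$, which is the identity on $T$. Claim (iv) is a rewording of Hypothesis~\ref{ClassHyp}(i),(ii) together with $q\ge 4$ and the observation that $Q$ elementary abelian gives $\Phi(Q)=1$. For (iii), Lemma~\ref{AT}(b) yields $\m{A}(T)=\{Q,Q^t\}$ for $t\in S\setminus T$, and since all elements of $\m{A}(T)$ are contained in $J(T)$ with maximal elementary abelian rank, $\m{A}(J(T))=\{Q,Q^t\}$ as well. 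Because $J(T)=O_2(G_1)$ is normal in $G_1$, the group $G_1$ permutes this two-element set with stabilizer $N_{G_1}(Q)=B$; the element $t\in S\le G_1$ swaps the two members, giving $|G_1:B|=2$.

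The substantial verification is Hypothesis (v). Let $1\neq U\unlhd MT$ be a $2$-group. Since $T\in\sy_2(G_2)$ we also have $T\in\sy_2(MT)$, so $U\le O_2(MT)\le T$. The image $UQ/Q$ is a normal $2$-subgroup of $MT/Q$, and since $M/Q\cong SL_2(q)$ is normal in $MT/Q$ with trivial $2$-core, the two subgroups commute. Hence $[U,M]\le Q$, so $U\le C_T(M/Q)\le Q$ by Hypothesis~\ref{ClassHyp}(i). Now Hypothesis~\ref{ClassHyp}(iii), applied to the non-trivial $U\unlhd MT$ contained in $Q$, forces $N_S(U)=T<S$, so $U$ is not normal in $S$, and therefore not normal in $G_1\ni S$. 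This is the step where the pushing-up content of Hypothesis~\ref{ClassHyp}(iii) is converted into the form required by Theorem~\ref{MainAmalgamThm}, and I expect it to be the main obstacle; the remaining verifications are essentially bookkeeping.

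Once $H=X/N$ is obtained, it remains to identify $\F_S(X)$ with $\F_S(H)$. By Lemma~\ref{BuildAmalgam} and Theorem~\ref{Rob}, $\F_S(X)=\<\F_S(G_1),\F_T(G_2)\>$, and the quotient map embeds each $G_i$ into $H$ so that $H=\<\ov{G_1},\ov{G_2}\>$, with $(\ov{G_1},\ov{G_2})$ a weak BN-pair of $H$ involving $SL_2(q)$ (inherited from $X$ via the isomorphisms $G_i\to\ov{G_i}$). Since the essential $\F_{SN/N}(H)$-classes are represented by Lie-theoretic parabolics, which lie inside $\ov{G_1}\cup\ov{G_2}$, the Alperin--Goldschmidt Fusion Theorem~\ref{AlpGold1} yields $\F_S(H)=\<\F_S(\ov{G_1}),\F_T(\ov{G_2})\>$, matching $\F_S(X)$ under the identification $S\cong SN/N$. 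Therefore $\F_0=\F_S(H)$ and $G:=H$ satisfies the claim.
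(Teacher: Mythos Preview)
Your approach is exactly that of the paper: apply Theorem~\ref{MainAmalgamThm} to the amalgam from Lemma~\ref{BuildAmalgam}, then identify $\F_0$ with the fusion system of the resulting finite quotient. Your explicit verification of hypotheses (i)--(v) is correct (the paper leaves these implicit), and your argument for (v) via $C_T(M/Q)\le Q$ and Hypothesis~\ref{ClassHyp}(iii) is precisely what is needed.

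One small correction in your final paragraph: the pair $(\ov{G_1},\ov{G_2})$ is \emph{not} a weak BN-pair of $H$ in the sense of Definition~\ref{WeakBNDef}, since $|G_1:B|=2$ forces $G_1/O_2(G_1)$ to have a subgroup of index~$2$, which is incompatible with $G_1^*/O_2(G_1)\cong SL_2(q)$ for $q\ge 4$. The weak BN-pair produced inside the proof of Theorem~\ref{MainAmalgamThm} is $(\ov{G_2},\ov{G_2^t})$ for $t\in S\setminus T$, sitting in the index-$2$ subgroup $\ov{X_0}\le H$. This does not damage your argument: the essential subgroups of $\F_{\ov S}(H)$ are $\ov Q$ and $\ov{Q^t}$ (and possibly $\ov{J(T)}$), with automizers $\ov{G_2}$, $\ov{G_2^t}=\ov{G_2}^{\ov t}$, and $\ov{G_1}$ respectively; since $\ov t\in\ov{G_1}$, Alperin--Goldschmidt still gives $\F_{\ov S}(H)=\<\F_{\ov S}(\ov{G_1}),\F_{\ov T}(\ov{G_2})\>$. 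The paper phrases this identification more tersely as ``by the structure of $Aut(F^*(\ov X))$'', which amounts to the same thing.
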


\begin{proof}
Let $X$ be as in Lemma~\ref{BuildAmalgam}. By Theorem~\ref{MainAmalgamThm}, there is a free normal subgroup $N$ of $X$ such that $N\cap G_1=1=N\cap G_2$, $\ov{X}:=X/N$ is finite, $\ov{S}\in Syl_2(\ov{X})$, and $F^*(\ov{X})\cong L_3(q)$ or $Sp_4(q)$. It is elementary to check that the natural epimorphism from $S$ to $\ov{S}$ is an isomorphism from $\F_0$ to a subsystem $\F_1$ of $\F_{\ov{S}}(\ov{X})$ containing $\F_{\ov{S}}(\ov{G_1})$ and $\F_{\ov{S}}(\ov{G_2})$. By the structure of $Aut(F^*(\ov{X}))$, $\F_{\ov{S}}(\ov{X})$ is generated by $\F_{\ov{S}}(\ov{G_1})$ and $\F_{\ov{S}}(\ov{G_2})$. This implies $\F_1=\F_{\ov{S}}(\ov{X})$ and thus the assertion.
\end{proof}

\begin{lemma}\label{TJT}
$T=J(T)=J(S)$.
\end{lemma}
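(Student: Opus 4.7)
Since $q\geq 4>2$, Lemma~\ref{StructureSC} already delivers $J(T)=J(S)$ and $|S:T|=2$, so only $T=J(T)$ requires argument. I would prove this by contradiction, supposing $T>J(T)$.

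The first move is purely structural. Hypothesis~\ref{ClassHyp}(i) embeds $T/Q$ into $\mathrm{Aut}(M/Q)\cong \mathrm{Aut}(SL_2(q))$, while Lemma~\ref{AT} identifies $J(T)/Q$ with a Sylow $2$-subgroup of $M/Q\cong SL_2(q)$. Consequently $T/J(T)$ embeds into the ``field'' part of the normalizer of a Sylow $2$-subgroup in $\mathrm{Aut}(SL_2(q))$, namely into $\mathrm{Gal}(GF(q)/GF(2))$; writing $q=2^e$, this is cyclic of $2$-part $2^{v_2(e)}$. So $T>J(T)$ forces $e$ to be even and supplies an involution in the cyclic $2$-group $T/J(T)$. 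A lift $t\in T\setminus J(T)$ with $t^{2}\in J(T)$ can be adjusted within its coset to be itself an involution, and then replaced by an $\F$-conjugate via Lemma~\ref{fullNormChar} so that $\langle t\rangle$ is fully $\F$-centralized; the resulting $t$ acts on $M/Q$ as an involutive field automorphism.

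With $t$ so chosen, Hypothesis~\ref{ClassHyp}(iv) yields that $\F_t:=C_\F(\langle t\rangle)$ is constrained, with model $H$ by Theorem~\ref{ModExUnique}, satisfying $C_S(t)\in \mathrm{Syl}_2(H)$ and $C_H(O_2(H))\leq O_2(H)$. On the other hand, Lemma~\ref{F0} identifies $\F_0=\F_S(G)$ with $F^*(G)\cong L_3(q)$ or $Sp_4(q)$; under this identification $C_{F^*(G)}(t)$ is the Chevalley group $L_3(q_0)$ or $Sp_4(q_0)$ with $q=q_0^{2}$, and in particular is not $2$-closed. Thus the sub-fusion-system $\F_{C_S(t)}(C_G(t))\leq C_{\F_0}(\langle t\rangle)\leq \F_t$ contains non-trivial odd-order morphisms coming from a Chevalley composition factor. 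Since $H$ realizes every morphism of $\F_t$ on $C_S(t)$, these odd-order morphisms must be witnessed inside $H$; tracing the associated torus gives an odd-order element of $H$ that centralizes $O_2(H)$, violating $C_H(O_2(H))\leq O_2(H)$ and producing the required contradiction.

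The hard part --- and the main obstacle --- is precisely this last step: a sub-fusion-system of a constrained saturated fusion system need not itself be constrained, so the contradiction must be extracted by carefully tracking how the Chevalley composition factor of $C_{F^*(G)}(t)$ is forced to embed into the model $H$ relative to $O_2(H)$, and how the faithful action of the odd-order torus on a Sylow $2$-subgroup of $C_{F^*(G)}(t)$ (properly contained in $C_S(t)=O_2(H)\cdot(\,\cdot\,)$) contradicts the characteristic-$p$ condition in $H$. The routine parts --- reduction to $e$ even via Lemma~\ref{StructureSC} and Lemma~\ref{AT}, arrangement of the involution $t$ to be fully $\F$-centralized via Lemma~\ref{fullNormChar} --- pose no serious difficulty once the structure of $\mathrm{Aut}(SL_2(q))$ is in hand.
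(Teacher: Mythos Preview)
There is a genuine gap in your plan, located at the innocuous-looking clause ``replaced by an $\F$-conjugate via Lemma~\ref{fullNormChar} so that $\langle t\rangle$ is fully $\F$-centralized; the resulting $t$ acts on $M/Q$ as an involutive field automorphism.'' Replacing $t$ by a fully centralized $\F$-conjugate gives you an involution somewhere in $S$, but nothing guarantees that this conjugate still lies in $T\setminus J(T)$. Hypothesis~\ref{ClassHyp}(iv) applies only to involutions in $T\setminus J(T)$, so if the fully centralized representative has migrated into $J(T)$ you cannot invoke it. In fact this is exactly what happens: the paper's proof shows (step~(10)) that \emph{no} involution $t\in T\setminus J(T)$ has $\langle t\rangle$ fully centralized, and then (step~(15)) that the fully centralized conjugate $t\phi$ lands in $Z(J(T))$.

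Your ``hard part'' --- using the Chevalley structure of $C_{F^*(G)}(t)$ to contradict constrainedness of $C_\F(\langle t\rangle)$ --- is essentially the paper's step~(10), carried out there by pinning $O_2(C_\F(\langle t\rangle))$ between $\langle t\rangle$ and $\langle t\rangle W$ via the $SL_2(q_0)$ action on $W=C_Q(t)$ and then contradicting property~(6). But that argument only proves that $\langle t\rangle$ is not fully centralized; it does not yet contradict $T>J(T)$. The remaining (and longer) part of the paper's proof takes a morphism $\phi$ carrying $t$ to a fully centralized involution, and through a sequence of increasingly sharp localization statements (steps (11)--(17)) forces $W_1\phi=W\langle t\rangle\phi$ into a single member $B$ of $\m{A}(T)$ with $C_S(W_1\phi)=B$; lifting the $SL_2(q_0)$ back to $A(B)$ and playing it off against the $SL_2(q)$-action on $B/C_B(E)$ then produces the final contradiction with $q_0>2$. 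Your outline stops at what is, in the paper, roughly the one-third mark.
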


\begin{proof}
Set $Z:=Z(J(S))$ and let $G$ be a finite group such that $S\in Syl_2(G)$,  $\F_0=\F_S(G)$ and $F^*(G)\cong L_3(q)$ or $Sp_4(q)$. Note that $G$ exists by \ref{F0}. By the structure of $Aut(L_3(q))$ respectively $Aut(Sp_4(q))$ and by Lemma~\ref{AT}, the following properties hold:

\bigskip

(1)\;\; $J(S)=J(T)\in Syl_2(F^*(G))$, and $N_S(P)=T$ for all $P\in\m{A}(S)$.

\bigskip

(2)\;\; $\m{A}(S)=\{Q,Q^x\}$ and $Q\cap Q^x=Z$ for all $x\in S\backslash T$.

\bigskip

(3)\;\; Every elementary abelian subgroup of $J(T)$ is contained in an element of $\m{A}(S)$.

\bigskip

Assume the assertion is wrong. Then we can pick $t\in T\backslash J(T)$ corresponding to a field automorphism of $F^*(G)$. In particular, there is $q_0\in\mathbb{N}$ such that $q_0^2=q$ and $C_{F^*(G)}(t)\cong L_3(q_0)$ or $Sp_4(q_0)$. 
If $F^*(G)\cong L_3(q)$ then, by the structure of $Aut(L_3(q))$, $S=C_S(t)J(T)$. If $F^*(G)\cong Sp_4(q)$ then, by \cite[Section~12.3]{Car}, we can choose $s\in S\backslash T$ such that $S=J(S)\<s\>$, $J(S)\cap \<s\>=1$ and $t\in \<s\>$. In both cases, set
$$W:=C_Q(t),\;L:=O^{p^\prime}(N_{F^*(G)}(W)\cap C_{F^*(G)}(t))\mbox{ and }L^*:=L(C_S(t)\cap N_S(W)).$$
The structure of $Aut(L_3(q))$, $Aut(Sp_4(q))$, $L_3(q_0)$ and $Sp_4(q_0)$ gives also the following properties:

\bigskip

(4)\;\; $L/W\cong SL_2(q_0)$, and $W/C_W(L)$ is a natural $SL_2(q_0)$-module for $L/W$.

\bigskip

(5)\;\; $C_S(W)\cap C_S(t)=W\<t\>$ and $L^*/C_{L^*}(W)$ embeds into the automorphism group of $$LC_{L^*}(W)/C_{L^*}(W)\cong L/W\cong SL_2(q_0).$$ 
In particular, $O_2(L^*/C_{L^*}(W))=1$.

\bigskip

(6)\;\; $Z(S)\cap C_W(L)=1$.

\bigskip

(7)\;\; $|C_A(x)|\leq |W|$ for every $A\in \m{A}(T)$ and every $x\in T\backslash J(T)$.

\bigskip

(8)\;\; For every involution $u\in T\backslash J(T)$ we have $|C_{J(T)}(u)|\leq |C_{J(T)}(t)|$.

\bigskip
 
Let $R\in\m{A}(T)\backslash\{Q\}$ and set $\hat{W}:=C_R(t)$. 
Note that the situation is symmetric in $Q$ and $R$. Moreover, $C_{J(S)}(t)=W\hat{W}$ and $\m{A}(C_{J(S)}(t))=\{W,\hat{W}\}$. Hence, (4) and Lemma~\ref{natSL2qOffFactor} give

\bigskip

(9)\;\; $|W/W\cap \hat{W}|=|\hat{W}/W\cap \hat{W}|=q_0$, $W\cap \hat{W}=C_W(\hat{W})=C_W(a)=C_{\hat{W}}(W)=C_{\hat{W}}(b)$ for all $a\in \hat{W}\backslash W$, $b\in W\backslash \hat{W}$.

\bigskip

We show next

\bigskip

(10)\;\; $\<t\>$ is not fully centralized.

\bigskip

Assume (10) is wrong. Then by Hypothesis~\ref{ClassHyp}(iv), $\m{C}:=C_\F(\<t\>)$ is constrained. Set $C:=O_2(\m{C})$ and $F:=N_C(W)$. Observe that every element of $Aut_{\m{C}}(W)$ extends to an element of $Aut_{\m{C}}(WF)$ and hence, by Remark~\ref{SatAut}(a), $(WF)_W$ is normal in $Aut_{\m{C}}(W)$. In particular, $(WF)_W$ is normal in $Aut_{L^*}(W)\cong L^*/C_{L^*}(W)$. So, by (5), $C\cap T\leq F\leq C_S(W)\cap C_S(t)=\<t\>W$ and $C\cap T= \<t\>(W\cap C)$. Since the situation is symmetric in $Q$ and $R$, we get also $C\cap T=\<t\>(\hat{W}\cap C)$ and thus, $(W\cap C)C_W(L)<W$. Hence, as $W/C_W(L)$ is an irreducible $L$-module, $W\cap C\leq C_W(L)$. Therefore, since $\m{C}$ is constrained, $Z(S)\leq C_W(C)\leq W\cap C\leq C_W(L)$, a contradiction to (6). This proves (10). In particular, by  Lemma~\ref{fullNormChar3}, $C_S(t)$ is contained in an essential subgroup of $\F$. Moreover, if $q=4$ then $S$ is isomorphic to a Sylow $2$-subgroup of $Aut(F^*(G))$. Therefore,  Lemma~\ref{NoEssL34} gives

\bigskip

(11)\;\; $q>4$.

\bigskip

In particular, $q_0>2$. Therefore, the structure of $L_3(q_0)$ and $Sp_4(q_0)$ gives 

\bigskip

(12)\;\; $[W,\hat{W}]=W\cap\hat{W}$.

\bigskip

By Lemma~\ref{fullNormChar}, we can choose $\phi\in Mor_\F(C_S(t),S)$ such that $\<t\phi\>$ is fully centralized. 
We set 
$$W_1:=W\<t\> \mbox{ and }\hat{W}_1:=\hat{W}\<t\>.$$
Note that $W_1$ and $\hat{W}_1$ are elementary abelian. We show next

\bigskip

(13)\;\; $|W_1\phi/W_1\phi\cap J(T)|\leq 2$ and $|\hat{W}_1\phi/\hat{W}_1\cap J(T)|\leq 2$.

\bigskip

If $F^*(G)\cong Sp_4(q)$ then $S/J(T)$ is cyclic and hence (13) holds, as $W_1\phi$ is elementary abelian. Thus, we may assume $F^*(G)\cong L_3(q)$ and  $|W_1\phi/W_1\phi\cap J(T)|=4$. Then, as $T/J(T)$ is cyclic, $t\in J(T)(W_1\phi\cap T)$ and $W_1\phi\not\leq T$. Hence, by (2) and (3), we have $W_1\phi\cap J(T)\leq C_{Z}(t)=W\cap \hat{W}$. Therefore, $2\cdot q_0^2=|W_1\phi|\leq 4\cdot |W\cap \hat{W}|=4\cdot q_0$ and $q_0\leq 2$, a contradiction to (11). As the situation is symmetric in $W$ and $\hat{W}$, this shows (13).

\bigskip

(14)\;\; $W_1\phi\leq T$ and $\hat{W}_1\phi\leq T$.

\bigskip

Assume $W_1\phi\not\leq T$. Then, by Lemma~\ref{AT}, $W\phi\cap J(T)\leq Z(J(T))$ and so $[W\phi\cap J(T),\hat{W}\phi\cap J(T)]=1$. Now (9) and (13) yield $q_0=2$, a contradiction to (11). As the situation is symmetric in $W$ and $\hat{W}$, this shows (14).

\bigskip

(15)\;\; $t\phi\in Z(J(T))$.

\bigskip

By (14), $t\phi\in T$. Hence, as $S=J(T)C_S(t)$ and $t\phi$ is fully centralized, it follows from (8) and (10) that $t\phi\in J(T)$. Suppose now (15) is wrong. Then (2) and (3) imply $t\phi\in P\backslash Z(J(T))$ for some $P\in \m{A}(T)$ and $C_S(t\phi)=C_T(t\phi)$. Moreover, by Lemma~\ref{natSL2q}(a), $C_{J(T)}(t\phi)=P$. Hence, by (13), $|W\phi/W\phi\cap P|\leq 2$ and $|\hat{W}\phi/\hat{W}\phi\cap P|\leq 2$. As $[W\phi\cap P,\hat{W}\phi\cap P]=1$, it follows now from (9) that $q_0=2$, a contradiction to (11). Hence, (15) holds.

\bigskip

(16)\;\; $W\phi\leq J(T)$ and $\hat{W}\phi\leq J(T)$.

\bigskip

By (14), $W\phi\leq T$. By (9),(11) and (13), $[W\phi\cap J(T),\hat{W}\phi\cap J(T)]\neq 1$. 
So, by (3), there are $P_1,P_2\in \m{A}(T)$ such that $P_1\neq P_2$, $W\phi\cap J(T)\leq P_1$ and $\hat{W}\phi\cap J(T)\leq P_2$. As, by (12), $W\phi\cap \hat{W}\phi=[W\phi,\hat{W}\phi]\leq J(T)$, this implies $W\phi\cap \hat{W}\phi\leq P_1\cap P_2=Z$. Assume $W\phi\not\leq J(T)$. Then $t\in (W\phi)J(T)$ and, by (15),  $\<t\phi\>(W\phi\cap \hat{W}\phi)\leq C_{Z}(t)=W\cap \hat{W}$, a contradiction. Hence, $W\phi\leq J(T)$ and, as the situation is symmetric in $W$ and $\hat{W}$, property (16) holds.

\bigskip

(17)\;\; Let $W_1\phi\leq B\in \m{A}(T)$. Then $W_1\phi$ is fully centralized and $C_S(W_1\phi)=B$.

\bigskip

By Lemma~\ref{fullNormChar}, we can choose $\psi\in Mor_\F(N_S(W_1\phi),S)$ such that $\dot{W_1}:=W_1\phi\psi$ is fully normalized. Note $B\leq C_S(W_1\phi)$ and  $\dot{W_1}\leq B\psi\in \m{A}(S)=\m{A}(T)$ and, by (1), $T=N_S(B\psi)$. Let $F:=C_S(\dot{W_1})\cap N_S(B\psi)$. Then $\dot{W_1}\leq C_{B\psi}(F)$ and so $|C_{B\psi}(F)|\geq |W_1|>|W|$. Thus, by (7), $F\leq T$. Assume now $B\psi<F$. Then $W\phi\psi\leq \dot{W_1}\leq C_{B\psi}(F)=Z$. Note that  $(\hat{W}\phi)B\leq N_S(W_1\phi)$ and, by (1) and (14),  $\hat{W}\phi\leq T=N_S(B)$. Hence, we have $\hat{W}\phi\psi\leq N_S(B\psi)=T$. In particular, $|(\hat{W}\phi\psi)/((\hat{W}\phi\psi)\cap J(T))|\leq 2$. As $W\phi\psi\leq Z$, this yields $$|\hat{W}/C_{\hat{W}}(W)|=|(\hat{W}\phi\psi)/C_{\hat{W}\phi\psi}(W\phi\psi)|\leq 2,$$ a contradiction to (9) and (11). This shows $F=B\psi$. Thus,  $C_S(\dot{W_1})=B\psi$ and (17) holds.

\bigskip

We now derive the final contradiction. Set $L_1:=Aut_L(W_1)$. Then $[t,L_1]=1$, $L_1\leq A(W_1)$ and, by (4),  $L_1\cong SL_2(q_0)$. So $L_2:=L_1\phi^*\cong SL_2(q_0)$ and $[t\phi, L_2]=1$. By (15), (16) and (3), there are $B,\hat{B}\in \m{A}(T)$ such that $W_1\phi\leq B$ and $\hat{W}_1\phi\leq \hat{B}$. Set $E:=O^{p^\prime}(Aut_{F^*(G)}(B))$ and $\hat{E}:=O^{p^\prime}(Aut_{F^*(G)}(\hat{B}))$. Note that $B\neq \hat{B}$ and $B,\hat{B}$ are conjugate to $Q$. Hence, it follows from Hypothesis~\ref{ClassHyp}(iii) that $C_B(E)\cap C_{\hat{B}}(\hat{E})=1$. In particular, either $t\phi\not\leq C_B(E)$ or $t\phi\not\leq C_{\hat{B}}(\hat{E})$. As the situation is symmetric in $W$ and $\hat{W}$, we may assume $$t\phi\not\in C_B(E).$$ 
By (17), $W_1\phi$ is fully centralized and $C_S(W_1\phi)=B$. Hence, by the saturation properties, every element of $L_2$ extends to an element of $A(B)$. So, for
$$X:=\{\phi\in A(B):\phi_{|W_1\phi,W_1\phi}\in L_2\},$$
we have $X/C_{X}(W_1\phi)\cong L_2\cong SL_2(q_0)$ and $[t\phi,X]=1$. Note that $E\cong SL_2(q)$, and $B/C_B(E)$ is a natural $SL_2(q)$-module for $E$. 
As $B$ is conjugate to $Q$ in $S$, by Hypothesis~\ref{ClassHyp}, $E$ is a normal subgroup of $A(B)$ and $S_B$ embeds into $Aut(E)$.
By Lemma~\ref{natSL2q}(d), every element of $E$ of odd order acts fixed point freely on $B/C_B(E)$. Therefore, as $[t\phi,X]=1$ and $t\phi\not\in C_B(E)$, $X\cap E$ is a normal $p$-subgroup of $X$. Hence, as $O_p(SL_2(q_0))=1$, we have $X\cap E\leq C_{X}(W_1\phi)$. Since $S_B$ embeds into $Aut(E)$, $A(B)/E$ has cyclic Sylow $2$-subgroups. In particular, $X/C_{X}(W_1\phi)\cong SL_2(q_0)$ has cyclic Sylow $2$-subgroups. This gives $q_0=2$, a contradiction to (11).
\end{proof}

\begin{lemma}\label{F01}
There is a finite group $G$ such that $S\in Syl_p(G)$, $\F_0=\F_S(G)$, $F^*(G)\cong L_3(q)$ or $Sp_4(q)$, $|O^2(G):F^*(G)|$ is odd and $|G:O^2(G)|=2$. Furthermore, if $F^*(G)\cong Sp_4(q)$ then $q=2^e$ where $e\in\N$ is odd. 
\end{lemma}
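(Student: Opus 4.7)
The plan is to show that the group $G$ supplied by Lemma~\ref{F0} already satisfies all the additional conditions. Write $H_0:=F^*(G)$, so $H_0$ is non-abelian simple with $H_0\cong L_3(q)$ or $Sp_4(q)$; since $C_G(H_0)\leq Z(H_0)=1$, conjugation embeds $G/H_0$ into $Out(H_0)$. By property (1) in the proof of Lemma~\ref{TJT} we have $T=J(S)\in Syl_2(H_0)$, so $S\cap H_0=T$ and the Sylow $2$-subgroup $SH_0/H_0$ of $G/H_0$ has order $|S/T|=2$.

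A cyclic Sylow $2$-subgroup of order $2$ is automatically central in its normalizer, so Burnside's normal $p$-complement theorem applied to $G/H_0$ yields a normal $2$-complement. Pulling this back to $G$ gives $O^2(G)$ with $|O^2(G):H_0|$ odd and $|G:O^2(G)|=2$, which disposes of the structural conditions in both the $L_3(q)$ and the $Sp_4(q)$ case.

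It remains to rule out $H_0\cong Sp_4(2^e)$ with $e=2f$ even. For such $H_0$ the group $Out(H_0)$ is cyclic of order $2e$, generated by an element $\rho$ whose square is the Frobenius endomorphism $F$ (of order $e$); the unique involution in $Out(H_0)$ is therefore $\rho^e=F^f$, a pure field automorphism. Pick $s\in S\setminus T$; since its image in $Out(H_0)$ has order $2$, the conjugation action of $s$ on $H_0$ equals $c_{h_0}\circ F^f$ for some $h_0\in H_0$. Hypothesis~\ref{ClassHyp}(i),(ii) forces $N_{H_0}(Q)$ to be a maximal parabolic of $H_0\cong Sp_4(q)$ with unipotent radical $Q$, and any such parabolic is stable under $F$, so $F^f(Q)=Q$ and hence
\[
sQs^{-1}=c_{h_0}(F^f(Q))=h_0\,Q\,h_0^{-1}
\]
is an $H_0$-conjugate of $Q$. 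On the other hand $s$ normalizes $T=J(S)$ (characteristic in $S$), so $sQs^{-1}\leq T$; and in $Sp_4(q)$ the Borel subgroup $B$ of $H_0$ containing $T$ lies in a unique parabolic of each type, so any $H_0$-conjugate of $N_{H_0}(Q)$ containing $B$ equals $N_{H_0}(Q)$. Consequently $h_0\in N_{H_0}(Q)$ and $sQs^{-1}=Q$, giving $s\in N_S(Q)=T$ by Hypothesis~\ref{ClassHyp}(iii), contradicting $s\in S\setminus T$. The main obstacle in turning this sketch into a full proof is identifying $N_{H_0}(Q)$ with a specific parabolic of the abstract copy of $Sp_4(q)$ delivered by Lemma~\ref{F0} and extracting the uniqueness of $H_0$-conjugates of $Q$ inside $T$ from the Bruhat decomposition; once these identifications are in place the contradiction is immediate and $e$ must be odd.
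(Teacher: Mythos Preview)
Your approach is essentially the paper's: both start from Lemma~\ref{F0}, use Lemma~\ref{TJT} to get $T=J(S)\in Syl_2(F^*(G))$ (you should cite the \emph{conclusion} of Lemma~\ref{TJT} rather than property~(1) of its proof, which only gives $J(S)=J(T)$), read off that the Sylow $2$-subgroup of $G/F^*(G)$ has order~$2$, and then argue that an element of $S\setminus T$ cannot induce a field automorphism because it fails to normalize $Q$. The ``obstacle'' you flag dissolves without any Bruhat-type argument: by Lemma~\ref{AT} we have $Q\in\mathcal{A}(T)$ and $\mathcal{A}(T)=\{Q,Q^s\}$, and in $Sp_4(q)$ these two maximal elementary abelian subgroups of a Sylow $2$-subgroup are exactly the unipotent radicals of the two maximal parabolics above the Borel; any element of $N_{Aut(H_0)}(T)$ whose image in $Out(H_0)$ lies in the field-automorphism subgroup fixes both of them, whereas $s$ swaps them since $s\notin N_S(Q)=T$. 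This is precisely what the paper compresses into ``any element of $S\backslash T$ induces a graph automorphism of $F^*(G)$''.
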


\begin{proof}
By Lemma~\ref{F0}, there is a finite group $G$ such that $S\in Syl_p(G)$, $\F_0=\F_S(G)$, and $F^*(G)\cong L_3(q)$ or $Sp_4(q)$. By  Lemma~\ref{TJT} and the structure of $Aut(F^*(G))$, no element of $G$ induces a field automorphism of $F^*(G)$ of even order. So $|G:O^2(G)|=2$ and $|O^2(G):F^*(G)|$ is odd. If $q=2^e$ then $Aut(Sp_4(q))/Inn(Sp_4(q))$ is cyclic of order $2e$ and generated by the image of any graph automorphism of $Sp_4(q)$. Hence, as any element of $S\backslash T$ induces a graph automorphism $F^*(G)$, this implies the assertion.
\end{proof}

\begin{lemma}\label{q=Z=4}
Let $\F\neq \F_0$. Then $q=|Z(T)|=4$.
\end{lemma}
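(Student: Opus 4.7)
The strategy is to first observe that in both cases $F^*(G) \cong L_3(q)$ or $Sp_4(q)$ provided by Lemma~\ref{F01}, a Sylow $2$-subgroup of $F^*(G)$ has center of order $q$. Since $T = J(T) = J(S)$ is such a Sylow $2$-subgroup by Lemma~\ref{TJT}, we have $|Z(T)| = q$. Hence the two claims of the lemma coincide, and it suffices to prove $q = 4$.

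Assume for contradiction that $q \geq 8$. Alperin's Fusion Theorem (Theorem~\ref{AlpGold1}), together with Lemma~\ref{F01} which identifies the essentials of $\F_0 = \F_S(G)$ with the $\F$-conjugates of $Q$, yields an essential subgroup $P$ of $\F$ with $P \notin Q^\F$ and $P \neq T$. Since $q \geq 4$ gives $|Q| > 4$, Lemma~\ref{HelpP} applies: $P \not\leq T$, $P \cap T$ is not $A(P)$-invariant, $\Omega(Z(P)) \cap T = Z(S)$, and $P$ is not elementary abelian; moreover, if $Z(S)$ is $A(P)$-invariant then $Z(T) \leq P$.

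Pick $t \in P \setminus T$. By Lemma~\ref{F01} combined with the argument ruling out field involutions in the proof of Lemma~\ref{TJT}, $t$ acts on $F^*(G)$ as a graph-type involution. Using the explicit centralizers of graph involutions --- $C_{F^*(G)}(t) \cong L_2(q)$ for $L_3(q)$, with $C_T(t) = Z(T)$ of order $q$, so in particular $Z(S) = Z(T)$; and $C_{F^*(G)}(t) \cong Sz(q)$ for $Sp_4(q)$ when $q = 2^e$ with $e$ odd, with $|C_T(t)| = q^2$ --- one pins down $C_S(t) = \langle t\rangle C_T(t)$. The essentiality of $P$ provides some $\alpha \in A(P)$ of odd prime order whose action on $P$ is nontrivial; its coprime action on the chief series of $P$, combined with Lemma~\ref{HelpP}(b)--(d) and the bound $|P \cap C_T(t)| \leq |C_T(t)|$, is expected to produce a numerical inequality that forces $q \leq 4$.

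The main obstacle lies precisely in this last step: one must rule out each configuration of $P \cap T$ inside $T$ consistent with Lemma~\ref{HelpP} for $q \geq 8$, using the specific $M/Q$-module structure of $Q/C_Q(M)$ and the bounded size of $C_T(t)$, possibly invoking auxiliary lemmas like Lemma~\ref{HS1Help} or Lemma~\ref{HS2} after verifying their hypotheses. The case $q = 4$ survives because the additional essential $P$ is then realized inside the Janko group $J_3$ (case~(e) of Theorem~\ref{Classify}), whose $2$-local structure provides centric subgroups carrying an extra automorphism of odd order that is invisible for $q > 4$.
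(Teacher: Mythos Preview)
Your opening reduction contains an error: for $Sp_4(q)$ with $q$ even, the Sylow $2$-subgroup $T$ has $|Z(T)|=q^2$, not $q$. Indeed, in characteristic~$2$ the Chevalley structure constant for $[U_{\alpha_1},U_{\alpha_1+\alpha_2}]$ vanishes, so both parabolic radicals are elementary abelian of order $q^3$; Lemma~\ref{AT}(a) then gives $Z(T)=Q\cap Q'$ of order $|Q|/q=q^2$. Your conclusion that it suffices to prove $q=4$ is nevertheless salvageable by a different argument: Lemma~\ref{F01} forces $e$ odd in the $Sp_4(2^e)$ case, so $q=4$ would leave only $L_3(4)$, where $|Z(T)|=4$. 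But this is not the route you gave.

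The more serious issue is that the body of your argument is only a programme, and one that diverges from the paper. You propose to analyse centralizers of graph involutions $t\in P\setminus T$ and extract a numerical bound forcing $q\le 4$, but you acknowledge that the crucial step is missing. The paper proceeds entirely differently and never computes $C_{F^*(G)}(t)$. It assumes $|Z|:=|Z(T)|>4$ and argues as follows: Corollary~\ref{HS1} applied to $G(Z)/Z$ shows $T$ is strongly closed in $N_\F(Z)$; then Lemma~\ref{StructureS}(d) together with Lemma~\ref{HelpP} forces $Z\neq Z(S)$, and Lemma~\ref{HS2} (with a Cartan subgroup of $Aut_M(Q)$ as $K$) upgrades this to $|Z/Z(S)|>2$; finally one passes to the model $G(P_0)$ for $P_0:=\Omega(C_{\Phi(P)}(P))\leq Z(S)$, where Lemma~\ref{HS1Help} produces a conjugate $T^h$ containing $Z\langle t\rangle$, and conjugating back into $N_G(Z)$ contradicts the strong closure of $T$. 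Your instinct that Lemmas~\ref{HS1Help} and~\ref{HS2} are relevant is correct, but the specific architecture---the strong-closure statement~(1), the reduction to $|Z/Z(S)|>2$, and the passage through $G(P_0)$---is exactly the content you are missing, and it is not recoverable from involution-centralizer computations alone.
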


\begin{proof}
We will use throughout the proof that, by Lemma~\ref{TJT}, $T=J(T)$. Set $Z:=Z(T)$ and assume $|Z|>4$.
As $\F\neq \F_0$ it follows from Theorem~\ref{AlpGold1} that there is an essential subgroup $P$ of $\F$ such that $P\not\in \{T\}\cup Q^\F$.
Recall that, by Lemma~\ref{HelpP}(a), $P\not\leq T$ and there is $t\in P\backslash T$ such that $t\phi\in T$ for some $\phi\in A(P)$.
By Corollary~\ref{HS1}, applied to $G(Z)/Z$ in place of $G$, we have

\bigskip

(1)\;\; $T$ is strongly closed in $N_\F(Z)$.

\bigskip

We show next

\bigskip

(2)\;\; $Z\neq Z(S)$.

\bigskip

For the proof assume $Z=Z(S)$. If $\Omega(Z(P))\not\leq T$ then Lemma~\ref{StructureS}(d) implies $P\cap T=Z$ and $P$ is elementary abelian, a contradiction to Lemma~\ref{HelpP}(c). Hence, by Lemma~\ref{HelpP}(b), $\Omega(Z(P))=Z(S)$, so $Z=Z(S)$ is $A(P)$-invariant. This is a contradiction to Lemma~\ref{HelpP}(a) and (1), so (2) holds. We show next

\bigskip

(3)\;\; $|Z/Z(S)|>2$.

\bigskip

Assume (3) is wrong, then by (2), $|Z/Z(S)|=2$. Take $K$ to be a subgroup of $G_1=G(T)$ such that $K\cong C_{q-1}$ and $Aut_K(Q)$ is a Cartan subgroup of $Aut_M(Q)$. Then  Lemma~\ref{HS2}, applied with $G_1$ in place of $G$, yields $|Z|=4$. As this contradicts our assumption, (3) holds.

\bigskip

Recall that by Lemma~\ref{HelpP}(c), $P$ is not elementary abelian, i.e. $P_0:=\Omega(C_{\Phi(P)}(P))\neq 1$. Observe that, by Lemma~\ref{HelpP}(b), $P_0\leq Z(S)$, so $N_\F(P_0)$ is parabolic and by assumption constrained. Thus, we may set $$G:=G(P_0).$$ 
As $P_0$ is characteristic in $P$, $A(P)\leq N_\F(P_0)=\F_S(G)$. Hence, there is $g\in G$ such that $t\in T^g$. Thus, by Lemma~\ref{HS1Help}, there exists $h\in G$ such that $Z\<t\>\leq T^h$. 
By Lemma~\ref{AT}, there is $B\in \m{A}(S^h)$ such that $Z\leq B$. Observe that $t\in T^h=N_{S^h}(B)$ and $B\<t\>\leq N_G(Z)$. In particular, there is $x\in N_G(Z)$ such that $B\<t\>\leq S^x$. Then $t\in N_{S^x}(B)=T^x$, a contradiction to (1). This shows the assertion.
\end{proof}

\begin{lemma}\label{q>2FneqF0}
Let $\F\neq \F_0$. Then $\F\cong \F_S(G)$ for a group $G$ with $G\cong J_3$ and $S\in Syl_2(G)$.
\end{lemma}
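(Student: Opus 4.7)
Plan: The goal is to identify $\F$ with $\F_S(J_3)$. Since $\F\neq\F_0$, Theorem~\ref{AlpGold1} supplies an essential subgroup $P$ of $\F$ with $P\notin\{T\}\cup Q^\F$. I will first nail down the ambient group $G_0$ with $\F_0=\F_S(G_0)$, then force $P$ into a single explicit isomorphism type, compute $A(P)$, and finally invoke Remark~\ref{AlpIso} to recognize $\F$.

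By Lemma~\ref{q=Z=4} we have $q=|Z(T)|=4$. Lemma~\ref{F01} then rules out $F^*(G_0)\cong Sp_4(q)$, since that case requires $q=2^e$ with $e$ odd. Thus $F^*(G_0)\cong L_3(4)$, $|G_0:O^2(G_0)|=2$, and the element of $S$ outside $T$ induces a graph (or graph-diagonal) automorphism on $F^*(G_0)$. Consequently $|T|=64$, $|S|=128$, $Q$ is elementary abelian of order $16$, $Z(T)=Z(S)$ has order $4$ (the graph automorphism fixes the central root subgroup pointwise), and $T=QQ^t$ for any $t\in S\setminus T$. All the $\F_0$-fusion inside $S$ is now explicit.

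Next I analyze $P$. Lemma~\ref{HelpP} forces $P\not\leq T$, $S=TP$, $P$ non-abelian, and $\Omega(Z(P))\cap T\leq Z(S)$. Since $Z(S)=Z(T)$ is characteristic in $T$ and $P\cap T\lhd P$, the subgroup $Z(S)$ is $A(P)$-invariant, so Lemma~\ref{HelpP}(d) gives $Z(T)\leq P$. Using Lemma~\ref{AT}, Hypothesis~\ref{ClassHyp}(iii) and the explicit $L_3(4){.}2$-structure of $S$ just established, I enumerate the subgroups of $S$ satisfying these constraints together with centricity and essentiality. The upshot is that $P$ is contained in, and in fact equals, the centralizer of a well-chosen involution $z\in Z(S)$ modulo a quotient: the only candidate is the extraspecial group of minus type and order $32$, namely $P\cong 2^{1+4}_-$ with $Z(P)=\langle z\rangle$, normal in $S$ with $|S:P|=4$. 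This enumeration is the main obstacle, as one must rule out dihedral-, quaternion-, $C_4\times C_2$- and similar candidates by appealing repeatedly to Hypothesis~\ref{ClassHyp}(iii), Lemma~\ref{HelpP}(a)--(c), and the constrainedness provided by Hypothesis~\ref{ClassHyp}(iv).

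With $P\cong 2^{1+4}_-$ pinned down, $A(P)/Inn(P)$ embeds into $Out(2^{1+4}_-)\cong O_4^-(2)\cong S_5$ and must contain a strongly $2$-embedded subgroup; the unique such subgroup of $S_5$ of even order invariant under the $N_S(P)$-action is $A_5$, so $A(P)/Inn(P)\cong A_5$ acting naturally on $P/Z(P)$. Applying the same analysis to any further essential class shows it coincides with $\{P\}$, so the essential classes of $\F$ are precisely $Q^\F$ and $\{P\}$. The group $J_3$ has a Sylow $2$-subgroup of order $128$, and its $2$-local structure furnishes exactly two essential classes: one of $L_3(4)$-parabolic type (accounting for $\F_0$ by Lemma~\ref{F01}) and one represented by the extraspecial involution-centralizer $2^{1+4}_-{:}A_5$ (accounting for $P$). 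Matching up the automorphism groups of representatives of each essential class via the explicit isomorphism $S\to \mathrm{Syl}_2(J_3)$, Remark~\ref{AlpIso} yields $\F\cong\F_S(J_3)$, as required.
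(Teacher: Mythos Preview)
Your proposal contains a concrete factual error that derails the argument. You assert that $Z(S)=Z(T)$ has order $4$, claiming the outer involution of $S$ induces a graph automorphism on $L_3(4)$ that centralizes $Z(T)$. This is false: the paper proves in its step~(4) that $Z(T)$ is \emph{not} $A(P)$-invariant and hence $Z(T)\neq Z(S)$, so $|Z(S)|=2$. Indeed the correct extension (established in the paper's step~(9)) is $PGL_3(4)$ extended by the product of the contragredient and the field automorphism, not a plain graph automorphism; this outer element does not centralize $Z(T)$. Your deduction that $Z(S)$ is $A(P)$-invariant ``since $Z(S)=Z(T)$ is characteristic in $T$ and $P\cap T\lhd P$'' is also logically unsound: being characteristic in $T$ says nothing about invariance under $A(P)$ when $P\neq T$. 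The paper instead proves $A(P)$-invariance of $Z(S)$ by a separate case analysis on whether $\Omega(Z(P))\leq T$.

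Beyond this, the heart of your argument is the sentence ``I enumerate the subgroups of $S$ satisfying these constraints\dots the only candidate is $2^{1+4}_-$.'' This is not a proof. The paper devotes several careful steps to forcing the structure of $P$: first showing $Z(T)$ is not $A(P)$-invariant, then $|S:P|>2$, then $P\cap T=[S,S]$, then that $P=[S,S]\langle t\rangle$ for an involution $t$, and only then computing $P\cong Q_8*D_8$ (which is your $2^{1+4}_-$) and $A(P)/Inn(P)\cong A_5$. Each of these steps uses specific features of the situation (constrainedness of $N_\F(Z(S))$ and $N_\F(Z(T))$, Lemma~\ref{HelpP}, the action on $Z(T)$) that your sketch does not engage with. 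Your endpoint ($P\cong 2^{1+4}_-$ with $A(P)/Inn(P)\cong A_5$) is correct, but the path you describe to reach it is not.
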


\begin{proof}
We will use frequently that, by Lemma~\ref{TJT}, $T=J(T)\in Syl_p(M)$. Set 
$$Z:=Z(T),\; R:=[S,S]\mbox{ and }\ov{S}:=S/Z.$$ 
By Lemma~\ref{q=Z=4} we have $q=|Z|=4$. It follows from Theorem~\ref{AlpGold1} that there is an essential subgroup $P$ of $\F$ such that $P\neq T$ and $P\not\in Q^\F\cup \{T\}$. Recall that by Lemma~\ref{HelpP}(a), $P\not\leq T$. Let $t\in P\backslash T$ of minimal order. We will use frequently that, by Lemma~\ref{AT}, $\m{A}(T)=\{Q,Q^t\}$, every elementary abelian subgroup of $T$ is contained in $Q$ or $Q^t$, and $Z=Q\cap Q^t$.
We show first

\bigskip

(1)\;\; $Z(S)$ is $A(P)$-invariant.

\bigskip

If $\Omega(Z(P))\leq T$ then $\Omega(Z(P))=Z(S)$. Thus we may assume $\Omega(Z(P))\not\leq T$.
If $Z=Z(S)$ then, by Lemma~\ref{StructureS}(d), $C_T(z)=Z(S)$ for every involution $z\in S\backslash T$. Hence, $P\cap T=C_T(\Omega(Z(P)))=Z(S)$ and $P$ is elementary abelian, contradicting Lemma~\ref{HelpP}(c).  Thus, $Z\neq Z(S)$ and $|Z(S)|=2$. As $P$ is not elementary abelian and $P=(P\cap T)\Omega(Z(P))$, we have $1\neq \Phi(P)=\Phi(P\cap T)\leq \Phi(T)\cap C(\Omega(Z(P)))=C_Z(\Omega(Z(P)))=Z(S)$. Hence, $Z(S)=\Phi(P)$ and (1) holds. Thus, by Lemma~\ref{HelpP}(d), we have

\bigskip

(2)\;\; $Z\leq P$.

\bigskip

Set now $$R_0=O_p(N_\F(Z)).$$ We show next

\bigskip

(3)\;\; $|(R_0\cap T)/Z|=4$ or $T\leq R_0$.

\bigskip

Assume $T\not\leq R_0$. As $N_\F(Z)$ is by assumption constrained, we have then $Z<R_0$. So, if $Z=R_0\cap T$ then $R_0\not\leq T$ and $[Q,S]\leq [T,TR_0]\leq [T,T](R_0\cap T)= Z$, a contradiction to $Q$ not being normal in $S$. Hence, $Z<R_0\cap T$ and (3) follows from $R_0\cap T$ being $A(T)$-invariant.

\bigskip

(4)\;\; $Z$ is not $A(P)$-invariant and $Z\neq Z(S)$.

\bigskip

Assume $Z$ is $A(P)$-invariant. Then $P$ is essential in $N_\F(Z)$. So by  Lemma~\ref{NormalEssential}, $R_0\leq P$ and $R_0$ is $A(P)$-invariant. If $Z(S)<Z$ then, by (1), $[Z,O^p(A(P))]=1$. If $Z=Z(S)$ then $[Z,O^{p^\prime}(A(P))]=[Z,\<S_P^{A(P)}\>]=1$. So, in any case, $[Z,O^p(O^{p^\prime}(A(P)))]=1$. Hence, as $A(P)$ is not $p$-closed, $[\ov{P},O^p(O^{p^\prime}(A(P)))]\neq 1$. As $P\cap T$ is not $A(P)$-invariant and elementary abelian, there is an involution in $\ov{P}\backslash \ov{P\cap T}$. Moreover, $\ov{P\cap T}$ has order at most $2^3$. Hence, by Lemma~\ref{Aut2Group}, $[\ov{P\cap T},\ov{P}]=1$. Thus, $\ov{P}$ is elementary abelian and $\ov{P\cap T}\leq C_{\ov{T}}(\ov{P})=\ov{R}$. Now, by (3), $R_0\cap T=P\cap T=R$. As $P\cap T$ is not $A(P)$-invariant, this implies $R_0=P$. In particular, $P$ is normal in $S$ and $|S_P/Inn(P)|=|S/P|=4$. 
On the other hand, $|\ov{P}/C_{\ov{P}}(S_P)|\leq 2$ and, as $P$ is essential, we may choose $\phi\in A(P)$ such that $S_P\cap S_P\phi^*= Inn(P)$. Set $Y:=\<S_P,S_P\phi^*\>$. Then $\hat{P}=\ov{P}/C_{\ov{P}}(Y)$ has order $4$ and $Y/C\cong S_3$ for $C=C_Y(\hat{P})$. Since $Y\leq O^{p^\prime}(A(Q))$ and $[Z,O^p(O^{p^\prime}(A(P)))]=1$, we have $[P,O^p(C)]=1$ and $C$ is a normal $2$-subgroup of $Y$. Thus, $C\leq S_P\cap S_P\phi^*=Inn(P)$ and $|S_P/Inn(P)|\leq 2$, a contradiction. Hence, $Z$ is not $A(P)$-invariant and (4) follows from (1).
We show next

\bigskip

(5)\;\; $|S:P|>2$.

\bigskip

Assume $|S:P|=2$. Then $|T:(T\cap P)|=2$. As $T=\<Q^P\>\not\leq P$, we have $Q\not\leq P$ and so $|Q\cap P|=8$. Observe now 
$$J(P\cap T)=\<(Q\cap P),(Q^t\cap P)\>.$$ 
If $J(P)=J(P\cap T)$ then, by Lemma~\ref{AT}, $Z=Z(J(P))$, a contradiction to (4). Thus, $J(P)\not\leq T$. Let $A\in \m{A}(P)$ such that $A\not\leq T$. Then $8=|Q\cap P|\leq |A|=2\cdot |A\cap T|$ and, by Lemma~\ref{AT}, $A\cap T\leq Z(S)$. Hence, $|Z(S)|\geq 4$ and $Z(S)=Z$, again a contradiction to (4). This shows (5).

\bigskip

(6)\;\; $R=P\cap T$.

\bigskip

Set $R_1=O_p(N_\F(Z(S)))$. By (1) and Lemma~\ref{NormalEssential}, $R_1\leq P$ and $R_1$ is $A(P)$-invariant. If $R_1\leq T$ then by (4) and Lemma~\ref{HelpP}(a), $Z<R_1 < P\cap T$. So, by (5), $|\ov{R_1}|=2$. Since $R_1$ is normal in $S$ it follows that $R_1$ is not elementary abelian. Hence, $\Omega(R_1)=Z$, a contradiction to (4). Therefore, $R_1\not\leq T$. Thus, $R\leq [T,R_1]Z\leq R_1Z$ and, by (2), $R\leq P$. Now (6) follows from (5).

\bigskip

(7)\;\; $t$ is an involution, $P=R\<t\>=R\<z : z\in S\backslash T, z^2=1\>$, and the essential subgroups of $\F$ are $Q,Q^t,T$ and $P$.

\bigskip

Observe that every element in $T$ has order at most $4$. By Lemma~\ref{HelpP}(a), $P\cap T$ is not $A(P)$-invariant, and so there is an element $x$ in $P\backslash T$ of order at most $4$. Then $x^2$ has order at most $2$ and is centralized by $x\in S\backslash T$. Hence, $x^2\in Z$ and $\<Z,x\>$ has order $8$. By (4), $\<Z,x\>$ is non-abelian and thus dihedral. Hence, by (2) there is an involution in $P\backslash T$. Since $t$ has minimal order, $t$ is then an involution as well. Hence, the involutions in $\ov{S}\backslash \ov{T}$ are the elements in $C_{\ov{T}}(t)\ov{t}=\ov{R}\ov{t}$. Thus, by (6), every involution in $S\backslash T$ is contained in $P$ and (7) holds. 

\bigskip

(8)\;\; $P\cong Q_8*D_8$, $A(P)/Inn(P)\cong A_5$ and $A(P)=O^p(Aut(P))$.

\bigskip

By (7), $t$ is an involution and $P=R\<t\>$. By (4), $Z\<t\>$ is dihedral of order $8$. An elementary calculation shows  $$C_T(t)=\<qq^tz: q\in Q\backslash Z,\;z\in Z\backslash Z(S)\>Z(S)\cong Q_8$$ 
and $R=P\cap T=C_T(t)Z$. Hence, $P=C_T(t)(Z\<t\>)$. As $C_T(t)\cap (Z\<t\>)=Z(S)$, this shows $P\cong Q_8*D_8$. In particular, $Aut(P)/Inn(P)\cong S_5$ and $P/Z(P)$ is a natural $S_5$-module for $Aut(P)/Inn(P)$. Since, by (7), $S_P/Inn(P)\cong S/P\cong C_2\times C_2$, a Sylow $p$-subgroup of $A(P)/Inn(P)$ is a fours group. As $A(P)/Inn(P)$ has a strongly $2$-embedded subgroup, this implies $A(P)/Inn(P)\cong A_5$ and $A(P)=O^p(Aut(P))$. Hence, (8) holds. We show next:

\bigskip

(9)\;\; $\F_0$ is isomorphic to the $2$-fusion system of the extension of $PGL_3(4)$ by the automorphism that is the product of the contragredient and the field automorphism.

\bigskip

Recall that by Lemma~\ref{q=Z=4}, $q=4$. By (8), we have in particular that $P^\F=\{P\}$, so $P$ is fully normalized. Moreover, there is an element of order $3$ in $N_{A(P)}(S_P)$, which by Remark~\ref{SatAut}(b) extends to an element of $A(S)$. Since $J(S)=T$, it follows from Lemma~\ref{AT}(b) that every element of $A(S)$ of odd order normalizes $Q$. Hence, there is an automorphism of $Q$ of order $3$ which centralizes $Z(S)$. Therefore, $A(Q)\cong GL_2(4)$. Now Lemma~\ref{F01}, (4) and the structure of $Aut(L_3(4))$ imply (9).

\bigskip

We now are able to prove the assertion. Let $G\cong J_3$ and $\hat{S}\in Syl_2(G)$. Set $\hat{\F}=\F_{\hat{S}}(G)$. 
Let $\hat{Q}\in\m{A}(\hat{S})$, $\hat{T}=N_{\hat{S}}(\hat{Q})$ and $\hat{\F_0}=\<N_{\hat{\F}}(\hat{Q}),N_{\hat{\F}}(\hat{T})\>$. From the structure of $J_3$ we will use that Hypothesis~\ref{ClassHyp} is fulfilled with $(\hat{\F},\hat{S},\hat{Q})$ in place of $(\F,S,Q)$, and that there is an essential subgroup $\hat{P}\in \hat{\F}$ with $\hat{P}\not\in \hat{Q}^{\hat{\F}}\cup \{\hat{T}\}$. In particular, the properties we have shown for $\F$ hold for $\hat{\F}$ accordingly. So by (9), we have $\F_0\cong \hat{\F_0}$, i.e. there is a group isomorphism $\alpha:S\rightarrow \hat{S}$ which is an isomorphism of fusion systems from $\F_0$ to $\hat{\F_0}$. By (8), $P$ is the only essential subgroup of $\F$ in $\F\backslash (Q^\F\cup\{T\})$, $\hat{P}$ is the only essential subgroup of $\hat{\F}$ in $\hat{\F}\backslash (\hat{Q}^{\hat{\F}}\cup \{\hat{T}\})$, $P\alpha=\hat{P}$ and $Aut_{\hat{\F}}(\hat{P})=\alpha^{-1}A(P)\alpha$.
Now by Remark~\ref{AlpIso}, $\alpha$ is also an isomorphism from $\F$ to $\hat{\F}$. This shows the assertion.
\end{proof}

\textit{Proof of Theorem~\ref{ClassifyG}.} This is a consequence of Lemma~\ref{q=2Q=4}, Lemma~\ref{q=2Q>4}, Lemma~\ref{F01} and Lemma~\ref{q>2FneqF0}.

\section{Existence of Thompson-restricted subgroups}\label{chapterAut}

Throughout this section assume the following hypothesis.

\begin{hyp}\label{MaxParUnique}
Let $\mathcal{F}$ be a saturated fusion system on a finite $p$-group $S$. Set $$Z:=\Omega(Z(S)).$$ 
Let $\m{N}$ be a proper saturated subsystem of $\F$ containing $C_\F(Z)$. By $\FNT$ denote the set of Thompson-maximal members of $\FN$.
\end{hyp}

Recall here from Notation~\ref{ENDef} that $\FN$ is the set of centric subgroups $P$ of $\F$ such that $Aut_\F(P)\not\leq \m{N}$. 
Note that, by Corollary \ref{Cor1}, $\FN\neq\emptyset$. Also recall the definition of Thompson-restricted subgroups and Thompson-maximality of Definitions~\ref{ThRestrictDef} and \ref{ThMax}. As introduced in Notation \ref{FN*Def}, we write $\FNT$ for the set of Thompson-maximal members of $\FN$. The aim of this section is to prove Theorem~\ref{mainSL2qThm}, i.e. the existence of a Thompson-restricted subgroup of $\F$ in $\F_\m{N}^*$, provided $N_\F(J(S))\leq \m{N}$.  As before, we set, for every $P\in \F$,
$$A(P)=Aut_\F(P).$$
Recall from Notation~\ref{SP} that for $U\in \F$ and $R\leq S$ a subgroup $R_U$ of $A(U)$ is defined by 
$$R_U=\{{c_g}_{|U,U}\;:\;g\in N_R(U)\}.$$
Furthermore, set
$$\FN^+:=\{Q\in \FNT: N_S(Q)=N_S(J(Q))\mbox{ and }J(Q)\mbox{ is fully normalized}\}.$$

\begin{lemma}\label{ThomRestrHelp}
We have $\FN^+\neq\emptyset$.
\end{lemma}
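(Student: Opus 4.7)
The plan is to obtain $\mathcal{F}_\mathcal{N}^+ \neq \emptyset$ as a direct consequence of Lemma~\ref{ThomRestrHelp0}. Under Hypothesis~\ref{MaxParUnique}, $\mathcal{N}$ is a proper saturated subsystem of $\mathcal{F}$ containing $C_\mathcal{F}(Z) = C_\mathcal{F}(\Omega(Z(S)))$, which is precisely the hypothesis of that lemma. Lemma~\ref{ThomRestrHelp0} then yields $Q \in \mathcal{F}_\mathcal{N}^*$ with $N_S(J(Q)) = N_S(Q)$ and $J(Q)$ fully normalized, and such a $Q$ belongs to $\mathcal{F}_\mathcal{N}^+$ by definition.

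If one preferred to reprove the statement from scratch, the argument would essentially reassemble the steps already carried out in Lemma~\ref{ThomRestrHelp0}. First, using Lemma~\ref{JQFullNorm}, choose $X \in \mathcal{F}_\mathcal{N}^*$ with $U := J(X)$ fully normalized. Then Lemma~\ref{AUnotleqN} gives $A(U) \not\leq \mathcal{N}$. Set $V := \Omega(Z(U))$ and $Q := C_S(V) \cap N_S(U)$, so $U \leq Q$ and $N_S(U) \leq N_S(Q)$.

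Next, apply a Frattini argument inside $A(U)$: since $U$ is fully normalized, $S_U \in \mathrm{Syl}_p(A(U))$, hence $Q_U \in \mathrm{Syl}_p(C_{A(U)}(V))$ and $A(U) = C_{A(U)}(V)\, N_{A(U)}(Q_U)$. Because $X$ is centric, $\Omega(Z(S)) \leq U$ and therefore $\Omega(Z(S)) \leq V$, so $C_{A(U)}(V) \leq C_\mathcal{F}(\Omega(Z(S))) \leq \mathcal{N}$. From $A(U) \not\leq \mathcal{N}$ one deduces $N_{A(U)}(Q_U) \not\leq \mathcal{N}$. Since $C_S(U) \leq Q$, Remark~\ref{SatAut}(b) lets every element of $N_{A(U)}(Q_U)$ extend to an element of $A(Q)$, so $A(Q) \not\leq \mathcal{N}$.

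Finally, Corollary~\ref{Cor3} applied to $Q$ (with $J(X) \leq Q$) forces $J(Q) = J(X) = U$. Hence $N_S(U) \leq N_S(Q) \leq N_S(J(Q)) = N_S(U)$, so the chain collapses to equalities and, via Remark~\ref{FullNormQ}, $Q$ is fully normalized; it is then centric because $C_S(Q) \leq Q$. Thus $Q \in \mathcal{F}_\mathcal{N}^*$ with the two extra properties, i.e.\ $Q \in \mathcal{F}_\mathcal{N}^+$. The only genuine obstacle is ensuring $A(Q) \not\leq \mathcal{N}$ after passing from $U$ to the larger group $Q$, and this is precisely where the hypothesis $C_\mathcal{F}(Z) \leq \mathcal{N}$ is used to kill the kernel in the Frattini factorization.
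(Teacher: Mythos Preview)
Your proposal is correct and matches the paper's own proof, which simply states that the lemma is a restatement of Lemma~\ref{ThomRestrHelp0}. The additional ``from scratch'' argument you sketch is exactly the proof of Lemma~\ref{ThomRestrHelp0} itself, so there is no new content beyond what the paper already does.
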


\begin{proof}
This is just a restatement of Lemma~\ref{ThomRestrHelp0}.
\end{proof}

\begin{theorem}\label{ThomRestr}
Let $Q$ be a maximal with respect to inclusion member of $\FN^+$. Then $J(S)=J(Q)$ or $Q$ is Thompson-restricted.
\end{theorem}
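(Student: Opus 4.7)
The plan is to assume $J(S)\neq J(Q)$ and then verify conditions (i)--(iv) of Definition~\ref{ThRestrictDef}. First I would assemble the structural setup: by Remark~\ref{ThomQ}, $J(S)\neq J(Q)$ is equivalent to $J(T)\not\leq Q$, where $T:=N_S(Q)$. Since $Q\in\FN^+\subseteq\FNT$, the subgroup $Q$ is centric with $J(Q)$ fully normalized and $N_S(J(Q))=T$; Remark~\ref{FullNormQ} then yields that $Q$ itself is fully normalized, so $N_\F(Q)$ is constrained and a model $G$ exists. Set $M:=J(G)$. Condition (i) is then exactly the definition of $\FN^+$. Next fix $V\trianglelefteq MT$ with $\Omega(Z(T))\leq V\leq \Omega(Z(Q))$; since $Z(T)\leq C_S(Q)\cap T\leq Q$ (as $Q$ is centric), one has $\Omega(Z(T))\leq\Omega(Z(Q))$, so such $V$ exist and are all elementary abelian.

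For condition (iii), I would pick $A\in\m{A}(T)$ with $A\not\leq Q$, which exists since $J(T)\not\leq Q$. By Lemma~\ref{AUnotleqN}, $A(J(Q))\not\leq\m{N}$, and Corollary~\ref{Cor3} then shows that any subgroup of $S$ lying above $J(Q)$ whose $\F$-automizer escapes $\m{N}$ must share the same Thompson subgroup as $Q$. Applying Lemma~\ref{Baum} to identify $A$ as a best offender on $V$, Lemma~\ref{1} to extract a single good offender, and the FF-module classification of Bundy--Hebbinghaus--Stellmacher \cite{BHS}, I would force $M/C_M(V)\cong SL_2(q)$ with $V/C_V(M)$ a natural module. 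The alternative FF-module possibilities, most notably natural $S_n$-modules for $n\geq 5$, are excluded by the inclusion-maximality of $Q$ in $\FN^+$: by Lemma~\ref{natSnOff} such a module would come equipped with a canonical offender subgroup $J$ whose normalizer in $G$ produces a strictly larger member of $\FN^+$, contradicting maximality.

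Condition (iv), $C_T(M/C_M(V))\leq Q$, is then a consequence of (iii) combined with Lemma~\ref{AutSL2qModule} applied to the image of $MT$ in $\mathrm{Aut}(V)$. For condition (ii), $Q\leq C_S(V)$ is automatic, so I would prove the reverse by contradiction: if $Q^*:=C_T(V)$ strictly contains $Q$, then $J(Q^*)=J(Q)$ by Thompson-maximality and Corollary~\ref{Cor3}, so $Q^*\in\FN^+$ (its Thompson subgroup is fully normalized and $N_S(J(Q^*))=T=N_S(Q^*)$), contradicting the inclusion-maximality of $Q$. That $C_G(V)/Q$ is a $p'$-group follows because any Sylow $p$-subgroup of $C_G(V)$ lies in a conjugate of $T\cap C_G(V)=C_S(V)=Q$.

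The main obstacle will be the FF-module case analysis, specifically disentangling the two maximality hypotheses on $Q$: Thompson-maximality in $\FN$ controls the quadratic geometry on $V$ via Corollary~\ref{Cor3}, while inclusion-maximality in $\FN^+$ is what rules out FF-modules (such as natural $S_n$-modules, $n\geq 5$) whose offender structure would otherwise be compatible with Thompson-maximality alone. Coordinating these to force $M/C_M(V)\cong SL_2(q)$ using only the input from \cite{BHS}---in line with the paper's stated avoidance of the classification of finite simple groups---is the delicate technical core of the argument.
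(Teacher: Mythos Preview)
Your overall shape is right, but you are missing the central mechanism that makes the argument run: the paper introduces the subgroup $H=\{g\in G:\,(c_g)_{|Q}\in Aut_{\m N}(Q)\}$ of the model $G=G(Q)$ and then picks a minimal parabolic $P\leq J(G)T$ with $T\leq P$ and $P\not\leq H$. Everything is proved relative to this pair $(H,P)$. In particular, the BHS structure theorem \cite[5.5]{BHS} you want to invoke applies to minimal parabolics, not to $M=J(G)$ directly; the paper first determines $\ov{P}=P/C_P(V)$ via \cite[5.5]{BHS}, and only afterwards transfers this to $J(G)C_G(V)/C_G(V)$ via a separate application of \cite[4.14]{BHS}. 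Your proposal collapses these two steps into one and never constructs $P$.

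This omission bites you in two concrete places. First, in your argument for (ii): to conclude $Q^*:=C_T(V)\in\FN^+$ you need $A(Q^*)\not\leq\m N$, which you assert without proof. The paper gets this from a Frattini argument: $X:=J(G)T\not\leq H$, $X=C_X(V)N_X(C_T(V))$, and $C_X(V)\leq H$ because $Z=\Omega(Z(S))\leq V$ and $C_\F(Z)\leq\m N$; hence $N_G(Q^*)\not\leq H$, from which $A(Q^*)\not\leq\m N$ follows by extension. Second, your elimination of the $S_n$ (and direct-product) cases is not what actually happens. The paper does not ``produce a strictly larger member of $\FN^+$''; rather it shows that for each $A\in\m D\cap\ov T$ one has $A=\ov B$ for some $B\in\m A(T)$, hence $N_{\ov P}(A)\leq\ov H$ by the Thompson-maximality property that $N_G(BQ)\leq H$ whenever $J(BQ)\not\leq Q$. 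In the $S_{2n+1}$ and $r>1$ cases these normalizers together with $\ov T$ generate $\ov P$, forcing $P\leq H$, which contradicts the choice of $P$. So inclusion-maximality of $Q$ is used, but via the single reduction step (3) in the paper (forcing $O_p(P)=Q$), not by repeatedly climbing to larger members of $\FN^+$.
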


\begin{proof}
Suppose $J(S)\not\leq Q$. As $Q\in\FN$, $Q$ is centric, and by Remark~\ref{FullNormQ}, $Q$ is fully normalized. In particular, we may choose a model $G$ of $N_\F(Q)$. Set
$$T:=N_S(Q)\;\mbox{and}\; H:=\{g\in G:\; {c_g}_{|Q,Q}\in Aut_\m{N}(Q)\}.$$
Note that $H$ is a proper subgroup of $G$, as $A(Q)\not\leq \m{N}$. By assumption, $J(S)\not\leq Q$ and $T=N_S(J(Q))$, so it follows from Remark~\ref{ThomQ} that

\bigskip

(1)\;\; $J(T)\not\leq Q$.

\bigskip

Corollary \ref{Cor3} yields $A(RQ)\leq \m{N}$ for every subgroup $R$ of $T$ with $J(RQ)\not\leq Q$. Thus, also the restriction of an element of $N_{A(RQ)}(Q)$ to an automorphism of $Q$ is a morphism in $\m{N}$. This yields 

\bigskip

(2)\;\; $N_G(R)\leq H$ for every subgroup $R$ of $T$ with $J(RQ)\not\leq Q$. 

\bigskip

We show next

\bigskip

(3)\;\; Let $Q_0$ be a normal subgroup of $T$ containing $Q$ such that $N_G(Q_0)\not\leq H$. Then $Q=Q_0$. 

\bigskip

For the proof of (3) set $M:=N_G(Q_0)$. Then every element of $Aut_M(Q)$ extends to an element of $A(Q_0)$. Furthermore, as $M\not\leq H$, we have $Aut_M(Q)\not\leq \m{N}$. Hence, $A(Q_0)\not\leq \m{N}$. Moreover, $Q_0$ is centric, since $Q$ is centric. Thus, $Q_0\in \FN$, so the Thompson-maximality of $Q$ yields $Q_0\in \FNT$ and $J(Q_0)=J(Q)$. In particular, as $Q\in\FN^+$, we have $N_S(Q_0)=T=N_S(J(Q_0))$ and $J(Q_0)$ is fully normalized. Therefore, $Q_0\in\FN^+$ and the maximality of $Q$ yields $Q=Q_0$. This shows (3). 
Note that, by (1) and (2), $N_G(T\cap J(G))\leq N_G(J(T))\leq H$. By a Frattini Argument, $G=N_G(T\cap J(G))J(G)$ and hence,

\bigskip

(4)\;\;$J(G)\not\leq H$. 

\bigskip

In particular, $X:=J(G)T\not\leq H$. 
Let $P\leq X$ be minimal with the property $T\leq P$ and $P\not\leq H$. As $N_G(T)\leq N_G(J(T))\leq H$, it follows from Remark~\ref{minparg} that $P$ is minimal parabolic and $P\cap H$ is the unique maximal subgroup of $P$ containing $T$. 
Observe that $Q\leq O_p(G)\leq O_p(P)$ and so, by (3),

\bigskip

(5)\;\; $O_p(P)=Q=O_p(G)$.

\bigskip

Let now $V\leq \Omega(Z(Q))$ be a normal subgroup of $X$ containing $\Omega(Z(T))$, and $\ov{P}=P/C_P(V)$.
Observe that $Q\leq C_S(V)$ and, by a Frattini Argument, $X=C_X(V)N_X(C_T(V))$. Also note $Z\leq \Omega(Z(T))\leq V$ and so $[C_X(V),Z]=1$. As $C_\F(Z)\leq \m{N}$, this yields $C_X(V)\leq H$ and thus $N_X(C_T(V))\not\leq H$. Now (3) implies $C_T(V)=Q$, i.e. $N_{C_S(V)}(Q)=Q$ and so, as $C_S(V)$ is nilpotent,

\bigskip

(6)\;\; $C_S(V)=Q$ and $C_G(V)/Q$ is a $p^\prime$-group.

\bigskip

In particular, by (1) and Lemma~\ref{Baum}(a), $\m{O}_{\ov{P}}(V)\neq \emptyset$. Let now $N$ be the preimage of $O_p(\ov{P})$ in $P$. Since $P\not\leq H$, we have $[Z,P]\neq 1$ and therefore, $\ov{P}$ is not a $p$-group. Hence, $O^p(P)\not\leq N$, so by Lemma~\ref{minpar}(a) and (5), $N\cap T\leq O_p(P)=Q$. Hence, 

\bigskip

(7)\;\;$O_p(\ov{P})=1$. 

\bigskip

Observe that $C_P(V)\leq C_P(Z)\leq P\cap H$ and therefore, $\ov{P}$ is minimal parabolic, $\ov{H\cap P}$ is the unique maximal subgroup of $\ov{P}$ containing $\ov{T}$, and $C_{\ov{P}}(C_V(\ov{T}))\leq C_{\ov{P}}(Z)\leq \ov{H\cap P}$. 
Now, for $\m{D}=\m{A}_{\ov{P}}(V)$,\footnote{Recall Definition~\ref{OffDef}} 
it follows from \cite[5.5]{BHS} that there are subgroups $E_1,\dots, E_r$ of $P$ containing $C_P(V)$ such that the following hold.
\begin{itemize}
\item[(i)] $\ov{P}=(\ov{E_1}\times \dots \times \ov{E_r})\ov{T}$ and $\ov{T}$ acts transitively on $\{\ov{E_1},\dots, \ov{E_r}\}$,
\item[(ii)] $\m{D}=(\m{D}\cap \ov{E_1})\cup \dots \cup (\m{D}\cap \ov{E_r})$,
\item[(iii)] $V=C_V(E_1\dots E_r)\prod_{i=1}^r [V,E_i]$, with $[V,E_i,E_j]=1$ for $j\neq i$,
\item[(iv)] $\ov{E_i}\cong SL_2(p^n)$, or $p=2$ and $\ov{E_i}\cong S_{2^n+1}$, for some $n\in\N$,
\item[(v)] $[V,E_i]/C_{[V,E_i]}(E_i)$ is a natural module for $\ov{E_i}$.
\end{itemize}
This implies together with \cite[4.6]{H}, Lemma~\ref{natSL2q}(b) and Lemma~\ref{natSnOff}(c) that
$|V/C_V(A)|=|A|$ for every $A\in \m{D}$. In particular, by the definition of $\m{D}$, $m_{\ov P}(V)= |V|$.\footnote{Recall Definition~\ref{OffDef}} Hence, we have

\bigskip

(8)\;\;\;There is no over-offender in $P$ on $V$, and $\m{D}$ is the set of minimal by inclusion elements of $\m{O}_{\ov{P}}(V)$.

\bigskip

For $B\in \m{A}(T)$, it follows from (2),(6) and a Frattini Argument that $N_{\ov{P}}(\ov{B})=\ov{N_P(BC_P(V))}=\ov{N_P(BQ)}\leq \ov{H}$.
By \cite[B.2.5]{AS} and (8), there exists $B\in\m{A}(T)$ such that $\ov{B}\in \m{D}$. Let $J$ be the full preimage of $\m{D}\cap \ov{T}$ in $T$. Observe that, by Lemma~\ref{natSL2q}(b) and Lemma~\ref{natSnOff}(b), $N_P(J)$ acts transitively on $\m{D}\cap \ov{T}$. Therefore, we get the following property. 

\bigskip

(9)\;\;\;For every $A\in \m{D}\cap \ov{T}$, there exists $B\in\m{A}(T)$ such that $A=\ov{B}$. In particular, $N_{\ov{P}}(A)\leq \ov{H}$.

\bigskip

Assume $r\neq 1$ or $E_1\cong S_{2^n+1}$ for some $n>1$. Then using Lemma~\ref{natSnOff}(a) we get
$$\ov{P}=\<N_{\ov{P}}(A): A\in \m{D}\cap \ov{T}\>\ov{T}.$$ 
Hence, (9) gives $\ov{P}\leq \ov{H}$. So, as $C_P(V)\leq C_P(Z)\leq H$, we get also $P\leq H$, contradicting the choice of $P$. Therefore, $r=1$ and for $E:=E_1$, we have 

\bigskip

(10)\;\; $P=ET$, $\ov{E}\cong SL_2(q)$ for some power $q$ of $p$, and $V/C_V(E)$ is a natural $SL_2(q)$-module for $\ov{E}$.

\bigskip

Note that $C_{\ov{P}}(\ov{E})/Z(\ov{E})\cong C_{\ov{P}}(\ov{E})\ov{E}/\ov{E}$ and so $C_{\ov{P}}(\ov{E})/Z(\ov{E})$ is a $p$-group. Moreover, as $\ov{E}\cong SL_2(q)$, $Z(\ov{E})$ has order prime to $p$. Hence, for $Y\in Syl_p(C_{\ov{P}}(\ov{E}))$, we have $C_{\ov{P}}(\ov{E})=Y\times Z(\ov{E})$ and $Y=O_p(C_{\ov{P}}(\ov{E}))\leq O_p(\ov{P})$. So by (7), 

\bigskip

(11)\;\; $C_{\ov{P}}(\ov{E})=Z(\ov{E})$. 

\bigskip

Let now $A\in \m{A}(T)$. Then by (8) there is $B\in \m{D}$ such that $B\leq \ov{A}$. By Lemma~\ref{natSL2q}(b) we have $B\in Syl_p(\ov{E})$. As $[B,\ov{A}]=1$, the structure of $Aut(\ov{E})$ yields together with (11) that $\ov{A}\leq \ov{E}C_{\ov{P}}(\ov{E})=\ov{E}$. 
Hence, $A\leq E$. Now it follows from (9),(10) and Lemma~\ref{natSL2q}(b) that 

\bigskip

(12)\;\;  $T\cap E=J(T)Q=AQ$, and $E=J(P)C_P(V)$.

\bigskip

Lemma~\ref{natSL2qOffFactor} gives the following two properties.

\bigskip

(13)\;\; $|V/C_V(A)|=|A/C_A(V)|=q$ and $C_V(A)=C_V(a)$ for every $a\in A\backslash C_G(V)$.

\bigskip

(14)\;\; $[V,A,A]=1$.

\bigskip

Set now $\widetilde{N_G(V)}:=N_G(V)/C_G(V)$ and $L:=J(G)C_G(V)$. Note that, by (12), $\ov{A}=\ov{J(T)}$ and thus also $\widetilde{A}=\widetilde{J(T)}$. Hence, $\widetilde{L}=\<\widetilde{A}^{\widetilde{L}}\>$. Moreover, $\widetilde{A}$ is weakly closed in $\widetilde{T}$ with
respect to $\widetilde{N_G(V)}$. In particular, the Frattini Argument gives $\widetilde{N_G(V)} = N_{\widetilde{N_G(V)}}(\widetilde A) \widetilde L$. By another application of the Frattini Argument and (1), (2), we get $N_{\widetilde{N_G(V)}}(\widetilde A)\leq \widetilde{N_G(V)}\cap \widetilde{N_G(J(T))}\leq\widetilde{N_H(V)}$.  Moreover, $C_{\widetilde{N_G(V)}}(C_V(\widetilde{T}))\leq C_{\widetilde{N_G(V)}}(Z)\leq \widetilde{N_H(V)}$. 
By (4), $J(G)\not\leq H$ and thus, by (13) and (14), the hypothesis of \cite[4.14]{BHS} is fulfilled with $\widetilde{N_G(V)}$, $\widetilde{N_H(V)}$ and $\widetilde{A}$ in place of $G$, $M$ and $A$.
Hence, we get $\widetilde{L}\cong SL_2(q)$ and $V/C_V(L)$ is a natural $SL_2(q)$-module for $\widetilde{L}$. 
Observe that, by (11), $C_{\ov{T}}(\ov{E})=1$ and so, by (6) and (12), $C_T(J(G)/C_{J(G)}(V))\leq C_T(\ov{E})\leq Q$. This completes the proof.
\end{proof}

\textit{The proof of Theorem~\ref{mainSL2qThm}.}
If $N_\F(J(S))\leq \m{N}$ then $A(Q)\leq \m{N}$, for every $Q\in\F$ with $J(S)=J(Q)$. Hence, the assertion follows from Lemma~\ref{ThomRestrHelp} and Theorem~\ref{ThomRestr}.

\section{Properties of Thompson-restricted subgroups}\label{ThRestrProp}

In the next section we will prove Theorem~\ref{CasesL} and Theorem~\ref{Classify}. Crucial are the properties of Thompson-restricted subgroups which we will state in this section. Throughout this section we assume the following hypothesis.

\begin{hyp}
Let $\F$ be a saturated fusion system on a finite $p$-group $S$ and let $Q\in\F$ be a Thompson-restricted subgroup. Set $T:=N_S(Q)$, $q:=|J(T)Q/Q|$ and $A(P):=Aut_\F(P)$, for every $P\in \F$.
\end{hyp}

Recall from Notation \ref{SP} that $R_P:=Aut_R(P):=\{{c_g}_{|P,P}:g\in N_R(P)\}$ for all  $P\leq R\leq S$. Furthermore, recall from Notation \ref{ModNot} that, for every fully normalized subgroup $P\in\F$, $G(P)$ denotes a model for $N_\F(P)$, provided $N_\F(P)$ is constrained.

\begin{notation}\label{AcDef}
For every $U\in \F$ set
$$V(U):=\Omega(Z(U)).$$
Moreover, we set
$$\Ac(Q):=\<(J(T)_Q)^{A(Q)}\>C_{A(Q)}(V(Q)).$$
\end{notation}

\begin{remark}\label{(IV)}
\begin{itemize}
\item[(a)] We have $J(T)_QInn(Q)\in Syl_p(\Ac(Q))$ and $J(T)Q=AQ$, for every $A\in \m{A}(T)$ with $A\not\leq Q$. 
\item[(b)] $C_{V(Q)}(J(T))=C_{V(Q)}(A)$ and $[V(Q),J(T),J(T)]=1$.
\item[(c)] Let $V\leq V(Q)$ such that $[V,\Ac(Q)]\neq 1$ and $V$ is $\Ac(Q)$-invariant. Then $V(Q)=VC_{V(Q)}(\Ac(Q))$, $C_S(V)=Q$, $|V/C_V(A)|=|A/C_A(V)|=q$, $\m{A}(Q)\subseteq \m{A}(T)$ and $(A\cap Q)V\in \m{A}(Q)$ for every $A\in\m{A}(T)$.
\item[(d)] $C_T(J(T)Q/Q)=J(T)Q$
\end{itemize}
\end{remark}

\begin{proof}
Since $Q$ is Thompson-restricted, (a) and (b) follow from Lemma~\ref{Baum}(a) and Lemma~\ref{natSL2qOffFactor}. Property (d) is a consequence of (a) and the structure of $Aut(SL_2(q))$. Let $V\leq V(Q)$ such that $[V,\Ac(Q)]\neq 1$ and $V$ is $\Ac(Q)$-invariant. Then, as $V(Q)/C_{V(Q)}(\Ac(Q))$ is irreducible, $V(Q)=VC_{V(Q)}(\Ac(Q))$. In particular, $C_{J(T)}(V)=C_{J(T)}(V(Q))\leq C_S(V(Q))=Q$. Hence, $[C_T(V),J(T)]\leq C_{J(T)}(V)\leq Q$ and, by (d), $C_T(V)=C_{J(T)Q}(V)=Q$. This means $N_{C_S(V)}(Q)=Q$ and so, as $C_S(V)$ is nilpotent, $C_S(V)=Q$. Now (b) follows from Lemma~\ref{Baum} and Lemma~\ref{natSL2qOffFactor}(a).
\end{proof}

Recall the Definition of the Baumann subgroup from Definition~\ref{BaumDef}.

\begin{lemma}\label{BTleq}
$B(T)\leq J(T)Q$.
\end{lemma}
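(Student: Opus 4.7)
My strategy is to realize $J(T)Q$ as the Sylow $p$-subgroup of an explicit subgroup $\hat G$ of a model for $N_\F(Q)$, and then to show that any element of $T$ centralizing $Z_0 := C_V(J(T))$ must already lie in $\hat T := T \cap \hat G$. I will set $V := V(Q)$, let $G$ be a model for $N_\F(Q)$, and put $\hat G := J(G)C_G(V)$. By parts (ii) and (iii) of Definition~\ref{ThRestrictDef} one has $\hat G/C_{\hat G}(V) \cong SL_2(q)$ with $C_{\hat G}(V)/Q$ a $p^\prime$-group, so $|\hat T| = q|Q|$; since $J(T) \leq J(G) \leq \hat G$, $Q \leq C_G(V) \leq \hat G$ and $|J(T)/(J(T) \cap Q)| = q$ (because $J(T)$ maps onto a Sylow of $SL_2(q)$), $J(T)Q \leq \hat T$ with $|J(T)Q| = q|Q|$; hence $\hat T = J(T)Q$.

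Next I set $U_B := \Omega(Z(J(T)))$. Applying Remark~\ref{(IV)}(c) with the Remark's $V$ taken to be $V(Q)$ yields $V \leq J(T)$; hence $Z_0 \leq V \cap Z(J(T))$ is elementary abelian in $Z(J(T))$, so $Z_0 \leq U_B$ and $B(T) = C_T(U_B) \leq C_T(Z_0)$. The problem thus reduces to showing $C_T(Z_0) \leq \hat T = J(T)Q$.

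For this final step I will analyse the action of $T$ on the natural $SL_2(q)$-module $\bar V := V/C_V(J(G))$. Definition~\ref{ThRestrictDef}(iv) gives that $T/Q$ embeds faithfully into $Aut(SL_2(q))$ for $SL_2(q) = J(G)/C_{J(G)}(V)$, and because $J(T)$ is characteristic in $T$ the Sylow $\hat U := J(T)Q/Q$ is $T$-invariant; the action of $T$ on $\bar V$ therefore factors through $N_{\Gamma L_2(q)}(\hat U)$. The image $\bar Z_0 := Z_0/C_V(J(G))$ is the unique $\hat U$-fixed line in $\bar V$. A direct semilinear calculation will show that a $p$-element $(g,\sigma) \in N_{\Gamma L_2(q)}(\hat U)$ fixes $\bar Z_0$ pointwise only when $\sigma = 1$ and $g \in \hat U$: the Galois component $\sigma$ acts on $\bar Z_0 \cong GF(q)$ as the Frobenius, so pointwise triviality forces $\sigma = 1$, and the remaining diagonal-torus part of $g$ is killed by the $p$-element condition because $GF(q)^\times$ has $p^\prime$-order. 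Therefore $C_T(\bar Z_0) \leq J(T)Q$, and combined with $C_T(Z_0) \leq C_T(\bar Z_0)$ this gives $B(T) \leq J(T)Q$.

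The main obstacle will be this concluding $\Gamma L_2(q)$-calculation: one must distinguish $Z_0$ from $\bar Z_0$ when $C_V(J(G)) \neq 1$, and check carefully that both the Galois and diagonal-torus parts of a $p$-element $(g,\sigma)$ vanish. Both verifications are short but are the only point at which the natural-module structure really enters the argument.
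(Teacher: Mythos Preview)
Your proof is correct and follows essentially the same strategy as the paper's: exhibit a subgroup $X \leq \Omega(Z(J(T)))$ arising from the action of $J(T)$ on $V(Q)$ and show $C_T(X) \leq J(T)Q$, whence $B(T) = C_T(\Omega(Z(J(T)))) \leq C_T(X) \leq J(T)Q$. The paper takes $X = [V(Q),J(T)]$ and invokes the pre-established Lemma~\ref{AutSL2qModule} for the containment $C_T(X) \leq J(T)Q$, whereas you take $X = Z_0 = C_{V(Q)}(J(T))$ and carry out the underlying $\Gamma L_2(q)$-calculation explicitly; since $[V(Q),J(T)] \leq C_{V(Q)}(J(T))$ by quadratic action (Remark~\ref{(IV)}(b)) and both lie in $\Omega(Z(J(T)))$, the two choices are interchangeable.
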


\begin{proof}
Since $Q$ is Thompson-restricted, it follows from Remark~\ref{(IV)}(a), \ref{natSL2q}(a) and \ref{AutSL2qModule} that $C_T([V(Q),J(T)])\leq J(T)Q$. By Remark~\ref{(IV)}(c), we have $V(Q)\leq J(T)$. So, by Remark~\ref{(IV)}(b), $[V(Q),J(T)]\leq \Omega(Z(J(T)))$. Hence, $B(T)\leq C_T([V(Q),J(T)])\leq J(T)Q$.
\end{proof}

\begin{definition}\label{chDef}
We say that $U\in\F$ is \textbf{$\F$-characteristic} in $Q$ and write
$$U\;\ch\;Q$$
if $U\leq Q$, $U\unlhd T$ and $\Ac(Q)=C_{\Ac(Q)}(V(Q))N_{\Ac(Q)}(U)$.
\end{definition}

\begin{lemma}\label{BTX}
Set $G:=G(Q)$ and $M:=J(G)C_G(V(Q))$. Let $U\ch Q$ and set $X:= B(N_M(U))$. Then we have $B(T)\in Syl_p(X)$ and $M=C_G(V(Q))X$.
\end{lemma}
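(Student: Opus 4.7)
The plan is to deduce the lemma from the Baumann subgroup result (Lemma~\ref{BaumCor}) applied to $H := N_M(U)$. First I would decode the $\F$-characteristic condition. Under the natural identification $A(Q) \cong G/C_G(Q)$, the subgroup $\Ac(Q)$ corresponds to $J(G)C_G(V(Q))/C_G(Q) = M/C_G(Q)$, the subgroup $C_{\Ac(Q)}(V(Q))$ corresponds to $C_G(V(Q))/C_G(Q)$, and $N_{\Ac(Q)}(U)$ to $N_M(U)/C_G(Q)$ (using $C_G(Q) \leq C_G(V(Q)) \leq M$). Hence the condition $U\ch Q$ translates into
\[
M = C_G(V(Q))\, N_M(U).
\]
In particular the image of $H$ in $M/C_G(V(Q)) \cong SL_2(q)$ is the whole group, so $H/C_H(V(Q)) \cong SL_2(q)$. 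Since $C_H(V(Q)) = N_{C_G(V(Q))}(U)$ and $C_G(V(Q))/Q$ is a $p^\prime$-group by the definition of a Thompson-restricted subgroup, $Q$ is a Sylow $p$-subgroup of $C_H(V(Q))$, and any Sylow $p$-subgroup of $H$ has order $q|Q|$. As $J(T)Q$ lies in $T$, normalizes $U$ (because $U \unlhd T$) and lies in $M$, it follows that $J(T)Q \in Syl_p(H)$; a small additional argument (any non-trivial normal $p$-subgroup of $H/Q$ would inject into $SL_2(q)$) gives $O_p(H) = Q$ and hence $\Omega(Z(O_p(H))) = V(Q)$.

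With these preparations Hypothesis~\ref{BaumHyp} is easy to verify for $H$ (in the role of $G$), the Sylow $p$-subgroup $J(T)Q$ (in the role of $T$), and $V := V(Q)$: the action of $H/C_H(V(Q))$ on $V(Q)$ is the natural $SL_2(q)$-action inherited from $J(G)$; $C_H(V(Q))/Q$ is $p^\prime$; and $[V(Q), J(J(T)Q)] = [V(Q), J(T)] \neq 1$ because $V(Q)/C_{V(Q)}(J(G))$ is a natural $SL_2(q)$-module. Lemma~\ref{BaumCor} then yields $H = C_H(V(Q)) B(H)$ and $B(J(T)Q) \in Syl_p(B(H))$. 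Combined with $M = C_G(V(Q))H$ this already gives $M = C_G(V(Q))X$.

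It remains to promote $B(J(T)Q)$ to $B(T)$, and this is the point where the argument is most delicate. Since $T$ itself is a $p$-group, $B(T) = C_T(\Omega(Z(J(T))))$; and since every abelian subgroup of $J(T)Q$ of maximal order in $T$ already lies in $J(T)$, one obtains $J(J(T)Q) = J(T)$. Consequently $B(J(T)Q) = C_{J(T)Q}(\Omega(Z(J(T)))) = B(T) \cap J(T)Q$, and Lemma~\ref{BTleq} (which asserts exactly $B(T) \leq J(T)Q$) collapses this to $B(T) = B(J(T)Q)$. Hence $B(T) \in Syl_p(X)$. The main technical obstacle is thus matching the ambient Sylow $p$-group appearing in Lemma~\ref{BaumCor} with our $T$; everything else is a direct translation of the $\F$-characteristic hypothesis into group-theoretic language.
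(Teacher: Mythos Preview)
Your proof is correct and follows essentially the same route as the paper: translate $U\ch Q$ into $M=C_G(V(Q))N_M(U)$, verify Hypothesis~\ref{BaumHyp} for $H=N_M(U)$ with Sylow $p$-subgroup $J(T)Q$, apply Lemma~\ref{BaumCor}, and then use Lemma~\ref{BTleq} together with $J(J(T)Q)=J(T)$ to identify $B(J(T)Q)$ with $B(T)$. The paper's proof is simply more compressed, leaving the identification of the Sylow $p$-subgroup of $N_M(U)$ and the equality $B(J(T)Q)=B(T)$ implicit in the phrase ``Lemma~\ref{BaumCor} and Lemma~\ref{BTleq} imply $B(T)\in Syl_p(X)$''.
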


\begin{proof}
Observe that $T$ normalizes $N_M(U)$ thus also $X$. As $\Ac(Q)=C_{\Ac(Q)}(V(Q))N_{\Ac(Q)}(U)$, we have $M=C_M(V(Q))N_M(U)$. Therefore, Hypothesis \ref{BaumHyp} is fulfilled with $N_M(U)$ and $V(Q)$ in place of $G$ and $V$. Hence, Lemma~\ref{BaumCor} and Lemma~\ref{BTleq} imply $B(T)\in Syl_p(X)$ and $N_M(U)=(N_M(U)\cap C_M(V(Q)))X$. This implies the assertion.
\end{proof}

\begin{lemma}\label{BTH}
Set $G:=G(Q)$ and $M:=J(G)C_G(V(Q))$. Let $U\ch Q$. Then there is $H\leq N_M(U)$ such that $B(T)\in Syl_p(H)$, $H$ is normalized by $T$, $M=C_G(V(Q))H$ and, for $\hat{H}:=H/O_p(H)$,  
$$\hat{H}/\Phi(\hat{H})\cong L_2(q).$$
\end{lemma}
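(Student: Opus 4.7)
The plan is to take $H := \langle B(T)^{N_M(U)} \rangle$, the normal closure of $B(T)$ in $N_M(U)$. Since $X := B(N_M(U))$ is characteristic in $N_M(U)$ and contains $B(T)$ by Lemma~\ref{BTX}, we get $H \leq X$, and since $H$ is normal in $N_M(U)$, it is normalized by $T$. Because $B(T) \in Syl_p(X)$ by Lemma~\ref{BTX} and $B(T) \leq H \leq X$, also $B(T) \in Syl_p(H)$.

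To establish $M = C_G(V(Q)) H$, set $Y := X \cap C_G(V(Q))$. By Lemma~\ref{BTX}, $M = C_G(V(Q)) X$, so it suffices to show $HY = X$. As $Q$ is Thompson-restricted, $J(G)/C_{J(G)}(V(Q)) \cong SL_2(q)$; together with $M = J(G) C_G(V(Q))$ this gives $X/Y \cong M/C_G(V(Q)) \cong SL_2(q)$. By Lemma~\ref{BTleq}, $B(T) \leq J(T)Q$, so the image of $B(T)$ in $X/Y$ lies in the image of $J(T)Q$, which is a Sylow $p$-subgroup of $SL_2(q)$ of order $q$. Hence $HY/Y$ is a normal subgroup of $X/Y \cong SL_2(q)$ containing a Sylow $p$-subgroup; a case check in each of $SL_2(2)$, $SL_2(3)$, and $SL_2(q)$ for $q \geq 4$ shows the only such subgroup is $SL_2(q)$ itself, so $HY = X$ and thus $M = C_G(V(Q)) H$.

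Next I would identify $O_p(H)$. Any normal $p$-subgroup of $H$ maps to a normal $p$-subgroup of $HY/Y \cong SL_2(q)$, which is trivial, so $O_p(H) \leq Y$. By Remark~\ref{(IV)}(c), $C_S(V(Q)) = Q$, so any $p$-subgroup of $Y \leq C_G(V(Q))$ lies in $Q$; thus $O_p(H) \leq H \cap Q$. The reverse inclusion is immediate since $Q \unlhd G$. Hence $O_p(H) = H \cap Q$, and setting $K' := (H \cap Y)/(H \cap Q)$, the quotient $\hat{H}/K' \cong H/(H \cap Y) \cong HY/Y \cong SL_2(q)$. Moreover, $K'$ is a $p'$-group, since it injects into $Y/Q \leq C_G(V(Q))/Q$, which is a $p'$-group by Definition~\ref{ThRestrictDef}(ii).

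Finally, for the Frattini quotient: since $H$ is generated by conjugates of the $p$-group $B(T)$, it is generated by $p$-elements, so $O^{p'}(H) = H$, and therefore $O^{p'}(\hat{H}) = \hat{H}$. If a maximal subgroup $M_0$ of $\hat{H}$ failed to contain $K'$, then $M_0 K' = \hat{H}$ would yield a nontrivial $p'$-quotient $\hat{H}/M_0 \cong K'/(M_0 \cap K')$, contradicting $O^{p'}(\hat{H}) = \hat{H}$; hence $K' \leq \Phi(\hat{H})$. By Remark~\ref{Frat0}(c), $\Phi(\hat{H})/K' = \Phi(\hat{H}/K') = \Phi(SL_2(q))$, and using the standard fact $SL_2(q)/\Phi(SL_2(q)) \cong L_2(q)$ (the center being the Frattini subgroup for $q$ odd, and trivial for $q$ even), one obtains $\hat{H}/\Phi(\hat{H}) \cong L_2(q)$. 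The main obstacle is showing $HY = X$, since the Frattini computation then proceeds automatically from $O^{p'}(H) = H$; everything else is bookkeeping using Lemma~\ref{BTX} and Remark~\ref{(IV)}.
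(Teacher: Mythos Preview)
Your construction of $H=\langle B(T)^{N_M(U)}\rangle$ and the verification that $B(T)\in Syl_p(H)$, that $T$ normalizes $H$, that $M=C_G(V(Q))H$, and that $O_p(H)=H\cap Q$ with $\hat H/K'\cong SL_2(q)$ for the $p'$-group $K'=(H\cap Y)/(H\cap Q)$ are all fine. The gap is in the final Frattini step. Your argument that $K'\leq\Phi(\hat H)$ reads: ``if a maximal subgroup $M_0$ of $\hat H$ failed to contain $K'$, then $M_0K'=\hat H$ would yield a nontrivial $p'$-quotient $\hat H/M_0$''. But $M_0$ need not be normal, so $\hat H/M_0$ is only a coset space, not a quotient group; all you get is that $|\hat H:M_0|$ is coprime to $p$, and this does \emph{not} contradict $O^{p'}(\hat H)=\hat H$. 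A concrete counterexample to the implication you use: take $p=2$, $q=2$, and $\hat H=(C_3\times C_3)\rtimes C_2$ with the involution inverting both factors. Then $O_2(\hat H)=1$, $\hat H$ is generated by its involutions so $O^{2'}(\hat H)=\hat H$, and for $K'\cong C_3$ one of the factors, $\hat H/K'\cong S_3\cong SL_2(2)$. Yet $\Phi(\hat H)=1$, so $\hat H/\Phi(\hat H)\not\cong L_2(2)$. Hence the abstract principle you invoke is false, and with your choice of $H$ nothing so far rules out exactly this shape for $\hat H$.

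The paper avoids this by building $H$ differently: it first chooses a minimal parabolic $H_0\leq XT$ with $T\leq H_0$ (via Remark~\ref{minparg}) and sets $H=H_0\cap X$. The minimal parabolic property lets one apply Lemma~\ref{minpar}(b) to force the image of $C_{H_0}(V(Q))$ into $\Phi(\overline{H_0})$, where $\overline{H_0}=H_0/Q$; then Lemma~\ref{PhiN} (which uses that $\overline{H_0}/\overline H$ is a $p$-group and $O_p(\overline{H_0})=1$) transfers this to $\Phi(\overline H)$. That transfer is precisely the extra leverage your normal-closure construction lacks.
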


\begin{proof}
Set $X:=B(N_M(U))$. By Lemma~\ref{BTX}, we have $T_1:=B(T)\in Syl_p(X)$ and $M=C_G(V(Q))X$. Note that $X$ is normalized by $T$. Set $X_0:=XT$ and let $H_0\leq X_0$ be minimal such that $T\leq H_0$ and $H_0\not\leq N_{X_0}(T_1)C_{X_0}(V(Q))$. 
Set $H:=H_0\cap X$. Then $H\not\leq N_X(T_1)C_X(V(Q))$, as $H_0= HT$. Since $X/C_X(V(Q))\cong SL_2(q)$ is generated by two Sylow $p$-subgroups, we get $X=C_X(V(Q))H$ and $M=C_G(V(Q))H$. Moreover, $T_1\in Syl_p(H)$ and $H$ is normal in $H_0$. Thus, it remains to show that $\hat{H}/\Phi(\hat{H})\cong L_2(q)$.

\bigskip

Observe that $Q=O_p(H_0)$. Set $\ov{H_0}=H_0/Q$ and $C:=C_{H_0}(V(Q))$. By Remark~\ref{minparg}, $H_0$ is minimal parabolic, and so $\ov{H_0}$ is minimal parabolic as well. As $H_0/C$ is not a $p$-group, it follows now from Lemma~\ref{minpar}(b) that $\ov{C}\leq \Phi(\ov{H_0})$. 
Observe that $\ov{H_0}=\ov{TH}$, so by Lemma~\ref{PhiN}, $\Phi(\ov{H_0})=\Phi(\ov{H})$. Hence, $\ov{C}\leq \Phi(\ov{H})$, so by Remark~\ref{Frat0}(c), $\Phi(\ov{H}/\ov{C})=\Phi(\ov{H})/\ov{C}$ and, as $\ov{H}/\ov{C}\cong SL_2(q)$, then $$\ov{H}/\Phi(\ov{H})\cong (\ov{H}/\ov{C})/\Phi(\ov{H}/\ov{C})\cong L_2(q).$$
As $\ov{H}\cong H/(H\cap Q)=H/O_p(H)=\hat{H}$, this implies the assertion.
\end{proof}

\begin{lemma}\label{HelpInQ}
Let $U\ch Q$ such that $U$ is fully normalized. Let $U^*\leq Q$ be invariant under $N_{\Ac(Q)}(U)$ and $N_{A(S)}(U)$. Set $\m{N}:=N_{N_\F(U)}(U^*)$, $H:=N_{\Ac(Q)}(U)$ and $X:=HS_Q$. Suppose $O^p(H)\not\leq C_{X}(V(Q))C_{X}(Q/U^*)$. Then $O_p(\m{N}/U^*)\leq Q/U^*$.
\end{lemma}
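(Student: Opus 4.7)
The plan is to argue by contradiction. I would let $R$ denote the full preimage in $N_{N_S(U)}(U^*)$ of $O_p(\m{N}/U^*)$, so $U^* \leq R$ and $R$ is a $p$-group; I then assume $R \not\leq Q$ and aim to contradict the hypothesis $O^p(H) \not\leq C_X(V(Q))C_X(Q/U^*)$. My first task is to verify the framework: since $U$ is fully normalized, $N_\F(U)$ is saturated, and the hypothesis that $U^*$ is invariant under $H = N_{\Ac(Q)}(U)$ and under $N_{A(S)}(U)$, combined with Alperin's Fusion Theorem (Theorem~\ref{AlpGold1}) applied inside $N_\F(U)$, should show $U^*$ is strongly closed in $\m{N}$. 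Hence $\m{N}/U^*$ will be a saturated fusion system, $R$ will be strongly closed and normal in $\m{N}$, and so $R$ will be normal in the underlying $p$-group $N_{N_S(U)}(U^*)$ of $\m{N}$; together with $Q \leq N_{N_S(U)}(U^*)$ (which follows from $U \unlhd Q$ and $Inn(Q) \leq H$ normalizing $U^*$), this gives that $R$ normalizes $Q$.

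Next, every element of $H$ is by construction an $\F$-automorphism of $Q$ normalizing both $U$ and $U^*$, so $H \leq Aut_\m{N}(Q)$. The normality of $R/U^*$ in $\m{N}/U^*$ then forces $R \cap Q$ to be $H$-invariant, and the conjugation action of $R$ on $Q$ yields a $p$-subgroup $R_Q = Aut_R(Q)$ of $Aut_\m{N}(Q)$ that is normalized by $H$. Since $R \not\leq Q$ and $C_S(V(Q)) = Q$ by Thompson-restrictedness applied to $V = V(Q)$, the image of $R_Q$ in $Aut(V(Q))$ will be a nontrivial $p$-subgroup invariant under the action of $H$ through $\Ac(Q)$.

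The decisive step will be a coprime-action argument exploiting the $SL_2(q)$-structure of $\Ac(Q)$ on $V(Q)$ given by Remark~\ref{(IV)}. Since the image of $R_Q$ is a normal $p$-subgroup of the image of $\<H, R_Q\>$ acting on $V(Q)$, and $\Ac(Q)/C_{\Ac(Q)}(V(Q))$ acts essentially as $SL_2(q)$ on $V(Q)/C_{V(Q)}(\Ac(Q))$, Lemma~\ref{natSL2q}(d) tells us that any $p'$-element of $O^p(H)$ acts fixed-point-freely on nontrivial $H$-invariant sections of the natural module; together with the $H$-invariance of the image of $R_Q$, I expect this to force $O^p(H)$ to centralize $V(Q)$ modulo the part captured inside $R$. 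A parallel analysis applied to the action on $Q/U^*$, using the $H$-invariance of $(R \cap Q)/U^*$ and the same coprime-action principle, should force $O^p(H)$ to centralize $Q/U^*$ modulo its commutator with $R$. Combining these two constraints yields $O^p(H) \leq C_X(V(Q))C_X(Q/U^*)$, the desired contradiction.

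The main obstacles I anticipate are twofold. First, carefully justifying that $U^*$ is strongly closed in $\m{N}$ from only its pointwise $H$- and $N_{A(S)}(U)$-invariance will require a careful Alperin-style reduction inside $N_\F(U)$, and special attention must be paid to the saturation of $\m{N}$. Second, tracking the two sections $V(Q)$ and $Q/U^*$ simultaneously in the coprime-action step will be delicate, since the image of $R_Q$ may contribute nontrivially to both and the product form $C_X(V(Q))C_X(Q/U^*)$ mixes them; the natural $SL_2(q)$-module structure on $V(Q)/C_{V(Q)}(\Ac(Q))$, together with Remark~\ref{(IV)}(c), is precisely what is needed to separate the contributions and extract the contradiction.
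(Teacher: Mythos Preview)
Your proposal contains a genuine gap at the very start of the argument. You claim that because $R$ (your preimage of $O_p(\m{N}/U^*)$) is normal in $N_{N_S(U)}(U^*)$ and $Q\leq N_{N_S(U)}(U^*)$, it follows that $R$ normalizes $Q$. This implication is backwards: what follows is that $Q$ normalizes $R$, not that $R\leq N_S(Q)=T$. Since $Q$ is in general \emph{not} normal in $N_S(U)$ (indeed $N_S(J(Q))=T$ by Thompson-restrictedness, and typically $T<N_S(U)$), there is no a~priori reason for $R$ to lie in $T$, and without this your whole coprime-action scheme on $V(Q)$ and $Q/U^*$ never gets off the ground: $R_Q$ is not even defined.

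Establishing $R\leq T$ is in fact the core of the paper's argument, and it is not cheap. The paper first shows $Q/U^*$ is centric in $\m{N}/U^*$, then exploits the $\Gamma L_2(q)$-structure of $X/C_X(V(Q))$ to produce a subgroup $R_0\leq T$ with $Q\leq R_0$ (corresponding to field automorphisms) and an $SL_2(q_0)$-subgroup $E_0$ of $H$ normalizing $(R_0)_Q$. It then shows that the image of $E_0$ in $Aut_{\m{N}/U^*}(Q/U^*)$ still has an $L_2(q_0)$-quotient (this is where the hypothesis $O^p(H)\not\leq C_X(V(Q))C_X(Q/U^*)$ is used), hence is not $p$-closed, and therefore lifts to a non-$p$-closed subgroup of $Aut_{\m{N}/U^*}(R_0/U^*)$. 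Alperin--Goldschmidt now yields an essential subgroup $P^+$ of $N_{\m{N}/U^*}(J(Q)/U^*)$ with $R_0/U^*\leq P^+$; a Thompson-subgroup argument forces $J(P)=J(Q)$, so $P^+$ is essential in $\m{N}/U^*$ as well, and Lemma~\ref{NormalEssential} gives $O_p(\m{N}/U^*)\leq P^+\leq T/U^*$. Only after this is known does the paper run an argument of the flavour you sketch (showing $(R_Q)C_X(V(Q))C_X(Q/U^*)$ is normal in $X$ and using the $SL_2(q)$-structure to force $R_Q\leq Inn(Q)$).

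A smaller point: you do not need to prove that $U^*$ is strongly closed in $\m{N}$, and your proposed route to it via Alperin's theorem inside $N_\F(U)$ is not justified by the hypotheses. All that is required is that $U^*\unlhd N_S(U)$, which follows immediately from $N_{A(S)}(U)$-invariance; this already makes $\m{N}$ saturated.
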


\begin{proof}
Observe first that $N_\F(U)$ is saturated as $U$ is fully normalized. Moreover, $U^*\unlhd N_S(U)$ since $U^*$ is $N_{A(S)}(U)$-invariant, so $U^*$ is fully normalized in $N_\F(U)$ and $\m{N}$ is saturated. 
Set $$\ov{X}:=X/C_X(V(Q)).$$
Since $U\ch Q$ we have $X\leq N_{A(Q)}(U)$. In particular, as $C_{A(Q)}(V(Q))\leq \Ac(Q)$, we have $C_X(V(Q))\leq \Ac(Q)\cap X\leq H$. Since $U\ch Q$ and $Q$ is Thompson-restricted, we have
$$\ov{H}\cong \Ac(Q)/C_{\Ac(Q)}(V(Q))\cong SL_2(q).$$
Moreover, $C_{S_Q}(\ov{H})\leq Inn(Q)$, so $Z(\ov{X})=Z(\ov{H})$ and $\ov{X}/Z(\ov{X})$ embeds into $Aut(\ov{H})\cong \Gamma L_2(q)$. This gives the following property.

\bigskip

(1)\;\; Let $N$ be a normal subgroup of $X$ containing $C_X(V(Q))$ such that $O^p(H)\not\leq N$. Then $N\leq H$ and $\ov{N}\leq Z(\ov{H})$. In particular, $|N/C_N(V(Q))|\leq 2$ and $N/(N\cap Inn(Q))$ has order prime to $p$.

\bigskip 

Set $C:=C_X(Q/U^*)$ and $C_1:=CC_X(V(Q))$. By assumption, $O^p(H)\not\leq C_1$. Hence, by (1),

\bigskip

(2)\;\; $\ov{C}=\ov{C_1}\leq Z(\ov{H})$ and $C_S(Q/U^*)\leq Q$.

\bigskip

Set $\m{N}^+=\m{N}/U^*$, $R^+=RU^*/U^*$ for every subgroup $R$ of $N_S(U)$, and $L^+$ for the subgroup of $Aut_{\m{N}^+}(Q^+)$ induced by $L$, for every subgroup $L$ of $X$. Then $L^+\cong LC/C$ for every $L\leq X$. 
Observe that $Q^+$ is fully normalized in $\m{N}^+$ since $Q$ is fully normalized in $\F$. As $\m{N}^+$ is saturated, it follows in particular that $Q^+$ is fully centralized in $\m{N}^+$. Now (2) yields

\bigskip

(3)\;\; $Q^+$ is centric in $\m{N}^+$.

\bigskip

As already observed above, $\ov{X}/Z(\ov{X})$ embeds into $Aut(\ov{H})\cong \Gamma L_2(q)$. Hence, there is a subgroup $R$ of $T$ such that $Q\leq R$, $\ov{R_Q}$ is a complement of $\ov{J(T)_Q}$ in $\ov{T_Q}$, and $[\ov{R_Q},\ov{E}]=1$ for some subgroup $E$ of $H$ with $\ov{E}\cong SL_2(q_0)$ where $q_0\neq 1$ is a divisor of $q$. ($\ov{R_Q}$ corresponds to a group of field automorphisms of $SL_2(q)$.) We may choose $E$ such that $C_H(V(Q))\leq E$. Note that $C_H(V(Q))/Inn(Q)$ is a $p^\prime$-group and so $R_Q\in Syl_p(R_QC_H(V(Q)))$. As $E$ normalizes $R_QC_H(V(Q))$, it follows from a Frattini Argument that $E=E_0C_H(V(Q))$ for $E_0=N_E(R_Q)$. In particular, $\ov{E_0}\cong\ov{E}\cong SL_2(q_0)$.

\bigskip

Set $\m{E}:=N_{\m{N}}(J(Q))$. Observe that $J(Q)$ is fully normalized in $\m{N}$, as $J(Q)$ is fully normalized in $\F$, and so $\m{E}$ and $\m{E}^+:=\m{E}/U^*$ are saturated. Moreover, $Aut_{\m{E}}(Q)=Aut_{\m{N}}(Q)$ and so $E_1:=E_0^+\leq Aut_{\m{E}^+}(Q^+)$. Note that $E_1\cong E_0C/C$. As $\ov{E_0}\cong \ov{E}\cong SL_2(q_0)$, property (2) implies 
$$ \ov{E_0C_1}/\ov{C_1}\cong SL_2(q_0)\;\mbox{or}\;L_2(q_0).$$
Since $C\leq C_1$, we have $(E_0C_1)/C_1\cong (E_0C)/((E_0C)\cap C_1)$, and $E_1$ has a factor group isomorphic to $L_2(q_0)$. In particlar, $E_1$ is not  $p$-closed. Also observe that $E_1$ normalizes $(R^+)_{Q^+}=(R_Q)^+$, and $Q^+$ is fully normalized in $\m{E}^+$, as $Q^+$ is normal in $T^+$. Hence, it follows from (2) and Remark~\ref{SatAut} that every element of $E_1$ extends to an element of $Aut_{\m{E}^+}(R^+)$. Thus, $Aut_{\m{E}^+}(R^+)$ is not $p$-closed. Hence, by \ref{AlpGold1}, there is $P\in\m{E}$ such that $U^*\leq P$, $P^+$ is essential in $\m{E}^+$, and $(R^+)\phi\leq P^+$ for some element $\phi\in Aut_{\m{E}^+}(T^+)$. Then $R^+\leq (P^+)\phi^{-1}$ and so, replacing $P$ by the preimage of $(P^+)\phi^{-1}$ in $T$, we may assume that $R\leq P$.

\bigskip

By the choice of $R$, we have $Q\leq R\leq P$ and $T=J(T)R=J(T)P$. By Remark~\ref{(IV)}(a),(c), we have $\m{A}(Q)\subseteq \m{A}(T)$ and $J(T)Q=AQ$ for every $A\in \m{A}(T)\backslash \m{A}(Q)$. 
Hence, if there exists $A\in\m{A}(P)\backslash \m{A}(Q)$ then $J(T)\leq AQ\leq P$ and $P=T$, a contradiction. Thus, $J(P)=J(Q)$. 
In particular, $Aut_{\m{E}}(P)=Aut_{\m{N}}(P)$, i.e. $Aut_{\m{E}^+}(P^+)=Aut_{\m{N}^+}(P^+)$ and $Aut_{\m{N}^+}(P^+)/Inn(P^+)$ has a strongly $p$-embedded subgroup. As $Q^+\leq R^+\leq P^+$ it follows from (3) that $P^+$ is centric in $\m{N}^+$. Therefore, $P^+$ is essential in $\m{N}^+$ and by Lemma~\ref{NormalEssential}, $O_p(\m{N}^+)\leq P^+$.
In particular,

\bigskip

(4)\;\;$O_p(\m{N}^+)\leq T^+$.

\bigskip

Let $U^*\leq Y\leq N_S(U)$ such that $Y^+=O_p(\m{N}^+)$. Then by (4), $Y\leq T$. Moreover, every element of $X^+$ extends to an $\m{N}^+$-automorphism of $(YQ)^+$, so by Remark~\ref{SatAut}(a), $((YQ)_Q)^+=((YQ)^+)_{Q^+}$ is normal in $X^+$. Hence, $(YQ)_QC$ and thus $Y_QC_1$ is normal in $X$. By assumption, $O^p(H)\not\leq C_1$ and so $O^p(H)\not\leq Y_QC_1$. Therefore, by (1), $Y_QC_1/Inn(Q)$ is a $p^\prime$-group. Hence, $Y_Q\leq Inn(Q)$ and so $Y\leq Q$. This proves the assertion.
\end{proof}

Applying Lemma~\ref{HelpInQ} with $U^*=1$ we obtain the following corollary.

\begin{corollary}\label{InQ}
Let $1\neq U\ch Q$ such that $U$ is fully normalized. Then $O_p(N_\F(U))\leq Q$.
\end{corollary}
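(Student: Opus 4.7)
The plan is to apply Lemma~\ref{HelpInQ} with the choice $U^*=1$. With this specialisation, the subsystem $\m{N}=N_{N_\F(U)}(1)$ coincides with $N_\F(U)$, and the lemma's conclusion $O_p(\m{N}/U^*)\leq Q/U^*$ translates directly into the desired assertion $O_p(N_\F(U))\leq Q$.

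It remains to verify the hypotheses of Lemma~\ref{HelpInQ} in this setting. The requirement that $U^*=1$ be invariant under both $N_{\Ac(Q)}(U)$ and $N_{A(S)}(U)$ is automatic. The substantive hypothesis to check is $O^p(H)\not\leq C_X(V(Q))C_X(Q/U^*)$, where $H=N_{\Ac(Q)}(U)$ and $X=HS_Q$. Since $U^*=1$, the factor $C_X(Q/U^*)=C_X(Q)$ collapses to the trivial subgroup: as $X\leq A(Q)$, any element of $C_X(Q)$ is an automorphism of $Q$ fixing every element of $Q$, hence the identity. So the hypothesis reduces to $O^p(H)\not\leq C_X(V(Q))$.

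To obtain this last inequality I would argue as follows. By the definition of $U\ch Q$, $\Ac(Q)=C_{\Ac(Q)}(V(Q))\cdot H$, so $H$ covers $\Ac(Q)/C_{\Ac(Q)}(V(Q))$. The Thompson-restrictedness of $Q$, applied to the normal subgroup $V(Q)=\Omega(Z(Q))$ of $J(G)T$ (which lies between $\Omega(Z(T))$ and $\Omega(Z(Q))$ because $Q$ is centric and $T=N_S(Q)$), gives $\Ac(Q)/C_{\Ac(Q)}(V(Q))\cong SL_2(q)$ -- a fact already used inside the proof of Lemma~\ref{HelpInQ}. Since $SL_2(q)=O^p(SL_2(q))$, the subgroup $O^p(H)$ surjects onto $SL_2(q)$ in this quotient, whence $O^p(H)\not\leq C_{\Ac(Q)}(V(Q))$. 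Combining $O^p(H)\leq H\leq \Ac(Q)$ with the elementary identity $C_X(V(Q))\cap \Ac(Q)=C_{\Ac(Q)}(V(Q))$ then yields $O^p(H)\not\leq C_X(V(Q))$, as required.

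Because the corollary is essentially a clean specialisation of Lemma~\ref{HelpInQ}, I anticipate no serious obstacle. The only delicate moves are recognising that $C_X(Q)$ is trivial when $U^*=1$ and translating the $SL_2(q)$-structure of $\Ac(Q)$ modulo $C_{\Ac(Q)}(V(Q))$ into the required failure of $O^p(H)$ to lie in $C_X(V(Q))$.
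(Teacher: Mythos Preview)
Your proposal is correct and follows exactly the paper's approach: the paper's entire proof is the single line ``Applying Lemma~\ref{HelpInQ} with $U^*=1$ we obtain the following corollary.'' Your additional verification that $O^p(H)\not\leq C_X(V(Q))$ via the $SL_2(q)$-structure of $\Ac(Q)/C_{\Ac(Q)}(V(Q))$ is precisely the check one needs to make the specialisation go through, and the argument is sound.
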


\begin{notation}\label{FminNot}
Let $1\neq U\leq Q$ such that $U\unlhd T$. Then we set
$$\m{D}(Q,U)=\{U_0\;:\;U_0\leq Q,\;U_0\;\mbox{is invariant under}\;N_{A(S)}(U)\;\mbox{and}\;N_{A(Q)}(U)\}.$$
By $U^*(Q)$ we denote the element of $\m{D}(Q,U)$ which is maximal with respect to inclusion.
\end{notation}

Note that here $U^*(Q)$ is well defined since $U\in \m{D}(Q,U)$ and the product of two elements of $\m{D}(Q,U)$ is contained in $\m{D}(Q,U)$. Moreover, if $O_p(N_\F(U))\leq Q$, then $O_p(N_\F(U))\in\m{D}(Q,U)$ and therefore $U\leq O_p(N_\F(U))\leq U^*(Q)$.

\begin{lemma}\label{Fmin1}
Let $\F$ be minimal, let $1\neq U\ch Q$ and assume $U$ is fully normalized. Then $O^p(N_{\Ac(Q)}(U))\leq C_{A(Q)}(Q/U^*(Q))C_{A(Q)}(V(Q))$.
\end{lemma}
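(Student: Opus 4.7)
The plan is to argue by contradiction using Lemma~\ref{HelpInQ} together with the minimality of $\F$. Suppose that $O^p(H) \not\leq C_{A(Q)}(Q/U^*(Q))\,C_{A(Q)}(V(Q))$, where $H := N_{\Ac(Q)}(U)$. Note first that if $U^*(Q) = Q$ then $C_{A(Q)}(Q/U^*(Q)) = A(Q)$ and the inclusion holds trivially; so we may assume $U^*(Q) < Q$, and consequently $U^*(Q) < N_S(U^*(Q))$. Setting $X := HS_Q$, we have $C_X(V(Q)) \leq C_{A(Q)}(V(Q))$ and $C_X(Q/U^*(Q)) \leq C_{A(Q)}(Q/U^*(Q))$, so the contradiction hypothesis yields $O^p(H) \not\leq C_X(V(Q))\,C_X(Q/U^*(Q))$. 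Since $U^*(Q) \in \m{D}(Q,U)$ by definition, Lemma~\ref{HelpInQ} applies with $U^* = U^*(Q)$ and gives $O_p(\m{N}/U^*(Q)) \leq Q/U^*(Q)$, where $\m{N} := N_{N_\F(U)}(U^*(Q))$.

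Next I would invoke minimality. Since $U \neq 1$ is fully normalized, $N_\F(U)$ is solvable. The subsystem $\m{N}$ is saturated because $U^*(Q) \unlhd N_S(U)$ (being $N_{A(S)}(U)$-invariant), so $U^*(Q)$ is fully normalized in $N_\F(U)$; thus Theorem~\ref{SubSolv}(a) shows $\m{N}$ is solvable. Now $U^*(Q)$ is normal in $\m{N}$ and, by our reduction, a proper subgroup of the underlying $p$-group $N_S(U) = N_{N_S(U)}(U^*(Q))$ of $\m{N}$. Hence the solvability definition forces $O_p(\m{N}/U^*(Q)) \neq 1$.

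Let $Q_0$ be the preimage in $Q$ of $O_p(\m{N}/U^*(Q))$, so that $U^*(Q) < Q_0 \leq Q$. The plan is to show $Q_0 \in \m{D}(Q,U)$, contradicting the maximality of $U^*(Q)$. Invariance under $N_{A(Q)}(U) = Aut_{\m{N}}(Q)$ is immediate because $Q_0/U^*(Q) = O_p(\m{N}/U^*(Q))$ is a characteristic subgroup for the action of $Aut_{\m{N}/U^*(Q)}(Q/U^*(Q))$. For invariance under $N_{A(S)}(U)$, each $\psi \in N_{A(S)}(U)$ fixes $U^*(Q)$ by $N_{A(S)}(U)$-invariance, and satisfies $\psi(N_S(U)) = N_S(U)$; its restriction therefore lies in $Aut_{\m{N}}(N_S(U))$, and so normalizes $O_p(\m{N}/U^*(Q)) = Q_0/U^*(Q)$. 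Thus $\psi(Q_0) = Q_0$, completing the verification that $Q_0 \in \m{D}(Q,U)$. The maximality of $U^*(Q)$ then gives $Q_0 \leq U^*(Q)$, contradicting $U^*(Q) < Q_0$.

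The main obstacle is the bookkeeping that ensures the hypothesis of Lemma~\ref{HelpInQ} is actually triggered (passing from centralizers in $A(Q)$ to centralizers in $X$) and, once $Q_0$ is in hand, confirming that the actions of $N_{A(S)}(U)$ and $N_{A(Q)}(U)$ really do descend to the factor system $\m{N}/U^*(Q)$ and preserve its largest normal $p$-subgroup. Everything else is a routine combination of the definition of $U^*(Q)$ with the Aschbacher–Puig solvability machinery provided by Theorem~\ref{SubSolv}.
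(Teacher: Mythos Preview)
Your proof is correct and follows essentially the same route as the paper's: assume the conclusion fails, pass to $X=HS_Q$, apply Lemma~\ref{HelpInQ} with $U^*=U^*(Q)$, use solvability of $\m{N}$ (via Theorem~\ref{SubSolv}) to obtain $O_p(\m{N}/U^*(Q))\neq 1$, take its preimage $Q_0\leq Q$, and contradict the maximality of $U^*(Q)$ by showing $Q_0\in\m{D}(Q,U)$. You have in fact been more careful than the paper in spelling out the reduction from centralizers in $A(Q)$ to centralizers in $X$, the trivial case $U^*(Q)=Q$, and the verification that $Q_0$ is invariant under both $N_{A(Q)}(U)$ and $N_{A(S)}(U)$.
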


\begin{proof}
Set $H:=N_{\Ac(Q)}(U)$, $X:=HS_Q$, $U^*=U^*(Q)$, and  $C:=C_X(Q/U^*)C_X(V(Q))$. Assume $O^p(H)\not\leq C$. Observe that $N_\F(U)$ is saturated and solvable, since $U$ is fully normalized and $\F$ is minimal. Moreover, $U^*\unlhd N_S(U)$ is fully normalized in $N_\F(U)$ and so, by Proposition~\ref{SubSolv}(a), $\m{N}:=N_{N_\F(U)}(U^*)$ is saturated and solvable. Therefore, $O_p(\m{N}/U^*)\neq 1$ and so $U^*< U_0$, where $U_0$ is the full preimage of $O_p(\m{N}/U^*)$ in $N_S(U)$. By Lemma~\ref{HelpInQ}, $U_0\leq Q$. Now $U_0\in \m{D}(Q,U)$ and so $U_0=U^*(Q)=U$, a contradiction.
\end{proof}

\begin{lemma}\label{XU*Char}
Let $\F$ be minimal and let $1\neq U\ch Q$ such that $U$ is fully normalized. Then $U^*(Q)X\ch Q$ for every subgroup $X$ of $Q$ with $X\unlhd T$.
\end{lemma}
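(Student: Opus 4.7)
The plan is to verify directly the three defining conditions of Definition~\ref{chDef} for the subgroup $U^*(Q)X$. The inclusion $U^*(Q)X\leq Q$ is immediate from $U^*(Q)\in\m{D}(Q,U)$ and the hypothesis $X\leq Q$. For the normality $U^*(Q)X\unlhd T$, note that $X\unlhd T$ by assumption, while $U\unlhd T$ yields $T_Q\leq N_{A(Q)}(U)$, so $U^*(Q)$ is $T_Q$-invariant by the definition of $\m{D}(Q,U)$ and hence $U^*(Q)\unlhd T$.

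The heart of the matter is the factorization $\Ac(Q)=C_{\Ac(Q)}(V(Q))N_{\Ac(Q)}(U^*(Q)X)$. Since $U\ch Q$, we already have $\Ac(Q)=C_{\Ac(Q)}(V(Q))H$ with $H:=N_{\Ac(Q)}(U)$, so it suffices to establish the inclusion $H\subseteq C_{\Ac(Q)}(V(Q))N_{\Ac(Q)}(U^*(Q)X)$. I would attack this by writing $H=O^p(H)\cdot P$ for a Sylow $p$-subgroup $P$ of $H$ and handling each factor separately.

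For the Sylow $p$-factor: by Remark~\ref{(IV)}(a), the subgroup $J(T)_QInn(Q)$ is a Sylow $p$-subgroup of $\Ac(Q)$, and since $U\unlhd T$ it lies in $H$, so is a Sylow $p$-subgroup of $H$. As $U^*(Q)X$ is normal in $T$ (hence normalized by $J(T)_Q$) and in $Q$ (hence normalized by $Inn(Q)$), this Sylow $p$-subgroup is contained in $N_{\Ac(Q)}(U^*(Q)X)$. For $O^p(H)$, I would invoke Lemma~\ref{Fmin1} (which uses minimality of $\F$) to conclude $O^p(H)\leq C_{A(Q)}(Q/U^*(Q))\cdot C_{A(Q)}(V(Q))$. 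Given $\phi\in O^p(H)$, write $\phi=\alpha\beta$ with $\alpha\in C_{A(Q)}(Q/U^*(Q))$ and $\beta\in C_{A(Q)}(V(Q))$. Since $\phi\in\Ac(Q)$ and $\beta\in C_{\Ac(Q)}(V(Q))$, also $\alpha\in\Ac(Q)$. The automorphism $\alpha$ acts trivially on $Q/U^*(Q)$, so $\alpha$ preserves both $U^*(Q)$ and $X\cdot U^*(Q)=U^*(Q)X$; hence $\alpha\in N_{\Ac(Q)}(U^*(Q)X)$.

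Combining the two parts and using that $C_{\Ac(Q)}(V(Q))$ is normal in $\Ac(Q)$ (being the kernel of the action of $\Ac(Q)$ on $V(Q)$), the desired inclusion and hence the factorization follow. The main (mild) obstacle is tracking that the decomposition $\phi=\alpha\beta$ furnished by Lemma~\ref{Fmin1} keeps $\alpha$ inside $\Ac(Q)$ and that $\alpha$'s trivial action on $Q/U^*(Q)$ genuinely forces it to normalize the larger subgroup $U^*(Q)X$; both follow immediately from the fact that $U^*(Q)\unlhd T$ and that $\alpha$ is an automorphism of $Q$.
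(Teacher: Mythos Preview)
Your argument is correct and follows essentially the same line as the paper's proof: both reduce to showing $N_{\Ac(Q)}(U)\leq C_{\Ac(Q)}(V(Q))N_{\Ac(Q)}(U^*(Q)X)$, and both achieve this by combining the containment of the Sylow $p$-subgroup $T_Q$ (or $J(T)_QInn(Q)$) in $N_{\Ac(Q)}(U^*(Q)X)$ with Lemma~\ref{Fmin1} for $O^p(N_{\Ac(Q)}(U))$. The paper compresses the two parts into a single chain $N_{\Ac(Q)}(U)\leq T_QO^p(N_{\Ac(Q)}(U))\leq T_QC_{A(Q)}(V(Q))C_{\Ac(Q)}(Q/U^*(Q))\leq C_{A(Q)}(V(Q))N_{A(Q)}(U^*(Q)X)$, whereas you spell out the Sylow and $O^p$ factors separately; the content is identical.
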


\begin{proof}
Note that $U_0:=U^*(Q)X$ is normal in $T$. Moreover, by Lemma~\ref{Fmin1}, we have 
$$\begin{array}{lll}
N_{\Ac(Q)}(U) &\leq T_QO^p(N_{\Ac(Q)}(U))\leq
T_QC_{A(Q)}(V(Q))C_{\Ac(Q)}(Q/U^*(Q))\\
&\leq C_{A(Q)}(V(Q))N_{A(Q)}(U_0).\end{array}$$ 
Hence, $\Ac(Q)=C_{A(Q)}(V(Q))N_{\Ac(Q)}(U)=C_{A(Q)}(V(Q))N_{\Ac(Q)}(U_0)$ and $U_0\ch Q$.
\end{proof}

\begin{lemma}\label{ThRestrConj1}
Let $\phi\in Mor_\F(N_S(Q),S)$. Then $Q\phi$ is Thompson-restricted.
\end{lemma}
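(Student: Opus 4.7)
Set $Q':=Q\phi$ and $T':=N_S(Q')$. The plan is to verify each clause of Definition~\ref{ThRestrictDef} for $Q'$ by transporting the corresponding structure for $Q$ via $\phi$. First I would note that $(N_S(Q))\phi$ is a subgroup of $T'$ of order $|T|$, while $|T'|\leq |T|$ by full normalization of $Q$, so $T\phi=T'$ and $Q'$ is fully normalized. A symmetric argument using centricity of $Q$ gives $C_S(Q')\leq Q'$, so $Q'$ is centric. Thus $N_\F(Q')$ is constrained and, by Theorem~\ref{ModExUnique}, $\phi|_T$ lifts to a group isomorphism $\tilde\phi\colon G\to G'$ between models $G:=G(Q)$ and $G':=G(Q')$ that restricts to $\phi$ on $T$.

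Next, fix any $V'\unlhd J(G')T'$ with $\Omega(Z(T'))\leq V'\leq \Omega(Z(Q'))$ and set $V:=V'\tilde\phi^{-1}$; this $V$ is admissible for $Q$, so clauses (i)--(iv) of Definition~\ref{ThRestrictDef} hold for the pair $(V,Q)$. Properties (iii), (iv), and the $p'$-part of (ii) (that $C_{G'}(V')/Q'$ is a $p'$-group) are intrinsic to $G$ and carry over verbatim under $\tilde\phi$. For (i), $J(Q')=J(Q)\tilde\phi\in J(Q)^\F$, and full normalization of $J(Q)$ bounds $|N_S(J(Q'))|\leq |T|$, while $J(Q')\unlhd T'$ yields $T'\leq N_S(J(Q'))$; these inequalities force $N_S(J(Q'))=T'$ and $J(Q')$ fully normalized.

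The remaining and subtle clause is $C_S(V')=Q'$ in (ii), a global condition on $S$ not captured by $\tilde\phi$ alone. The inclusion $Q'\leq C_S(V')$ is immediate from $V'\leq \Omega(Z(Q'))$; for the reverse I would show that $V$ is fully normalized in $\F$, hence fully centralized by saturation, which gives $|C_S(V')|\leq |C_S(V)|=|Q|=|Q'|$ and thus $C_S(V')=Q'$. A key intermediate point is $N_S(V)=T$: for $g\in N_S(V)$ the identity $(C_S(V))^g=C_S(V^g)=Q$ forces $g\in N_S(Q)=T$. The hard part will then be, for a fully normalized $V^*\in V^\F$, to deduce $N_S(V^*)\leq N_S(Q^{**})$, where $Q^{**}$ is the $\F$-conjugate of $Q$ produced by applying Lemma~\ref{fullNormChar} and passing $Q$ through the resulting morphism; I would close the loop by transporting the Thompson-restricted structure of $Q$ through this morphism and appealing to the faithful action of $T/Q$ on $V$ afforded by clauses (iii) and (iv) to force the desired centralizer identity.
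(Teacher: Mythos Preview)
Your verification of the ``transportable'' clauses (i), (iii), (iv), and the $p'$-part of (ii) is correct, as is the observation that $\phi$ induces an isomorphism of saturated fusion systems $N_\F(Q)\to N_\F(Q')$ on $T$ and $T'=T\phi$ (and hence an isomorphism $\tilde\phi$ of models). However, you are overcomplicating the clause $C_S(V')=Q'$, and your sketch of the detour through full centralization of $V$ is vague at precisely the point where the real work would occur.

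The paper's observation is that $C_S(V')=Q'$ is \emph{equivalent} to the purely local condition $C_{T'}(V')=Q'$. Indeed, $Q'\leq C_S(V')$ is clear; if the inclusion were strict, nilpotence of the $p$-group $C_S(V')$ would force $N_{C_S(V')}(Q')>Q'$, but $N_{C_S(V')}(Q')=C_{N_S(Q')}(V')=C_{T'}(V')$. So it suffices to know $C_{T'}(V')=Q'$, and this is exactly $C_T(V)=Q$ transported through $\phi$. Note that $C_T(V)=Q$ comes directly from clause~(ii) for $Q$ (namely $C_S(V)=Q$), not from (iii) and (iv) as you suggest.

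Your proposed route---show $V$ is fully centralized by passing to a fully normalized conjugate $V^*$ and arguing $N_S(V^*)\leq N_S(Q^{**})$---can in fact be completed, but only by the same nilpotence trick one step removed: one observes $Q^{**}=C_{N_S(Q^{**})}(V^*)$, so anything normalizing both $V^*$ and $N_S(Q^{**})$ normalizes $Q^{**}$, whence $N_{N_S(V^*)}(N_S(Q^{**}))=N_S(Q^{**})$ and thus $N_S(V^*)=N_S(Q^{**})$. In other words, the ``hard part'' you flag is resolved by exactly the local-to-global reduction you could have applied directly to $V'$ and $Q'$ in the first place.
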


\begin{proof}
As $Q$ is centric, $\widetilde{Q}:=Q\phi$ is centric. Observe that 
$$ |N_S(J(Q))|=|N_S(Q)|=|N_S(Q)\phi|\leq |N_S(\widetilde{Q})|\leq |N_S(J(\widetilde{Q}))|.$$
So, as $Q$ and $J(Q)$ are fully normalized, $\widetilde{Q}$ and $J(\widetilde{Q})=J(Q)\phi$ are fully normalized, and $N_S(Q)\phi=N_S(\widetilde{Q})=N_S(J(\widetilde{Q}))$. Observe that $\phi:N_S(Q)\rightarrow N_S(\widetilde{Q})$ is an isomorphism of fusion systems from $N_\F(Q)$ to $N_\F(\widetilde{Q})$. Moreover, for $V\leq \Omega(Z(\widetilde{Q}))$, we have $C_S(V)=\widetilde{Q}$ if and only if $C_{N_S(\widetilde{Q})}(V)=\widetilde{Q}$. So $\widetilde{Q}$ is Thompson-restricted as $Q$ is Thompson-restricted.
\end{proof}

\section{Pushing up in fusion systems}\label{PUFs}

Throughout this section, assume the following hypothesis.

\begin{hyp}\label{HypMaxParUnique0}
Let $\F$ be a saturated fusion system on a finite $p$-group $S$. Suppose $\F$ is minimal. Let $\m{N}$ be a proper saturated subsystem of $\F$ on $S$, and let $\Q$ be the set of all Thompson-maximal members of $\FN$ which are Thompson-restricted. 
\end{hyp}

Recall here the definition of Thompson-maximality and Thompson-restricted subgroups from Definition~\ref{ThMax} and Definition~\ref{ThRestrictDef} in the introduction.
Furthermore, recall from Notation~\ref{ENDef} that $\FN$ is the set of subgroups $P\in \F$ with $Aut_\F(P)\not\leq \m{N}$.
The aim of this section is to prove Theorem~\ref{CasesL}, which then, together with Theorem~\ref{mainSL2qThm} and Theorem~\ref{ClassifyG}, implies Theorem~\ref{Classify}. We restate Theorem~\ref{CasesL} here for the readers convenience. Recall the Definition of a full maximal parabolic from Definition~\ref{ParDef}.

\begin{hyp}\label{HypMaxParUnique}
Assume Hypothesis \ref{HypMaxParUnique0} and suppose $\m{N}$ contains every full maximal parabolic of $\F$.
\end{hyp}

\setcounter{mtheorem}{\value{repeat}}
\begin{mtheorem}
Assume Hypothesis~\ref{HypMaxParUnique}. 
Let $Q\in \Q$, $G:=G(Q)$ and $M:=J(G)$.\footnote{Recall Notation~\ref{ModNot}.} Then $N_S(X)=N_S(Q)$, for every non-trivial normal $p$-subgroup $X$ of $MN_S(Q)$. Moreover, $Q\leq M$, $M/Q\cong SL_2(q)$ and one of the following holds:
\begin{itemize}
\item[(I)] $Q$ is elementary abelian, and $Q/C_Q(M)$ is a natural $SL_2(q)$-module for $M/Q$, or
\item[(II)] $p=3$, $S=N_S(Q)$ and $|Q|= q^5$. Moreover, $Q/Z(Q)$ and $Z(Q)/\Phi(Q)$ are natural $SL_2(q)$-modules for $M/Q$, and $\Phi(Q)=C_Q(M)$ has order $q$.
\end{itemize}
\end{mtheorem}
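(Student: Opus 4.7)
The plan is to apply the Baumann--Niles pushing-up theorem (Theorem~\ref{PUGrCor}) to the finite group $H$ supplied by Lemma~\ref{BTH}, using the Thompson-maximality of $Q$ together with the hypothesis on $\mathcal{N}$ to establish the requisite pushing-up property. Set $T := N_S(Q)$ and $V := V(Q) = \Omega(Z(Q))$; since $Q$ is Thompson-restricted, $V$ is an elementary abelian normal subgroup of $MT$, $M/C_M(V) \cong SL_2(q)$ with $V/C_V(M)$ a natural module, $C_S(V) = Q$, and $C_G(V)/Q$ is a $p'$-group. Applying Lemma~\ref{BTH} with $U = Q$ (which is trivially $\F$-characteristic in $Q$) produces $H \trianglelefteq M$ with $B(T) \in Syl_p(H)$, $M = C_G(V) H$, and $\hat H := H/O_p(H)$ satisfying $\hat H/\Phi(\hat H) \cong L_2(q)$.

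The key preliminary step is the first conclusion of the theorem: for every non-trivial normal $p$-subgroup $X$ of $MT$, $N_S(X) = T$. Since $T$ is Sylow in $MT$ and $Q \trianglelefteq MT$, such an $X$ lies in $T$; because $[X,Q] \leq X \cap Q$ and $C_G(Q) \leq Q$, the intersection $X \cap Q$ is non-trivial, so replacing $X$ by $X \cap Q$ we may assume $X \leq Q$. Then $X$ is $\F$-characteristic in $Q$ in the sense of Definition~\ref{chDef}. Assume for contradiction $N_S(X) > T$, and let $Y$ be a fully normalized $\F$-conjugate of $X$ via $\phi \in Mor_\F(N_S(X), S)$ given by Lemma~\ref{fullNormChar}; then $|N_S(Y)| > |T|$, and by minimality of $\F$ together with Theorem~\ref{SubSolv} the saturated subsystem $N_\F(Y)$ is solvable and constrained. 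The $\F$-characteristic machinery of Section~\ref{ThRestrProp} (Lemma~\ref{Fmin1} and Lemma~\ref{XU*Char}, together with Corollary~\ref{InQ}) then forces $O_p(N_\F(Y))$ into the $\F$-conjugate $Q\phi$ of $Q$, producing via Lemma~\ref{ThRestrConj1} a Thompson-restricted member of $\FN$ with $S$-normalizer strictly exceeding $T$, which contradicts the Thompson-maximality of $Q$. The hypothesis that $\mathcal{N}$ contains every full maximal parabolic is used at this point to exclude the possibility that $A(Q\phi)$ is absorbed into $\mathcal{N}$.

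Once the first conclusion is established, I would verify Hypothesis~\ref{ModHyp} for $H$: $V \leq \Omega(Z(O_p(H)))$ (since $O_p(H) \leq Q \cap M$ is centralized by $V \leq Z(Q)$), $H/C_H(V) \cong SL_2(q)$ with $V/C_V(H) = V/C_V(M)$ natural (from $M = C_G(V) H$), and $\hat H/\Phi(\hat H) \cong L_2(q)$ is given. The pushing-up property for $H$ follows because a non-trivial characteristic subgroup $C$ of $B(T)$ normal in $H$ would, via the Frattini argument $M = H \cdot N_M(B(T))$, be normal in $M$ and hence in $MT$; by the first conclusion $N_S(C) = T$, but $C$ is characteristic in $T$, so $N_S(T) \leq N_S(C) = T$, forcing $S = T$. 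The residual case $S = T$ is excluded in case (I) (otherwise $N_\F(Q)$ would be a full parabolic contained in $\mathcal{N}$, contradicting $Q \in \FN$) and appears in the conclusion of case (II). Theorem~\ref{PUGrCor} applied to $H$ then yields the dichotomy (I)/(II) for $V^\sharp := [O_p(H), O^p(H)]$; using its part (a) one extracts $Q = V^\sharp C_Q(M)$ (after noting $C_M(V)/Q$ is $p'$ and controlling the central factor), whence $Q \leq M$ and $M/Q \cong SL_2(q)$. Case (I) gives $Q = V$ elementary abelian with $Q/C_Q(M)$ a natural $SL_2(q)$-module directly; case (II) gives $p = 3$, $\Phi(Q) = C_Q(M)$ of order $q$, and $Q/Z(Q)$ and $Z(Q)/\Phi(Q)$ natural modules, so $|Q| = q \cdot q^2 \cdot q^2 = q^5$.

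It remains to show $S = N_S(Q)$ in case (II). If $S > T$, pick $s \in N_S(T) \setminus T$ and set $\phi := c_s \in Aut(T)$; because $C_S(V) = Q$, one has $V\phi \not\leq Q$. Then Theorem~\ref{PUGrCor}(e) and (f) applied in case (II) give $V \leq W \langle (W\phi)^G \rangle$ and $V \not\leq Q\phi$, from which one identifies a non-trivial $T$-invariant $p$-subgroup normal in $MT$ whose $S$-normalizer properly exceeds $T$, contradicting the first conclusion. The main obstacle throughout is the first conclusion itself: linking the $\mathcal{N}$-hypothesis with Thompson-maximality through the $\F$-characteristic machinery is delicate and requires careful bookkeeping around fully normalized conjugates, solvability of local fusion subsystems via Theorem~\ref{SubSolv}, and the interaction with the Baumann-subgroup-based Lemma~\ref{BTH}.
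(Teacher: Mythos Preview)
Your argument for the first conclusion---that $N_S(X)=T$ for every non-trivial normal $p$-subgroup $X$ of $MT$---does not work, and this is the load-bearing step of the whole proof. You reduce to $X\leq Q$ with $X\ch Q$, pass to a fully normalized conjugate $Y=X\phi$, observe via Corollary~\ref{InQ} that $O_p(N_\F(Y))\leq Q\phi$, and then assert that this produces ``a Thompson-restricted member of $\FN$ with $S$-normalizer strictly exceeding $T$, which contradicts the Thompson-maximality of $Q$.'' But Thompson-maximality (Definition~\ref{ThMax}) is about $m(Q)$ and then $|J(Q)|$; it says nothing whatsoever about $|N_S(Q)|$. The conjugate $Q\phi$ satisfies $|N_S(Q\phi)|=|T\phi|=|T|$, and nothing in your chain of lemmas manufactures a member of $\Q$ with larger normalizer. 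There is simply no contradiction here.

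This gap is fatal because your verification of the pushing-up property for $H$ rests entirely on the first conclusion (you argue that a characteristic subgroup $C$ of $B(T)$ normal in $H$ is normal in $MT$, whence $N_S(C)=T$, whence $S=T$). The paper does \emph{not} prove $N_S(X)=T$ first. Instead it chooses a pair $(Q,U)$ with $1\neq U\ch Q$ and $|N_S(U)|$ maximal---a completely different maximality---and then invokes the hard technical core of Section~\ref{PUFs}, namely Lemma~\ref{PUThm}, which asserts $B(N_S(U))=B(N_S(Q))$. That lemma is what the entire sequence Lemma~\ref{PUFsl}--Lemma~\ref{JNSUT} is there to prove, and you never invoke it. With $B(T_0)=B(T)$ in hand (where $T_0=N_S(U)$), characteristic subgroups of $B(T)$ are $A(T_0)$-invariant; the maximal choice of $|N_S(U)|$ together with Lemma~\ref{trivial} (this is where the full-parabolic hypothesis enters) then forbids any such subgroup from being $\F$-characteristic in $Q$, and \emph{that} is what yields pushing-up for $H$. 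Only after applying Theorem~\ref{PUGrCor} and carrying out a substantial case analysis (properties (3)--(7) in the paper's proof) does one deduce $N_S(U)=T$, and hence the first conclusion as property~(4). Your proposal inverts this logical order and supplies no substitute for Lemma~\ref{PUThm}.
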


Note here that Theorem~\ref{mainSL2qThm} yields  $\Q\neq\emptyset$ if Hypothesis~\ref{HypMaxParUnique} holds. In fact, this is already the case if we assume the following more general hypothesis. 

\begin{hyp}\label{HypMaxParUnique1}
Assume Hypothesis \ref{HypMaxParUnique0}, and suppose $N_\F(C)\leq\m{N}$ for every characteristic subgroup $C$ of $S$.
\end{hyp}

Many arguments in the proof of Theorem~\ref{CasesL} require only Hypothesis~\ref{HypMaxParUnique1}. More precisely, we will be able to prove the following Lemma.

\begin{lemma}\label{PUThm} 
Assume Hypothesis~\ref{HypMaxParUnique1}. Let $Q\in\Q$ and $1\neq U\ch Q$. Then $B(N_S(U))=B(N_S(Q))$.
\end{lemma}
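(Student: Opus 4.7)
The plan begins with a standard conjugation reduction to the case where $U$ is fully normalized in $\F$. Using Lemma~\ref{fullNormChar}, pick $\phi\in Mor_\F(N_S(U),S)$ with $U\phi$ fully normalized; by Lemma~\ref{ThRestrConj1} the image $Q\phi$ is still Thompson-restricted and hence lies in $\Q$, and a direct verification from Definition~\ref{chDef} using the isomorphism $\phi^*\colon A(Q)\to A(Q\phi)$ shows that $U\phi\ch Q\phi$, so replacing $(Q,U)$ by $(Q\phi,U\phi)$ loses no generality. Set $T:=N_S(Q)$ and $T':=N_S(U)$, so that $T\leq T'$ since $U\unlhd T$. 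Since $\F$ is minimal and $U\neq 1$ is fully normalized, $N_\F(U)$ is solvable, hence constrained by Theorem~\ref{SubSolv}(b); let $G^*:=G(U)$ be its model, so that $T'\in Syl_p(G^*)$, and by Corollary~\ref{InQ} we have $O_p(G^*)\leq Q$.

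The strategy is to establish the two inclusions $B(T)\leq B(T')$ and $B(T')\leq T$; combined with the Sylow description of $B(T)$ from Lemma~\ref{BTX}, these yield $B(T')=B(T)$. The first inclusion reduces at once to the identity $J(T')=J(T)$, since then $\Omega(Z(J(T')))=\Omega(Z(J(T)))$ and $B(T)=C_T(\Omega(Z(J(T))))\subseteq C_{T'}(\Omega(Z(J(T'))))=B(T')$.

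For $J(T')=J(T)$, I would suppose for contradiction that some $A\in\m{A}(T')$ satisfies $A\not\leq T$. Then $A$ normalizes $U$ but not $Q$, so there is $a\in A$ with $Q^a\neq Q$; both $Q$ and $Q^a$ sit inside $G^*$ as $\F$-conjugates of $Q$. Since $Q\in\FN$ and $Q$ is Thompson-maximal in $\FN$, Corollary~\ref{Cor3} together with the offender/FF-module analysis of Remark~\ref{(IV)} forces $A$ to act on $V(Q)$ as an offender; but Remark~\ref{(IV)}(c) gives $C_S(V(Q))=Q$ for any $\Ac(Q)$-invariant $V\leq V(Q)$ with $[V,\Ac(Q)]\neq 1$, and so an offender outside $Q$ in $T'$ cannot exist without contradicting the structure of $\Ac(Q)$. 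Hence $A\leq T$ and $J(T')=J(T)$.

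To deduce $B(T')\leq T$, apply Lemma~\ref{BTH} to $U\ch Q$: with $G:=G(Q)$ and $M:=J(G)C_G(V(Q))$ one obtains $H\leq N_M(U)$ with $B(T)\in Syl_p(H)$, $H$ normalized by $T$, $M=C_G(V(Q))H$, and $\hat H/\Phi(\hat H)\cong L_2(q)$. By Theorem~\ref{ModExUnique} applied to the constrained subsystem $N_{N_\F(Q)}(U)\leq N_\F(U)$ (whose model is $N_G(U)$), this subgroup transports $T$-equivariantly into $G^*$, and then Lemma~\ref{BTX}/\ref{BaumCor} applied inside $G^*$ with $V:=V(Q)$ identifies the Baumann Sylow $B(T')$ with the Baumann Sylow produced by $H$, namely $B(T)$; in particular $B(T')\leq T$, and equality $B(T')=B(T)$ follows.

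The main obstacle is the equality $J(T')=J(T)$. Since Theorem~\ref{CasesL} has not yet been proved, the structure of $Q$ is unknown (both case (I), with $Q$ elementary abelian, and case (II), with $p=3$ and a layered natural module structure on $V(Q)$, must be treated uniformly). Case (II) is the delicate one: one must rule out exotic offenders on $V(Q)$ from elements of $T'\setminus T$ using only the Thompson-restricted structure from Definition~\ref{ThRestrictDef} and the solvability of $N_\F(U)$ via its model $G^*$, without circularly invoking the conclusion of Theorem~\ref{CasesL}.
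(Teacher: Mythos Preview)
Your proposal has genuine gaps in both main steps.

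For $J(T')=J(T)$: the contradiction you sketch does not follow. From $C_S(V(Q))=Q$ you cannot conclude that ``an offender outside $Q$ in $T'$ cannot exist''; offenders outside $Q$ certainly exist (namely in $J(T)\setminus Q$), and nothing in Remark~\ref{(IV)} or Corollary~\ref{Cor3} prevents an $A\in\m{A}(T')$ from lying in $T'\setminus T$. Corollary~\ref{Cor3} controls $J$ of subgroups whose \emph{automizer} is not in $\m{N}$, but you have not produced such a subgroup containing $A$. In the paper this equality is Lemma~\ref{JNSUT}, and its proof is far from a one-line offender argument: it rests on a chain of lemmas (\ref{PUFsl}, \ref{NT0R}, \ref{AbleqQ}, \ref{JT0T}) that analyse the minimal offenders $\m{A}_*(\,\cdot\,,V(U))$ and, crucially, the subgroup $R(Q):=[V(Q),J(T)]$. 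The key fact, Lemma~\ref{NT0R}, is that $N_S(U)\cap N_S(R(Q))=N_S(Q)$; this is what pins down how $T'$ can meet $T$, and nothing in your outline approaches it.

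For $B(T')\leq T$: transporting $H$ into $G^*=G(U)$ and invoking Lemmas~\ref{BTX}/\ref{BaumCor} there does not identify $B(T')$ with $B(T)$. Those lemmas are tailored to the model $G(Q)$ via the Thompson-restricted structure of $Q$; there is no analogous hypothesis available for $U$ inside $G^*$, and $V(Q)$ is not a normal subgroup of $G^*$ on which Hypothesis~\ref{BaumHyp} holds. The paper's route is direct once $J(T')=J(T)$ is in hand: then $B(T')=C_{T'}(\Omega(Z(J(T))))$, and since $R(Q)\leq \Omega(Z(J(T)))$ by Remark~\ref{RCent}, $B(T')$ centralizes $R(Q)$; now Lemma~\ref{NT0R} gives $B(T')\leq N_{T'}(R(Q))=T$, whence $B(T')=B(T)$. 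The subgroup $R(Q)$ is doing all the work here, and it is entirely absent from your proposal.
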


Here for a Thompson-restricted subgroup $Q$ of $\F$ recall the definition of $\Ac(Q)$ and of $\F$-characteristic subgroups from Notation~\ref{AcDef} and Definition~\ref{chDef}. For a finite group $H$, recall the Definition of the Baumann subgroup $B(H)$ from Definition~\ref{BaumDef}.

\bigskip

Lemma~\ref{PUThm} is a major step in the proof of Theorem~\ref{CasesL} because, together with Lemma~\ref{BTH}, it enables us to apply the pushing up result by Baumann and Niles in the form stated in Theorem~\ref{PUGrCor}.

\bigskip

In the remainder of this section we use the following notation: For $P\in\F$ set 
$$A(P):=Aut_\F(P)\mbox{ and }V(P):=\Omega(Z(P)).$$ 
Recall from Notation~\ref{SP} that, for subgroups $P$ and $R$ of $S$,
$$R_P:=Aut_R(P):=\{{c_g}_{|P,P}:g\in N_R(P)\}.$$

\subsection{Preliminaries}\label{PuFs1}

Throughout Subsection~\ref{PuFs1} assume Hypothesis~\ref{HypMaxParUnique1}.

\begin{lemma}\label{A(T)}
Let $Q\in\Q$. Then $A(YQ)\leq \m{N}$ and $N_{A(Q)}(Y_Q)\leq \m{N}$, for every subgroup $Y$ of $T$ with $J(QY)\not\leq Q$.
\end{lemma}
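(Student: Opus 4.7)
The plan is to reduce both statements to Corollary~\ref{Cor3} and the extension criterion in Remark~\ref{SatAut}(b).

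For the first assertion $A(YQ)\leq\m{N}$: since $Q\in\Q$, by definition of $\Q$ we have $Q\in\FNT$, and in particular $J(Q)\leq Q\leq YQ$. So the hypothesis of Corollary~\ref{Cor3} is satisfied with $X:=Q$ and the ambient subgroup $YQ$ in the role of the ``$Q$'' there, provided $A(YQ)\not\leq\m{N}$. In that case Corollary~\ref{Cor3} would yield $J(YQ)=J(Q)\leq Q$, contradicting the assumption $J(QY)\not\leq Q$. Hence $A(YQ)\leq\m{N}$.

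For the second assertion $N_{A(Q)}(Y_Q)\leq\m{N}$: I plan to show that every $\phi\in N_{A(Q)}(Y_Q)$ extends to a morphism in $A(YQ)$, after which the first part finishes the proof. Since $Y\leq T=N_S(Q)$, we have $Y_Q=\{c_y{}_{|Q,Q}:y\in Y\}$, and since $Q\unlhd YQ$ every element of $(YQ)_Q$ is induced by some $yq$ with $y\in Y$, $q\in Q$; writing $c_{yq}{}_{|Q}=c_y{}_{|Q}\cdot c_q{}_{|Q}$ shows
\[
(YQ)_Q=Y_Q\cdot Inn(Q).
\]
Because $Inn(Q)$ is always normal in $A(Q)$, any $\phi$ that normalizes $Y_Q$ also normalizes $(YQ)_Q$, i.e. $(YQ)_Q\phi^*=(YQ)_Q$. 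Recall from Remark~\ref{SatAut}(b) that this is equivalent to $\phi$ extending to an element $\hat\phi\in A(YQ)$, provided $C_S(Q)\leq YQ$ and $Q$ is fully centralized. The former holds because $Q$ is centric (so $C_S(Q)\leq Q\leq YQ$), and the latter holds because $Q\in\FNT$ is in particular fully normalized and centric, hence fully centralized. Thus the extension $\hat\phi\in A(YQ)$ exists.

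By the first part $\hat\phi\in A(YQ)\leq\m{N}$, and consequently $\phi=\hat\phi{}_{|Q,Q}\in\m{N}$, giving $N_{A(Q)}(Y_Q)\leq\m{N}$. There is no serious obstacle in this argument; the only point demanding care is the identification $(YQ)_Q=Y_Q\cdot Inn(Q)$, which is what allows the passage from normalization of $Y_Q$ in $A(Q)$ to the hypothesis of Remark~\ref{SatAut}(b). Both parts use only the Thompson-maximality of $Q$ in $\FN$ (via Corollary~\ref{Cor3}) and saturation of $\F$, so Hypothesis~\ref{HypMaxParUnique1} is more than enough.
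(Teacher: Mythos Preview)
Your proof is correct and follows essentially the same approach as the paper: apply Corollary~\ref{Cor3} (in contrapositive) for the first claim, then use Remark~\ref{SatAut}(b) to extend automorphisms from $Q$ to $YQ$ for the second. One tiny point of attribution: the fact that $Q$ is fully normalized (hence fully centralized) comes from $Q$ being Thompson-restricted (Definition~\ref{ThRestrictDef}), not merely from $Q\in\FNT$; otherwise the argument is exactly as in the paper.
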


\begin{proof}
Set $X:=YQ$. By Corollary~\ref{Cor3}, we have $A(X)\leq \m{N}$. Since $Q$ is fully normalized and $C_S(Q)\leq Q\leq X$, Remark~\ref{SatAut}(b) implies that every element of $N_{A(Q)}(X_Q)$ extends to an element of $A(X)$. As $N_{A(Q)}(Y_Q)\leq N_{A(Q)}(X_Q)$, this shows the assertion.
\end{proof}

\begin{remark}\label{RemAc}
Let $Q\in\Q$. Then $\Ac(Q)\not\leq \m{N}$.
\end{remark}

\begin{proof}
Otherwise, by the Frattini Argument and \ref{A(T)}, $A(Q)=\Ac(Q)N_{A(Q)}(J(T)_Q)\leq \m N$, contradicting $Q\in\FN$.
\end{proof}

\begin{lemma}\label{trivial0}
Let $Q\in\Q$, let $U\in\F$ be $\F$-characteristic in $Q$ and characteristic in $S$. Then $U=1$.
\end{lemma}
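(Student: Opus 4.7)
The plan is to argue by contradiction: supposing $U\neq 1$, I will show that $\Ac(Q)\leq\m{N}$, contradicting Remark~\ref{RemAc}. The key is to exploit the factorisation
$$ \Ac(Q)=C_{\Ac(Q)}(V(Q))\,N_{\Ac(Q)}(U) $$
supplied by Definition~\ref{chDef} (the defining equation for $U\ch Q$) and to place each of the two factors inside $\m{N}$ by applying Hypothesis~\ref{HypMaxParUnique1} to two different characteristic subgroups of $S$.

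For the centralizer factor, since $Q\in\Q\subseteq\FN$ is centric and contained in $S$, we have $Z:=\Omega(Z(S))\leq\Omega(Z(Q))=V(Q)$, so every automorphism centralising $V(Q)$ also centralises $Z$. Because $Z$ is characteristic in $S$, Hypothesis~\ref{HypMaxParUnique1} gives $C_\F(V(Q))\leq C_\F(Z)\leq N_\F(Z)\leq\m{N}$, hence $C_{\Ac(Q)}(V(Q))\leq\m{N}$.

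For the normalizer factor, $U$ being characteristic in $S$ forces $N_S(U)=S$, so $U$ is fully normalized in $\F$ and Hypothesis~\ref{HypMaxParUnique1} again yields $N_\F(U)\leq\m{N}$. Since $U\leq Q$, any $\varphi\in N_{A(Q)}(U)$ is its own extension from $QU=Q$ to $QU=Q$ stabilising $U$, so by Puig's construction $\varphi$ is a morphism of $N_\F(U)$; thus $N_{\Ac(Q)}(U)\leq\m{N}$. Combining the two factors yields $\Ac(Q)\leq\m{N}$, the desired contradiction. No real obstacle arises here; the only point that needs care is the passage $N_{A(Q)}(U)\subseteq\mathrm{Mor}_{N_\F(U)}(Q,Q)$, which is immediate because $U\leq Q$ so the automorphism is already defined on $QU=Q$ and fixes $U$ setwise.
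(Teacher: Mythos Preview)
Your proof is correct and follows essentially the same route as the paper's: both use the factorisation $\Ac(Q)=C_{\Ac(Q)}(V(Q))N_{\Ac(Q)}(U)$ from $U\ch Q$, place the first factor in $\m{N}$ via $\Omega(Z(S))$ being characteristic in $S$ and the second via $U$ being characteristic in $S$, and then invoke Remark~\ref{RemAc}. Your write-up simply makes explicit the two points the paper takes for granted (that $\Omega(Z(S))\leq V(Q)$ since $Q$ is centric, and that elements of $N_{A(Q)}(U)$ are morphisms of $N_\F(U)$).
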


\begin{proof}
Assume $U\neq 1$. As $U$ is characteristic in $S$,  Hypothesis \ref{HypMaxParUnique1} implies  $N_{\Ac(Q)}(U)\leq N_\F(U)\leq \m N$ and  $C_{\Ac(Q)}(V(Q))\leq N_\F(\Omega(Z(S)))\leq \m N$. Hence, as $U\ch Q$, we have $\Ac(Q)\leq \m N$. This is a contradiction to Remark~\ref{RemAc}.
\end{proof}

For $Q\in\Q$ and a subgroup $U$ of $Q$ with $1\neq U\unlhd T$ define $U^*(Q)$ as in Notation~\ref{FminNot}.

\begin{notation}\label{CQDef}
Let $Q\in \Q$.
\begin{itemize}
\item Set 
$$\m{C}(Q)=\{U\leq Q\;:\; U\ch Q,\;C_S(V(U))=U=U^*(Q)\}.$$
\item We define $\m{C}^*(Q)$ to be the set of all $1\neq U\ch Q$ such that $U$ is fully normalized and
$$|U|=max\{|U^*|:U^*\ch Q,\;U^*\unlhd N_S(U)\}.$$
\end{itemize}
\end{notation}

Let $Q\in\Q$. Observe that, by the definition of $U^*(Q)$, we have $U^*(Q)\ch Q$ and $U^*(Q)\unlhd N_S(U)$, for every $U\ch Q$. Hence, for every $U\in \m{C}^*(Q)$, $U=U^*(Q)$. Also note $V(Q)\leq C_S(V(U))=U$, for every $U\in \m{C}(Q)$. This implies the following remark.

\begin{remark}\label{V(Q)Rem}
Let $Q\in\Q$ and $U\in \m{C}(Q)$. Then $V(Q)\leq V(U)$.
\end{remark}

\begin{lemma}\label{XleqU}
Let $Q\in \Q$, $U\in \m{C}^*(Q)$ and $X\leq Q$ such that $X\unlhd N_S(U)$. Then $X\leq U$.
\end{lemma}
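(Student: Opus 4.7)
The plan is to reduce the claim directly to Lemma~\ref{XU*Char} together with the maximality built into the definition of $\m{C}^*(Q)$. Concretely, I would argue as follows.

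First, I would record that since $U\ch Q$, we have $U\unlhd T$, where $T=N_S(Q)$; hence $T\leq N_S(U)$. As $X\unlhd N_S(U)$ by hypothesis, this gives $T\leq N_S(U)\leq N_S(X)$, so $X\unlhd T$. Next, I would invoke the remark following Notation~\ref{CQDef} to obtain $U=U^*(Q)$, which is automatic for members of $\m{C}^*(Q)$ (this uses only that $U\ch Q$ is fully normalized and that $U^*(Q)$ is itself $\F$-characteristic in $Q$ and normal in $N_S(U)$).

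Now I would apply Lemma~\ref{XU*Char}, which requires $\F$ minimal (available by Hypothesis~\ref{HypMaxParUnique1}), $1\neq U\ch Q$ fully normalized, and $X\leq Q$ with $X\unlhd T$. All these are in hand, so the lemma yields
\[
UX \;=\; U^*(Q)X \;\ch\; Q .
\]
On the other hand, $U$ and $X$ are both normal in $N_S(U)$, so $UX\unlhd N_S(U)$, and of course $UX\leq Q$ since $X\leq Q$ and $U\leq Q$. Thus $UX$ is one of the subgroups appearing in the set over which the maximum in the definition of $\m{C}^*(Q)$ is taken, and so by the maximality of $|U|$ we conclude $|UX|\leq |U|$, i.e.\ $UX=U$, which gives $X\leq U$ as required.

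No serious obstacle arises: the only slightly delicate points are (i) verifying that the hypothesis $X\unlhd T$ of Lemma~\ref{XU*Char} is implied (via $T\leq N_S(U)$) by the weaker $X\unlhd N_S(U)$, and (ii) observing that $UX$ is still normalized by $N_S(U)$ so that it competes in the maximum defining $\m{C}^*(Q)$. Both are immediate from the set-up, so the proof is essentially one line of bookkeeping on top of Lemma~\ref{XU*Char}.
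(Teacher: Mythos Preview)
Your proof is correct and follows essentially the same route as the paper's: apply Lemma~\ref{XU*Char} (using $U=U^*(Q)$) to get $UX\ch Q$, note $UX\unlhd N_S(U)$, and invoke the maximality of $|U|$ in the definition of $\m{C}^*(Q)$. Your write-up simply makes explicit the verification that $X\unlhd T$ (via $T\leq N_S(U)$), which the paper leaves implicit.
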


\begin{proof}
Since $U\ch Q$ and $U=U^*(Q)$, it follows from Lemma~\ref{XU*Char} that $UX\ch Q$. Moreover, $UX\unlhd N_S(U)$. Hence, the maximality of $|U|$ yields $X\leq U$.
\end{proof}

\begin{lemma}\label{Xt}
Let $Q\in \Q$, $U\in \m{C}^*(Q)$ and $X\leq Q$ such that $X\not\leq U$. Then there is $t\in N_S(U)$ such that $X^t\not\leq Q$.
\end{lemma}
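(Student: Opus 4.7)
The plan is to argue by contraposition, reducing the claim immediately to Lemma~\ref{XleqU}. Suppose, for the sake of contradiction, that $X^t\leq Q$ for every $t\in N_S(U)$. Then the subgroup
$$Y:=\<X^{N_S(U)}\>$$
is contained in $Q$, because it is generated by subgroups all of which lie in $Q$. On the other hand, $Y$ is normal in $N_S(U)$ by construction, since $N_S(U)$ permutes the $N_S(U)$-conjugates of $X$ by conjugation.

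Having produced a subgroup $Y\leq Q$ with $Y\unlhd N_S(U)$, I can apply Lemma~\ref{XleqU}, which gives $Y\leq U$. Since $X\leq Y$, this forces $X\leq U$, contradicting the hypothesis $X\not\leq U$. Hence some $t\in N_S(U)$ must satisfy $X^t\not\leq Q$, proving the lemma.

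There is no real obstacle here: the statement is essentially a restatement of Lemma~\ref{XleqU} in contrapositive form, with the only (minor) observation being that the normal closure of $X$ under $N_S(U)$ stays inside $Q$ precisely when every conjugate $X^t$ does. The content has already been done in proving $U\in\m{C}^*(Q)$ absorbs all $N_S(U)$-invariant subgroups of $Q$ (via Lemma~\ref{XU*Char} and the maximality of $|U|$), so this lemma is a direct corollary.
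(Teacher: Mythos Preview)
Your proof is correct and follows exactly the same approach as the paper: assume all $N_S(U)$-conjugates of $X$ lie in $Q$, form the normal closure $\<X^{N_S(U)}\>\leq Q$, and apply Lemma~\ref{XleqU} to obtain the contradiction $X\leq U$. The paper's version is simply a one-line compression of what you wrote.
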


\begin{proof}
Otherwise $\<X^{N_S(U)}\>\leq Q$, a contradiction to Lemma~\ref{XleqU} and $X\not\leq U$.
\end{proof}

\begin{lemma}\label{C*C}
Let $Q\in\Q$. Then $\m{C}^*(Q)\subseteq \m{C}(Q)$.
\end{lemma}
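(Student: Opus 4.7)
Let $U\in\m{C}^*(Q)$. Since $U\ch Q$ is built into the definition of $\m{C}^*(Q)$ and the equality $U=U^*(Q)$ was already observed just after Notation~\ref{CQDef}, proving $U\in\m{C}(Q)$ reduces to verifying $C_S(V(U))=U$. The inclusion $U\leq C_S(V(U))$ is immediate from $V(U)\leq Z(U)$, so the substantive task is $C_S(V(U))\leq U$.

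My plan is to isolate the single subclaim $V(Q)\leq U$ and deduce the rest. Assuming $V(Q)\leq U$, we obtain $V(Q)\leq U\cap Z(Q)\leq Z(U)$, and since $V(Q)$ has exponent $p$ this yields $V(Q)\leq V(U)$. By the Thompson-restrictedness of $Q$ (Definition~\ref{ThRestrictDef}(ii) applied to $V=V(Q)$, noting that $\Omega(Z(T))\leq V(Q)$ since $Q$ is centric and hence $Z(T)\leq Q$ gives $Z(T)\leq Z(Q)$), we have $C_S(V(Q))=Q$, so $C_S(V(U))\leq C_S(V(Q))=Q$. Since $V(U)$ is characteristic in $U$, $N_S(U)$ normalizes $V(U)$ and hence $C_S(V(U))$; thus $C_S(V(U))$ is a subgroup of $Q$ normal in $N_S(U)$. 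Lemma~\ref{XleqU} applied with $X=C_S(V(U))$ then gives $C_S(V(U))\leq U$, completing the argument.

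The main obstacle is the subclaim $V(Q)\leq U$. The natural route is to apply Lemma~\ref{XleqU} once more, this time with $X=V(Q)$: since $V(Q)\leq Q$ is $T$-invariant (being characteristic in $Q$) and $T\leq N_S(U)$ from $U\unlhd T$, it suffices to show $V(Q)\unlhd N_S(U)$, which, because $V(Q)$ is characteristic in $Q$, in turn reduces to $N_S(U)=T$. If this equality failed, picking $t\in N_S(U)\setminus T$ would give $Q\neq Q^t$, yet $Q^t\in\Q$ by Lemma~\ref{ThRestrConj1} and $U\leq Q\cap Q^t$. To force the contradiction I would invoke Lemma~\ref{Xt} contrapositively: assuming $V(Q)\not\leq U$, there must be $s\in N_S(U)$ with $V(Q^s)=V(Q)^s\not\leq Q$, producing $v\in V(Q^s)\setminus Q$ of order $p$ centralizing $U$ (as $V(Q^s)\leq Z(Q^s)\leq C_S(U)$). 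Combining such an element with the Thompson-maximality of $Q$ in $\FN$ and the maximality of $|U|$ in $\m{C}^*(Q)$ — for instance by constructing, out of $v$ and the pushing-up framework of Lemmas~\ref{BTX}--\ref{Fmin1}, an $N_S(U)$-invariant subgroup that is $\ch Q$ and strictly contains $U$ — should then yield the required contradiction, thereby establishing $V(Q)\leq U$.
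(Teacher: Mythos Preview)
Your reduction to the subclaim $V(Q)\leq U$ is sensible, and the final step (once $C_S(V(U))\leq Q$ is known, conclude $C_S(V(U))\leq U$ via Lemma~\ref{XleqU}) is fine. The gap is precisely in establishing that subclaim. Your argument for it is not complete: you reduce $V(Q)\unlhd N_S(U)$ to $N_S(U)=T$, but there is no reason for this equality to hold here (indeed, the whole pushing-up analysis in Section~\ref{PUFs} is designed to handle $N_S(U)>T$). Your fallback paragraph then assumes $V(Q)\not\leq U$, applies Lemma~\ref{Xt} to produce $v\in V(Q^s)\setminus Q$ centralizing $U$, and asserts that ``the pushing-up framework of Lemmas~\ref{BTX}--\ref{Fmin1}'' should manufacture a larger $\F$-characteristic subgroup. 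But $v\notin Q$, so $U\langle v\rangle\not\leq Q$ and cannot be $\F$-characteristic in $Q$; none of the cited lemmas produces an $\F$-characteristic subgroup of $Q$ from such a $v$, and the claim $Q^t\in\Q$ for $t\in N_S(U)\setminus T$ is not justified by Lemma~\ref{ThRestrConj1} (which requires a morphism defined on $N_S(Q)$, not on $N_S(U)$). So this part is a genuine hole, not a routine verification.

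The paper avoids the obstacle entirely by not aiming for $V(Q)\leq U$ directly. Instead it takes $Z:=\Omega(Z(S))$, which \emph{is} normal in $N_S(U)$ (being characteristic in $S$), so Lemma~\ref{XleqU} gives $Z\leq U$ and hence $Z\leq V(U)$. Then the $\F$-characteristic condition $\Ac(Q)=C_{\Ac(Q)}(V(Q))N_{\Ac(Q)}(U)$ forces $V:=\langle Z^{\Ac(Q)}\rangle=\langle Z^{N_{\Ac(Q)}(U)}\rangle\leq V(U)$. Lemma~\ref{trivial0} ensures $[V,\Ac(Q)]\neq 1$, whence Remark~\ref{(IV)}(c) gives $C_S(V)=Q$ and thus $C_S(V(U))\leq Q$. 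From there the argument finishes as you outlined. The point you are missing is that $\Omega(Z(S))$, unlike $V(Q)$, is automatically $N_S(U)$-invariant, and the $\F$-characteristic hypothesis on $U$ lets you bootstrap from this smaller subgroup to a $V\leq V(U)$ large enough that $C_S(V)=Q$.
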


\begin{proof}
Set $T:=N_S(Q)$ and let $U\in \m{C}^*(Q)$. As already remarked above, $U=U^*(Q)$. By Lemma~\ref{XleqU}, we have $Z:=\Omega(Z(S))\leq U$ and so $Z\leq V(U)$. Hence, as $U\ch Q$ and $Z\leq V(Q)$, we have $V:=\<Z^{\Ac(Q)}\>=\<Z^{N_{\Ac(Q)}(U)}\>\leq V(U)$. Lemma~\ref{trivial0} implies $Z\not\leq C(\Ac(Q))$. 
Thus, $[V,\Ac(Q)]\neq 1$ and, by Remark~\ref{(IV)}(c), we have $C_S(V)=Q$. Therefore, $C_S(V(U))\leq Q$ and so 
$C_S(V(U))=C_Q(V(U))\ch Q$. At the same time, $C_S(V(U))\unlhd N_S(U)$. Hence, the maximality of $|U|$ yields $U=C_S(V(U))$ and thus $U\in\m{C}(Q)$.
\end{proof}

\begin{notation}\label{Astar}
Let $W\leq S$ be elementary abelian and $W\leq Y\leq N_S(W)$. Then we write $\m{A}_*(Y,W)$ for the set of elements $A\in\m{A}(Y)$ with $[A,W]\neq 1$ for which $AC_Y(W)$ is minimal with respect to inclusion among the groups $BC_Y(W)$ with $B\in\m{A}(Y)$ and $[W,B]\neq 1$. (In particular, $\m{A}_*(Y,W)=\emptyset$ if $[W,J(Y)]=1$.)
\end{notation}

\begin{lemma}\label{minBestOff} 
Let $Q\in \Q$, $U\in \m{C}(Q)$, $W:=V(U)$ and $A\in \m{A}_*(T,W)$.
Assume $A\not\leq Q$. Then 
$$|W/C_W(A)|=|A/C_A(W)|=q\;\mbox{and}\;W=V(Q)C_W(A).$$
\end{lemma}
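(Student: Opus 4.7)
The plan is to derive Lemma~\ref{minBestOff} as a direct application of Lemma~\ref{1} from the FF-modules section, taking the ambient group to be $T$, the smaller module to be $V := V(Q)$ and the larger one to be the $W = V(U)$ of the statement.

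First I would verify the set-up. Both $V(Q)$ and $V(U)=W$ are elementary abelian and normal in $T$: $V(Q)\unlhd T$ because $Q\unlhd T$, while $V(U)\unlhd T$ because $U\ch Q$ forces $U\unlhd T$ by Definition~\ref{chDef}. The inclusion $V(Q)\leq V(U)$ is Remark~\ref{V(Q)Rem} (which applies since $U\in\m{C}(Q)$). The minimality of $AC_T(W)$ among the $BC_T(W)$ with $B\in\m{A}(T)$ and $[W,B]\neq 1$ is exactly the hypothesis $A\in\m{A}_*(T,W)$, and $[V(Q),A]\neq 1$ follows from $A\not\leq Q=C_S(V(Q))$, the latter equality coming from the definition of Thompson-restricted applied with $V=V(Q)$.

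Next I would check the remaining two hypotheses of Lemma~\ref{1}. For $[V(Q),J(T)]\neq 1$: since $Q$ is Thompson-restricted, $\Ac(Q)/C_{\Ac(Q)}(V(Q))\cong SL_2(q)$ acts non-trivially, and by the definition of $\Ac(Q)$ this action is realised by $A(Q)$-conjugates of $J(T)_Q$ modulo the centralizer; if $J(T)\leq Q$ then $J(T)_Q\leq Inn(Q)\leq C_{A(Q)}(V(Q))$ (because $V(Q)\leq Z(Q)$), forcing $\Ac(Q)=C_{\Ac(Q)}(V(Q))$, a contradiction. Hence $J(T)\not\leq Q=C_S(V(Q))$ and $[V(Q),J(T)]\neq 1$. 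To see that $A$ is not an over-offender on $V(Q)$, I would apply Remark~\ref{(IV)}(c) to $V=V(Q)$ (which is $\Ac(Q)$-invariant and, by Lemma~\ref{trivial0}, not centralized by $\Ac(Q)$), yielding $|V(Q)/C_{V(Q)}(A)|=|A/C_A(V(Q))|=q$, so equality, not strict inequality, holds.

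With all hypotheses in place, Lemma~\ref{1} yields
\[
|W/C_W(A)|=|A/C_A(W)|=|V(Q)/C_{V(Q)}(A)|=q\quad\text{and}\quad W=V(Q)C_W(A),
\]
which is precisely the claim. The only point where care is needed is the verification that $A$ is not an over-offender on $V(Q)$; once this is recognised as an instance of the natural $SL_2(q)$-module structure on $V(Q)$ modulo $C_{V(Q)}(\Ac(Q))$ (encoded in Remark~\ref{(IV)}(c)), the proof reduces to citing Lemma~\ref{1}.
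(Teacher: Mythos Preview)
Your proof is correct and follows essentially the same approach as the paper: both reduce the claim to an application of Lemma~\ref{1} with $V=V(Q)$ and $W=V(U)$, using Remark~\ref{V(Q)Rem} for the inclusion $V(Q)\leq W$ and Remark~\ref{(IV)}(c) to obtain $|V(Q)/C_{V(Q)}(A)|=|A/C_A(V(Q))|=q$ (hence $A$ is not an over-offender on $V(Q)$). Your write-up is simply more explicit in checking the remaining hypotheses of Lemma~\ref{1}; the invocation of Lemma~\ref{trivial0} for $[V(Q),\Ac(Q)]\neq 1$ is a slight detour, since this already follows from the Thompson-restricted structure (the quotient $V(Q)/C_{V(Q)}(\Ac(Q))$ is a natural $SL_2(q)$-module), but it does no harm.
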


\begin{proof}
By Lemma~\ref{V(Q)Rem}, $V(Q)\leq W$. As $C_S(V(Q))=Q$, we have $[V(Q),A]\neq 1$. Remark~\ref{(IV)}(c) implies
$$|V(Q)/C_{V(Q)}(A)|=|A/C_A(V(Q))|=q.$$
Hence, the assertion follows from Lemma~\ref{1}.
\end{proof}

\begin{notation}
For $Q\in\Q$ set $$R(Q)=[V(Q),J(N_S(Q))].$$
\end{notation}

\begin{remark}\label{RCent}
Let $Q\in\Q$ and set $T:=N_S(Q)$. Then $[R(Q),J(T)Q]=1$ and $R(Q)=[V(Q),A]$, for every $A\in \m{A}(T)$ with $A\not\leq Q$.
\end{remark}

\begin{proof}
This follows from Remark~\ref{(IV)}(a),(b).
\end{proof}

\begin{lemma}\label{R=}
Let $Q\in\Q$, $U\in \m{C}(Q)$ and $A\in\m{A}_*(T,V(U))$ such that $A\not\leq Q$. Then $R(Q)=[V(U),A]$.
\end{lemma}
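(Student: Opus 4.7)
The plan is short: use Remark~\ref{RCent} to rewrite $R(Q)$ and Lemma~\ref{minBestOff} to rewrite $V(U)$, then the claim drops out by a one-line commutator calculation.

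First I would invoke Remark~\ref{RCent}, which applies because $A\in\m{A}(T)$ with $A\not\leq Q$; it gives $R(Q)=[V(Q),A]$. Next I would apply Lemma~\ref{minBestOff} (whose hypotheses are exactly those of the present lemma, since $U\in\m{C}(Q)$, $W:=V(U)$ and $A\in\m{A}_*(T,V(U))$ with $A\not\leq Q$) to obtain $V(U)=V(Q)\,C_{V(U)}(A)$. Then
\[
[V(U),A]=[V(Q)\,C_{V(U)}(A),A]=[V(Q),A]=R(Q),
\]
where the middle equality uses that commutators distribute over products and that $[C_{V(U)}(A),A]=1$ by definition of the centralizer.

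The main ``obstacle'' is really just bookkeeping: one has to be sure that the hypotheses of Lemma~\ref{minBestOff} are available verbatim, which they are, and that $V(Q)\leq V(U)$ so that the product decomposition inside $V(U)$ makes sense. The containment $V(Q)\leq V(U)$ is precisely Remark~\ref{V(Q)Rem}, so no extra work is needed. Thus the proof is essentially a direct substitution of two earlier results, and no new idea is required.
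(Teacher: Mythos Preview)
Your proof is correct and follows exactly the same approach as the paper: invoke Remark~\ref{RCent} for $R(Q)=[V(Q),A]$, invoke Lemma~\ref{minBestOff} for $V(U)=V(Q)C_{V(U)}(A)$, and combine them via the obvious commutator identity. The only difference is that the paper omits the final commutator calculation (leaving it as ``this implies the assertion''), whereas you spell it out.
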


\begin{proof}
By Remark~\ref{RCent}, $R(Q)=[V(Q),A]$, and by Lemma~\ref{minBestOff}, $V(U)=V(Q)C_{V(U)}(A)$. This implies the assertion.
\end{proof}

\begin{lemma}\label{ThRestrConj2a}
Let $Q\in\Q$ and $\phi\in Mor_\F(N_S(Q),S)$. Then $Q\phi\in \Q$, $N_S(Q)\phi=N_S(Q\phi)$, $\Ac(Q)\phi^*=\Ac(Q\phi)$,\footnote{Recall Notation~\ref{phi*}} $V(Q)\phi=V(Q\phi)$ and $R(Q)\phi=R(Q\phi)$. Moreover, for every $U\ch Q$, we have $U\phi\ch Q\phi$.
\end{lemma}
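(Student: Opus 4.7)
The plan is to show first that $\phi \in \m{N}$; once this is known, every remaining assertion is just transport of structure under an isomorphism of fusion systems, using Lemma~\ref{ThRestrConj1} for the Thompson-restricted property.

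To establish $N_S(Q)\phi = N_S(Q\phi)$, I will use that $Q$ is fully normalized (which follows from the definition of Thompson-restricted): the chain $|N_S(Q)| \ge |N_S(Q\phi)| \ge |N_S(Q)\phi| = |N_S(Q)|$ together with $N_S(Q)\phi \le N_S(Q\phi)$ gives equality, and shows also that $Q\phi$ is fully normalized. The key step, that $\phi \in \m{N}$, will be obtained by applying Corollary~\ref{Cor2} to $X = Q$ and $R = N_S(Q)$: either $J(N_S(Q)) = J(Q)$, or $\phi$ is already in $\m{N}$. In the first case, using $N_S(J(Q)) = N_S(Q)$ (from the Thompson-restricted property), I get $J(N_S(J(Q))) = J(Q) \le Q$; by the contrapositive of Remark~\ref{ThomQ} this forces $J(S) \le Q$, and a direct $p$-rank comparison then yields $J(Q) = J(S)$ and $N_S(Q) = N_S(J(S)) = S$. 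Thus $\phi \in A(S)$, and since $\phi(J(S))$ is a subgroup of $S$ generated by its elementary abelian subgroups of maximal order (which lie in $J(S)$), order comparison gives $\phi(J(S)) = J(S)$; as $J(S)$ is characteristic in $S$, Hypothesis~\ref{HypMaxParUnique1} places $\phi$ in $N_\F(J(S)) \le \m{N}$.

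With $\phi \in \m{N}$ in hand, $Q\phi \in \Q$ follows directly: $Q\phi$ is centric (the image of a fully centralized centric subgroup under an isomorphism of normalizers has $C_S(Q\phi) = C_S(Q)\phi \le Q\phi$); $A(Q\phi) \not\le \m{N}$ because any $\alpha \in A(Q) \setminus \m{N}$ would otherwise yield $\alpha = \phi(\phi^{-1}\alpha\phi)\phi^{-1} \in \m{N}$; $Q\phi$ is Thompson-maximal in $\FN$ since it has the same $p$-rank and the same $|J|$ as $Q$; and $Q\phi$ is Thompson-restricted by Lemma~\ref{ThRestrConj1}. The remaining equalities then fall out by naturality. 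Concretely, $V(Q)\phi = \Omega(Z(Q))\phi = \Omega(Z(Q\phi)) = V(Q\phi)$; using $N_S(Q)\phi = N_S(Q\phi)$, $J(N_S(Q))\phi = J(N_S(Q\phi))$, so $R(Q)\phi = [V(Q), J(N_S(Q))]\phi = R(Q\phi)$. For $\Ac$, I will observe that the map $\phi^* \colon A(Q) \to A(Q\phi)$ sends $c_g|_Q$ to $c_{g\phi}|_{Q\phi}$, hence sends $J(N_S(Q))_Q$ to $J(N_S(Q\phi))_{Q\phi}$ and $C_{A(Q)}(V(Q))$ to $C_{A(Q\phi)}(V(Q\phi))$; applying $\phi^*$ to the defining expression of $\Ac(Q)$ yields $\Ac(Q\phi)$. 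Finally, if $U \ch Q$, then $U\phi \le Q\phi$ and $U\phi \unlhd N_S(Q\phi)$ since $\phi$ is an isomorphism of $N_S(Q)$ onto $N_S(Q\phi)$, and applying $\phi^*$ to $\Ac(Q) = C_{\Ac(Q)}(V(Q))N_{\Ac(Q)}(U)$ gives $\Ac(Q\phi) = C_{\Ac(Q\phi)}(V(Q\phi))N_{\Ac(Q\phi)}(U\phi)$, so $U\phi \ch Q\phi$.

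The main obstacle is the first part: one must notice that in the delicate case where Corollary~\ref{Cor2} does not immediately give $\phi \in \m{N}$, the identity $J(N_S(Q)) = J(Q)$ combined with the Thompson-restricted hypothesis is rigid enough to force $Q$ to be essentially controlled by $J(S)$, allowing the hypothesis on characteristic subgroups to finish the job. Everything else is bookkeeping with the functorial definitions of $V$, $R$, $\Ac$ and $\ch$.
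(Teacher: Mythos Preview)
Your proof is correct, but it takes a small detour compared to the paper. In the application of Corollary~\ref{Cor2} you carefully treat the alternative $J(N_S(Q))=J(Q)$ and argue via Remark~\ref{ThomQ} and Hypothesis~\ref{HypMaxParUnique1} that even then $\phi\in\m{N}$. The paper avoids this case entirely by observing that it cannot occur: for a Thompson-restricted subgroup $Q$ one has $J(T)\not\le Q$ with $T=N_S(Q)$ (this is implicit in Remark~\ref{(IV)}, since $\Ac(Q)/C_{\Ac(Q)}(V(Q))\cong SL_2(q)$ has nontrivial Sylow $p$-subgroup $J(T)_Q\,\mathrm{Inn}(Q)/\mathrm{Inn}(Q)$, so $J(T)\not\le C_S(V(Q))=Q$). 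Hence $J(N_S(Q))\neq J(Q)$, and Corollary~\ref{Cor2} gives $\phi\in\m{N}$ immediately. After that both arguments coincide: Lemma~\ref{ThRestrConj1} gives that $Q\phi$ is Thompson-restricted, $\phi\in\m{N}$ yields $A(Q\phi)=\phi^* A(Q)\not\le\m{N}$ and hence $Q\phi\in\FN^*$, and the remaining equalities for $N_S$, $V$, $R$, $\Ac$ and $\ch$ are routine transport of structure under the isomorphism $\phi^*:A(Q)\to A(Q\phi)$. Your extra case is harmless but superfluous; noting $J(T)\not\le Q$ would shorten the argument.
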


\begin{proof}
By Lemma~\ref{ThRestrConj1}, $Q\phi$ is Thompson-restricted. As $J(N_S(Q))\not\leq Q$ and $Q$ is Thompson-maximal in $\FN$, it follows from Corollary~\ref{Cor2} that $\phi$ is a morphism in $\m{N}$. Hence, $A(Q\phi)=A(Q)\phi^*\not\leq \m{N}$ as $A(Q)\not\leq \m{N}$. Thus, $Q\phi\in \FN$ and Thompson-maximal in $\FN$, since $Q$ is Thompson-maximal in $\FN$. Hence, $Q\phi\in \Q$. Now the assertion is easy to check as the map $\phi^*:A(Q)\rightarrow A(Q\phi)$ is an isomorphism of groups with $J(N_S(Q))_Q\phi^*=J(N_S(Q\phi))_{Q\phi}$. 
\end{proof}

\begin{corollary}\label{ThRestrConj2}
Let $Q\in\Q$ and $1\neq U\ch Q$. Then there is $\phi\in Mor_\F(N_S(U),S)$ such that $U\phi$ is fully normalized. For each such $\phi$ we have $Q\phi\in \Q$, $U\phi\ch Q\phi$, $N_S(Q)\phi=N_S(Q\phi)$, $\Ac(Q)\phi^*=\Ac(Q\phi)$, $V(Q)\phi=V(Q\phi)$ and $R(Q)\phi=R(Q\phi)$.
\end{corollary}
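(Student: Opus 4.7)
The plan is to reduce the corollary to Lemma~\ref{ThRestrConj2a} by finding a suitable $\phi$ and then restricting it to $N_S(Q)$. For the existence claim, I would pick any $\F$-conjugate $U'$ of $U$ that is fully normalized in $\F$; such a $U'$ exists because $\F$ is saturated. Then Lemma~\ref{fullNormChar} produces a morphism $\phi \in Mor_\F(N_S(U), N_S(U')) \subseteq Mor_\F(N_S(U), S)$ with $U\phi = U'$, which settles the first assertion.

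For the second assertion, fix any $\phi \in Mor_\F(N_S(U), S)$ such that $U\phi$ is fully normalized. The key observation is that, by the definition of $U \ch Q$ (Definition~\ref{chDef}), $U$ is normal in $T := N_S(Q)$, so $N_S(Q) \leq N_S(U)$ and $\phi$ restricts to a morphism $\phi_0 := \phi_{|N_S(Q),S} \in Mor_\F(N_S(Q), S)$. Apply Lemma~\ref{ThRestrConj2a} to $\phi_0$: it gives at once $Q\phi \in \Q$, $N_S(Q)\phi = N_S(Q\phi)$, $\Ac(Q)\phi^* = \Ac(Q\phi)$, $V(Q)\phi = V(Q\phi)$, $R(Q)\phi = R(Q\phi)$, and $U\phi \ch Q\phi$ (using the final clause of that lemma applied to the $\F$-characteristic subgroup $U$ of $Q$).

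The only point requiring a little care is that the image of $N_S(Q)$ under $\phi$ really equals $N_S(Q\phi)$ and not just a subgroup thereof; but this is precisely what Lemma~\ref{ThRestrConj2a} asserts and uses no additional hypothesis beyond $\phi_0 \in Mor_\F(N_S(Q),S)$ and $Q \in \Q$. Thus no real obstacle appears: the corollary is essentially a packaging of Lemma~\ref{fullNormChar} with Lemma~\ref{ThRestrConj2a}, together with the inclusion $N_S(Q) \leq N_S(U)$ coming from $U \unlhd T$.
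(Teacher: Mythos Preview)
Your proposal is correct and follows essentially the same approach as the paper: use Lemma~\ref{fullNormChar} for existence, observe that $U\unlhd T$ forces $N_S(Q)\leq N_S(U)$ so $\phi$ restricts to $N_S(Q)$, and then invoke Lemma~\ref{ThRestrConj2a}. The paper's proof is just a terser version of what you wrote.
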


\begin{proof}
By Lemma~\ref{fullNormChar}, there is $\phi\in Mor_\F(N_S(U),S)$ such that $U\phi$ is fully normalized. As $U\ch Q$, $U\unlhd T=N_S(Q)$. Hence, $\phi_{|N_S(Q)}\in Mor_\F(N_S(Q),S)$. Now the result follows from Lemma~\ref{ThRestrConj2a}.
\end{proof}

\subsection{The proof of Lemma~\ref{PUThm}} \label{PUThmP}

Throughout Subsection~\ref{PUThmP} assume Hypothesis~\ref{HypMaxParUnique1}.

\begin{lemma}\label{PUFsl}
Let $Q\in \Q$, let $U\in \m{C}(Q)$ be fully normalized, and let $R_0\leq R(Q)$ such that $[R_0,\Ac(Q)]\neq 1$ and 
$$N_S(U)\cap N_S(R_0)\cap N_S(\<\m{A}_*(Q,V(U))\>)\leq N_S(Q).$$ 
Then $N_S(U)\cap N_S(R_0)\leq N_S(Q)$.
\end{lemma}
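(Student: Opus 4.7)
The plan is to argue by contradiction: assume there exists $y\in N_S(U)\cap N_S(R_0)$ with $y\notin T:=N_S(Q)$, and aim to derive a contradiction by showing that $y$ normalizes $J:=\<\m{A}_*(Q,V(U))\>$. Combined with the third hypothesis, $y\in N_S(U)\cap N_S(R_0)\cap N_S(J)\leq T$ would contradict $y\notin T$.

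The first step transfers all the structure attached to $Q$ over to $P:=Q^y$. Setting $W:=V(U)$, note that $W$ is characteristic in $U$, so $N_S(U)=N_S(W)$ and $y$ normalizes $W$. Conjugation by $y$ is a morphism in $\F$ restricting to $N_S(U)\to N_S(U)$, and since $U\ch Q$ is fully normalized, Corollary~\ref{ThRestrConj2} applies and yields $P\in \Q$, $U\ch P$, $V(P)=V(Q)^y$, $R(P)=R(Q)^y$, and $\Ac(P)=\Ac(Q)^{y^*}$. Because $R_0^y=R_0$, this gives $R_0\leq R(Q)\cap R(P)$ and $[R_0,\Ac(P)]=[R_0,\Ac(Q)]^{y^*}\neq 1$. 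Since $y$ normalizes $W$, we also get $J^y=\<\m{A}_*(Q,W)^y\>=\<\m{A}_*(P,W)\>$, so the task reduces to showing $\<\m{A}_*(Q,W)\>=\<\m{A}_*(P,W)\>$.

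To establish this equality I would use the combined constraints imposed on both $Q$ and $P$ by $U$ and $R_0$. Since $U\in\m{C}(Q)$ and $U\ch P$, Lemma~\ref{C*C}-type reasoning (and the identity $C_S(W)=U$) gives $C_Q(W)=C_P(W)=U$, so $\m{A}_*(Q,W)$ and $\m{A}_*(P,W)$ both live in the faithful action of $N_S(W)/U$ on $W$. Applying Remark~\ref{(IV)}(c) to the $\Ac(Q)$-invariant subgroup $V:=\<R_0^{\Ac(Q)}\>\leq V(Q)$ (nontrivial action by $[R_0,\Ac(Q)]\neq 1$) yields $Q=C_S(V)$, and the irreducibility of the natural $SL_2(q)$-module $V(Q)/C_{V(Q)}(\Ac(Q))$ forces $V\cdot C_{V(Q)}(\Ac(Q))=V(Q)$. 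The analogous statement holds with $V':=\<R_0^{\Ac(P)}\>$ and $P$. Because $y$ fixes $R_0$ and conjugates $\Ac(Q)$ onto $\Ac(P)$, we have $V^y=V'$ inside $W$. Now one compares the two families of best offenders using Lemma~\ref{minBestOff} (which for $A\in\m{A}_*(T,W)$ with $A\not\leq Q$ yields $|A/C_A(W)|=q$ and $W=V(Q)C_W(A)$, and analogously with $P$ in place of $Q$) together with Remark~\ref{(IV)}(c)'s statement that $(A\cap Q)V\in\m{A}(Q)$ for all $A\in\m{A}(T)$.

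The main obstacle will be carrying out this last comparison rigorously: pinning down $\m{A}_*(Q,W)$ and $\m{A}_*(P,W)$ as the \emph{same} collection of subgroups of $S$, rather than merely two sets generating the same subgroup. The minimality condition on $AC_Q(W)=AU$ in $Q/U$ versus $AC_P(W)=AU$ in $P/U$ is sensitive to which ambient group $Q$ or $P$ one works in, so the argument must exploit that $V,V'\leq W$ generate corresponding lines in the natural $SL_2(q)$-modules of $V(Q)$ and $V(P)$ and that the offenders $A$ realize the $q$-condition $W=V(Q)C_W(A)=V(P)C_W(A)$ simultaneously. I expect this technical identification to be the heart of the proof, the earlier transfer step via Corollary~\ref{ThRestrConj2} being essentially formal.
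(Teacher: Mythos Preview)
Your reduction is sound but circular: since $\m{A}_*(P,W)=\m{A}_*(Q,W)^y$ automatically (as $P=Q^y$, $W^y=W$, $U^y=U$), the equality $\<\m{A}_*(Q,W)\>=\<\m{A}_*(P,W)\>$ is \emph{exactly} the statement $y\in N_S(\<\m{A}_*(Q,W)\>)$ you are trying to prove. So after the transfer step you have only restated the problem, and the ``technical identification'' you flag as the heart of the proof is the entire content. Nothing in your outline supplies an argument for it; in particular you never invoke the minimality of $\F$, which is essential here through Lemma~\ref{Fmin1}.

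The paper proceeds quite differently. It does \emph{not} attempt to handle an arbitrary $y\in N_S(U)\cap N_S(R_0)$. Instead, setting $T_0:=N_T(R_0)$, it exploits $T_0<N_S(U)\cap N_S(R_0)$ to choose $t\in N_S(U)\cap N_S(R_0)\cap N_S(T_0)$ with $t\notin T$. The extra normalizing condition guarantees $A^t\leq T_0\leq T$ for $A\in\m{A}_*(Q,W)$; by the hypothesis on $t$ some such $A^t$ lies outside $\m{A}_*(Q,W)$, hence outside $Q$, and therefore $A^t\in\m{A}_*(T,W)$. Now Lemma~\ref{R=} gives $R(Q)=[W,A^t]=[W,A]^t$, so $R(Q)^{t^{-1}}=[W,AU]$. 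The crucial step is then Lemma~\ref{Fmin1} (which uses that $\F$ is minimal): it forces $R(Q)^{t^{-1}}\cap V(Q)$ to be $O^p(N_{\Ac(Q)}(U))$-invariant, and the module structure of $V(Q)$ together with $[R(Q)^{t^{-1}},J(T)]=1$ pushes $R(Q)^{t^{-1}}\cap V(Q)$ into $C_{V(Q)}(\Ac(Q))$. Since $R_0^{t^{-1}}=R_0$, this contradicts $[R_0,\Ac(Q)]\neq 1$. The choice of $t$ normalizing $T_0$ and the appeal to Lemma~\ref{Fmin1} are the two ideas missing from your plan.
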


\begin{proof}
Set $W:=V(U)$, $T:=N_S(Q)$, $T_0:=N_T(R_0)$, $R:=R(Q)$ and $\m{A}_*(Y):=\m{A}_*(Y,W)$ for $Y\leq T$. Assume the assertion is wrong. Then $T_0<N_S(U)\cap N_S(R_0)$. In particular, $T<N_S(U)$ and so $J(Q)\not\leq U$ since $N_S(J(Q))=T$. Hence, $\m{A}_*(Q)\neq\emptyset$. Moreover, $T_0 < N_S(U)\cap N_S(R_0)\cap N_S(T_0)$, i.e. there is $t\in N_S(U)\cap N_S(R_0)\cap N_S(T_0)$ such that $t\not\in T$. Then, by assumption, there is $A\in \m{A}_*(Q)$ such that $A^t\not\in \m{A}_*(Q)$. Note that $A^t\leq T_0\leq T$. Remark~\ref{(IV)}(c) implies $\m{A}(Q)\subseteq \m{A}(T)$. Hence, as $C_S(W)=U\leq Q$, we get 
$$\m{A}_*(Q)\subseteq \m{A}_*(T).$$
Therefore, $A^t\in\m{A}_*(T)$ and $A^t\not\leq Q$.
Now Lemma~\ref{R=} yields $R=[W,A^t]=[W,A]^t $. Hence, $R^{t^{-1}}= [W,A]=[W,AU]$. So, by Lemma~\ref{Fmin1}, $R^{t^{-1}}\cap V(Q)=[W,AU]\cap V(Q)$ is $O^p(N_{\Ac(Q)}(U))$-invariant. As $U\ch Q$, this implies $R^{t^{-1}}\cap V(Q)\leq C_{V(Q)}(\Ac(Q))$ or $V(Q)=(R^{t^{-1}}\cap V(Q))C_{V(Q)}(\Ac(Q))$. By Remark \ref{(IV)}(a), we have $J(T)\leq A^tQ\leq T_0$, so $J(T)=J(T_0)$ and $J(T)^t=J(T)$. Remark~\ref{RCent} implies $[R,J(T)]=1$. Therefore, we get $[R^{t^{-1}},J(T)]=1$. As $[V(Q),J(T)]\not\leq C_{V(Q)}(\Ac(Q))$, it follows now $R^{t^{-1}}\cap V(Q)\leq C_{V(Q)}(\Ac(Q))$. Hence,
$$R_0=R_0^{t^{-1}}\leq R^{t^{-1}}\cap V(Q)\leq C(\Ac(Q)),$$
a contradiction. This proves the assertion.
\end{proof}

\begin{lemma}\label{NT0R}
Let $Q\in \Q$ and $1\neq U\ch Q$. Then $N_S(R_0)\cap N_S(U)\leq N_S(Q)$ for every $R_0\leq R(Q)$ with $[R_0,\Ac(Q)]\neq 1$. In particular,
$N_S(R(Q))\cap N_S(U)=N_S(Q)$.
\end{lemma}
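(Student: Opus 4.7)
I proceed by contradiction. Suppose the lemma fails, and pick a counterexample $(Q,U,R_0)$ with $|U|$ maximum. First, by Corollary~\ref{ThRestrConj2}, there exists $\phi\in Mor_\F(N_S(U),S)$ with $U\phi$ fully normalized. Since $\phi$ carries $Q, \Ac(Q), V(Q), R(Q)$ to $Q\phi\in\Q$, $\Ac(Q\phi)$, $V(Q\phi)$, $R(Q\phi)$ and preserves $|U|$ and the failure of the conclusion (as $\phi(N_S(R_0)\cap N_S(U))\leq N_S(R_0\phi)\cap N_S(U\phi)$ while $\phi(N_S(Q))=N_S(Q\phi)$), I may assume $U$ is itself fully normalized.

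The next step is to establish $U\in\m{C}^*(Q)$, whence $U\in\m{C}(Q)$ by Lemma~\ref{C*C} and in particular $U=U^*(Q)$. Suppose instead there exists $U^*\ch Q$ with $U^*\unlhd N_S(U)$ and $|U^*|>|U|$. By Lemma~\ref{XU*Char} the product $U':=U^*(Q)U^*$ is $\ch Q$ (recall $\F$ is minimal, $U$ is fully normalized, and $U^*\unlhd T$ from $U^*\ch Q$). Moreover $U^*(Q)$ is $N_S(U)$-invariant (by its definition) and $U^*\unlhd N_S(U)$ by hypothesis, so $U'\unlhd N_S(U)$, with $|U'|\geq |U^*|>|U|$. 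Apply Corollary~\ref{ThRestrConj2} to $U'$ to get $\psi\in Mor_\F(N_S(U'),S)$ with $U'\psi$ fully normalized. Since $R_0\leq V(Q)\leq Z(Q)\leq N_S(U')$, the image $R_0\psi$ is defined and satisfies $R_0\psi\leq R(Q\psi)$ with $[R_0\psi,\Ac(Q\psi)]\neq 1$. By the maximality of $|U|$, the lemma holds for $(Q\psi, U'\psi, R_0\psi)$; then for any $g\in N_S(U)\cap N_S(R_0)\subseteq N_S(U')\cap N_S(R_0)$, the element $g\psi\in N_S(U'\psi)\cap N_S(R_0\psi)\leq N_S(Q\psi)=\psi(N_S(Q))$ forces $g\in N_S(Q)$, contradicting the counterexample. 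Hence $U\in\m{C}^*(Q)\subseteq \m{C}(Q)$.

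Now set $U_1:=\<\m{A}_*(Q,V(U))\>$. If $\m{A}_*(Q,V(U))=\emptyset$, then $[J(Q),V(U)]=1$, so $J(Q)\leq C_Q(V(U))=U$ (using $U\in\m{C}(Q)$); hence $J(Q)=J(U)$, and the Thompson-restricted property (i) yields $N_S(U)\leq N_S(J(U))=N_S(J(Q))=N_S(Q)$, the desired contradiction. Otherwise $U_1\neq 1$; any $A\in\m{A}_*(Q,V(U))$ satisfies $[A,V(U)]\neq 1$, so $A\not\leq C_Q(V(U))=U$, giving $U_1\not\leq U$ and $|UU_1|>|U|$. Since $U=U^*(Q)$, Lemma~\ref{XU*Char} yields $UU_1=U^*(Q)U_1\ch Q$. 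The same transfer argument as above, applied to $UU_1$ in place of $U'$, gives $N_S(R_0)\cap N_S(UU_1)\leq N_S(Q)$. Because any $g\in N_S(U)\cap N_S(U_1)$ normalizes $UU_1$, this verifies the side hypothesis of Lemma~\ref{PUFsl}:
$$N_S(U)\cap N_S(R_0)\cap N_S(\<\m{A}_*(Q,V(U))\>)\leq N_S(Q).$$
Lemma~\ref{PUFsl} then concludes $N_S(U)\cap N_S(R_0)\leq N_S(Q)$, contradicting the counterexample. The ``in particular'' statement follows by taking $R_0=R(Q)$: the natural $SL_2(q)$-module structure on $V(Q)/C_{V(Q)}(\Ac(Q))$ (Thompson-restricted property (iii)) shows that $R(Q)=[V(Q),J(T)]$ maps to the order-$q$ submodule $C_V(T)$, so $R(Q)\not\leq C_{V(Q)}(\Ac(Q))$ and thus $[R(Q),\Ac(Q)]\neq 1$; the reverse inclusion is immediate since $T=N_S(Q)$ normalizes both $R(Q)=[V(Q),J(T)]$ (as $V(Q)$ and $J(T)$ are characteristic in $T$) and $U$ (as $U\ch Q$ forces $U\unlhd T$).

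The main subtlety in this plan is verifying that the products $U^*(Q)U^*$ and $U^*(Q)U_1$ really are $\ch Q$ — this is where the special structure of $U^*(Q)$ and Lemma~\ref{XU*Char} (resting in turn on Lemma~\ref{Fmin1} and the minimality of $\F$) are essential, since the general product of two $\ch Q$-subgroups need not be $\ch Q$. The rest is careful bookkeeping through the conjugations $\phi$, $\psi$ provided by Corollary~\ref{ThRestrConj2}.
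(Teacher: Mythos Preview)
Your proof is correct and follows essentially the same approach as the paper: choose a counterexample with $|U|$ maximal, reduce to $U$ fully normalized (hence $U\in\m{C}^*(Q)\subseteq\m{C}(Q)$), then apply maximality to the larger subgroup $U\cdot\<\m{A}_*(Q,V(U))\>$ to feed into Lemma~\ref{PUFsl}. The only differences are cosmetic: your conjugation transfers via $\psi$ in the second and third paragraphs are unnecessary (maximality of $|U|$ applies directly to $(Q,U^*,R_0)$ and $(Q,UU_1,R_0)$, since $N_S(U)\leq N_S(U^*)$ and $N_S(U)\cap N_S(U_1)\leq N_S(UU_1)$), and you handle the case $\m{A}_*(Q,V(U))=\emptyset$ explicitly where the paper absorbs it into the observation $Q_*\not\leq U$.
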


\begin{proof}
Assume the assertion is wrong. Choose $Q,U\in\F$ such that $Q\in\Q$, $1\neq U\ch Q$, and there exists $R_0\leq R(Q)$ with $[R_0,\Ac(Q)]\neq 1$ and $N_S(R_0)\cap N_S(U)\not\leq N_S(Q)$. We may choose this pair $(Q,U)$ such that $|U|$ is maximal.
By Corollary~\ref{ThRestrConj2}, there is $\phi\in Mor_\F(N_S(U),S)$ such that $Q\phi\in \Q$, $U\phi \ch Q\phi$ and $U\phi$ is fully normalized. Moreover, then $R_0\phi\leq R(Q)\phi=R(Q\phi)$, $[R_0\phi, \Ac(Q\phi)]\neq 1$ and $N_S(R_0\phi)\geq N_{N_S(U)}(R_0)\phi\not\leq N_S(Q)\phi=N_S(Q\phi)$. So, replacing $(Q,U)$ by $(Q\phi,U\phi)$, we may assume without loss of generality that $U$ is fully normalized. Observe that then $U\in \m{C}^*(Q)$ and thus, by Lemma~\ref{C*C}, $U\in \m{C}(Q)$.

\bigskip

Set $Q_*=\<\m{A}_*(Q,V(U))\>$ and note that $UQ_*\ch Q$, as $Q_*$ is $N_{A(Q)}(U)$-invariant and $U\ch Q$. Since $N_S(J(Q))=N_S(Q)$ and $N_S(Q)<N_S(U)$ by assumption, we have $Q^*\not\leq U$. Hence, the maximality of $|U|$ yields $N_S(R_0)\cap N_S(UQ_*)\leq N_S(Q)$ and thus $N_S(R_0)\cap N_S(U)\cap N_S(Q_*)\leq N_S(Q)$. Now Lemma~\ref{PUFsl} yields $N_S(U)\cap N_S(R_0)\leq N_S(Q)$, contradicting the choice of $U$.
\end{proof}

\begin{lemma}\label{AbleqQ}
Let $Q\in\Q$, $1\neq U\in \m{C}(Q)$, $A\in \m{A}_*(N_S(Q),V(U))$ and $b\in N_S(U)\backslash N_S(Q)$ such that $A\not\leq Q$ and $A^b\leq N_S(Q)$. Then $A^b\leq Q$.
\end{lemma}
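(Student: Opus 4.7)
The plan is to argue by contradiction, assuming $A^b \not\leq Q$, and set $T := N_S(Q)$. Since $U\in \m{C}(Q)$ we have $C_S(V(U))=U\leq Q\leq T$, so in particular $C_T(V(U))=U$; moreover $b\in N_S(U)$ normalizes both $V(U)=\Omega(Z(U))$ and $U=C_T(V(U))$. The overall strategy is to promote $A^b$ into $\m{A}_*(T,V(U))$ so that Lemma~\ref{R=} applies to it, extract the equality $R(Q)=[V(U),A]=[V(U),A^b]$, deduce that $b$ normalizes $R(Q)$, and then invoke Lemma~\ref{NT0R} to force $b\in T$.

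The first step is to check $A^b\in\m{A}_*(T,V(U))$. Since $A^b$ is elementary abelian of the same order as $A$ and lies in $T$ by hypothesis, $A^b\in\m{A}(T)$. Conjugating by $b$ and using $V(U)^b=V(U)$ gives $[A^b,V(U)]=[A,V(U)]^b\neq 1$. For the inclusion-minimality of $A^bC_T(V(U))=A^bU$, suppose $B\in\m{A}(T)$ with $[V(U),B]\neq 1$ and $BU\subseteq A^bU$; conjugating by $b^{-1}$ and using $U^{b^{-1}}=U$ shows $B^{b^{-1}}\leq AU\leq T$, so $B^{b^{-1}}\in\m{A}(T)$, $[V(U),B^{b^{-1}}]\neq 1$, and $B^{b^{-1}}U\subseteq AU$; the minimality of $AU$ then forces $B^{b^{-1}}U=AU$, and hence $BU=A^bU$, as required.

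With $A,A^b\in\m{A}_*(T,V(U))$ and neither contained in $Q$, two applications of Lemma~\ref{R=} yield $R(Q)=[V(U),A]=[V(U),A^b]$. Because $b$ normalizes $V(U)$, this gives $R(Q)^b=[V(U),A]^b=[V(U)^b,A^b]=R(Q)$, so $b\in N_S(R(Q))$. Combined with $b\in N_S(U)$, the ``in particular'' clause of Lemma~\ref{NT0R} yields $b\in N_S(R(Q))\cap N_S(U)=N_S(Q)=T$, contradicting $b\notin N_S(Q)$.

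The only point that requires care is the inclusion-minimality verification in the second paragraph: because $b$ is only assumed to lie in $N_S(U)$ and not in $N_S(T)$, one cannot simply conjugate the whole set $\m{A}(T)$ by $b^{-1}$. The argument goes through because any competitor $B$ with $BU\subseteq A^bU$ automatically satisfies $B^{b^{-1}}\leq AU\leq T$, so $B^{b^{-1}}$ is itself a legitimate competitor against the genuinely minimal $AU$. After this, the rest of the proof is a direct combination of Lemma~\ref{R=} and Lemma~\ref{NT0R}.
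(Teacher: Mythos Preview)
Your proof is correct and follows exactly the same route as the paper: assume $A^b\not\leq Q$, apply Lemma~\ref{R=} to both $A$ and $A^b$ to get $R(Q)=[V(U),A]$ and $R(Q)=[V(U),A^b]$, conclude $R(Q)^b=R(Q)$, and contradict Lemma~\ref{NT0R}. The paper's proof is two lines and leaves the verification that $A^b\in\m{A}_*(T,V(U))$ completely implicit; you have correctly identified this as the only non-trivial point and supplied the argument (using that any competitor $B$ with $BU\subseteq A^bU$ satisfies $B^{b^{-1}}\leq AU\leq T$, so one can legitimately invoke the minimality of $AU$).
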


\begin{proof}
Assume $A^b\not\leq Q$. Then Lemma~\ref{R=} implies $$R(Q)=[V(U),A^b]=[V(U),A]^b=R(Q)^b.$$ This is a contradiction to Lemma~\ref{NT0R}.
\end{proof}

\begin{lemma}\label{JT0T}
Let $Q\in\Q$ and $U\in \m{C}^*(Q)$. Then we have $\m{A}_*(N_S(Q),V(U))=\m{A}_*(Q,V(U))$ or $J(N_S(U))\leq N_S(Q)$.
\end{lemma}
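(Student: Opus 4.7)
Set $T:=N_S(Q)$ and $W:=V(U)$. By Lemma~\ref{C*C}, $U\in\m{C}(Q)$, so in particular $C_S(W)=U$. The statement is a dichotomy, and I would prove it by assuming $J(N_S(U))\not\leq T$ and deducing $\m{A}_*(T,W)=\m{A}_*(Q,W)$. My first step would be to reformulate this equality in a useful way. Since $[V(Q),\Ac(Q)]\neq 1$ (by Lemma~\ref{trivial0}, as $V(Q)$ is characteristic in $S$ precisely when trivially centralized), Remark~\ref{(IV)}(c) yields $\m{A}(Q)\subseteq\m{A}(T)$. Combining this with the Dedekind identity $AC_T(W)\cap Q=AC_Q(W)$ for $A\leq Q$ (valid since $A\leq Q$ gives $A(C_T(W)\cap Q)=AC_Q(W)$), I would verify the equivalence
\[ \m{A}_*(T,W)=\m{A}_*(Q,W)\iff A\leq Q\text{ for every }A\in\m{A}_*(T,W). \]
Hence it suffices to rule out the existence of some $A\in\m{A}_*(T,W)$ with $A\not\leq Q$.

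Now suppose for contradiction that such an $A$ exists, and that $J(N_S(U))\not\leq T$. Pick $B\in\m{A}(N_S(U))$ with $B\not\leq T$. Since $B\leq N_S(U)$, every element of $B$ normalizes $U$ and hence the characteristic subgroup $W=V(U)$. By Lemma~\ref{NT0R}, $N_S(R(Q))\cap N_S(U)=T$, so $B\not\leq N_S(R(Q))$ and there exists $b\in B\setminus T$ with $R(Q)^b\neq R(Q)$. By Lemma~\ref{R=} applied to $A$, we have $R(Q)=[W,A]$, and since $W^b=W$,
\[ R(Q)^b=[W,A]^b=[W,A^b]. \]
The plan is to establish that $A^b\leq T$. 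Granted this, Lemma~\ref{AbleqQ} gives $A^b\leq Q$, so $A^b\in\m{A}(Q)\subseteq\m{A}(T)$. A direct calculation, using Lemma~\ref{minBestOff} to fix $|A/C_A(W)|=q$ and the fact that $b$ normalizes $W$ (so $|A^b/C_{A^b}(W)|=q$ as well), would force $A^b\in\m{A}_*(T,W)$. Applying Lemma~\ref{R=} to $A^b$ then produces $R(Q)=[W,A^b]=R(Q)^b$, contradicting the choice of $b$.

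The main obstacle is justifying $A^b\leq T$, since $b\in N_S(U)$ is not guaranteed to normalize $T$. I would address this by replacing $A$ or $b$ by a suitable conjugate. More precisely, since $B$ is elementary abelian and $A\leq T\leq N_S(U)$, the commutator $[A,B]\leq N_S(U)$ measures how $B$ moves $A$; a counting argument inside $N_S(U)$ together with the fact (from Lemma~\ref{XleqU}) that $B\cap U$ is large enough to force many elements of $B$ to centralize a large portion of $A$ would produce an element $b\in B\setminus T$ such that $A^b\leq T$. Alternatively, one can pick $A$ from within the $(T\cap T^{b^{-1}})$-conjugacy orbit of $\m{A}_*(T,W)$, exploiting the transitivity of $T$-conjugation on $\m{A}_*(T,W)$ (which follows, as in the proof of Lemma~\ref{AbleqQ}, from Lemma~\ref{R=} together with Lemma~\ref{NT0R}). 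In either case, once $A^b\leq T$ is secured, the contradiction described above closes the argument.
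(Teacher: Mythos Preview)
Your reduction to showing that every $A\in\m{A}_*(T,W)$ lies in $Q$ is correct, and the overall strategy of conjugating such an $A$ by an element of $N_S(U)\setminus T$ to reach a contradiction via Lemma~\ref{R=} and Lemma~\ref{AbleqQ} points in the right direction. However, the step you flag as the ``main obstacle'' is a genuine gap, and neither of your proposed fixes works. The counting argument invoking Lemma~\ref{XleqU} does not apply: that lemma concerns subgroups of $Q$ normal in $N_S(U)$ and says nothing about $B\cap U$ being large for an arbitrary $B\in\m{A}(N_S(U))$. The alternative appeal to ``transitivity of $T$-conjugation on $\m{A}_*(T,W)$'' is unfounded; no such transitivity is available. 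Even granting $A^b\leq T$, you would still need $A^b\in\m{A}_*(T,W)$ to invoke Lemma~\ref{R=}, and since $b$ need not normalize $T$, the minimality condition on $A^bC_T(W)$ does not transfer automatically.

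The paper resolves exactly this difficulty by an intermediate construction you are missing. Setting $T_0=N_S(U)$, one first shows that $\langle\m{A}_*(T_0,W)\rangle\not\leq T$ and then produces a subgroup $T\leq T_1\leq N_{T_0}(\langle\m{A}_*(T,W)\rangle)$ with $\langle\m{A}_*(T_1,W)\rangle\not\leq T$. The point of $T_1$ is that every element of $T_1$ normalizes $\langle\m{A}_*(T,W)\rangle$, so for $A\in\m{A}_*(T,W)$ and $b\in T_1$ one automatically gets $A^b\leq\langle\m{A}_*(T,W)\rangle\leq T$, after which Lemma~\ref{AbleqQ} applies. One then works with $B_*\in\m{A}_*(T_1,W)$, $B_*\not\leq T$, and proves that $B_*$ acts quadratically on $W$ and that $|B_*/(B_*\cap T)|=2$, forcing $p=2$. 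A final module-theoretic calculation with $W=R(Q)C_W(B_*)$ and a second element $t\in\Ac(Q)$ produces the contradiction. The argument is substantially more delicate than what your outline suggests; the passage to $T_1$ and the reduction to $p=2$ are the missing ideas.
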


\begin{proof}
\setcounter{equation}{0}
Set $T=N_S(Q)$, $T_0=N_S(U)$, $R:=R(Q)$, $W=V(U)$, and  $\m{A}_*(Y)=\m{A}_*(Y,W)$ for every $Y\leq T_0$.
We will use frequently and without reference that, by Lemma~\ref{C*C}, $U\in \m{C}(Q)$ and, in particular, by Remark~\ref{V(Q)Rem}, $V(Q)\leq W$. Assume $J(T_0)\not\leq T$ and $\m{A}_*(T)\neq\m{A}_*(Q)$. We show first: 

\bigskip

(1)\;\; $\<\m{A}_*(T_0)\>\not\leq T.$

\bigskip

By assumption, there is $B_*\in \m{A}(T_0)$ with $B_*\not\leq T$. We may choose $B_*$ such that $|B_*U|$ is minimal. Let $B\in \m{A}_*(B_*U)$. Then $B\in \m{A}_*(T_0)$. Let $t\in T_0$ and observe that $B^t\in \m{A}_*(T_0)$. Assume (1) does not hold. Then $B$ and $B^t$ are contained in  $T$.

\bigskip

Suppose $B^t\not\leq Q$. Since $B_*^tU/U$ is elementary abelian, $B^tU$ is normalized by $B_*^t$. Hence, for every $x\in B_*^t$, $(B^t)^x\leq T$ and $(B^t)^x\not\leq Q$. Hence, by Lemma~\ref{AbleqQ}, $B_*^t\leq T$. In particular, $\m{A}(T)\subseteq\m{A}(T_0)$ and, by Remark~\ref{(IV)}(a),  $B_*^t\leq J(T)\leq B^tQ$. Since $B\leq B_*U$, this gives  $B_*^tU=B^t(B_*^tU\cap Q)=B^tU(B_*^t\cap Q)$ and $B_*U=BU(B_*\cap Q^{t^{-1}})$. By Remark~\ref{(IV)}(c), we have  $C_*=(B_*^t\cap Q)V(Q)\in\m{A}(T)\subseteq \m{A}(T_0)$. Therefore, $C_*^{t^{-1}}=(B_*\cap Q^{t^{-1}})V(Q)^{t^{-1}}\in\m{A}(T_0)$. 
Note that, by Remark \ref{V(Q)Rem}, $V(Q)^{t^{-1}}\leq U^{t^{-1}}=U$. In particular, $B_*U=BU(B_*\cap Q^{t^{-1}})=BUC_*^{t^{-1}}$. As $BU\leq T$ and $B_*\not\leq T$, we get $C_*^{t^{-1}}\not\leq T$. On the other hand, $C_*^{t^-1}\leq B_*U$, so the minimality of $|B_*U|$ gives $C_*^{t^{-1}}U=B_*U$. Then $B_*\leq Q^{t^{-1}}$, i.e. $B^t\leq B_*^t\leq Q$ contradicting our assumption. Hence, $B^t\leq Q$. Since $t\in T_0$ was arbitrary we have shown that $X:=\<B^{T_0}\>\leq Q$. Therefore, it follows from Lemma~\ref{XleqU} that $B\leq X\leq U$, a contradiction to the choice of $B$. Thus, (1) holds. We show next:

\bigskip

(2)\;\; There is $T\leq T_1\leq N_{T_0}(\<\m{A}_*(T)\>)$ such that $\<\m{A}_*(T_1)\>\not\leq T.$

\bigskip

For the proof let $T\leq Y\leq T_0$ be maximal with respect to inclusion such that $\<\m{A}_*(Y)\>\leq T$. Then, by (1), $Y\neq T_0$ and hence $Y<T_1:=N_{T_0}(Y)$. So the maximality of $Y$ implies $\<\m{A}_*(T_1)\>\not\leq T$. Since $\<\m{A}_*(Y)\>=\<\m{A}_*(T)\>$, we have $T_1\leq N_{T_0}(\<\m{A}_*(T)\>)$. This shows (2).

\bigskip

So we can choose now $T_1$ with the properties as in (2). We fix $B_*\in\m{A}_*(T_1)$ such that $B_*\not\leq T$. It follows from \cite[9.2.1]{KS} and \cite[9.2.3]{KS} that there is $C\in \m{A}(B_*C_{T_1}(W))$ such that $[W,C]\neq 1$ and $[W,C,C]=1$. Hence, $B_*\in \m{A}_*(T_1)$ implies

\bigskip

(3)\;\;$B_*$ acts quadratically on $W$.

\bigskip

We show next:

\bigskip

(4)\;\; $|B_*/N_{B_*}(R)|=|B_*/B_*\cap T|=2=p.$

\bigskip

By Lemma~\ref{NT0R}, $N_{B_*}(R)=B_*\cap T$. Let $b\in B_*\backslash T$ and assume there is $c\in B_*\backslash ((B_*\cap T)\cup b(B_*\cap T))$. Note that $b,c$ and $cb^{-1}$ are not elements of $T$. By assumption, $\m{A}_*(T)\neq \m{A}_*(Q)$, i.e. there is $A\in \m{A}_*(T)$ with $A\not\leq Q$. Then by the choice of $B_*\leq T_1$ and Lemma~\ref{AbleqQ},
$$A^b\leq Q,\;A^c\leq Q\;\mbox{and}\;A^{cb^{-1}}\leq Q.$$
This gives
$$A^c\leq Q\cap Q^b\cap Q^{b^{-1}c}.$$
Hence, $A^c$ centralizes $W_0:=V(Q)V(Q)^bV(Q)^{b^{-1}c}$. Note that $W_0\leq W$ and 
$$W_1:=V(Q)[V(Q),b][V(Q),b^{-1}c]\leq W_0.$$ 
By (3), $W_1$ is invariant under $b$ and $b^{-1}c$, so $W_0=W_1$ and $W_0=W_0^b=W_0^{b^{-1}c}$. Hence, $W_0^c=(W_0^{b^{-1}})^c=W_0^{b^{-1}c}=W_0$. As shown above, $[W_0,A^c]=1$. So we get $[W_0^c,A^c]=1$ and thus $[W_0,A]=1$. 
In particular, $[V(Q),A]=1$, a contradiction to $A\not\leq Q$. Hence, (4) holds. We show now

\bigskip

(5)\;\;$W=RC_W(B_*).$

\bigskip

It follows from (4) and Remark~\ref{RCent} that $$|B_*/C_{B_*}(R)|=|B_*/B_*\cap T|\cdot |(B_*\cap T)/C_{B_*}(R)|\leq 2\cdot |(B_*\cap~T)/(B_*\cap J(T))|.$$
Since $Q$ is Thompson-restricted, we have for $\ov{A(Q)}=A(Q)/C_{A(Q)}(V(Q))$ that $\ov{\Ac(Q)}\cong SL_2(q)$, $\ov{J(T)_Q}\in Syl_p(\ov{\Ac(Q)})$ and $T/Q\cong \ov{T_Q}$ embeds into $Aut(\ov{\Ac(Q)})$. Hence, $T/J(T)Q\cong Aut(GF(q))$ is cyclic, and $q=2$ implies $T=J(T)$. Therefore, $|(B_*\cap T)/(B_*\cap J(T))|\leq 2$ and

\bigskip

(*)\;\;$|B_*/C_{B_*}(R)|\leq q.$

\bigskip

The module structure of $V(Q)$ implies $|R/C_R(\Ac(Q))|=q$. By Lemma~\ref{NT0R}, $R\cap B_*\leq C_R(B_*)\leq C_R(\Ac(Q))$ and hence
$$|R/R\cap B_*|\geq |R/C_R(B_*)|\geq |R/C_R(\Ac(Q))|=q.$$
Thus, by (*), $|RC_{B_*}(R)|=|R/R\cap B_*|\cdot |C_{B_*}(R)|\geq |B_*|$.
Observe that $RC_{B_*}(R)$ is elementary abelian, so $RC_{B_*}(R)\in\m{A}(T_1)$. Since $(RC_{B_*}(R))U=C_{B_*}(R)U$ is a proper subset of $B_*U$, it follows from the minimality of $B_*U$ that $C_{B_*}(R)\leq U$ and $C_{B_*}(R)=C_{B_*}(W)$. Therefore, by Lemma~\ref{Baum} and (*), 
$$|W/C_W(B_*)|\leq |B_*/C_{B_*}(W)|=|B_*/C_{B_*}(R)|\leq q. $$
As seen above, $|R/C_R(B_*)|\geq q$. This implies $|RC_W(B_*)|\geq |W|$ and thus (5).

\bigskip

Now choose $t\in \Ac(Q)\backslash N_{\Ac(Q)}(T_Q)C_{\Ac(Q)}(V(Q))$ and $b\in
B_*\backslash C_{B_*}(W)$. Set
$Y=RR^tR^b$. Note that $Y \leq W$, since $RR^t\leq V(Q)\leq W$. 
Using (5), we get $[W,b]=[RC_W(B_*),b]=[R,b]\leq RR^b\leq Y$. Hence,
$$Y^b=Y.$$ 
As before let $A\in \m{A}_*(T)$ with $A\not\leq Q$. Then, by the choice of $B_*\leq T_1$ and Lemma~\ref{AbleqQ}, we have $A^b\leq Q$. Hence,
$[RR^t,A^b]\leq [V(Q),A^b]=1$. By Remark~\ref{RCent}, $[R,A]=[R,J(T)]=1$, so $[R^b,A^b]=1$ and
$[Y,A^b]=1$. As we have shown above, $Y=Y^b$. So we get $[Y,A]^b =[Y^b,A^b]=[Y,A^b]=1$ and hence, $[Y,A]=1$. In particular, $[R^t,A]=1$ which is a contradiction to the module structure of $V(Q)$. This completes the proof of Lemma~\ref{JT0T}.
\end{proof}

\begin{lemma}\label{JNSUT}
Let $Q\in\Q$ and $1\neq U\ch Q$. Then $J(N_S(U))\leq N_S(Q)$.
\end{lemma}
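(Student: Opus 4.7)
I will argue by contradiction, taking $(Q, U)$ to be a counterexample with $|U|$ chosen maximally among pairs $(Q', U')$ with $Q' \in \Q$, $1 \neq U' \ch Q'$, and $J(N_S(U')) \not\leq N_S(Q')$. The strategy is to perform two reductions that place $U$ into $\m{C}^*(Q)$, then invoke Lemma~\ref{JT0T} and rule out its remaining alternative; this last step will be the main obstacle.

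For the first reduction, I will replace $U$ by a fully normalized conjugate using Corollary~\ref{ThRestrConj2}, which supplies $\phi \in Mor_\F(N_S(U), S)$ with $U\phi$ fully normalized, together with $Q\phi \in \Q$, $U\phi \ch Q\phi$, and $N_S(Q)\phi = N_S(Q\phi)$. Any $A \in \m{A}(N_S(U))$ with $A \not\leq N_S(Q)$ has image $A\phi$ elementary abelian in $N_S(U)\phi \leq N_S(U\phi)$ of the same order as $A$ and lying outside $N_S(Q\phi)$; embedding $A\phi$ into an element of $\m{A}(N_S(U\phi))$ then exhibits $(Q\phi, U\phi)$ as a counterexample with $|U\phi|=|U|$, so I may assume $U$ is fully normalized.

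For the second reduction, suppose some $U^* \ch Q$ satisfies $U^* \unlhd N_S(U)$ and $|U^*|>|U|$. The inclusion $U \in \m{D}(Q,U)$ is trivial, so $U \leq U^*(Q)$, and since $U^*(Q)$ is $N_{A(S)}(U)$-invariant by definition (and inner automorphisms by $N_S(U)$ lie in $N_{A(S)}(U)$), the subgroup $U^*(Q)$ is normal in $N_S(U)$. Applying Lemma~\ref{XU*Char} with $X = U^*$ yields $\widetilde U := U^*(Q) U^* \ch Q$; this subgroup is also normal in $N_S(U)$ and has order at least $|U^*|>|U|$. Then $N_S(U) \leq N_S(\widetilde U)$, so any $A \in \m{A}(N_S(U))$ with $A \not\leq N_S(Q)$ extends to some $B \in \m{A}(N_S(\widetilde U))$ with $B \not\leq N_S(Q)$, making $(Q, \widetilde U)$ a counterexample with $|\widetilde U|>|U|$, contradicting the maximal choice of $|U|$. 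Hence $U \in \m{C}^*(Q)$.

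With $U \in \m{C}^*(Q)$, Lemma~\ref{JT0T} gives either $J(N_S(U)) \leq N_S(Q)$ — which directly contradicts the counterexample hypothesis — or the equality $\m{A}_*(N_S(Q), V(U)) = \m{A}_*(Q, V(U))$, meaning every minimal best offender on $V(U)$ from $N_S(Q)$ already lies in $Q$. The hard part will be ruling out this second alternative together with $J(N_S(U)) \not\leq N_S(Q)$. I expect to follow the template of the proof of Lemma~\ref{JT0T}: select a minimal best offender $A \in \m{A}_*(N_S(U), V(U))$ lying outside $N_S(Q)$ (available since $J(N_S(U)) \not\leq N_S(Q)$ and, as $U \in \m{C}(Q)$ gives $C_S(V(U))=U$, any such elementary abelian is automatically a best offender on $V(U)$), and exploit the identity $R(Q) = [V(U),A]$ from Lemma~\ref{R=} together with the $N_S(U)$-invariance of $\langle \m{A}_*(N_S(Q),V(U))\rangle$ and Lemma~\ref{NT0R} to contradict the containment $\m{A}_*(N_S(Q), V(U)) \subseteq Q$.
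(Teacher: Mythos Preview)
Your overall strategy---choose $(Q,U)$ with $|U|$ maximal, reduce to $U$ fully normalized, conclude $U\in\m{C}^*(Q)$, then invoke Lemma~\ref{JT0T}---matches the paper's. However, there are gaps.

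\textbf{The reductions.} Your ``extension'' argument is not sound: given $A\in\m{A}(N_S(U))$ with $A\not\leq N_S(Q)$, there is no guarantee that $A$ (or $A\phi$) embeds in an element of $\m{A}$ of the larger normalizer, because $\m{A}(Y)$ consists of elementary abelian subgroups of \emph{maximal order}, not maximal-by-inclusion ones. The correct direction is the contrapositive: if $\widetilde U\ch Q$ with $\widetilde U\unlhd N_S(U)$ and $|\widetilde U|>|U|$, then by maximality $(Q,\widetilde U)$ is \emph{not} a counterexample, so $J(N_S(\widetilde U))\leq T$; since $T\leq N_S(U)\leq N_S(\widetilde U)$, this sandwich forces $J(N_S(U))=J(T)\leq T$, contradicting the hypothesis. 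The same sandwich handles the first reduction.

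\textbf{The main gap.} Your final paragraph asserts the ``$N_S(U)$-invariance of $\langle\m{A}_*(N_S(Q),V(U))\rangle$'', but this is unjustified and is essentially the heart of the proof: since $\m{A}_*(T,V(U))=\m{A}_*(Q,V(U))$, the set $X:=\langle\m{A}_*(Q,V(U))\rangle$ depends on $Q$, which is \emph{not} normalized by $N_S(U)$ (that is exactly what you are trying to prove). Likewise, Lemma~\ref{R=} requires $A\in\m{A}_*(T,V(U))$, not $A\in\m{A}_*(N_S(U),V(U))$, so it does not apply to your chosen offender. The paper obtains the needed invariance by using the maximality of $|U|$ a \emph{second} time: set $U_1:=XU$ (which is $\F$-characteristic in $Q$ with $|U_1|>|U|$ since $X\not\leq U=C_S(V(U))$), and conclude $J(N_S(U_1))\leq T$. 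Then, writing $T_0:=N_S(U)$, one has $T\leq N_{T_0}(U_1)\leq N_S(U_1)$, so the sandwich gives $J(N_{T_0}(U_1))=J(T)$ and hence $\m{A}_*(N_{T_0}(U_1),V(U))=\m{A}_*(T,V(U))=\m{A}_*(Q,V(U))$. Now $X$ is visibly characteristic in $N_{T_0}(U_1)$, so $N_{T_0}(N_{T_0}(U_1))$ normalizes $X$ and $U$, hence $U_1$; nilpotence of $T_0$ then forces $T_0\leq N_S(U_1)$, contradicting $J(N_S(U_1))\leq T$ and $J(T_0)\not\leq T$.
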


\begin{proof}
Assume the assertion is wrong. Then there is $Q\in\Q$ and $U\ch Q$ such that $J(N_S(U))\not\leq N_S(Q)$. We can choose the pair $(Q,U)$ such that $|U|$ has maximal order. By Corollary~\ref{ThRestrConj2} we can furthermore choose it such that $U$ is fully normalized. Set $T=N_S(Q)$ and $T_0:=N_S(U)$. Note that $U\in
\m{C}^*(Q)$. Thus, by Lemma~\ref{JT0T}, $\m{A}_*(T,V(U))=\m{A}_*(Q,V(U))$. Set
$X:=\<\m{A}_*(Q,V(U))\>$. Observe that $T=N_S(J(Q))<T_0$, so $J(Q)\not\leq U$ and  $X\not\leq U$ as $U=C_S(V(U))$. Also note $U_1:=XU\ch Q$. Therefore, by
the choice of $U$, $J(N_S(U_1))\leq T \leq T_0$. 
In particular, $J(T) = J(N_S(U_1)) = J(N_{T_0}(U_1))$ and 
$$\m{A}_*(Q,V(U)) = \m{A}_*(T,V(U)) = \m{A}_*(N_{T_0}(U_1),V(U)).$$
Hence, $N_{T_0}(N_{T_0}(U_1))$ normalizes $XU = U_1$. It follows that $T_0 \leq N_S(U_1)$, which contradicts $J(N_S(U_1))\leq T$ and $J(T_0)\not\leq T$.
\end{proof}

\textit{The proof of Lemma~\ref{PUThm}.} 
Let $Q\in \Q$ and $1\neq U\ch Q$.
Set $T:=N_S(Q)$ and $T_0:=N_S(U)$. By Lemma~\ref{JNSUT}, $J(T_0)=J(T)$ and so $B(T_0)=C_{T_0}(\Omega(Z(J(T))))$. By Remark~\ref{RCent} $R(Q)\leq \Omega(Z(J(T)))$. Hence, by Lemma~\ref{NT0R}, $B(T_0)\leq N_{T_0}(R(Q))=T$. This shows $B(T_0)=B(T)$ and completes the proof.

\subsection{The proofs of Theorem~\ref{CasesL} and Theorem~\ref{Classify}}

From now on assume Hypothesis~\ref{HypMaxParUnique}. Observe that this implies Hypothesis~\ref{HypMaxParUnique1}. In particular, we can use Lemma~\ref{PUThm} and the other results from the previous subsections.

\begin{lemma}\label{trivial}
Let $Q\in\Q$, let $U\in\F$ be $\F$-characteristic in $Q$ and $A(S)$-invariant. Then $U=1$.
\end{lemma}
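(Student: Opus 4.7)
The plan is to mirror the proof of Lemma~\ref{trivial0}, showing that the weaker assumption "$A(S)$-invariant" (in place of "characteristic in $S$") still forces $N_\F(U)$ to be a full parabolic, hence contained in $\m{N}$ by the stronger Hypothesis~\ref{HypMaxParUnique}. First I would assume for contradiction that $U\neq 1$. Since $Inn(S)\leq A(S)$, the $A(S)$-invariance of $U$ gives $U\unlhd S$, so that $N_\F(U)$ is a parabolic subsystem of $\F$ in the sense of Definition~\ref{ParDef}.

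Next I would argue that $N_\F(U)$ is in fact a \emph{full} parabolic, i.e.\ contains $N_\F(S)$. For this, let $\phi\in Mor_{N_\F(S)}(P,R)$; then $\phi$ extends to some $\tilde\phi\in A(S)$, and by the $A(S)$-invariance of $U$ the restriction of $\tilde\phi$ to $PU\to RU$ preserves $U$. Thus $\phi\in Mor_{N_\F(U)}(P,R)$, giving $N_\F(S)\leq N_\F(U)$. Since $\m{N}$ contains every full maximal parabolic of $\F$ (Hypothesis~\ref{HypMaxParUnique}), and $N_\F(U)$ is contained in some such maximal parabolic, we obtain $N_\F(U)\leq \m{N}$, and in particular $N_{\Ac(Q)}(U)\leq \m{N}$.

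Then I would handle the centralizer factor in the decomposition coming from $U\ch Q$. Since $\Omega(Z(S))$ is characteristic in $S$, $N_\F(\Omega(Z(S)))$ is a full parabolic and hence contained in $\m{N}$; this yields $C_\F(\Omega(Z(S)))\leq \m{N}$. Because $Q\in\FN$ is centric, $\Omega(Z(S))\leq V(Q)$, so $C_{\Ac(Q)}(V(Q))\leq C_\F(\Omega(Z(S)))\leq \m{N}$. Now the defining identity $\Ac(Q)=C_{\Ac(Q)}(V(Q))\,N_{\Ac(Q)}(U)$ coming from $U\ch Q$ gives $\Ac(Q)\leq \m{N}$, which contradicts Remark~\ref{RemAc}. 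Hence $U=1$.

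The argument is short and essentially routine once the right observation is made; the only real content is the step $N_\F(S)\leq N_\F(U)$, which is precisely where the hypothesis of $A(S)$-invariance (rather than full $Aut(S)$-invariance as in Lemma~\ref{trivial0}) is used together with the stronger parabolic hypothesis on $\m{N}$. I do not anticipate any technical obstacle beyond making this translation carefully.
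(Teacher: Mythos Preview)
Your proposal is correct and follows essentially the same route as the paper's proof: assume $U\neq 1$, observe that $A(S)$-invariance makes $N_\F(U)$ a full parabolic so that $N_{\Ac(Q)}(U)\leq N_\F(U)\leq \m{N}$, note $C_{\Ac(Q)}(V(Q))\leq N_\F(\Omega(Z(S)))\leq \m{N}$, and combine via $U\ch Q$ to contradict Remark~\ref{RemAc}. The paper simply asserts ``$N_\F(U)$ is full parabolic'' and ``$N_\F(U)\leq\m{N}$'' without the intermediate justifications you supply, but the argument is the same.
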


\begin{proof}
Assume $U\neq 1$. Since $U$ is $A(S)$-invariant, $N_\F(U)$ is full parabolic. Hence, $N_{\Ac(Q)}(U)\leq N_\F(U)\leq \m N$. Observe also that $C_{\Ac(Q)}(V(Q))\leq N_\F(\Omega(Z(S)))\leq \m N$ and hence, as $U\ch Q$, $\Ac(Q)\leq \m N$. This is a contradiction to Remark~\ref{RemAc}.
\end{proof}

\textit{The proof of Theorem~\ref{CasesL}.}
Choose a pair $(Q,U)$ such that $Q\in \m{Q}$, $1\neq U\ch Q$ and $|N_S(U)|$ is maximal. Moreover, choose $U$ so that $|U|\geq |U_0|$ for all $1\neq U_0\ch Q$ with $U_0\unlhd N_S(U)$. Note that $U$ is fully normalized by Corollary~\ref{ThRestrConj2}. So the maximal choice of $|U|$ yields $U\in \m{C}^*(Q)$. Hence, by Lemma~\ref{C*C}, $Q\in \m{C}(Q)$. Set 
$$G:=G(Q),\;T:=N_S(Q) \mbox{ and } {M^*}:=C_G(V(Q))J(G).$$ 
Observe that it is sufficient to show the following properties.
\begin{itemize}
\item[(a)] $N_S(Q)=N_S(U)$.
\item[(b)] ${M^*}/Q\cong SL_2(q)$ and one of the following hold:
\begin{itemize}
\item[(I)] $Q$ is elementary abelian, $|Q|\leq q^3$ and $Q/C_Q({M^*})$ is a natural $SL_2(q)$-module for ${M^*}/Q$.
\item[(II)] $p=3$, $T=S$, $|Q|=q^5$, $\Phi(Q)=C_Q({M^*})$, and $Q/V(Q)$ and $V(Q)/\Phi(Q)$ are natural $SL_2(q)$-modules for ${M^*}/Q$.
\end{itemize}
\end{itemize}
For the proof of (a) and (b) set 
$$T_1:=B(T),\; T_0:=N_S(U) \mbox{ and } Q_1:=Q\cap T_1.$$
The maximal choice of $T_0=N_S(U)$ together with Lemma~\ref{trivial} yields the following property.

\bigskip

(1)\;\; Let $1\neq C\leq T_1$ such that $C\ch Q$. Then $C$ is not $A(T_0)$-invariant and, if $S\neq T_0$, $C$ is not normal in $N_S(T_0)$. 

\bigskip

By Lemma~\ref{BTH}, we can now choose $H\leq N_{M^*}(U)$ such that $T_1\in Syl_p(H)$, $H$ is normalized by $T$, ${M^*}=C_G(V(Q))H$ and $(H/O_p(H))/\Phi(H/O_p(H))\cong L_2(q)$. Observe that $Q_1=O_p(H)$.
Note that, by Lemma~\ref{PUThm}, $T_1=B(T_0)$ and so every characteristic subgroup of $T_1$ is $A(T_0)$-invariant. Therefore, (1) implies that $H$ fulfills Hypothesis~\ref{ModHyp} with $V(Q)$ in place of $W$. Thus, by Theorem~\ref{PUGrCor} one of the following holds for $V:=[Q_1,O^p(H)]$.
\begin{itemize}
\item[(I')] $V\leq \Omega(Z(Q_1))$ and $V/C_V(H)$ is a natural $SL_2(q)$-module for $H/C_H(V(Q))$.
\item[(II')] $Z(V)\leq Z(Q_1)$, $p=3$, $\Phi(V)=C_V(H)$ has order $q$, $V/Z(V)$ and $Z(V)/\Phi(V)$ are natural $SL_2(q)$-modules for $H/C_H(V(Q))$.
\end{itemize}
Furthermore, the following hold for every $\phi\in Aut(T_1)$ with $V\phi\not\leq Q_1$.
\begin{itemize}
\item[(i)] $Q_1=VC_{Q_1}(L)$ for some subgroup $L$ of $H$ with $O^p(H)\leq L$ and $H=LQ_1$.
\item[(ii)] $\Phi(C_{Q_1}(O^p(H)))\phi=\Phi(C_{Q_1}(O^p(H)))$.
\item[(iii)] If (II') holds then $V\leq V(Q)\<(V(Q)\phi)^H\>\leq Q_1$.
\item[(iv)] If (II') holds then $T_1$ does not act quadratically on $V/\Phi(V)$.
\item[(v)] $V\not\leq Q\phi$.
\item[(vi)] If (II') holds then $Q_1\phi^2=Q_1$.
\end{itemize}
If $U_0\leq Q_1$ for $U_0=\<V^{A(T_0)}\>$ or for $U_0=\<V^{N_S(T_0)}\>$, then $[U_0,O^p(H)]\leq V\leq U_0$ and $U_0\ch Q$. Together with (1) this gives the following property.

\bigskip

(2)\;\; There is $\phi \in A(T_0)$ such that $V\phi\not\leq Q_1$. If $S\neq T_0$ then we may choose $\phi$ such that $\phi\in S_{T_0}$.

\bigskip

Let now $\phi\in A(T_0)$ such that $V\phi\not\leq Q_1$.
Recall that, by Lemma~\ref{PUThm}, $T_1=B(T_0)$ and hence $T_1\phi=T_1$.
Set $$D:=\Phi(C_{Q_1}(O^p(H))).$$
Note that, as $Q_1$ and $H$ are $T$-invariant, $D$ is normal in $T$ and so $\F$-characteristic in $Q$. By Lemma~\ref{ThRestrConj2a} and (ii), $Q\phi\in \Q$ and $D=D\phi \ch Q\phi$. Assume $D\neq 1$. By Corollary~\ref{ThRestrConj2}, there is $\psi\in Mor_\F(N_S(D),S)$ such that $D\psi$ is fully normalized, $Q\psi, Q\phi\psi\in \Q$, and $D\psi$ is $\F$-characteristic in $Q\psi$ and $Q\phi\psi$. Hence, by Corollary~\ref{InQ}, $D^*:=O_p(N_\F(D\psi))\leq Q\psi\cap Q\phi\psi$. As $\F$ is minimal, $N_\F(D)$ is solvable and thus constrained. Hence, 
$$V(Q\psi)V(Q\phi\psi)\leq C_{N_S(D\psi)}(D^*)\leq D^*\leq (Q\psi)\cap (Q\phi\psi).$$ 
In particular, $V(Q)\phi\leq Q$. If (I') holds then $V\leq V(Q)C_V(X)=V(Q)Z(T_1)$ and so $V\phi\leq Q$, contradicting the choice of $\phi$. Therefore (II') holds. 
Observe that $H_Q\psi^*\leq N_\F(D\psi)$ as $H_Q\leq N_\F(D)$.\footnote{Recall Notation~\ref{phi*}.} Hence, $H_Q\psi^*$ normalizes $D^*$, and $V_0:=V(Q\psi)\<V(Q\phi\psi)^{H_Q\psi^*}\>\leq D^*\leq Q\phi\psi$. This implies $V(Q)\<V(Q\phi)^H\>=V_0\psi^{-1}\leq Q\phi$. Then by (iii), $V\leq Q\phi\cap T_1=Q_1\phi$, a contradiction to (v). 
This proves $D=1$ and so we have shown that

\bigskip

(3)\;\;$C_{Q_1}(O^p(H))$ is elementary abelian.

\bigskip

We show next that (a) holds. For the proof assume $T<N_S(U)$. Then there is $x\in (N_S(U)\cap N_S(T))\backslash T$. Since $J(Q_1)=J(Q)$ and $N_S(J(Q))=T$, we have $Q_1^x\neq Q_1$. By (3) and (i), $Q_1=VC_{Q_1}(H)=VZ(T_1)$ and so $V^x\not\leq Q_1$. On the other hand, $U\in \m{C}(Q)$ and so, by Corollary~\ref{InQ}, $O_p(N_\F(U))=U^*(Q)=U$. As $U=U^x\leq Q\cap Q^x$ and $N_\F(U)$ is constrained, we get $V(Q)V(Q)^x\leq C_{N_S(U)}(U)\cap T_1\leq U\cap T_1\leq Q_1$. If (I') holds then $V\leq V(Q)Z(H)\leq V(Q)Z(T_1)$ and so $V^x\leq Q_1$, a contradiction. Hence (II') holds. Then, by (iii), we have $V\leq V(Q)\<V(Q^x)^H\>\leq U$. So $V\leq U\cap T_1\leq Q^x$, a contradiction to (v). This proves (a). The choice of $(Q,U)$ together with Corollary~\ref{ThRestrConj2} and Lemma~\ref{trivial} gives now the following property.

\bigskip

(4)\;\;For every $1\neq U_0\ch Q$, we have $T=N_S(U_0)$ and $U_0$ is fully normalized. In particular, $U_0$ is not $A(T)$-invariant.

\bigskip

We show next:

\bigskip

(5)\;\;Let $\alpha\in A(T)$ such that $Q_1\alpha\neq Q_1$. If (I') holds then $\Phi(Q)\alpha=\Phi(Q)$.

\bigskip

For the proof of (5) assume that (I') holds and $\alpha$ is as in (5). Then $Q_1=V(Q)Z(T_1)$, so we have $V(Q)\alpha\not\leq Q_1$. By \ref{natSL2q}(b), $V(Q\alpha)$ is not an over-offender on $V(Q)$ and vice versa, so $|V(Q)/C_{V(Q)}(V(Q\alpha))|=|V(Q\alpha)/C_{V(Q\alpha)}(V(Q))|$. Hence, $V(Q\alpha)$ is an offender on $V(Q)$ and vice versa. So, again by Lemma~\ref{natSL2q}(b), $J(T)Q=V(Q\alpha)Q$ and $J(T)(Q\alpha)=V(Q)(Q\alpha)$. In particular, $[J(T),Q\alpha]\leq Q$ and, by Remark~\ref{(IV)}(d), $Q\alpha\leq J(T)Q$. Hence, $J(T)Q=J(T)(Q\alpha)=V(Q\alpha)Q=V(Q)(Q\alpha)$. In particular, $Q\alpha=V(Q\alpha)(Q\cap Q\alpha)$ and $Q=V(Q)(Q\cap Q\alpha)$. This yields
$\Phi(Q\alpha)=\Phi(Q\cap Q\alpha)=\Phi(Q)$ and proves (5). 

\bigskip

From now on let $\alpha\in A(T)$ such that $Q_1\alpha\neq Q_1$. Note that $\alpha$ exists by (4). We show now:

\bigskip

(6)\;\;If (I') holds then $Q$ is elementary abelian.

\bigskip

Let $\beta\in A(T)$ such that $Q_1\beta=Q_1$. Then $Q_1\beta\alpha\neq Q_1$. If (I') holds then (5) yields $\Phi(Q)\alpha=\Phi(Q)=\Phi(Q)\beta\alpha$ and hence $\Phi(Q)=\Phi(Q)\beta$. Thus, by (5), $\Phi(Q)$ is $A(T)$-invariant. Now (4) implies $\Phi(Q)=1$, so (6) holds. We show now:

\bigskip

(7)\;\;$C_Q(H)\cap (C_Q(H)\alpha)=1$.

\bigskip
For the proof of (7) assume $U_1:=C_Q(H)\cap (C_Q(H)\alpha)\neq 1$. 
By Lemma~\ref{ThRestrConj2a}, we have $Q\alpha\in \Q$. Note that $U_1\ch Q$ and $U_1\ch Q\alpha$. In particular, by (4), $U_1$ is fully normalized. Moreover, Corollary~\ref{InQ} implies $U_1^*:=O_p(N_\F(U_1))\leq Q\cap Q\alpha$. By Corollary~\ref{MinCharpType}, $N_\F(U)$ is constrained. Hence, $$V(Q)V(Q\alpha)\leq C_{N_S(U_1)}(U_1^*)\leq U_1^*\leq Q\cap Q\alpha.$$ 
If (I') holds then, by (6), $Q=V(Q)$ and so $Q=Q\alpha$, contradicting the choice of $\alpha$. By (i) and (3), $Q_1=VZ(T_1)$, so $V\alpha\not\leq Q_1$. Hence, if (II') holds then, by (iii),  $V\leq V(Q)\<V(Q\alpha)^H\>\leq U_1^*\leq Q\alpha$, contradicting (v). This shows (7).

\bigskip

It remains to show that (I') implies (I) and (II') implies (II). Assume first (I') holds. By (6), $Q$ is elementary abelian. Hence, $Q=V(Q)$, $C_G(V(Q))=Q$, ${M^*}/Q\cong SL_2(q)$ and $Q/C_Q({M^*})$ is a natural $SL_2(q)$-module for ${M^*}/Q$. By (7), $C_Q({M^*})\cap (C_Q({M^*})\alpha)=1$. This implies
$$|C_Q({M^*})|=|(C_Q({M^*})\alpha)C_Q({M^*})/C_Q({M^*})|
\leq |Z(J(T))/C_Q({M^*})|\leq q.$$
Hence, $|Q|\leq q^3$ and (I) holds.

\bigskip

Assume from now on that (II') holds. Note that, for $W:=Z(Q_1)$, $W/C_W(H)$ is a natural $SL_2(q)$-module for $H/Q_1$. Hence, $|Z(T_1)/C_{Q_1}(H)|=|C_W(T_1)/C_W(H)|\leq q$. Now (7) yields
$$|C_{Q_1}(H)|=|(C_{Q_1}(H)\alpha)C_{Q_1}(H)/C_{Q_1}(H)|\leq |Z(T_1)/C_{Q_1}(H)|\leq q$$
and so $C_{Q_1}(H)=C_V(H)$. Now by (i) and (3), $Q_1=VC_{Q_1}(H)=V$. In particular, by (iii), $Q_1=V=V(Q)\<(V(Q)\phi)^H\>$. So $Q_1=V$ is generated by elements of order $p$ and $[Q_1,Q_1]=\Phi(Q_1)=C_{Q_1}(H)$. As $Q_1/Z(Q_1)$ is an irreducible module for $H$, $[Q_1,Q]\leq Z(Q_1)$ and so $[Q_1,Q,Q_1]=1=[Q,Q_1,Q_1]$. Now the Three-Subgroups Lemma implies $[C_{Q_1}(H),Q]=[Q_1,Q_1,Q]=1$. Observe that $Z(Q_1)=V(Q)C_{Q_1}(H)$ and so $[Z(T_1),Q]\leq [Z(Q_1),Q]=1$. The definition of $T_1$ gives now $[\Omega(Z(J(T))),Q]=1$ and $Q=Q_1=V$. In particular, by (vi), every automorphism of $T$ of odd order normalizes $Q$. Hence, $Q$ is normal in $N_S(T)$ and so $T=S$. If $[\ov{Q},C_G(V(Q))]\neq 1$ for $\ov{Q}=Q/C_Q(H)$, then $\ov{Q}$ is the direct sum of two natural $SL_2(q)$-modules for $H/C_H(V(Q))$ and so $[\ov{Q},T_1,T_1]=1$, a contradiction to (iv). Thus, $[\ov{Q},C_G(V(Q))]=1$ and, if $x\in C_G(V(Q))$ has order prime to $p$, then $[Q,x]=[Q,x,x]\leq [C_Q(H),x]\leq [V(Q),x]=1$. Hence, $C_G(V(Q))=Q$. This shows ${M^*}/Q\cong SL_2(q)$ and (II) holds. Thus, the proof of Theorem~\ref{CasesL} is complete.

\bigskip

\textit{The proof of Theorem~\ref{Classify}.} Theorem~\ref{Classify} follows from Theorem \ref{mainSL2qThm}, Theorem~\ref{CasesL}, Corollary~\ref{MinCharpType} and Theorem~\ref{ClassifyG}.

\bigskip

\textbf{\Large Acknowledgements}

\bigskip

I would like to thank Sergey Shpectorov for many useful discussions. The concept of a minimal fusion system is a generalization of a concept suggested to me by him. I thank Michael Aschbacher for pointing out that my original definition of a minimal fusion system is equivalent to the one given in this paper. Also, I am very thankful to Bernd Stellmacher who suggested applying results from \cite{BHS}. Moreover, some arguments which appear in Sections 8, 10 and 11 go back to ideas from him. Apart from that, I thank him for his thorough reading of, and suggestions regarding Sections 9, 10 and 11.

\end{document}